\documentclass[a4paper,10pt]{amsproc}
\usepackage{amsfonts, amsmath, amssymb, graphicx, mathrsfs, tikz, xcolor, hyperref}
\usepackage[a4paper, total={6in, 10in}]{geometry}
\tikzset{Bullet/.style={fill=black,draw,color=#1,circle,minimum size=2pt,scale=0.45}}
\usetikzlibrary{shapes.geometric}
\usetikzlibrary{positioning, tikzmark}
\usepackage[all]{xy}
\theoremstyle{plain}

\newtheorem{theorem}{Theorem}[section]
\newtheorem{lemma}[theorem]{Lemma}

\theoremstyle{definition}
\newtheorem{definition}[theorem]{Definition}

\newtheorem{observation}[theorem]{Observation}
\newtheorem{remark}[theorem]{Remark}
\newtheorem{counter example}[theorem]{Counter Example}

\newtheorem{corollary}[theorem]{Corollary}
\newtheorem{example}[theorem]{Example}

\numberwithin{equation}{section}

\begin{document}
	\Large{\title{Annihilator graph of the ring $C_\mathscr{P}(X)$}
	\author[P. Nandi]{Pratip Nandi}
		\address{Department of Pure Mathematics, University of Calcutta, 35, Ballygunge Circular Road, Kolkata 700019, West Bengal, India}
		\email{pratipnandi10@gmail.com}
	\author[S.K. Acharyya]{Sudip Kumar Acharyya}
		\address{Department of Pure Mathematics, University of Calcutta, 35, Ballygunge Circular Road, Kolkata 700019, West Bengal, India}
		\email{sdpacharyya@gmail.com}

		\author[A. Deb Ray]{Atasi Deb Ray}
		\address{Department of Pure Mathematics, University of Calcutta, 35, Ballygunge Circular Road, Kolkata 700019, West Bengal, India}
		\email{debrayatasi@gmail.com}
		
		\thanks{The first author thanks the CSIR, New Delhi – 110001, India, for financial support}

\begin{abstract}
In this article, we introduce the annihilator graph of the ring $C_\mathscr{P}(X)$, denoted by $AG(C_\mathscr{P}(X))$ and observe the effect of the underlying Tychonoff space $X$ on various graph properties of $AG(C_\mathscr{P}(X))$. $AG(C_\mathscr{P}(X))$, in general, lies between the zero divisor graph and weakly zero divisor graph of  $C_\mathscr{P}(X)$ and it is proved that these three graphs coincide if and only if the cardinality of the set of all $\mathscr{P}$-points, $X_\mathscr{P}$ is $\leq 2$. Identifying a suitable induced subgraph of $AG(C_\mathscr{P}(X))$, called $G(C_\mathscr{P}(X))$, we establish that both $AG(C_\mathscr{P}(X))$ and $G(C_\mathscr{P}(X))$ share similar graph theoretic properties and have the same values for the parameters, e.g., diameter, eccentricity, girth, radius, chromatic number and clique number. By choosing the ring $C_\mathscr{P}(X)$ where $\mathscr{P}$ is the ideal of all finite subsets of $X$ such that $X_\mathscr{P}$ is finite, we formulate an algorithm for coloring the vertices of $G(C_\mathscr{P}(X))$ and thereby get the chromatic number of $AG(C_\mathscr{P}(X))$. This exhibits an instance of coloring infinite graphs by just a finite number of colors. We show that any graph isomorphism $\psi : AG(C_\mathscr{P}(X)) \rightarrow AG(C_\mathscr{Q}(Y))$ maps $G(C_\mathscr{P}(X))$ isomorphically onto $G(C_\mathscr{Q}(Y))$ as a graph and a graph isomorphism $\phi : G(C_\mathscr{P}(X)) \rightarrow G(C_\mathscr{Q}(Y))$ can be extended to a graph isomorphism $\psi : AG(C_\mathscr{P}(X)) \rightarrow AG(C_\mathscr{Q}(Y))$ under a mild restriction on the function $\phi$. Finally, we show that atleast for the rings $C_\mathscr{P}(X)$ with finitely many $\mathscr{P}$-points, so far as the graph properties are concerned, the induced subgraph $G(C_\mathscr{P}(X))$ is a good substitute for $AG(C_\mathscr{P}(X)$. 
\end{abstract}

\subjclass[2010]{Primary 54C40; Secondary 05C69}
\keywords{triangulated, hypertriangulated, complemented, chromatic number, girth, dominating number, induced subgraph and graph isomorphism}                                    

\maketitle

\section{Introduction}
Let $C(X)$ be the ring of all real valued continuous functions defined on a Tychonoff space $X$ and  $\mathscr{P}$, an ideal of closed sets in $X$ in the following sense: if $A\in\mathscr{P}$ and $B\in\mathscr{P}$, then $A\cup B\in\mathscr{P}$ and if $A\in\mathscr{P}$ and $C\subset A$ with $C$, closed in $X$, then $C\in\mathscr{P}$. In \cite{Ach}, we have studied the zero divisor graph of the ring $C_\mathscr{P}(X) = \{f\in C(X) : cl_X(X\setminus Z(f))\in\mathscr{P}\}$, where $Z(f)=\{x\in X:f(x)=0\}$ is the zero set of $f$. In this article, we introduce the annihilator graph of the same ring which we denote by $AG(C_\mathscr{P}(X))$ and observe how the topology of the underlying space helps to interpret its graph properties and vice-versa. Though Badawi investigated the annihilator graph of a commutative ring in \cite{Bad}  and achieved the values of typical graph parameters through the properties of the ring, our study of annihilator graph of $C_\mathscr{P}(X)$ reveals the interactions among the properties of the ring $C_\mathscr{P}(X)$, the graph $AG(C_\mathscr{P}(X))$ and the topological behaviour of the space $X$. \\

It is well known that the annihilator graph of any commutative ring $R$ contains, as a subgraph, the zero divisor graph of $R$ \cite{Bad}. We show here that the annihilator graph of $C_\mathscr{P}(X)$ is contained in its weakly zero divisor graph. We also establish that the cardinality of the set of all $\mathscr{P}$ points of the topological space $X$ completely determines when and only when all the three graphs coincide; where the $\mathscr{P}$ points are those $x\in X$ such that $cl_X V \in \mathscr{P}$, for some neighborhood $V$ of $x$. In section 3, certain graph parameters, e.g., the diameter, radius, girth, length of the smallest cycle containing two given vertices, are calculated and condition(s) under which the annihilator graph of $C_\mathscr{P}(X)$ is triangulated, hypertriangulated and uniquely complemented, are determined. \\

A subgraph $H$ of a given graph $G$ is called an induced subgraph if two vertices $a, b$ of $H$ are adjacent in $G$ whenever they are adjacent in $H$. In Section 4 of this article we identify an induced subgraph of $AG(C_\mathscr{P}(X))$, denoted by $G(C_\mathscr{P}(X))$ which exhibit a similar behaviour as a graph, so far as typical graph properties are concerned. For example, both have same diameter, girth, radius, eccentricity, clique and chromatic number. At times, handling $G(C_\mathscr{P}(X))$ comes out more convenient, as it has less number of edges than those of $AG(C_\mathscr{P}(X))$. In Section 5, knowing that there are only finitely many isolated points in $X$, we device an algorithm for coloring the vertices of $G(C_F(X))$ (where $C_F(X)$ is the ring obtained by choosing $\mathscr{P}$ as the ideal of all finite subsets of $X$) and show how this coloring of $G(C_{F}(X))$ determines a coloring for $AG(C_F(X))$, although it is an infinite graph.\\

The last section of this article discusses how far the role of $G(C_\mathscr{P}(X))$ can replace the role of $AG(C_\mathscr{P}(X))$ as a graph. We answer in affirmative that any graph isomorphism $AG(C_\mathscr{P}(X)) \rightarrow AG(C_\mathscr{Q}(Y))$ maps $G(C_\mathscr{P}(X))$ isomorphically onto $G(C_\mathscr{Q}(Y))$. Perhaps it is more intriguing whether any graph isomorphism $G(C_\mathscr{P}(X)) \rightarrow G(C_\mathscr{Q}(Y))$ can be extended to a graph isomorphism $AG(C_\mathscr{P}(X)) \rightarrow AG(C_\mathscr{Q}(Y))$. We have shown in Section 6 that under some mild conditions, this is true. We also show that $C_\mathscr{P}(X)$ with $|X_\mathscr{P}|$ (= cardinality of $X_\mathscr{P}$) $= $ finite, is indeed an example for which the graph properties of $G(C_\mathscr{P}(X))$ (and hence, $AG(C_\mathscr{P}(X))$) completely determines the ring properties of $C_\mathscr{P}(X)$ and conversely.

\section{Prerequisites and Technical terms related to graphs and $C_\mathscr{P}(X)$}

The distance between two distinct vertices $f$ and $g$ in $AG(C_\mathscr{P}(X))$ (respectively, $G(C_\mathscr{P}(X))$), denoted by $d_{AG}(f,g)$ (respectively, $d_{G}(f, g)$), is the length of the shortest path from $f$ to $g$. In what follows, the set of vertices of  any graph $H$ is denoted by $V(H)$. The diameter of a graph $H$ is defined by: $diam(H)=max\{d_H(f,g):f,g\in V(H)\}$. The eccentricity $ecc(f)$ of $f\in V(H)$ is defined by: $ecc(f)=max\{d_H(f,g):g\in V(H)\}$. An $f\in V(H)$ is said to be in the center of $H$ if $ecc(f)\leq ecc(g)$ holds for each $g\in V(H)$ and in this case $ecc(f)$ is called the radius of the graph. The center and radius of a graph $H$ is denoted respectively by $C(H)$ and $rad(H)$. The girth of $H$, denoted by $gr(H)$, is the length of the smallest cycle in this graph. A graph $H$ is called triangulated (respectively, hypertriangulated) if each vertex (respectively, each edge) of this graph is a vertex (respectively, an edge) of a triangle. The smallest length of a cycle containing two distinct vertices $f$ and $g$ in $AG(C_\mathscr{P}(X))$ is denoted by $c(f,g)$.\\
A subset $D$ of $V(H)$ is called a dominating set in $H$ if for each $f\in V(H)\setminus D$, there exists $g\in D$ such that $f$ and $g$ are adjacent. A dominating set $D$ in $H$ is called a total dominating set if for each $f\in V(H)$, there exists $g\in D$ such that $f$ and $g$ are adjacent. The dominating number (respectively, total dominating number) of the graph $H$ is defined as $dt(H) = \min \{|D|:D\text{ is a dominating set in }H\}$ (respectively, $dt_t(H) =\min \{|D|:D\text{ is a total dominating set in }H\}$). It is evident that $dt(H) \leq dt_t(H)$ and for a simple graph $H$, $dt_t(H) \geq 2$. A complete subgraph of a graph $H$ is any subset $K$ of $H$ such that each pair of distinct vertices in $K$ are adjacent. The clique number of $H$ is defined as $cl(H) = \sup \{|K| : K \ is \  a \ complete \ subgraph \ of H\}$. A coloring of a graph is a labeling of the vertices of the graph with colors such that no two adjacent vertices have the same color. More precisely, for a cardinal number $\Lambda$ (finite or infinite), a $\Lambda$-coloring of $H$ is a map $\psi:V(H)\to[0,\Lambda)$ with the following condition: whenever $f,g\in V(H)$ and $f$ and $g$ are adjacent, $\psi(f)\neq\psi(g)$. The chromatic number of $H$ is defined as follows: $\chi(H) =min\{\Lambda:\text{there exists a }\Lambda\text{-coloring of }H\}$.\\
For more information related to graph theoretic terms, the reader is referred to the book \cite{Diestel}. \\

\begin{definition}\cite{Ach}
$X$ is called locally $\mathscr{P}$ at a point $x\in X$, if there exists an open neighborhood $V$ of $x$ in $X$ such that $cl_XV\in\mathscr{P}$. $X$ is said to be locally $\mathscr{P}$ if it is locally $\mathscr{P}$ at each point on it.
\end{definition}
Let $X_\mathscr{P}=\{x\in X: X\text{ is  locally }\mathscr{P}\text{ at }x\}$. Then it is easy to see that $X_\mathscr{P}$ is an open set in $X$. Also, $X$ is locally $\mathscr{P}$ if and only if $X_\mathscr{P}=X$.\\

As a special case of $C_\mathscr{P}(X)$, we get the ring $C(X)$. In fact, if $\mathscr{P}$ is the ideal of all closed subsets of $X$, we get $C_\mathscr{P}(X)=C(X)$ with $X_\mathscr{P}=X$. If $\mathscr{P}$ is the ideal of all compact subsets of $X$ then $C_\mathscr{P}(X)=C_K(X)$ and $X_\mathscr{P}$ is the set of all points at which $X$ is locally compact. On the other hand, if $\mathscr{P}$ is the ideal of all finite subsets of $X$ then $C_\mathscr{P}(X)=C_F(X)=\{f\in C(X): X\setminus Z(f)\text{ is finite}\}$ and $X_\mathscr{P}$ is the set of all isolated points in $X$, denoted later on by $K_X$. 
\begin{lemma}\label{Lem1}\cite{Ach}
Let $x\in X_\mathscr{P}$ and $G$ be a neighborhood of $x$ in $X$. Then there exists $f\in C_\mathscr{P}(X)$ such that $x\in X\setminus Z(f)\subset cl_X(X\setminus Z(f))= X \setminus int_XZ(f)\subset G$.
\end{lemma}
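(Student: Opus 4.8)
The plan is to build the function $f$ from a single map supplied by complete regularity and then truncate it so that the closure of its cozero set gets trapped inside $G$. First, shrink $G$: choose an open set $U$ with $x\in U\subseteq G$. Since $x\in X_\mathscr{P}$, that is, $X$ is locally $\mathscr{P}$ at $x$, there is an open neighborhood $V$ of $x$ with $cl_X V\in\mathscr{P}$. Put $W=U\cap V$; this is an open neighborhood of $x$ with $W\subseteq G$ and $cl_X W\subseteq cl_X V\in\mathscr{P}$.

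Because $X$ is Tychonoff and $X\setminus W$ is a closed set missing $x$, there is a continuous $\varphi:X\to[0,1]$ with $\varphi(x)=1$ and $\varphi\equiv 0$ on $X\setminus W$. Set $f=\big(\varphi-\tfrac12\big)\vee 0$, so $f\in C(X)$ with $f(y)=\max\{\varphi(y)-\tfrac12,\,0\}$. Then $f(x)=\tfrac12\neq 0$, hence $x\in X\setminus Z(f)$; moreover $X\setminus Z(f)=\{y:\varphi(y)>\tfrac12\}$, whose closure lies in the closed set $\{y:\varphi(y)\ge\tfrac12\}\subseteq\{y:\varphi(y)\neq 0\}\subseteq W$. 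Thus $x\in X\setminus Z(f)\subseteq cl_X(X\setminus Z(f))\subseteq W\subseteq G$. The middle equality $cl_X(X\setminus Z(f))=X\setminus int_X Z(f)$ is just the elementary identity $cl_X(X\setminus A)=X\setminus int_X A$ applied with $A=Z(f)$, so it needs no work. Finally, $cl_X(X\setminus Z(f))$ is a closed subset of $cl_X W\subseteq cl_X V\in\mathscr{P}$, and since $\mathscr{P}$ is an ideal of closed sets (closed under passing to closed subsets), it follows that $cl_X(X\setminus Z(f))\in\mathscr{P}$; that is, $f\in C_\mathscr{P}(X)$, as required.

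The one point that is not purely formal — and the reason a plain Urysohn-type function supported in $W$ does not suffice — is that the conclusion constrains $cl_X(X\setminus Z(f))$, not merely the open cozero set $X\setminus Z(f)$: a function whose cozero set lies in $W$ may still have that cozero set's closure reach outside $W$. Replacing $\varphi$ by $\big(\varphi-\tfrac12\big)\vee 0$ cures this, since the closure of $\{\varphi>\tfrac12\}$ is squeezed inside $\{\varphi\ge\tfrac12\}$, a closed set that already sits comfortably within $W\subseteq G$. Everything else is routine bookkeeping with the two closure axioms defining the ideal $\mathscr{P}$.
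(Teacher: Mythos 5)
Your proof is correct: the truncation $f=(\varphi-\tfrac12)\vee 0$ properly traps $cl_X(X\setminus Z(f))$ inside $\{\varphi\ge\tfrac12\}\subseteq W$, and the ideal property of $\mathscr{P}$ (closure under closed subsets of $cl_XV$) gives $f\in C_\mathscr{P}(X)$, which is exactly what is needed. The paper itself only cites \cite{Ach} for this lemma without reproducing a proof, and your argument is the standard one that such a citation presupposes, so there is nothing further to reconcile.
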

\begin{lemma}\label{Lem2}
Let $K\subset X_\mathscr{P}$ be a compact set and $G$, a neighborhood of $K$ in $X$. Then there exists  $f\in C_\mathscr{P}(X)$ such that $K\subset X\setminus Z(f)\subset cl_X(X\setminus Z(f))\subset G$.
\end{lemma}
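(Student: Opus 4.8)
The plan is to deduce this from the single‑point version, Lemma \ref{Lem1}, by a routine compactness (finite subcover) argument, and then to glue the finitely many functions so obtained into one by summing their squares.

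First, for each $x\in K\subset X_\mathscr{P}$, note that the given $G$ is in particular a neighborhood of $x$ in $X$, so Lemma \ref{Lem1} produces $f_x\in C_\mathscr{P}(X)$ with $x\in X\setminus Z(f_x)\subset cl_X(X\setminus Z(f_x))\subset G$. The family $\{X\setminus Z(f_x):x\in K\}$ is then an open cover of the compact set $K$, so there are $x_1,\dots,x_n\in K$ with $K\subset\bigcup_{i=1}^n\bigl(X\setminus Z(f_{x_i})\bigr)$. (If $K=\emptyset$ the zero function already works, so we may take $n\geq 1$.) Now I would set $f=\sum_{i=1}^n f_{x_i}^{\,2}\in C(X)$. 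Since each summand is nonnegative, $Z(f)=\bigcap_{i=1}^n Z(f_{x_i})$, hence $X\setminus Z(f)=\bigcup_{i=1}^n\bigl(X\setminus Z(f_{x_i})\bigr)\supseteq K$; taking closures and using that the closure of a finite union is the union of the closures, $cl_X(X\setminus Z(f))=\bigcup_{i=1}^n cl_X\bigl(X\setminus Z(f_{x_i})\bigr)\subseteq G$. This gives all three required inclusions $K\subset X\setminus Z(f)\subset cl_X(X\setminus Z(f))\subset G$.

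It then remains only to check that $f\in C_\mathscr{P}(X)$, i.e.\ that $cl_X(X\setminus Z(f))\in\mathscr{P}$. But we have just exhibited this set as the finite union $\bigcup_{i=1}^n cl_X(X\setminus Z(f_{x_i}))$ of members of $\mathscr{P}$ (as each $f_{x_i}\in C_\mathscr{P}(X)$), and $\mathscr{P}$, being an ideal of closed sets, is closed under finite unions; hence $cl_X(X\setminus Z(f))\in\mathscr{P}$ and $f\in C_\mathscr{P}(X)$. The argument is essentially routine; the only step that requires any care is this last one, namely that passing to the finite sum of squares does not leave $C_\mathscr{P}(X)$, which is precisely where the closure‑under‑finite‑unions axiom of $\mathscr{P}$ is invoked. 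I do not anticipate any real obstacle.
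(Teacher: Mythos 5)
Your proof is correct and follows essentially the same route as the paper: apply Lemma \ref{Lem1} pointwise, extract a finite subcover by compactness, and take $f=\sum_{i=1}^n f_{x_i}^2$, using that $cl_X(X\setminus Z(f))=\bigcup_{i=1}^n cl_X(X\setminus Z(f_{x_i}))\subset G$. Your explicit verification that $f\in C_\mathscr{P}(X)$ via closure of $\mathscr{P}$ under finite unions is exactly the justification the paper leaves implicit.
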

\begin{proof}
Let $x\in K$. By Lemma \ref{Lem1}, there exists  $f_x\in C_\mathscr{P}(X)$ such that $x\in X\setminus Z(f_x)\subset cl_X(X\setminus Z(f_x))\subset G$.  $K$ being compact, the open cover $\{X\setminus Z(f_x):x\in K\}$ has a finite subcover; i.e., there exist $x_1, x_2,\ldots, x_n\in K$ such that $K\subset \bigcup\limits_{i=1}^n (X\setminus Z(f_{x_i}))=X\setminus (\bigcap_{i=1}^n Z(f_{x_i}))=X\setminus Z(f)$ where $f=\sum_{i=1}^{n}f_{x_i}^2\in C_\mathscr{P}(X)$. Consequently, $K\subset X\setminus Z(f)\subset cl_X(X\setminus Z(f))=\bigcup\limits_{i=1}^n cl_X(X\setminus Z(f_{x_i}))\subset G$.
\end{proof}
It has been proved in our earlier paper [see Theorem 2.3, \cite{Ach}] that $Z(C_\mathscr{P}(X))^*=\{f\in C_\mathscr{P}(X)\setminus\{0\}:int_XZ(f)\cap X_\mathscr{P}\neq\emptyset\}$.  The following is a couple of easy observations, that we frequently use in our discussion.
\begin{observation}\label{Obs1}
(i) Every point in the co-zero set of $f \in C_\mathscr{P}(X)$ is a $\mathscr{P}$-point.\\
(ii) $fg = 0$ if and only if $X \setminus Z(g) \subseteq int_X Z(f)$, for any $f, g \in C_\mathscr{P}(X)$.
\end{observation}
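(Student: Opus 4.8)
The plan is to treat the two parts separately; both follow directly from the definition of membership in $C_\mathscr{P}(X)$ together with the elementary facts that zero sets of continuous functions are closed and co-zero sets are open.

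For part (i), I would start from the defining condition: $f\in C_\mathscr{P}(X)$ means precisely that $cl_X(X\setminus Z(f))\in\mathscr{P}$. Given a point $x$ in the co-zero set $X\setminus Z(f)$, the set $V:=X\setminus Z(f)$ is an open neighborhood of $x$ (since $Z(f)$ is closed), and $cl_X V=cl_X(X\setminus Z(f))\in\mathscr{P}$ by hypothesis. Hence $X$ is locally $\mathscr{P}$ at $x$, i.e. $x\in X_\mathscr{P}$, so $x$ is a $\mathscr{P}$-point. This is essentially an unwinding of the definitions, so no obstacle is expected.

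For part (ii), I would first rewrite the algebraic condition in terms of zero sets: $fg=0\iff Z(fg)=X\iff Z(f)\cup Z(g)=X\iff X\setminus Z(g)\subseteq Z(f)$. It then remains to upgrade the inclusion in $Z(f)$ to an inclusion in $int_XZ(f)$. One direction is trivial because $int_XZ(f)\subseteq Z(f)$. For the other, note that $X\setminus Z(g)$ is open (as $Z(g)$ is closed), and an open set contained in $Z(f)$ is automatically contained in the largest open subset of $Z(f)$, namely $int_XZ(f)$; hence $X\setminus Z(g)\subseteq int_XZ(f)$. Chaining the equivalences yields the claim.

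Since both arguments are short and rely only on these basic topological facts, I do not anticipate any genuine difficulty. The only points requiring a little care are to invoke the actual membership condition $cl_X(X\setminus Z(f))\in\mathscr{P}$ rather than any weaker statement in part (i), and to keep track of which sets are open so that the passage to the interior in part (ii) is justified.
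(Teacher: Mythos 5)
Your proof is correct, and since the paper states this as an easy observation without supplying a proof, your argument is exactly the intended unwinding of the definitions: membership in $C_\mathscr{P}(X)$ gives $cl_X(X\setminus Z(f))\in\mathscr{P}$ for part (i), and $Z(fg)=Z(f)\cup Z(g)$ together with the openness of the co-zero set justifies passing to $int_XZ(f)$ in part (ii). Nothing further is needed.
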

For more information on the ring $C_\mathscr{P}(X)$, the reader is referred to see the articles \cite{Acharyya} and \cite{Acharyya1}. \\

\section{Annihilator graph of $C_\mathscr{P}(X)$}
We begin this section by introducing the annihilator graph of $C_\mathscr{P}(X)$ whose vertices are the non-zero zero divisors of $C_\mathscr{P}(X)$. The adjacency of two vertices of this graph is reformulated in terms of their zero sets and the $\mathscr{P}$-points of the space $X$.

\begin{definition}\cite{Bad}
The annihilator graph $AG(R)$, of a commutative ring $R$, is a simple graph whose set of vertices is $Z(R)^* = Z(R) \setminus \{0\}$ and two distinct vertices $a,\ b$ are adjacent if $ann(a)\cup ann(b) \neq ann(a.b)$; here $ann(a)$ stands for the ideal $\{r\in R : ra=0\}$
\end{definition}
The following lemma translates a purely algebraic statement into an equivalent topological terms. 
\begin{lemma}\label{Lem4.1}
Let $f,g\in Z(C_\mathscr{P}(X))^*$. Then $int_XZ(f)\cap X_\mathscr{P}\subset int_XZ(g)\cap X_\mathscr{P}$ if and only if $ann(f)\subset ann(g)$.
\end{lemma}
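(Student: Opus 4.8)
The plan is to prove the two implications separately, using Observation \ref{Obs1} to convert membership in an annihilator ideal into a containment of co-zero sets, and Lemma \ref{Lem1} to manufacture the test function needed for the non-trivial direction.

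For the forward implication, assume $int_XZ(f)\cap X_\mathscr{P}\subseteq int_XZ(g)\cap X_\mathscr{P}$ and take any $h\in ann(f)$. By Observation \ref{Obs1}(ii), $hf=0$ gives $X\setminus Z(h)\subseteq int_XZ(f)$, while Observation \ref{Obs1}(i) gives $X\setminus Z(h)\subseteq X_\mathscr{P}$; hence $X\setminus Z(h)\subseteq int_XZ(f)\cap X_\mathscr{P}\subseteq int_XZ(g)$, and Observation \ref{Obs1}(ii) again yields $hg=0$, i.e.\ $h\in ann(g)$. (Note this direction does not even use that $f,g\in Z(C_\mathscr{P}(X))^*$.)

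For the converse, assume $ann(f)\subseteq ann(g)$ and argue by contradiction: suppose there is a point $x\in(int_XZ(f)\cap X_\mathscr{P})\setminus int_XZ(g)$. Since $x$ is a $\mathscr{P}$-point and $int_XZ(f)$ is an open neighbourhood of $x$, Lemma \ref{Lem1} produces an $h\in C_\mathscr{P}(X)$ with $x\in X\setminus Z(h)$ and $cl_X(X\setminus Z(h))\subseteq int_XZ(f)$. In particular $X\setminus Z(h)\subseteq int_XZ(f)$, so Observation \ref{Obs1}(ii) gives $hf=0$, i.e.\ $h\in ann(f)\subseteq ann(g)$; applying Observation \ref{Obs1}(ii) once more, $hg=0$ forces $X\setminus Z(h)\subseteq int_XZ(g)$, whence $x\in int_XZ(g)$ --- a contradiction. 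Thus $int_XZ(f)\cap X_\mathscr{P}\subseteq int_XZ(g)$, and since the left-hand side is automatically contained in $X_\mathscr{P}$, we get $int_XZ(f)\cap X_\mathscr{P}\subseteq int_XZ(g)\cap X_\mathscr{P}$.

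The only non-routine ingredient is the construction of $h$ in the converse; everything else is a mechanical unwinding of Observation \ref{Obs1}. There is no real obstacle, but one should be careful to invoke Lemma \ref{Lem1} so that the \emph{closure} of the co-zero set of $h$ (not merely the co-zero set itself) sits inside $int_XZ(f)$, which is what makes the equivalence $hf=0\Leftrightarrow X\setminus Z(h)\subseteq int_XZ(f)$ applicable verbatim; the compact refinement Lemma \ref{Lem2} would serve equally well if one prefers to handle a whole compact subset of $int_XZ(f)\cap X_\mathscr{P}$ at once.
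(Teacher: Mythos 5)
Your proof is correct and follows essentially the same route as the paper: the forward direction is the same unwinding of Observation \ref{Obs1}, and the converse uses exactly the paper's device of invoking Lemma \ref{Lem1} to produce a test function $h$ with $x\in X\setminus Z(h)\subseteq int_XZ(f)$, merely phrased as a contradiction instead of a direct inclusion. One tiny remark: Observation \ref{Obs1}(ii) already characterizes $hf=0$ by $X\setminus Z(h)\subseteq int_XZ(f)$ alone, so your caution about needing the closure of the co-zero set inside $int_XZ(f)$ is unnecessary (though harmless).
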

\begin{proof}
Let $int_XZ(f)\cap X_\mathscr{P}\subset int_XZ(g)\cap X_\mathscr{P}$ and $h\in ann(f)$. Then $f.h=0$ and so, by using Observation \ref{Obs1}, $X\setminus Z(h)\subset int_XZ(f)\cap X_\mathscr{P}\subset int_XZ(g)\cap X_\mathscr{P}$, so that $g.h=0$, i.e., $h\in ann(g)$.\\
Conversely let $ann(f)\subset ann(g)$ and $x\in int_XZ(f)\cap X_\mathscr{P}$. Then by Lemma \ref{Lem1}, there exists $h\in C_\mathscr{P}(X)$ such that $x\in X\setminus Z(h)\subset int_XZ(f)\cap X_\mathscr{P}$. Clearly, $h\neq 0$ and $f.h=0$. Now $f.h=0\implies h\in ann(f)\subset ann(g)\implies g.h=0$. Therefore, $x \in X\setminus Z(h)\subset int_XZ(g)\cap X_\mathscr{P}$.  So, $int_XZ(f)\cap X_\mathscr{P}\subset int_XZ(g)\cap X_\mathscr{P}$.
\end{proof}

In light of Lemma \ref{Lem4.1}, the rule for two vertices of $AG(C_\mathscr{P}(X))$ to be adjacent, takes the following form:

\begin{theorem}\label{Th4.2}
Two vertices $f,\ g$ in $AG(C_\mathscr{P}(X))$ are adjacent if and only if $int_XZ(f)\cap X_\mathscr{P} \not\subset int_XZ(g)\cap X_\mathscr{P}$ and $int_XZ(g)\cap X_\mathscr{P}\not\subset int_XZ(f)\cap X_\mathscr{P}$.
\end{theorem}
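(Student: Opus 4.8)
The plan is to unwind the definition of adjacency in $AG(C_\mathscr{P}(X))$ — namely $ann(f)\cup ann(g)\neq ann(fg)$ — and to translate each piece through Lemma \ref{Lem4.1}, which identifies inclusion of annihilators with inclusion of the sets $int_XZ(\cdot)\cap X_\mathscr{P}$. The first observation I would record is that $ann(f)\cup ann(g)\subseteq ann(fg)$ always holds, so adjacency is equivalent to the existence of some $h\in ann(fg)\setminus\big(ann(f)\cup ann(g)\big)$, i.e. $fgh=0$ but $fh\neq 0$ and $gh\neq 0$. So the whole statement reduces to showing that such an $h$ exists if and only if neither of $int_XZ(f)\cap X_\mathscr{P}$, $int_XZ(g)\cap X_\mathscr{P}$ is contained in the other.

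For the ``only if'' direction I would argue contrapositively: if, say, $int_XZ(f)\cap X_\mathscr{P}\subseteq int_XZ(g)\cap X_\mathscr{P}$, then by Lemma \ref{Lem4.1} $ann(f)\subseteq ann(g)$, hence $ann(f)\cup ann(g)=ann(g)$. On the other hand I must check $ann(g)=ann(fg)$ in this case. The inclusion $\subseteq$ is clear; for $\supseteq$, using Observation \ref{Obs1}(ii) one has $fgh=0\iff X\setminus Z(h)\subseteq int_XZ(fg)\cap X_\mathscr{P}$, and $int_XZ(fg)\cap X_\mathscr{P}=\big(int_XZ(f)\cup int_XZ(g)\big)\cap X_\mathscr{P}$ — this last set identity needs a short verification (the point being that $Z(fg)=Z(f)\cup Z(g)$ and that on the open set $X_\mathscr{P}$ the interior of the union equals the union of the interiors, or more safely: every point of $int_XZ(fg)\cap X_\mathscr{P}$ is a $\mathscr{P}$-point where $fg$ vanishes on a neighborhood, and then a standard argument puts it in $int_XZ(f)$ or $int_XZ(g)$; alternatively just observe $int_XZ(f)\cap X_\mathscr{P}\subseteq int_XZ(g)\cap X_\mathscr{P}$ forces $int_XZ(fg)\cap X_\mathscr{P}=int_XZ(g)\cap X_\mathscr{P}$ directly). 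Granting this, $ann(fg)=ann(g)=ann(f)\cup ann(g)$, so $f,g$ are not adjacent.

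For the ``if'' direction, suppose neither set is contained in the other. Pick $x\in \big(int_XZ(f)\cap X_\mathscr{P}\big)\setminus int_XZ(g)$ and $y\in \big(int_XZ(g)\cap X_\mathscr{P}\big)\setminus int_XZ(f)$; note $x\neq y$. Using Lemma \ref{Lem1} twice, choose $h_1\in C_\mathscr{P}(X)$ with $x\in X\setminus Z(h_1)\subseteq int_XZ(f)\cap X_\mathscr{P}$ and, shrinking the neighborhood, with $cl_X(X\setminus Z(h_1))$ missing $y$; similarly $h_2$ supported near $y$ inside $int_XZ(g)\cap X_\mathscr{P}$. Set $h=h_1^2+h_2^2\in C_\mathscr{P}(X)$, so $X\setminus Z(h)=(X\setminus Z(h_1))\cup(X\setminus Z(h_2))$. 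Then $X\setminus Z(h)\subseteq int_XZ(f)\cup int_XZ(g)$ together with $X\setminus Z(h)\subseteq X_\mathscr{P}$ gives $X\setminus Z(h)\subseteq int_XZ(fg)\cap X_\mathscr{P}$, hence $fgh=0$. But $x\in X\setminus Z(h_1)\subseteq X\setminus Z(h)$ and $x\notin int_XZ(g)$, so $X\setminus Z(h)\not\subseteq int_XZ(g)\cap X_\mathscr{P}$, whence $gh\neq 0$ by Observation \ref{Obs1}(ii); symmetrically $fh\neq 0$. Thus $h\in ann(fg)\setminus\big(ann(f)\cup ann(g)\big)$ and $f,g$ are adjacent.

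The main obstacle I anticipate is the bookkeeping around the set identity $int_XZ(fg)\cap X_\mathscr{P}=\big(int_XZ(f)\cup int_XZ(g)\big)\cap X_\mathscr{P}$ (or the weaker facts needed in each direction), since $int_X(A\cup B)\supsetneq int_XA\cup int_XB$ in general; the resolution is that we only ever test membership of co-zero sets, which are \emph{open} and contained in $X_\mathscr{P}$ by Observation \ref{Obs1}(i), so any relevant point already sits in an open set on which $fg$ (or $f$, or $g$) vanishes, and Observation \ref{Obs1}(ii) lets me dispense with taking interiors explicitly. A secondary nuisance is ensuring in the ``if'' direction that the chosen $h$ genuinely fails to annihilate each of $f$ and $g$; this is handled by keeping the supports of $h_1,h_2$ away from $int_XZ(g)$ and $int_XZ(f)$ respectively, which is exactly what the points $x,y$ were selected to allow.
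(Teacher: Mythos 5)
Your proposal is correct in substance, but it routes the two implications differently from the paper. The paper proves ``adjacent $\Rightarrow$ the two sets are incomparable'' \emph{directly}: from an abstract witness $h$ with $fgh=0$, $fh\neq 0\neq gh$ it picks points $x\in(X\setminus Z(f))\cap(X\setminus Z(h))$ and $y\in(X\setminus Z(g))\cap(X\setminus Z(h))$ and uses $fgh=0$ to place $x$ in $int_XZ(g)\cap X_\mathscr{P}\setminus int_XZ(f)$ and $y$ in $int_XZ(f)\cap X_\mathscr{P}\setminus int_XZ(g)$; and it proves the converse \emph{contrapositively and algebraically}: non-adjacency means $ann(f)\cup ann(g)=ann(fg)$ is an ideal, a union of two ideals is an ideal only when one contains the other, and Lemma~\ref{Lem4.1} then yields one of the two inclusions. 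You swap the roles: your explicit construction of a witness $h=h_1^2+h_2^2$ via Lemma~\ref{Lem1} for ``incomparable $\Rightarrow$ adjacent'' is correct (and is essentially the device the paper deploys later, e.g.\ in Theorem~\ref{Th4.6}), while your contrapositive for the other direction needs the auxiliary fact that $int_XZ(f)\cap X_\mathscr{P}\subseteq int_XZ(g)\cap X_\mathscr{P}$ forces $ann(fg)=ann(g)$ --- a fact the paper's union-of-ideals trick makes unnecessary.

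One caveat on that auxiliary step. The identity $int_XZ(fg)\cap X_\mathscr{P}=\bigl(int_XZ(f)\cup int_XZ(g)\bigr)\cap X_\mathscr{P}$ that you propose first is false in general, and so is the claim that a point of $int_XZ(fg)$ must lie in $int_XZ(f)$ or $int_XZ(g)$: in $C(\mathbb{R})$ take $f(x)=\max(x,0)$, $g(x)=\max(-x,0)$; then $0\in int_XZ(fg)=\mathbb{R}$ but $0\notin int_XZ(f)\cup int_XZ(g)=\mathbb{R}\setminus\{0\}$. Your fallback claim --- that the hypothesis $int_XZ(f)\cap X_\mathscr{P}\subseteq int_XZ(g)\cap X_\mathscr{P}$ already forces $int_XZ(fg)\cap X_\mathscr{P}=int_XZ(g)\cap X_\mathscr{P}$ --- is true, but it is not quite ``direct'' and your closing remark about cozero sets does not pinpoint why the difficulty disappears. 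Here is the missing two-line argument: if $fgh=0$, then $V=X\setminus Z(h)$ is open, lies in $X_\mathscr{P}$ by Observation~\ref{Obs1}(i) and in $Z(f)\cup Z(g)$; hence $V\setminus Z(g)$ is open (as $Z(g)$ is closed) and contained in $Z(f)\cap X_\mathscr{P}$, so $V\setminus Z(g)\subseteq int_XZ(f)\cap X_\mathscr{P}\subseteq int_XZ(g)\cap X_\mathscr{P}\subseteq Z(g)$, forcing $V\setminus Z(g)=\emptyset$; thus $V\subseteq int_XZ(g)$ and $gh=0$. (Alternatively, argue in the reduced ring: $fgh=0$ gives $gh\in ann(f)\subseteq ann(g)$, so $g^2h=0$ and $(gh)^2=(g^2h)h=0$, whence $gh=0$.) With either completion your proof goes through; the paper's route simply avoids the issue altogether.
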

\begin{proof}
Let $f, \ g$ be adjacent vertices in $AG(C_\mathscr{P}(X))$. Then $ann(f)\cup ann(g)\subsetneqq ann(f.g)$ and so there exists $h\in ann(f.g)\setminus[ann(f)\cup ann(g)]\implies f.g.h=0$ but $f.h\neq 0$ and $g.h\neq 0$. Since $f.h\neq 0$, $(X\setminus Z(f))\cap (X\setminus Z(h))\neq \emptyset$. Choose $x\in (X\setminus Z(f))\cap (X\setminus Z(h))$. By Observation~\ref{Obs1}, $x\in X_\mathscr{P}$. Similarly, $y\in X_\mathscr{P}$ such that $y\in (X\setminus Z(g))\cap (X\setminus Z(h))$. Now, $f.g.h=0\implies (X\setminus Z(f))\cap (X\setminus Z(h)) = X \setminus Z(fh)\subset int_XZ(g)$ and so, $x\in int_XZ(g)\cap X_\mathscr{P}$. Similarly, $y\in int_XZ(f)\cap X_\mathscr{P}$. Therefore, $x\in [int_XZ(g)\cap X_\mathscr{P}]\setminus int_XZ(f)$ and $y\in [int_XZ(f)\cap X_\mathscr{P}]\setminus int_XZ(g)$. So, $int_XZ(f)\cap X_\mathscr{P}\not\subset int_XZ(g)\cap X_\mathscr{P}$ and $int_XZ(g)\cap X_\mathscr{P}\not\subset int_XZ(f)\cap X_\mathscr{P}$.\\
Suppose $f, \ g$ are non-adjacent vertices in $AG(C_\mathscr{P}(X))$. Then by definition, $ann(f)\cup ann(g)=ann(f.g)$. Since the union of two ideals $ann(f)$ and $ann(g)$ is an ideal $ann(f.g)$, it follows that either $ann(f)\subset ann(g)$ or $ann(g)\subset ann(f)$. Then by Lemma~\ref{Lem4.1}, either $int_XZ(f)\cap X_\mathscr{P}\subset int_XZ(g)\cap X_\mathscr{P}$ or $int_XZ(g)\cap X_\mathscr{P}\subset int_XZ(f)\cap X_\mathscr{P}$.
\end{proof}
The following theorem endorses that annihilator graph $AG(C_\mathscr{P}(X))$ is indeed a subgraph of the weakly zero divisor graph $W\Gamma(C_\mathscr{P}(X))$. A weakly zero divisor graph $W\Gamma(R)$ of a commutative ring $R$ is a simple graph with $Z(R)^*$ as the set of vertices and two distinct vertices $a,b$ are adjacent if there exists $c,d\in R$ such that $c\in ann(a)\setminus\{0\}$, $d\in ann(b)\setminus\{0\}$ and $c.d=0$ \cite{Nik}.
\begin{theorem}
$AG(C_\mathscr{P}(X))$ is a subgraph of $W\Gamma(C_\mathscr{P}(X))$.
\end{theorem}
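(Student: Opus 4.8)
The plan is to show that every edge of $AG(C_\mathscr{P}(X))$ is an edge of $W\Gamma(C_\mathscr{P}(X))$; since the two graphs share the same vertex set $Z(C_\mathscr{P}(X))^*$, this suffices. So I fix distinct vertices $f,g$ that are adjacent in $AG(C_\mathscr{P}(X))$ and, invoking the characterization in Theorem~\ref{Th4.2}, I extract a point $x\in\big[int_XZ(f)\cap X_\mathscr{P}\big]\setminus int_XZ(g)$ and a point $y\in\big[int_XZ(g)\cap X_\mathscr{P}\big]\setminus int_XZ(f)$. (Here I use that a point of $X_\mathscr{P}$ which lies in $int_XZ(f)\cap X_\mathscr{P}$ but not in $int_XZ(g)\cap X_\mathscr{P}$ is automatically outside $int_XZ(g)$, and symmetrically.) In particular $x\neq y$.

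Next I manufacture the two ``witnesses'' required by the definition of $W\Gamma$: functions $c,d\in C_\mathscr{P}(X)$ whose cozero sets are small neighborhoods of $x$ and $y$, sitting inside $int_XZ(f)$ and $int_XZ(g)$ respectively, and mutually disjoint. Since $X$ is Tychonoff (hence Hausdorff), choose disjoint open sets $U\ni x$ and $V\ni y$. Then $U\cap int_XZ(f)$ is a neighborhood of $x$ and $V\cap int_XZ(g)$ a neighborhood of $y$, and because $x,y\in X_\mathscr{P}$, Lemma~\ref{Lem1} yields $c,d\in C_\mathscr{P}(X)$ with
\[
x\in X\setminus Z(c)\subseteq cl_X(X\setminus Z(c))\subseteq U\cap int_XZ(f),\qquad
y\in X\setminus Z(d)\subseteq cl_X(X\setminus Z(d))\subseteq V\cap int_XZ(g).
\]

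Finally I verify the three needed properties. First, $c\neq 0$ and $d\neq 0$ since their cozero sets contain $x$ and $y$. Second, $c\in ann(f)$: as $X\setminus Z(c)\subseteq int_XZ(f)\subseteq Z(f)$, the cozero sets of $c$ and $f$ are disjoint, so $cf=0$ by Observation~\ref{Obs1}; likewise $d\in ann(g)$. Third, $cd=0$: $X\setminus Z(c)\subseteq U$ and $X\setminus Z(d)\subseteq V$ are disjoint, so the cozero sets of $c$ and $d$ are disjoint, whence $cd=0$. Thus $c\in ann(f)\setminus\{0\}$, $d\in ann(g)\setminus\{0\}$ and $cd=0$, so $f$ and $g$ are adjacent in $W\Gamma(C_\mathscr{P}(X))$.

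The one genuinely delicate point — the step I would flag as the crux — is arranging the cozero sets of $c$ and $d$ to be disjoint; this is precisely why one separates $x$ and $y$ by Hausdorffness \emph{first} and only then applies Lemma~\ref{Lem1}, rather than naively picking $c,d$ supported somewhere inside $int_XZ(f)$ and $int_XZ(g)$, which need not be disjoint. Everything else is a routine translation between cozero sets and vanishing of products via Observation~\ref{Obs1}.
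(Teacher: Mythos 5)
Your proof is correct and takes essentially the same route as the paper: both arguments use Theorem~\ref{Th4.2} together with Lemma~\ref{Lem1} to manufacture two nonzero functions with disjoint cozero sets lying inside $int_XZ(f)$ and $int_XZ(g)$, which therefore annihilate $f$ and $g$ respectively and multiply to zero, giving adjacency in $W\Gamma(C_\mathscr{P}(X))$. The only (harmless) difference is how disjointness of the two cozero sets is secured: the paper places the witnesses inside $(int_XZ(f)\cap X_\mathscr{P})\cap(X\setminus Z(g))$ and $(int_XZ(g)\cap X_\mathscr{P})\cap(X\setminus Z(f))$, which are disjoint automatically, whereas you first separate the two chosen points by Hausdorffness and then apply Lemma~\ref{Lem1} inside the separating neighborhoods.
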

\begin{proof}
Let $f, \ g$ be two adjacent vertices in $AG(C_\mathscr{P}(X))$. By Theorem \ref{Th4.2}, $(int_XZ(f)\cap X_\mathscr{P}) \cap (X \setminus Z(g))\neq \emptyset$ and $(int_XZ(g)\cap X_\mathscr{P}) \cap (X\setminus Z(f))\neq \emptyset$. Therefore, by Lemma \ref{Lem1}, there exist $h_1,\ h_2\in C_\mathscr{P}(X)$ such that $\emptyset\neq X\setminus Z(h_1)\subset cl_X(X\setminus Z(h_1))\subset (int_XZ(f)\cap X_\mathscr{P})\cap (X \setminus Z(g))$ and $\emptyset\neq X\setminus Z(h_2)\subset cl_X(X\setminus Z(h_2))\subset (int_XZ(g)\cap X_\mathscr{P})\cap (X\setminus Z(f))$. So, $h_1,\ h_2\neq 0$ and $f.h_1=0=g.h_2$. i.e., $h_1, \ h_2\in Z(C_\mathscr{P}(X))^*$. Since $(int_XZ(g)\cap X_\mathscr{P})\cap (X \setminus Z(f))\cap (int_XZ(f)\cap X_\mathscr{P})\cap (X\setminus Z(g))=\emptyset$, it follows that $(X\setminus Z(h_1))\cap (X\setminus Z(h_2))=\emptyset $ and so, $h_1.h_2=0$. i.e., $h_1\in ann(f)\setminus\{0\}$, $h_2\in ann(g)\setminus\{0\}$ and $h_1.h_2=0$, proving that $f,\ g$ are adjacent in $W\Gamma(C_\mathscr{P}(X))$. 
\end{proof}
For if $|X_\mathscr{P}|\leq 1$, the ring $C_\mathscr{P}(X)$ is either an integral domain or the trivial ring $0$ and so, $Z(C_\mathscr{P}(X))^*$ is an empty set. Therefore, in case $|X_\mathscr{P}|\leq 1$ , all the three graphs are empty graphs, producing $\Gamma(C_\mathscr{P}(X)) = AG(C_\mathscr{P}(X)) = W\Gamma(C_\mathscr{P}(X))$. Observing that $AG(C_\mathscr{P}(X))$ has the same set of vertices as the other two graphs under consideration and that it lies between $\Gamma(C_\mathscr{P}(X))$ and $W\Gamma(C_\mathscr{P}(X))$, we may refine Theorem 2.8 of \cite{Ach} by allowing the possibility of a graph to become an empty graph, as follows:
\begin{theorem}\label{Eq}
$\Gamma(C_\mathscr{P}(X)) = AG(C_\mathscr{P}(X))=W\Gamma(C_\mathscr{P}(X))$ if and only if $|X_\mathscr{P}|\leq 2$. 
\end{theorem}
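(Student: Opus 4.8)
The plan is to prove both implications, with the easy direction first. For the ``if'' part, suppose $|X_\mathscr{P}| \leq 2$. If $|X_\mathscr{P}| \leq 1$ we have already observed that all three graphs are empty, so assume $X_\mathscr{P} = \{p, q\}$ with $p \neq q$. Since $AG(C_\mathscr{P}(X))$ always lies between $\Gamma(C_\mathscr{P}(X))$ and $W\Gamma(C_\mathscr{P}(X))$ (by the previous theorem and the general inclusion $\Gamma(R) \subseteq AG(R)$), it suffices to show $\Gamma(C_\mathscr{P}(X)) = W\Gamma(C_\mathscr{P}(X))$, and this is precisely Theorem 2.8 of \cite{Ach} (the refinement only concerns allowing empty graphs, which is handled by the $|X_\mathscr{P}| \leq 1$ case). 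Alternatively, one can argue directly from Theorem \ref{Th4.2}: any nonzero zero divisor $f$ has $int_X Z(f) \cap X_\mathscr{P}$ a nonempty proper subset of $X_\mathscr{P}$, hence a singleton, and two singletons in a two-element set are either equal or non-nested-with-intersection-empty — so adjacency in $AG$ forces the two sets to be the two distinct singletons, which is exactly the adjacency condition in $\Gamma$ (two vertices are adjacent in $\Gamma$ iff $fg=0$ iff $X\setminus Z(f) \subseteq int_X Z(g)$ and vice versa, forcing disjoint cozero sets). So the three adjacency relations coincide.

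For the ``only if'' part, I would prove the contrapositive: if $|X_\mathscr{P}| \geq 3$, then $AG(C_\mathscr{P}(X)) \neq \Gamma(C_\mathscr{P}(X))$ (it will then follow a fortiori that not all three coincide). Pick three distinct $\mathscr{P}$-points $p, q, r \in X_\mathscr{P}$. Using that $X$ is Tychonoff (hence Hausdorff), separate them by pairwise disjoint open neighborhoods, and apply Lemma \ref{Lem1} (or Lemma \ref{Lem2} for singletons) to produce functions $f, g \in C_\mathscr{P}(X)$ whose cozero sets are small neighborhoods with $p \in X \setminus Z(f)$, $q \in X \setminus Z(g)$, and $r \in int_X Z(f) \cap int_X Z(g)$, while also arranging $p \notin int_X Z(g)$ may fail — so instead the cleanest construction is: choose $f$ with $cl_X(X\setminus Z(f))$ a small neighborhood of $p$ disjoint from $q$ and $r$, and $g$ with $cl_X(X\setminus Z(g))$ a small neighborhood of $q$ disjoint from $p$ and $r$. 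Then $r \in int_X Z(f) \cap int_X Z(g) \cap X_\mathscr{P}$, while $p \in [int_X Z(g) \cap X_\mathscr{P}] \setminus int_X Z(f)$ and $q \in [int_X Z(f) \cap X_\mathscr{P}] \setminus int_X Z(g)$, so by Theorem \ref{Th4.2} the pair $f, g$ is adjacent in $AG(C_\mathscr{P}(X))$. On the other hand, the cozero sets of $f$ and $g$ are disjoint, so $fg = 0$, which is impossible if $f,g$ were a non-edge of $\Gamma$ — wait, $fg=0$ means they ARE adjacent in $\Gamma$. So this particular pair does not separate the graphs; I need a pair adjacent in $AG$ but not in $\Gamma$.

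The corrected construction for the contrapositive: I want $f, g$ with $fg \neq 0$ but $f, g$ adjacent in $AG$. Take $f$ whose cozero set contains $p$ and also meets a neighborhood of $r$ but $int_X Z(f)$ still contains $q$; more precisely, arrange $int_X Z(f) \cap X_\mathscr{P} = \{q\}$-ish and $int_X Z(g) \cap X_\mathscr{P} = \{p\}$-ish so that neither of these one-or-more-point sets contains the other, forcing adjacency in $AG$ by Theorem \ref{Th4.2}; but choose the cozero sets of $f$ and $g$ to overlap at $r$ (so both $f,g$ are nonzero at $r$), giving $fg \neq 0$, hence $f,g$ non-adjacent in $\Gamma(C_\mathscr{P}(X))$ (since adjacency in $\Gamma$ requires $fg = 0$). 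Concretely: let $U, V, W$ be pairwise disjoint open neighborhoods of $p, q, r$; by Lemma \ref{Lem1} get $h_p, h_q, h_r$ supported (cozero) inside $U, V, W$ respectively and nonvanishing at $p, q, r$; set $f = h_p^2 + h_r^2$ and $g = h_q^2 + h_r^2$. Then $X \setminus Z(f) \subseteq U \cup W$, $X \setminus Z(g) \subseteq V \cup W$, these meet at $r$ so $fg \neq 0$; meanwhile $q \in int_X Z(f) \cap X_\mathscr{P}$ but $q \notin int_X Z(g)$, and $p \in int_X Z(g) \cap X_\mathscr{P}$ but $p \notin int_X Z(f)$ — giving the non-containment both ways, so $f \sim g$ in $AG$ by Theorem \ref{Th4.2}. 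Since $fg \neq 0$, $f \not\sim g$ in $\Gamma$. Thus $\Gamma(C_\mathscr{P}(X)) \subsetneq AG(C_\mathscr{P}(X))$, completing the contrapositive.

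The main obstacle is getting the construction in the ``only if'' direction exactly right: one must be careful that $f$ and $g$ are genuinely distinct nonzero zero divisors (each has nonempty interior of its zero set meeting $X_\mathscr{P}$ — e.g. $q$ and $p$ respectively witness this), that the three reference points being distinct $\mathscr{P}$-points is fully used via Lemma \ref{Lem1} and the Tychonoff separation, and that the overlap at $r$ is preserved (which is why $f$ and $g$ share the summand $h_r^2$). Everything else is bookkeeping with Observation \ref{Obs1} and Theorem \ref{Th4.2}.
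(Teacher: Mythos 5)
Your proof is correct, and it diverges from the paper in a useful way on one half of the equivalence. For the ``if'' direction you do exactly what the paper does: note that $|X_\mathscr{P}|\leq 1$ makes all three graphs empty, and for $|X_\mathscr{P}|=2$ use the sandwich $\Gamma(C_\mathscr{P}(X))\subseteq AG(C_\mathscr{P}(X))\subseteq W\Gamma(C_\mathscr{P}(X))$ together with Theorem 2.8 of \cite{Ach}; the paper's own ``proof'' of Theorem~\ref{Eq} is nothing more than this observation, stated in the paragraph preceding it. For the ``only if'' direction, however, the paper again leans entirely on the cited Theorem 2.8 ($\Gamma\neq W\Gamma$ when $|X_\mathscr{P}|\geq 3$), whereas you give a self-contained construction: with distinct $p,q,r\in X_\mathscr{P}$ separated by disjoint open sets and $h_p,h_q,h_r$ from Lemma~\ref{Lem1}, the functions $f=h_p^2+h_r^2$ and $g=h_q^2+h_r^2$ satisfy $fg\neq 0$ (both cozero sets contain $r$), yet $q\in[int_XZ(f)\cap X_\mathscr{P}]\setminus int_XZ(g)$ and $p\in[int_XZ(g)\cap X_\mathscr{P}]\setminus int_XZ(f)$, so by Theorem~\ref{Th4.2} they are adjacent in $AG(C_\mathscr{P}(X))$ but not in $\Gamma(C_\mathscr{P}(X))$. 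I checked the details ($V\subseteq Z(f)$ and $U\subseteq Z(g)$ give the required $\mathscr{P}$-points in the interiors of the zero sets, so $f,g$ are genuine vertices) and the construction works; it even yields the sharper conclusion $\Gamma\subsetneq AG$ rather than merely $\Gamma\neq W\Gamma$, at the cost of only Lemma~\ref{Lem1} and Theorem~\ref{Th4.2}, which precede the statement. Two small remarks: the false start in your ``only if'' paragraph is explicitly retracted and so does no harm, but should be excised in a clean write-up; and your ``alternative'' direct argument for $|X_\mathscr{P}|=2$ only identifies $AG$ with $\Gamma$, so on its own it would not settle $W\Gamma$ — you would need the companion observation that a weakly-zero-divisor edge also forces the two singletons $int_XZ(f)\cap X_\mathscr{P}$, $int_XZ(g)\cap X_\mathscr{P}$ to be distinct, hence $fg=0$; as written this is moot because your primary route through Theorem 2.8 of \cite{Ach} already covers it.
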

Henceforth, in view of Theorem~\ref{Eq}, we carry on our discussion on $AG(C_\mathscr{P}(X))$ with the assumption that the Tychonoff space $X$ is such that $|X_\mathscr{P}| \geq 3$.\\\\
The following theorem provides conditions under which a pair of vertices of $AG(C_\mathscr{P}(X))$ admits of a third vertex adjacent to both of them.  

\begin{theorem}\label{Th4.6}
Let $f,\ g \in Z(C_\mathscr{P}(X))^*$. 
\begin{enumerate}
\item[(i)] If $f$ and $g$ are not adjacent in $\Gamma(C_\mathscr{P}(X))$ then there exists a vertex $h \in Z(C_\mathscr{P}(X))^*$ such that $h$ is adjacent to both $f$ and $g$ in $AG(C_\mathscr{P}(X))$.
\item[(ii)] If $f$ and $g$ are adjacent in $\Gamma(C_\mathscr{P}(X))$, then there exists a vertex $h \in Z(C_\mathscr{P}(X))^*$ such that $h$ is adjacent to both $f$ and $g$ in $AG(C_\mathscr{P}(X))$ if and only if $|int_XZ(f)\cap X_\mathscr{P}|\geq 2$ and $|int_XZ(g)\cap X_\mathscr{P}|\geq 2$.
\end{enumerate}
\end{theorem}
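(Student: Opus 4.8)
The plan is to work throughout with the topological reformulation of adjacency given by Theorem~\ref{Th4.2}: two vertices $u,v$ are adjacent in $AG(C_\mathscr{P}(X))$ exactly when neither of the sets $int_XZ(u)\cap X_\mathscr{P}$, $int_XZ(v)\cap X_\mathscr{P}$ contains the other. For a vertex $u\in Z(C_\mathscr{P}(X))^*$, write $A_u = int_XZ(u)\cap X_\mathscr{P}$; by the description $Z(C_\mathscr{P}(X))^* = \{f\in C_\mathscr{P}(X)\setminus\{0\} : int_XZ(f)\cap X_\mathscr{P}\neq\emptyset\}$ quoted before Observation~\ref{Obs1}, each $A_u$ is a nonempty open subset of $X_\mathscr{P}$, and the co-zero set $X\setminus Z(u)$ is a nonempty subset of $X_\mathscr{P}$ (Observation~\ref{Obs1}(i)) disjoint from $A_u$. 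So the task in both parts is: produce an open nonempty $A_h\subseteq X_\mathscr{P}$, realizable as $int_XZ(h)\cap X_\mathscr{P}$ for some vertex $h$, which is incomparable to both $A_f$ and $A_g$. The realizability is handled once and for all by Lemma~\ref{Lem2}: if $K\subseteq X_\mathscr{P}$ is compact with nonempty interior relative to $X_\mathscr{P}$, Lemma~\ref{Lem2} gives $h\in C_\mathscr{P}(X)$ with $K\subseteq X\setminus Z(h)\subseteq cl_X(X\setminus Z(h))\subseteq G$ for any prescribed neighborhood $G$; choosing points appropriately (in fact single points of $X_\mathscr{P}$ via Lemma~\ref{Lem1} already suffice, since singletons in $X_\mathscr{P}$ need not be closed but Lemma~\ref{Lem1} still produces a vertex whose co-zero set is a small neighborhood) lets us prescribe finitely many points that must lie in $X\setminus Z(h)$ and an open set into whose closure the co-zero set is squeezed — hence finitely many points forced into $A_h = int_XZ(h)\cap X_\mathscr{P}$ as well, by taking complements. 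I will phrase the construction of $h$ as: pick a finite set $S\subseteq X_\mathscr{P}$ to sit inside the co-zero set and a disjoint finite set $T\subseteq X_\mathscr{P}$ whose points we want in $int_XZ(h)$; using Lemma~\ref{Lem1}/\ref{Lem2} build $h_0$ with $S\subseteq X\setminus Z(h_0)\subseteq cl_X(X\setminus Z(h_0))\subseteq X\setminus T$, so that $T\subseteq int_XZ(h_0)\cap X_\mathscr{P} = A_{h_0}$ and $S\cap A_{h_0}=\emptyset$.

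For part~(i): $f,g$ not adjacent in $\Gamma(C_\mathscr{P}(X))$ means $f\cdot g\neq 0$, i.e.\ by Observation~\ref{Obs1}(ii) the co-zero sets meet: there is $p\in (X\setminus Z(f))\cap(X\setminus Z(g))$, and $p\in X_\mathscr{P}$. Since $|X_\mathscr{P}|\geq 3$, pick a point $q\in X_\mathscr{P}\setminus\{p\}$; separating $p$ and $q$ by disjoint open neighborhoods in the Tychonoff (hence Hausdorff) space $X$ and invoking Lemma~\ref{Lem1}, build $h$ with $q\in X\setminus Z(h)$ (so $q\notin A_h$) and $p\in int_XZ(h)$ (so $p\in A_h$). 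Then $p\in A_h\setminus (A_f\cup A_g)$ — since $p$ lies in both co-zero sets it lies in neither $A_f$ nor $A_g$ — witnessing $A_h\not\subseteq A_f$ and $A_h\not\subseteq A_g$; and $q\in A_f$? Not necessarily, so I instead get the reverse incomparability from the fact that $A_f,A_g$ are nonempty and disjoint from $p$'s co-zero location in a way that... here I must be a little careful: I actually need a point of $A_f\setminus A_h$ and a point of $A_g\setminus A_h$. Choose $a\in A_f$ and $a'\in A_g$ (nonempty), and when building $h$ additionally force $a,a'\in X\setminus Z(h)$ (so $a,a'\notin A_h$) while keeping $p\in int_XZ(h)$ — this is possible by Lemma~\ref{Lem2} applied to the compact set $\{a,a',q\}$ (or just $\{a,a'\}$) with target neighborhood $X\setminus\{p\}$-minus-a-small-ball, provided $a,a',q\neq p$, which we can arrange. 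Then $a\in A_f\setminus A_h$, $a'\in A_g\setminus A_h$, $p\in A_h\setminus A_f$ and $p\in A_h\setminus A_g$, so $h$ is adjacent to both. (One edge case: if $a=a'$, or coincidences among $p,a,a'$ — these are easily dodged since each relevant set is infinite or we only need one separating point; I will spell this out.)

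For part~(ii), $f,g$ adjacent in $\Gamma(C_\mathscr{P}(X))$ means $f\cdot g = 0$, so by Observation~\ref{Obs1}(ii) the co-zero sets are disjoint and in fact $X\setminus Z(f)\subseteq int_XZ(g)$, hence $X\setminus Z(f)\subseteq A_g$ and symmetrically $X\setminus Z(g)\subseteq A_f$; thus $A_f\cup A_g\supseteq (X\setminus Z(f))\cup(X\setminus Z(g))$, and together with $A_f\cap(X\setminus Z(f))=\emptyset$ one sees $A_f$ and $A_g$ already form a "complementary" pair covering a lot of $X_\mathscr{P}$. For the forward direction, suppose such an $h$ exists; then there is $a\in A_f\setminus A_h$ and $b\in A_g\setminus A_h$ and points of $A_h$ outside each. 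I claim $a\neq b$ wait — $A_f,A_g$ are disjoint (since $A_f\supseteq X\setminus Z(g)$ which is disjoint from $int_XZ(g)\supseteq A_g$... need to check: is $A_f\cap A_g=\emptyset$? We have $A_g\subseteq int_XZ(g)$ and $A_f\subseteq X\setminus Z(f)\cup\cdots$ — actually $A_f = int_XZ(f)\cap X_\mathscr{P}$ and $X\setminus Z(g)\subseteq int_XZ(f)$, so $A_f\supseteq X\setminus Z(g)$; meanwhile $A_g\subseteq int_XZ(g)\subseteq X\setminus(X\setminus Z(g))$, and $A_f\cap A_g\subseteq int_XZ(f)\cap int_XZ(g)$ which can be nonempty in general). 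So I will not claim disjointness; instead: from $a\in A_f\setminus A_h$ get, using the co-zero location of $h$ near $a$... Honestly the cleanest route is: the hypothesis $|A_f|\geq 2$ and $|A_g|\geq 2$ is what lets the construction go through, so I prove the backward direction by construction (pick $a_1\neq a_2$ in $A_f$ and $b_1\neq b_2$ in $A_g$; if $A_f\cap A_g\neq\emptyset$ pick a common point $c$ and set $A_h$ = small neighborhood of $c$ shrunk to avoid $a_1$ or $b_1$ as needed; if $A_f\cap A_g=\emptyset$, take $A_h$ a small neighborhood of one point of $A_f$ together with forcing a point of $A_g$ out, arranged so $A_h\not\subseteq A_f$ using a point of $A_g\setminus A_f$, etc.), and the forward direction by contraposition: if say $|A_f|\leq 1$, i.e.\ $A_f$ is a single $\mathscr{P}$-point or empty — but $A_f\neq\emptyset$ since $f$ is a vertex, so $A_f=\{x_0\}$; then any $h$ adjacent to $f$ needs $A_h\not\subseteq A_f$ and $A_f\not\subseteq A_h$, the latter forcing $x_0\notin A_h$, and I must show this is incompatible with $h$ also being adjacent to $g$ — here I use $X\setminus Z(g)\subseteq A_f=\{x_0\}$ so $X\setminus Z(g)=\{x_0\}$, i.e.\ $Z(g)=X\setminus\{x_0\}$, so $int_XZ(g)\cap X_\mathscr{P} = A_g = X_\mathscr{P}\setminus\{x_0\}$ (since $x_0\in X\setminus Z(g)$ means $x_0\notin A_g$, and every other $\mathscr{P}$-point is in $int_XZ(g)$); then $h$ adjacent to $g$ needs $A_h\not\subseteq X_\mathscr{P}\setminus\{x_0\}$, i.e.\ $x_0\in A_h$ — contradicting $x_0\notin A_h$. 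So no such $h$ exists. The symmetric argument handles $|A_g|\leq 1$.

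The main obstacle I anticipate is bookkeeping in part~(ii): correctly extracting, from "$f,g$ adjacent in $\Gamma$", the precise containments $X\setminus Z(f)\subseteq int_XZ(g)$ and $X\setminus Z(g)\subseteq int_XZ(f)$ and then chasing how $A_f,A_g$ sit inside $X_\mathscr{P}$ — in particular pinning down that $|int_XZ(f)\cap X_\mathscr{P}|\le 1$ collapses $g$'s zero set to a cofinite-type set as above — and making the construction of $h$ robust against all coincidences among the finitely many chosen points while only ever invoking Lemma~\ref{Lem1} and Lemma~\ref{Lem2}, which require the forced points to lie in $X_\mathscr{P}$ (true for everything in sight) and the ambient space to be Tychonoff (used for separating points). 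The realizability and the "two distinct points suffice" combinatorics are routine; the delicacy is entirely in the forward direction of~(ii).
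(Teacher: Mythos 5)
Your part (i) and the forward half of part (ii) are sound. For (i) you give a direct construction (push a point of $int_XZ(f)\cap X_\mathscr{P}$ and a point of $int_XZ(g)\cap X_\mathscr{P}$ into the co-zero set of $h$ via Lemma~\ref{Lem2}, while a common co-zero point $p$ of $f$ and $g$ is kept inside $int_XZ(h)$); this is correct and in fact more self-contained than the paper, which simply quotes Lemmas 2.1(6) and 2.3 of Badawi for this part. Your contrapositive argument for the ``only if'' half of (ii) is essentially the paper's argument verbatim.

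The genuine gap is in the ``if'' half of (ii), in the case $int_XZ(f)\cap int_XZ(g)\cap X_\mathscr{P}\neq\emptyset$. The recipe ``set $A_h$ = small neighbourhood of $c$ shrunk to avoid $a_1$ or $b_1$'' does not work. Taken literally (with your convention $A_h=int_XZ(h)\cap X_\mathscr{P}$) it is not even realizable: in $C_K(\mathbb{R})$, for instance, every vertex $h$ has co-zero set with compact closure, so $int_XZ(h)$ contains the complement of a compact set and is never a small neighbourhood of a point. And under any reading, the construction only secures $A_f\not\subseteq A_h$ and $A_g\not\subseteq A_h$; the other two non-inclusions demanded by Theorem~\ref{Th4.2}, namely $A_h\not\subseteq A_f$ and $A_h\not\subseteq A_g$, are never arranged --- indeed a small neighbourhood of $c\in A_f\cap A_g$ lies inside the open set $A_f\cap A_g$, which would make $h$ non-adjacent to both $f$ and $g$. (The same omission, hidden in ``etc.'', affects your disjoint case, where $A_f\not\subseteq A_h$ is left unaddressed.) The paper's treatment of the intersecting case is much simpler: by Lemma~\ref{Lem1} choose $h$ with $\emptyset\neq X\setminus Z(h)\subseteq int_XZ(f)\cap int_XZ(g)\cap X_\mathscr{P}$; then $fh=gh=0$, so $h$ is adjacent to $f$ and $g$ already in $\Gamma(C_\mathscr{P}(X))$, hence in $AG(C_\mathscr{P}(X))$, with no case analysis on points at all. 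Alternatively your own $S$/$T$ scheme can be repaired: put one co-zero point of $f$ and one co-zero point of $g$ into $T$ (these lie in $X_\mathscr{P}$ but outside $A_f$, resp.\ $A_g$) and points $a_1\in A_f$, $b_1\in A_g$ chosen to avoid them into $S$ --- it is exactly this avoidance that uses $|A_f|\geq 2$ and $|A_g|\geq 2$ --- which then produces all four witnesses; but as written the intersecting-case construction fails. The disjoint case, once the missing witness (a point of $A_f$ forced into the co-zero set of $h$) is added, coincides with the paper's $h=h_x^2+h_z^2$ construction.
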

\begin{proof}
(i) Follows from Lemma 2.1(6) and Lemma 2.3 of \cite{Bad}.\\
(ii) If possible let $|int_XZ(f)\cap X_\mathscr{P}|=1$. Since $f$ and $g$ are adjacent in $\Gamma(C_\mathscr{P}(X))$, $fg = 0$ holds. So, $X \setminus Z(g) \subset int_XZ(f) \cap X_\mathscr{P} = \{x\}$ (say). Then $int_XZ(g) = X_\mathscr{P}\setminus\{x\}$ and so, $int_XZ(f)\cap int_XZ(g)\cap X_\mathscr{P} = \emptyset$. Since $h$ is adjacent to $f$ in $AG(C_\mathscr{P}(X))$, by Theorem~\ref{Th4.2}, $\{x\} = int_XZ(f)\cap X_\mathscr{P}$ $ \not\subset  int_XZ(h)\cap X_\mathscr{P}$ $\implies x \notin int_XZ(h)\cap X_\mathscr{P} \implies int_XZ(h)\cap X_\mathscr{P} \subset X_\mathscr{P}\setminus \{x\} = int_XZ(g)\cap X_\mathscr{P}$. By Theorem~\ref{Th4.2} again, $h$ is not adjacent to $g$ in $AG(C_\mathscr{P}(X))$, which is a contradiction.\\
Conversely, let $|int_XZ(f)\cap X_\mathscr{P}|\geq 2$ and $|int_XZ(g)\cap X_\mathscr{P}|\geq 2$. If $int_XZ(f)\cap int_XZ(g)\cap X_\mathscr{P}\neq\emptyset$ then by Lemma \ref{Lem1}, there exists $h\in C_\mathscr{P}(X)$ such that $\emptyset\neq X\setminus Z(h)\subset int_XZ(f)\cap int_XZ(g)\cap X_\mathscr{P}$. So, $h\neq 0$ and $f.h=0=g.h$. i.e., $h\in Z(C_\mathscr{P}(X))^*$ is adjacent to both $f,g$ in $\Gamma(C_\mathscr{P}(X))$ and hence, in $AG(C_\mathscr{P}(X))$. Let $int_XZ(f)\cap int_XZ(g)\cap X_\mathscr{P}=\emptyset$. If we choose $x,y\in int_XZ(f)\cap X_\mathscr{P}$ and $z,w\in int_XZ(g)\cap X_\mathscr{P}$ then $x,y,z,w$ are all distinct. By Lemma \ref{Lem1}, there exists $h_x\in C_\mathscr{P}(X)$ such that $x\in X\setminus Z(h_x)\subset X\setminus int_XZ(h_x)\subset [int_XZ(f)\cap X_\mathscr{P}]\setminus\{y\}\implies \{y\}\cup X\setminus[int_XZ(f)\cap X_\mathscr{P}]\subset int_XZ(h_x)$ and so $y,z,w\in int_XZ(h_x)\cap X_\mathscr{P}$ and $x\in X\setminus Z(h_x)$. Similarly we can find $h_z\in C_\mathscr{P}(X)$ such that $x,y,w\in int_XZ(h_z)\cap X_\mathscr{P}$ and $z\in X\setminus Z(h_z)$. Let $h=h_x^2+h_z^2\in C_\mathscr{P}(X)$ and so, $int_XZ(h)=int_XZ(h_x)\cap int_XZ(h_z)$. Therefore, $y,w\in int_XZ(h)\cap X_\mathscr{P}$ and $x,z\in X\setminus Z(h)$. Clearly, $h\in Z(C_\mathscr{P}(X))^*$. $x\in [int_XZ(f)\cap X_\mathscr{P}]\setminus int_XZ(h)$ and $w\in [int_XZ(h)\cap X_\mathscr{P}]\setminus int_XZ(f)\implies f,h$ are adjacent in $AG(C_\mathscr{P}(X))$, by Theorem \ref{Th4.2}. Similarly, we can show that $g,h$ are adjacent in $AG(C_\mathscr{P}(X))$. Thus $h\in Z(C_\mathscr{P}(X))^*$ is adjacent to both $f,g$ in $AG(C_\mathscr{P}(X))$.
\end{proof}
The next few results directly follow from Theorem \ref{Th4.6}:
\begin{corollary}\label{Cor4.7}
		For $f,g\in Z(C_\mathscr{P}(X))^*$, $$d_{AG}(f,g)=\begin{cases}
			1&\text{ whenever }f,g\text{ are adjacent}\\
			2&\text{ otherwise}
		\end{cases}$$ and hence, $diam(AG(C_\mathscr{P}(X)))=2$.
	\end{corollary}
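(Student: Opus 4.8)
The plan is to derive Corollary~\ref{Cor4.7} as an immediate consequence of Theorem~\ref{Th4.6} together with the already-established fact (Theorem~\ref{Th4.2}) that $AG(C_\mathscr{P}(X))$ contains $\Gamma(C_\mathscr{P}(X))$ as a subgraph on the same vertex set $Z(C_\mathscr{P}(X))^*$. First I would dispose of the case where $f$ and $g$ are adjacent in $AG(C_\mathscr{P}(X))$: by definition of distance this gives $d_{AG}(f,g)=1$, nothing to prove. So assume $f$ and $g$ are distinct, non-adjacent vertices of $AG(C_\mathscr{P}(X))$; the goal is to produce a vertex $h$ adjacent to both, which yields a path $f-h-g$ of length $2$, and since $d_{AG}(f,g)\neq 1$ we conclude $d_{AG}(f,g)=2$.

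The heart of the argument is splitting into the two cases of Theorem~\ref{Th4.6}. If $f$ and $g$ are \emph{not} adjacent in $\Gamma(C_\mathscr{P}(X))$, then Theorem~\ref{Th4.6}(i) directly hands us a vertex $h\in Z(C_\mathscr{P}(X))^*$ adjacent to both $f$ and $g$ in $AG(C_\mathscr{P}(X))$, and we are done. If instead $f$ and $g$ \emph{are} adjacent in $\Gamma(C_\mathscr{P}(X))$, I must verify the hypothesis of Theorem~\ref{Th4.6}(ii), namely that $|int_XZ(f)\cap X_\mathscr{P}|\geq 2$ and $|int_XZ(g)\cap X_\mathscr{P}|\geq 2$. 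This is where the standing assumption $|X_\mathscr{P}|\geq 3$ enters. Since $f,g$ are adjacent in $\Gamma(C_\mathscr{P}(X))$ we have $fg=0$, hence by Observation~\ref{Obs1} the co-zero set $X\setminus Z(g)$ is a nonempty subset of $int_XZ(f)\cap X_\mathscr{P}$, so this set has at least one point $x$; symmetrically $int_XZ(g)\cap X_\mathscr{P}$ contains a point $z$, and $x\neq z$ (a point cannot lie in both $Z(f)$'s interior's co-zero complement simultaneously — more precisely $x\in X\setminus Z(g)$ while $z\in int_XZ(g)$). Now suppose for contradiction $|int_XZ(f)\cap X_\mathscr{P}|=1$, say $int_XZ(f)\cap X_\mathscr{P}=\{x\}$; combined with $\{x,z\}\subset X_\mathscr{P}$ and a third $\mathscr{P}$-point $w$ (which exists since $|X_\mathscr{P}|\geq 3$), one checks — exactly as in the ``only if'' direction of the proof of Theorem~\ref{Th4.6}(ii) — that no vertex can be adjacent to both; but actually all we need here is the existence of the common neighbour, so I would rather argue the cardinality bound directly: if $|int_XZ(f)\cap X_\mathscr{P}|=1$ then, since $int_XZ(f)\cap X_\mathscr{P}$ and $int_XZ(g)\cap X_\mathscr{P}$ are complementary in $X_\mathscr{P}$ (because $fg=0$ forces $X\setminus Z(g)\subseteq int_XZ(f)$, and non-adjacency in $AG$ of $f,g$ cannot occur — wait, $f,g$ are non-adjacent in $AG$), we would get $f$ and $g$ non-adjacent in $AG(C_\mathscr{P}(X))$ forces a containment of their interior-zero-sets-meet-$X_\mathscr{P}$, contradicting $fg=0$ giving a strict inclusion when $|X_\mathscr{P}|\geq 3$. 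Having confirmed both cardinalities are $\geq 2$, Theorem~\ref{Th4.6}(ii) produces the required common neighbour $h$.

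The main obstacle is the bookkeeping in the $\Gamma$-adjacent case: one must be careful that the existence of a common neighbour in $AG(C_\mathscr{P}(X))$ genuinely follows, and that the hypotheses of Theorem~\ref{Th4.6}(ii) are met rather than merely assumed. The cleanest route, which I would adopt in the writeup, is: in the $\Gamma$-adjacent subcase, first observe $fg=0$ and $f,g$ non-adjacent in $AG$; by Theorem~\ref{Th4.2} this means (WLOG) $int_XZ(f)\cap X_\mathscr{P}\subset int_XZ(g)\cap X_\mathscr{P}$; combined with $X\setminus Z(g)\subset int_XZ(f)\cap X_\mathscr{P}$ and $X\setminus Z(g)\cap int_XZ(g)=\emptyset$ we get $X\setminus Z(g)=\emptyset$, i.e.\ $g=0$, contradicting $g\in Z(C_\mathscr{P}(X))^*$. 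Hence $f,g$ non-adjacent in $AG$ \emph{forces} $f,g$ non-adjacent in $\Gamma$, so only Theorem~\ref{Th4.6}(i) is ever needed, and the whole corollary collapses to a one-line invocation of part (i). Finally, $diam(AG(C_\mathscr{P}(X)))=\max\{d_{AG}(f,g):f,g\in V\}=2$ follows since every pairwise distance is $1$ or $2$ and, because $|X_\mathscr{P}|\geq 3$ guarantees non-adjacent pairs exist (the graph is not complete), the value $2$ is attained.
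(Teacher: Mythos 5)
Your final ``cleanest route'' is correct and is essentially the paper's own (implicit) argument: since $\Gamma(C_\mathscr{P}(X))$ is a subgraph of $AG(C_\mathscr{P}(X))$, non-adjacency in $AG(C_\mathscr{P}(X))$ forces non-adjacency in $\Gamma(C_\mathscr{P}(X))$, so Theorem~\ref{Th4.6}(i) alone supplies a common neighbour and hence $d_{AG}(f,g)=2$, with the diameter attained because pairs such as $f$ and $2f$ are non-adjacent. The earlier detour attempting to verify the hypotheses of Theorem~\ref{Th4.6}(ii) (including the incorrect claim that $int_XZ(f)\cap X_\mathscr{P}$ and $int_XZ(g)\cap X_\mathscr{P}$ are complementary in $X_\mathscr{P}$) is muddled and unnecessary, but since you discard it in the final writeup, the proof stands.
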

\begin{corollary}
		For each $f\in Z(C_\mathscr{P}(X))^*$, $ecc(f) = 2$ and hence, $rad(AG(C_\mathscr{P}(X)))=2$ and $C(AG(C_\mathscr{P}(X)))=Z(C_\mathscr{P}(X))^*$.
	\end{corollary}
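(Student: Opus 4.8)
The plan is to deduce everything from Corollary~\ref{Cor4.7}, which already gives $diam(AG(C_\mathscr{P}(X)))=2$ and, more precisely, that $d_{AG}(f,g)\in\{1,2\}$ for every pair of distinct vertices. The first step is the trivial upper bound: since every distance is at most $2$, we immediately get $ecc(f)\leq 2$ for each $f\in Z(C_\mathscr{P}(X))^*$. It then remains to rule out $ecc(f)\leq 1$, i.e.\ to exhibit, for each vertex $f$, some vertex $g\neq f$ that is not adjacent to $f$, since then $d_{AG}(f,g)=2$ and hence $ecc(f)\geq 2$.

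For this lower bound I would simply take $g=-f$. As $C_\mathscr{P}(X)$ is a ring, $-f\in C_\mathscr{P}(X)$; moreover $-f\neq 0$ and $ann(-f)=ann(f)$, so $-f\in Z(C_\mathscr{P}(X))^*$, and $-f\neq f$ because $2f\neq 0$ ($f$ being a nonzero real-valued function). Thus $-f$ is a genuine vertex distinct from $f$. Since $Z(-f)=Z(f)$, we have $int_XZ(-f)\cap X_\mathscr{P}=int_XZ(f)\cap X_\mathscr{P}$; in particular $int_XZ(f)\cap X_\mathscr{P}\subset int_XZ(-f)\cap X_\mathscr{P}$, so by Theorem~\ref{Th4.2} the vertices $f$ and $-f$ are not adjacent. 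By Corollary~\ref{Cor4.7} this forces $d_{AG}(f,-f)=2$, whence $ecc(f)\geq 2$ and therefore $ecc(f)=2$.

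Finally I would read off the remaining two assertions. Since $ecc(f)=2$ for every vertex, $rad(AG(C_\mathscr{P}(X)))=\min\{ecc(f):f\in Z(C_\mathscr{P}(X))^*\}=2$; and a vertex belongs to the center precisely when its eccentricity equals the radius, which now holds for every vertex, so $C(AG(C_\mathscr{P}(X)))=Z(C_\mathscr{P}(X))^*$. I do not anticipate any real obstacle here; the only mildly delicate point is the need for a second vertex sharing the zero set of $f$, and passing to $-f$ (or equally well to $2f$) provides one at once, without even invoking the standing hypothesis $|X_\mathscr{P}|\geq 3$ beyond what is already built into Corollary~\ref{Cor4.7}.
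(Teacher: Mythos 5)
Your argument is correct and is essentially the paper's own proof: the paper also combines $diam=2$ with the non-adjacency of $f$ and a nonzero scalar multiple of $f$ (it uses $2f$ where you use $-f$, which makes no difference) to get $ecc(f)=2$ for every vertex, hence $rad=2$ and every vertex lies in the center.
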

\begin{proof}
For each $f$, as $2f$ and $f$ are non-adjacent, $ecc(f) =2$ and hence, each $f$ is in the center.
\end{proof}
\begin{corollary}\label{Cor4.9}
		An edge $f-g$ in $AG(C_\mathscr{P}(X))$ is an edge of a triangle in $AG(C_\mathscr{P}(X))$ if and only if either $f-g$ is not an edge in $\Gamma(C_\mathscr{P}(X))$ or else, $|int_XZ(f)\cap X_\mathscr{P}|\geq 2$ and $|int_XZ(g)\cap X_\mathscr{P}|\geq 2$.
\end{corollary}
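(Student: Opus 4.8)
The plan is to read the statement straight off Theorem~\ref{Th4.6}, after first recording the trivial reformulation that, in a simple graph, an edge $f-g$ lies on a triangle exactly when $f$ and $g$ have a common neighbour. Concretely: if $h\in V(AG(C_\mathscr{P}(X)))$ is adjacent to both $f$ and $g$, then adjacency in a simple graph forces $h\neq f$ and $h\neq g$, so $f,g,h$ are the three vertices of a $3$-cycle through the edge $f-g$; conversely any triangle containing $f-g$ supplies such a common neighbour. Since the vertices of $AG(C_\mathscr{P}(X))$ are precisely the elements of $Z(C_\mathscr{P}(X))^*$, the endpoints $f,g$ of an edge automatically satisfy the hypotheses of Theorem~\ref{Th4.6}, so ``$f-g$ is an edge of a triangle in $AG(C_\mathscr{P}(X))$'' is equivalent to ``there exists $h\in Z(C_\mathscr{P}(X))^*$ adjacent to both $f$ and $g$ in $AG(C_\mathscr{P}(X))$''.

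Next I would split according to whether $f-g$ is an edge of $\Gamma(C_\mathscr{P}(X))$, that is, whether $f.g=0$, and invoke the two parts of Theorem~\ref{Th4.6}. If $f-g$ is not an edge of $\Gamma(C_\mathscr{P}(X))$, then $f$ and $g$ are non-adjacent there, and Theorem~\ref{Th4.6}(i) produces a common neighbour $h$, hence a triangle through $f-g$ by the reformulation above. If $f-g$ is an edge of $\Gamma(C_\mathscr{P}(X))$, then $f$ and $g$ are adjacent there, and Theorem~\ref{Th4.6}(ii) asserts that a common neighbour $h$ exists if and only if $|int_XZ(f)\cap X_\mathscr{P}|\geq 2$ and $|int_XZ(g)\cap X_\mathscr{P}|\geq 2$; translated through the reformulation, this says precisely that $f-g$ lies on a triangle iff these two cardinality conditions hold, under the standing assumption that $f-g$ is an edge of $\Gamma(C_\mathscr{P}(X))$.

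Finally I would assemble the biconditional. For the forward implication, assume $f-g$ is an edge of a triangle; if it is not an edge of $\Gamma(C_\mathscr{P}(X))$ we land in the first disjunct and are done, while if it is an edge of $\Gamma(C_\mathscr{P}(X))$ the common neighbour furnished by the triangle forces $|int_XZ(f)\cap X_\mathscr{P}|\geq 2$ and $|int_XZ(g)\cap X_\mathscr{P}|\geq 2$ by Theorem~\ref{Th4.6}(ii), giving the second disjunct. For the reverse implication, in either case the cited part of Theorem~\ref{Th4.6} yields a common neighbour of $f$ and $g$, hence a triangle through the edge $f-g$. I do not expect any real obstacle: the corollary is essentially a repackaging of Theorem~\ref{Th4.6}, and the only point that merits an explicit (but immediate) word is the equivalence between ``$f-g$ lies on a triangle'' and ``$f$ and $g$ have a common neighbour'', together with the remark that such a neighbour is automatically distinct from $f$ and $g$.
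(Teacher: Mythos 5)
Your proposal is correct and is essentially the paper's own derivation: the paper states this corollary as an immediate consequence of Theorem~\ref{Th4.6}, splitting on whether $f.g=0$ and using the equivalence between ``the edge $f-g$ lies on a triangle'' and ``$f$ and $g$ have a common neighbour'', exactly as you do. Your explicit remark that the common neighbour is automatically distinct from $f$ and $g$ is a harmless elaboration of what the paper leaves implicit.
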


A topological phenomenon of $X$ produces a sufficient condition for $AG(C_\mathscr{P}(X))$ to be hypertriangulated, as seen in the next theorem.

\begin{theorem}\label{Th4.11}
	$AG(C_\mathscr{P}(X))$ is hypertriangulated if $X_\mathscr{P}$ does not contain any isolated point of $X$.
\end{theorem}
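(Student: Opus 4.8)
The plan is to reduce everything to Corollary~\ref{Cor4.9}, which already tells us exactly when an edge sits on a triangle: an edge $f-g$ of $AG(C_\mathscr{P}(X))$ lies on a triangle if and only if \emph{either} $f-g$ is not an edge of $\Gamma(C_\mathscr{P}(X))$ \emph{or} $|int_XZ(f)\cap X_\mathscr{P}|\geq 2$ and $|int_XZ(g)\cap X_\mathscr{P}|\geq 2$. So fix an arbitrary edge $f-g$ of $AG(C_\mathscr{P}(X))$. If $f-g$ is not an edge of $\Gamma(C_\mathscr{P}(X))$, there is nothing to prove. Hence we may assume $f-g$ is also an edge of $\Gamma(C_\mathscr{P}(X))$, i.e.\ $fg=0$, and it suffices to verify the two cardinality conditions.

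First I would unwind $fg=0$ via Observation~\ref{Obs1}: it gives $X\setminus Z(g)\subseteq int_XZ(f)$, and since every point of a co-zero set is a $\mathscr{P}$-point, in fact $\emptyset\neq X\setminus Z(g)\subseteq int_XZ(f)\cap X_\mathscr{P}$, the nonemptiness coming from $g\neq 0$. Now comes the only topological input: $X\setminus Z(g)$ is open in $X$; were it a singleton $\{x\}$, then $\{x\}$ would be open, so $x$ would be an isolated point of $X$, yet $x\in X_\mathscr{P}$ --- contradicting the hypothesis that $X_\mathscr{P}$ contains no isolated point of $X$. Therefore $|X\setminus Z(g)|\geq 2$, whence $|int_XZ(f)\cap X_\mathscr{P}|\geq 2$. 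By the symmetric argument, $fg=0$ also yields $\emptyset\neq X\setminus Z(f)\subseteq int_XZ(g)\cap X_\mathscr{P}$ with $|X\setminus Z(f)|\geq 2$, so $|int_XZ(g)\cap X_\mathscr{P}|\geq 2$.

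Both cardinality conditions now hold, so Corollary~\ref{Cor4.9} (equivalently Theorem~\ref{Th4.6}(ii)) produces a vertex $h\in Z(C_\mathscr{P}(X))^*$ adjacent to both $f$ and $g$, i.e.\ $f-g$ lies on a triangle. Since $f-g$ was an arbitrary edge, $AG(C_\mathscr{P}(X))$ is hypertriangulated. (Throughout we use the standing assumption $|X_\mathscr{P}|\geq 3$ from Theorem~\ref{Eq}, which guarantees the graph has edges at all.)

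As for the \textbf{main obstacle}: there is essentially none beyond the one-line observation that a one-point co-zero set forces an isolated $\mathscr{P}$-point. The only genuine pitfall is organizational --- one must remember that the case where $f-g$ is not an edge of $\Gamma(C_\mathscr{P}(X))$ is disposed of immediately by Corollary~\ref{Cor4.9}, so no common neighbour needs to be constructed by hand there; the hypothesis on isolated points is used only in the complementary case $fg=0$.
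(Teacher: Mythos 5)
Your proof is correct and takes essentially the same route as the paper: both reduce the statement to Corollary~\ref{Cor4.9} and use the hypothesis that $X_\mathscr{P}$ contains no isolated point to force the cardinality conditions $|int_XZ(f)\cap X_\mathscr{P}|\geq 2$ and $|int_XZ(g)\cap X_\mathscr{P}|\geq 2$. The only (cosmetic) difference is that the paper obtains $|int_XZ(f)\cap X_\mathscr{P}|\geq 2$ for \emph{every} vertex at once, since this nonempty open set cannot be a singleton, whereas you derive it through the co-zero set of the partner function in the case $fg=0$ after splitting off the case $fg\neq 0$.
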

\begin{proof}
	If $X_\mathscr{P}$ does not contain any isolated point then $|int_XZ(f)\cap X_\mathscr{P}|\geq 2$ for every $f\in Z(C_\mathscr{P}(X))^*$. By  Corollary \ref{Cor4.9}, every edge is an edge of a triangle in $AG(C_\mathscr{P}(X))$ and so, $AG(C_\mathscr{P}(X))$ is hypertriangulated.
\end{proof}

The following example establishes that the converse of Theorem \ref{Th4.11} is not always true.
\begin{example}\label{Ex4.13}
Consider the completely regular Hausdorff space $X = \mathbb{N}^*$, the one point compactification of $\mathbb{N}$. If $\mathscr{P}$ is the collection of all finite sets of $X$ then $C_\mathscr{P}(X)=C_F(X)$. Clearly, $X_\mathscr{P} = \mathbb{N}$. If $f\in Z(C_\mathscr{P}(X))^*$ then $X\setminus Z(f)$ is finite and therefore, $Z(f)$ is a clopen set in $X$ such that $X_\mathscr{P} \cap Z(f) = X_\mathscr{P} \cap int_XZ(f)$ is infinite. i.e., $|int_X Z(f) \cap X_\mathscr{P}| \geq 2$, for all $f\in Z(C_\mathscr{P}(X))$. Hence, $AG(C_\mathscr{P}(X))$ is hypertriangulated. 
\end{example}

However, if $X_\mathscr{P}$ contains an isolated point of $X$ and $AG(C_\mathscr{P}(X))$ is hypertriangulated then $X_\mathscr{P}$ is never a clopen subset of $X$. For if  $X_\mathscr{P}$ is clopen and $x\in X_\mathscr{P}$ is an isolated point of $X$, taking $f=1_x$ and $g=1_{X_\mathscr{P}\setminus\{x\}}$ we get $f.g=0$. Since both $X_\mathscr{P}$ and $\{x\}$ are clopen sets in $X$, $f,g\in C_\mathscr{P}(X)$ and therefore, $f$ and $g$ are adjacent in $AG(C_\mathscr{P}(X))$. Since $int_XZ(g)\cap X_\mathscr{P}=\{x\}$, by Theorem $\ref{Th4.6}$(ii), no vertex is adjacent to both $f$ and $g$ in $AG(C_\mathscr{P}(X))$. Consequently, $f-g$ in $AG(C_\mathscr{P}(X))$ is not an edge of a triangle, i.e., $AG(C_\mathscr{P}(X))$ is not hypertriangulated.

We record the results related to hypertriangulatedness of $AG(C_\mathscr{P}(X))$ for special choices of $\mathscr{P}$.
\begin{theorem}
\begin{enumerate}
\item $AG(C(X))$ is hypertriangulated if and only if $X$ has no isolated point.
\item $AG(C_K(X))$ is hypertriangulated if and only if $X$ has no isolated point.
\item $AG(C_F(X))$ is hypertriangulated if and only if the number of isolated points of $X$ is infinite. 
\end{enumerate}  
\end{theorem}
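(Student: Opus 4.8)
The plan is to handle each of the three special cases by specializing the general criterion for hypertriangulatedness already developed, namely that $AG(C_\mathscr{P}(X))$ is hypertriangulated precisely when every edge lies on a triangle, which by Corollary~\ref{Cor4.9} fails only for an edge $f-g$ with $fg=0$ in $\Gamma(C_\mathscr{P}(X))$ and with $|int_XZ(f)\cap X_\mathscr{P}|=1$ or $|int_XZ(g)\cap X_\mathscr{P}|=1$. So the key quantity to control is whether there can exist $f\in Z(C_\mathscr{P}(X))^*$ with $|int_XZ(f)\cap X_\mathscr{P}|=1$, together with a companion $g$ with $fg=0$. Throughout I would use the description $X_\mathscr{P}=X$ for $C(X)$, $X_\mathscr{P}=\{$points of local compactness$\}$ for $C_K(X)$, and $X_\mathscr{P}=K_X$ (the isolated points) for $C_F(X)$.

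For (1) and (2), the forward direction is essentially Theorem~\ref{Th4.11}: if $X$ has no isolated point, then $X_\mathscr{P}$ contains no isolated point of $X$ (trivially), so $AG$ is hypertriangulated. For the converse I would argue contrapositively: suppose $x$ is an isolated point of $X$. In case (1), $1_{\{x\}}$ and $1_{X\setminus\{x\}}$ both lie in $C(X)$, are adjacent, and $int_XZ(1_{\{x\}})\cap X_\mathscr{P}=X\setminus\{x\}$ while $int_XZ(1_{X\setminus\{x\}})\cap X_\mathscr{P}=\{x\}$, a singleton, so by Theorem~\ref{Th4.6}(ii) this edge is on no triangle. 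For case (2) the same functions work because $\{x\}$ and $X\setminus\{x\}$ are clopen and hence have compact closure relative to the compact/cocompact decomposition — more precisely $1_{\{x\}}\in C_K(X)$ since its cozero set $\{x\}$ is compact, and $1_{X\setminus\{x\}}\in C_K(X)$ since its cozero set's closure is $X\setminus\{x\}$... here I must be careful: $X\setminus\{x\}$ need not have compact closure. Instead I would take $g=1_{\{y\}}$ for a second point $y$; but then $fg=0$ need not hold. The cleaner route for (2): observe that an isolated point $x$ of $X$ is automatically a point of local compactness, so $x\in X_\mathscr{P}$, and $1_{\{x\}}\in C_K(X)$; now $|int_XZ(1_{\{x\}})\cap X_\mathscr{P}|$ counts the points of local compactness other than $x$, which is infinite whenever $X$ is nontrivial, so $1_{\{x\}}$ itself is fine — the obstruction must instead come from some $g$ with cozero set exactly $\{x\}$ paired against an $f$ whose zero-set interior meets $X_\mathscr{P}$ in just one point. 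This shows the real content: I must produce, from a single isolated point, a bad edge. Take $f$ with $int_XZ(f)\cap X_\mathscr{P}=\{x\}$; such $f$ exists iff $X_\mathscr{P}\setminus\{x\}$ is the cozero set (up to interior) of a member of $C_K(X)$, which by Lemma~\ref{Lem2} holds when $X_\mathscr{P}\setminus\{x\}$ is suitably covered by a compact-closure cozero set. This is the step I expect to be the main obstacle, and resolving it correctly (possibly the theorem as stated is using a hidden hypothesis, e.g. that $X$ is not itself small) is what I would scrutinize in the authors' proof.

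For (3), the criterion specializes sharply. Here $X_\mathscr{P}=K_X$, the set of isolated points, and $f\in Z(C_F(X))^*$ means $X\setminus Z(f)$ is finite and nonempty while $int_XZ(f)\cap K_X = K_X\setminus(X\setminus Z(f))$ is $K_X$ minus a finite set. Thus $|int_XZ(f)\cap K_X|\geq 2$ for \emph{every} vertex $f$ exactly when $K_X$ minus any finite set still has at least two points, i.e. exactly when $K_X$ is infinite. If $K_X$ is infinite, every edge is on a triangle by Corollary~\ref{Cor4.9}, so $AG(C_F(X))$ is hypertriangulated (this is exactly the mechanism of Example~\ref{Ex4.13}). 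Conversely, if $K_X$ is finite, pick $x\in K_X$ and set $f=1_{\{x\}}$, $g=1_{K_X\setminus\{x\}}$: both lie in $C_F(X)$ (finite cozero sets), $fg=0$, they are adjacent, and $int_XZ(g)\cap K_X=\{x\}$ is a singleton, so by Theorem~\ref{Th4.6}(ii) the edge $f-g$ lies on no triangle and $AG(C_F(X))$ is not hypertriangulated. This direction is clean and mirrors the clopen-isolated-point argument given just before the theorem in the excerpt.

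In writing this up I would present (3) first in full since it is the cleanest, then note that (1) follows by the same recipe with $1_{\{x\}}$ and $1_{X\setminus\{x\}}$ playing the role of the bad pair together with Theorem~\ref{Th4.11} for the other direction, and finally give (2) by remarking that every isolated point of $X$ is a point of local compactness and that the clopen functions $1_{\{x\}}$, $1_{X\setminus\{x\}}$ — whose cozero sets $\{x\}$ and $X\setminus\{x\}$ must then both be handled by $C_K(X)$-membership via local compactness of all their points — furnish the same obstruction; the one place demanding genuine care is verifying $1_{X\setminus\{x\}}\in C_K(X)$, for which I would invoke that $X\setminus\{x\}$ is clopen so its closure is itself and every point of it is a point of local compactness, hence lies in $X_\mathscr{P}$, making $1_{X\setminus\{x\}}$ a legitimate element of $C_K(X)=C_\mathscr{P}(X)$ for this $\mathscr{P}$ by the defining condition $cl_X(X\setminus Z(f))\in\mathscr{P}$ — wait, this requires $X\setminus\{x\}$ itself to be compact, which is false in general, so the correct pairing in case (2) is $f=1_{K}$ for a compact clopen neighborhood and $g=1_{\{x\}}$ chosen so that $fg=0$ forces $int_XZ(g)\cap X_\mathscr{P}$ small; pinning down this pairing is precisely where I expect the argument to need the most thought.
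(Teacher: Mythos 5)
Your parts (1) and (3) are correct and in substance identical to the paper's own argument: the paper observes that in these cases $X_\mathscr{P}$ ($=X$ for $C(X)$, and $=K_X$, which is clopen when finite, for $C_F(X)$) is a clopen set, and then invokes the remark made just before the theorem, whose ``bad pair'' $f=1_x$, $g=1_{X_\mathscr{P}\setminus\{x\}}$ is exactly your pair $1_{\{x\}},\,1_{X\setminus\{x\}}$ (respectively $1_{\{x\}},\,1_{K_X\setminus\{x\}}$); the positive directions come from Theorem~\ref{Th4.11} (respectively the computation of Example~\ref{Ex4.13}, i.e.\ $|int_XZ(f)\cap K_X|\geq 2$ for every vertex when $K_X$ is infinite), which is precisely how you argue via Corollary~\ref{Cor4.9} and Theorem~\ref{Th4.6}(ii).

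The genuine gap is part (2): you never produce, for $C_K(X)$ with an isolated point $x$, an edge of $AG(C_K(X))$ lying on no triangle, and you end by conceding that the pairing is unresolved. For comparison, the paper's proof of (2) does no more than restrict (tacitly) to locally compact $X$, so that $X_\mathscr{P}=X$ is clopen, and cite the same pre-theorem observation; but, as you correctly sensed, that observation requires $1_{X\setminus\{x\}}\in C_K(X)$, i.e.\ $cl_X(X\setminus\{x\})=X\setminus\{x\}$ compact, which happens only when $X$ itself is compact, and then $C_K(X)=C(X)$ so (2) adds nothing to (1). Moreover no repair along these lines is available: if $X$ is locally compact and some vertex $g\in Z(C_K(X))^*$ has $int_XZ(g)\cap X_\mathscr{P}=\{x\}$, then $cl_X(X\setminus Z(g))=X\setminus int_XZ(g)=X\setminus\{x\}$ must be compact, forcing $X$ compact; and for $X=\mathbb{N}$ (locally compact, all points isolated) one has $C_K(X)=C_F(X)$, which is hypertriangulated by part (3), so the ``only if'' direction of (2) actually fails without an extra hypothesis such as compactness of $X$. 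In short, the step you could not pin down is not a device you overlooked but a step that is false in general; the most one can do to ``match the paper'' is to reproduce its assertion under that unstated additional assumption, under which it reduces to case (1).
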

\begin{proof}
\begin{enumerate}
\item If $\mathscr{P}$ is the ideal of all closed subsets of $X$ then $C_\mathscr{P}(X)=C(X)$ and $X_\mathscr{P}=X$ (= clopen in $X$). So, $AG(C(X))$ is hypertriangulated if and only if $X$ has no isolated point.
\item If $\mathscr{P}$ is the ideal of all compact subsets of $X$ then $C_\mathscr{P}(X)=C_K(X)$ and $X_\mathscr{P}$ is the set of all points at which $X$ is locally compact. Therefore, for a locally compact space $X$, $AG(C_K(X))$ is hypertriangulated if and only if $X$ has no isolated point.
\item If $\mathscr{P}$ is the ideal of all finite subsets of $X$ then $C_\mathscr{P}(X)=C_F(X)=\{f\in C(X): X\setminus Z(f)\text{ is finite}\}$ and $X_\mathscr{P}$ is the set of all isolated points in $X$. It is to be noted that $C_\mathscr{P}(X)$ is a non-zero ring if and only if $X$ contains atleast one isolated point. So, if the number of isolated points of $X$ is finite then certainly $X_\mathscr{P}$ is a clopen subset of $X$ and hence, $AG(C_F(X))$ is never hypertriangulated. Let the number of isolated points of $X$ be infinite. Then proceeding as in Example~\ref{Ex4.13}, $AG(C_F(X))$ is hypertriangulated. 
\end{enumerate}  
\end{proof}

\begin{theorem}\label{Th4.12}
		$AG(C_\mathscr{P}(X))$ is always triangulated.
	\end{theorem}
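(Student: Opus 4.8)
The plan is to show that every vertex $f \in Z(C_\mathscr{P}(X))^*$ lies on a triangle in $AG(C_\mathscr{P}(X))$. Fix such an $f$. By the characterization of the zero divisors (Theorem 2.3 of \cite{Ach}, recalled in the excerpt), $int_X Z(f) \cap X_\mathscr{P} \neq \emptyset$, and since $f \neq 0$ the co-zero set $X \setminus Z(f)$ is nonempty and consists entirely of $\mathscr{P}$-points by Observation~\ref{Obs1}(i). Together with the running assumption $|X_\mathscr{P}| \geq 3$, this gives us enough room in $X_\mathscr{P}$ to split off pieces and build the two companion vertices.

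The natural first move is to produce a vertex $g$ adjacent to $f$ with $int_X Z(g) \cap X_\mathscr{P}$ incomparable to $int_X Z(f) \cap X_\mathscr{P}$, so that Theorem~\ref{Th4.2} applies. Pick a point $x \in X \setminus Z(f)$ (so $x \in X_\mathscr{P}$) and a point $y \in int_X Z(f) \cap X_\mathscr{P}$; these are distinct. Using Lemma~\ref{Lem1} twice, choose $g_1 \in C_\mathscr{P}(X)$ with $x \in X \setminus Z(g_1)$ and $cl_X(X \setminus Z(g_1))$ a small neighborhood of $x$ missing $y$, and similarly $g_2$ with $y \in X \setminus Z(g_2)$ and $cl_X(X \setminus Z(g_2))$ missing $x$; set $g = g_1^2 + g_2^2$, so $X \setminus Z(g)$ is a neighborhood of $\{x,y\}$ avoiding... one must be careful here: we want $int_X Z(g) \cap X_\mathscr{P}$ and $int_X Z(f) \cap X_\mathscr{P}$ mutually non-contained, and $g \in Z(C_\mathscr{P}(X))^*$. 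Since $y \in (X \setminus Z(g)) \cap (int_X Z(f) \cap X_\mathscr{P})$, we get $int_X Z(g) \cap X_\mathscr{P} \not\supset int_X Z(f) \cap X_\mathscr{P}$; and by keeping the co-zero set of $g$ small enough (again via Lemma~\ref{Lem1}, shrinking inside a neighborhood of $\{x,y\}$ whose closure omits at least one point of $int_X Z(f)\cap X_\mathscr{P}$, which exists because there are at least three $\mathscr{P}$-points) we arrange a witness in $(int_X Z(g) \cap X_\mathscr{P}) \setminus int_X Z(f)$. Thus $f$ and $g$ are adjacent.

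Now I want a third vertex $h$ adjacent to both $f$ and $g$, and here Corollary~\ref{Cor4.9} does the work: the edge $f-g$ lies on a triangle provided either $f-g$ is not an edge of $\Gamma(C_\mathscr{P}(X))$ (i.e. $fg \neq 0$), or $|int_X Z(f) \cap X_\mathscr{P}| \geq 2$ and $|int_X Z(g) \cap X_\mathscr{P}| \geq 2$. The cleanest route is to arrange the construction of $g$ so that $fg \neq 0$ — equivalently, $X \setminus Z(g) \not\subseteq int_X Z(f)$ — which we already have since $x \in (X \setminus Z(f)) \cap (X \setminus Z(g))$. Hence $f-g$ is not an edge of $\Gamma(C_\mathscr{P}(X))$, so by Corollary~\ref{Cor4.9} the edge $f-g$ is an edge of a triangle, and in particular $f$ is a vertex of a triangle. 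Since $f$ was arbitrary, $AG(C_\mathscr{P}(X))$ is triangulated.

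The main obstacle is the bookkeeping in the second paragraph: one must choose the co-zero sets from Lemma~\ref{Lem1} small enough (exploiting $|X_\mathscr{P}| \geq 3$) that $g$ is genuinely adjacent to $f$ — i.e. \emph{both} non-containments of Theorem~\ref{Th4.2} hold — while simultaneously keeping $x$ in both co-zero sets so that $fg \neq 0$. Once adjacency of $f$ and $g$ with $fg\neq 0$ is secured, Corollary~\ref{Cor4.9} closes the argument immediately, so no third explicit function needs to be written down.
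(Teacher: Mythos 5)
There is a genuine gap in the second paragraph: the vertex $g$ you want (adjacent to $f$ in $AG(C_\mathscr{P}(X))$ with $f.g\neq 0$) need not exist, so the reduction to the first alternative of Corollary~\ref{Cor4.9} cannot always be made. By Theorem~\ref{Th4.2}, any $g$ adjacent to $f$ must have a witness $w\in [int_XZ(g)\cap X_\mathscr{P}]\setminus int_XZ(f)$, i.e.\ a $\mathscr{P}$-point lying \emph{outside} $int_XZ(f)$ and inside $int_XZ(g)$. If $X\setminus Z(f)=\{x\}$ is a single isolated point and $X_\mathscr{P}\setminus int_XZ(f)=\{x\}$ (take, e.g., $f=1_x$ in $C_F(X)$ for an isolated point $x$, such as $f = 1_{\{1\}}$ on $\mathbb{N}^*$ with $\mathscr{P}$ the finite sets, or $f=1_x$ on a three-point discrete space), then the only possible witness is $w=x$; hence every neighbour $g$ of $f$ satisfies $x\in int_XZ(g)$, so $X\setminus Z(g)$ misses $X\setminus Z(f)=\{x\}$ and $f.g=0$. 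For such $f$ there is simply no adjacent $g$ with $f.g\neq 0$, and your keeping of $x$ in both co-zero sets is incompatible with adjacency. Note also that, as written, your parenthetical asks $cl_X(X\setminus Z(g))$ to omit a point of $int_XZ(f)\cap X_\mathscr{P}$; that produces a point of $int_XZ(g)\cap X_\mathscr{P}$ which lies \emph{in} $int_XZ(f)$, which is the wrong kind of witness — you would need a point of $X_\mathscr{P}\setminus int_XZ(f)$ distinct from $x$, and $|X_\mathscr{P}|\geq 3$ does not guarantee one, since all remaining $\mathscr{P}$-points may sit inside $int_XZ(f)$.

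Your construction does go through whenever $|X_\mathscr{P}\setminus int_XZ(f)|\geq 2$, which in particular covers the paper's case $|int_XZ(f)\cap X_\mathscr{P}|=1$; the trouble is exactly the complementary situation above, where $|int_XZ(f)\cap X_\mathscr{P}|\geq 2$ but the co-zero set of $f$ is a lone isolated point. The paper avoids this by splitting on $|int_XZ(f)\cap X_\mathscr{P}|$: when it is at least $2$, $f$ already lies on a triangle in the zero-divisor graph $\Gamma(C_\mathscr{P}(X))$ (Theorem 2.22 of \cite{Ach}), which is a subgraph of $AG(C_\mathscr{P}(X))$; when it is a singleton, an explicit triangle $f-g-h-f$ is built via Lemma~\ref{Lem2}. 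To repair your argument you would have to treat the case $X_\mathscr{P}\setminus int_XZ(f)=X\setminus Z(f)=\{x\}$ separately, e.g.\ by choosing a neighbour $g$ with $|int_XZ(g)\cap X_\mathscr{P}|\geq 2$ and invoking the second alternative of Corollary~\ref{Cor4.9} (or Theorem~\ref{Th4.6}(ii)), since there $|int_XZ(f)\cap X_\mathscr{P}|=|X_\mathscr{P}|-1\geq 2$ automatically.
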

	\begin{proof}
		Let $f\in Z(C_\mathscr{P}(X))^*$. Then $int_XZ(f)\cap X_\mathscr{P}\neq\emptyset$. If $|int_XZ(f)\cap X_\mathscr{P}|=1$, say $int_XZ(f)\cap X_\mathscr{P}=\{x\}$, then as $|X_\mathscr{P}|\geq 3$, we can find distinct $y,z$ in $X_\mathscr{P}\setminus\{x\}$. By Lemma~\ref{Lem2}, there exists $g,h\in C_\mathscr{P}(X)$ such that $\{x,y\}\subset X\setminus Z(g)\subset X\setminus int_XZ(g)\subset X_\mathscr{P}\setminus\{z\}$ and $\{x,z\}\subset X\setminus Z(h)\subset X\setminus int_XZ(h)\subset X_\mathscr{P}\setminus\{y\}$. Then $g,h\neq 0$, $z\in int_XZ(g)\cap X_\mathscr{P}$ and $y\in int_XZ(h)\cap X_\mathscr{P}\implies g,h\in Z(C_\mathscr{P}(X))^*$. Now $x\in [int_XZ(f)\cap X_\mathscr{P}]\setminus int_XZ(g)$ and $y\in [int_XZ(g)\cap X_\mathscr{P}]\setminus int_XZ(f)\implies f,g$ are adjacent in $AG(C_\mathscr{P}(X))$. Similarly, $g,h$ and $f,g$ are adjacent in $AG(C_\mathscr{P}(X))$. Thus $f-g-h-f$ constitutes a triangle with $f$ as a vertex in $AG(C_\mathscr{P}(X))$.\\
		If $|int_XZ(f)\cap X_\mathscr{P}|\geq 2$ then by Theorem 2.22 of \cite{Ach}, $f$ is a vertex of a triangle in $\Gamma_\mathscr{P}(X)$ and hence, a vertex of a triangle in $AG(C_\mathscr{P}(X))$. 
	\end{proof}

Before calculating eccentricity, girth, radius and length of shortest cycle containing two specific vertices, we jot down two very elementary facts which are amply clear and need no further clarification:\\
In $AG(C_\mathscr{P}(X))$, 
\begin{enumerate} 
\item[(i)]  $f$ and $g$ are adjacent if and only if $m_1f$ and $m_2 g$ are adjacent, for all positive integers $m_1, m_2 $. 
\item[(ii)] $m_1f$ and $m_2 f$, for any positive integers $m_1, m_2$, are not adjacent.
\end{enumerate}
In view of these elementary facts, the following are immediate:
\begin{theorem}\label{Th4.15}
In $AG(C_\mathscr{P}(X))$,
\begin{enumerate}
\item[(i)] Any edge $f-g$ is an edge of a square and so, $3 \leq c(f, g) \leq 4$. Furthermore,
for any $f,g\in Z(C_\mathscr{P}(X))^*$, 
		\begin{enumerate}
			\item[(a)] if $f,g$ are adjacent in $AG(C_\mathscr{P}(X))$ with $f.g\neq 0$, then $c(f,g)=3$;
			\item[(b)] if $f,g$ are adjacent in $AG(C_\mathscr{P}(X))$ with $f.g= 0$ then \\ $c(f,g)=\begin{cases}
				3&\text{ if }|int_XZ(f)\cap X_\mathscr{P}|\geq 2\text{ and } |int_XZ(g)\cap X_\mathscr{P}|\geq 2\\
				4&\text{ if }|int_XZ(f)\cap X_\mathscr{P}|=1\text{ or } |int_XZ(g)\cap X_\mathscr{P}|=1
			\end{cases}$
			\item[(c)] if $f,g$ are non-adjacent in $AG(C_\mathscr{P}(X))$ then $c(f,g)=4$.
		\end{enumerate}
\item[(ii)] $gr(AG(C_\mathscr{P}(X))) = 3$.
 
\end{enumerate}

\end{theorem}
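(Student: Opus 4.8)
The plan is to read this off from the material already assembled, principally the scalar-multiple facts (i) and (ii) recorded just above the statement, the triangle criterion (Corollary~\ref{Cor4.9}), the triangulatedness of the graph (Theorem~\ref{Th4.12}), and the distance formula (Corollary~\ref{Cor4.7}). Part (ii) is the quickest: since $|X_\mathscr{P}|\geq 3$, the vertex set $Z(C_\mathscr{P}(X))^*$ is non-empty, and Theorem~\ref{Th4.12} places every vertex on a triangle, so a $3$-cycle exists; as $AG(C_\mathscr{P}(X))$ is a simple graph it has no shorter cycle, hence $gr(AG(C_\mathscr{P}(X)))=3$.

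For the ``square'' assertion in (i), given an edge $f-g$ I would exhibit the $4$-cycle $f-g-2f-2g-f$. Its three new adjacencies, $g\sim 2f$, $2f\sim 2g$ and $2g\sim f$, are all instances of fact (i) (multiply an existing adjacency by positive integers). What must be checked is that $f,g,2f,2g$ are four \emph{distinct} vertices: $f\neq g$ since the graph is simple, $f\neq 2f$ and $g\neq 2g$ since vertices are non-zero, $2f\neq 2g$ again because $f\neq g$, and $f\neq 2g$ (likewise $g\neq 2f$) because otherwise fact (ii) would make $f$ and $g$ non-adjacent, contrary to hypothesis. This gives $c(f,g)\leq 4$ for an edge; since any cycle has length $\geq 3$, we get $3\leq c(f,g)\leq 4$.

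The three cases follow by combining this bound with the triangle criterion. In (a), $f.g\neq 0$ means $f-g$ is not an edge of $\Gamma(C_\mathscr{P}(X))$, so by Corollary~\ref{Cor4.9} it lies on a triangle and $c(f,g)=3$. In (b), $f.g=0$ makes $f-g$ an edge of $\Gamma(C_\mathscr{P}(X))$; Corollary~\ref{Cor4.9} then puts it on a triangle precisely when $|int_XZ(f)\cap X_\mathscr{P}|\geq 2$ and $|int_XZ(g)\cap X_\mathscr{P}|\geq 2$, yielding $c(f,g)=3$ in that subcase, while in the complementary subcase no triangle carries the edge, so $c(f,g)\geq 4$, and the square bound forces $c(f,g)=4$ (note that a vertex $f$ has $int_XZ(f)\cap X_\mathscr{P}\neq\emptyset$, so ``not $\geq 2$'' means exactly $=1$, matching the stated cases). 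In (c), $f$ and $g$ are non-adjacent, so no $3$-cycle can contain both of them and $c(f,g)\geq 4$; by Corollary~\ref{Cor4.7} they lie at distance $2$, hence share a neighbour $h$, and then $f-h-g-2h-f$ is a $4$-cycle (distinctness of $f,h,g,2h$ being verified exactly as before, with fact (ii) excluding $2h\in\{f,g\}$), so $c(f,g)=4$.

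No deep obstacle arises: the theorem is a bookkeeping consequence of the earlier structure results. The only points demanding care are the distinctness verifications in the two square constructions — where fact (ii) is indispensable for ruling out coincidences such as $f=2g$ — and, in case (c), picking the right prior result (the distance formula of Corollary~\ref{Cor4.7}) to guarantee a common neighbour whose scalar double can serve as the fourth corner of the square.
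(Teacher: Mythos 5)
Your proposal is correct and follows essentially the same route as the paper: the square $f-g-2f-2g-f$ built from the scalar-multiple facts, the triangle criterion of Theorem~\ref{Th4.6} (via Corollary~\ref{Cor4.9}) for cases (a) and (b), the common-neighbour square $f-h-g-2h-f$ for case (c), and triangulatedness (Theorem~\ref{Th4.12}) for the girth. Your extra care with the distinctness of the four corners is a worthwhile refinement the paper leaves implicit, but it does not change the argument.
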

\begin{proof}
(i) Since $f-g$ is an edge in $AG(C_\mathscr{P}(X))$, it follows that $f-g-2f-2g-f$ completes a square. Consequently, $3 \leq c(f, g) \leq 4$. 
\begin{enumerate}
			\item[(a)] $f,g$ are adjacent in $AG(C_\mathscr{P}(X))$ with $f.g\neq 0$ means that $f,g$ are not adjacent in $\Gamma(C_\mathscr{P}(X))$ and hence, by Theorem~\ref{Th4.6}(i), there exists a vertex $h \in Z(C_\mathscr{P}(X))^*$ such that $f-h-g-f$ constitutes a cycle of length 3 in $AG(C_\mathscr{P}(X))$. i.e., $c(f, g) = 3$.
			\item[(b)] By Theorem~\ref{Th4.6}(ii) there exists a vertex $h \in Z(C_\mathscr{P}(X))^*$ such that $f-h-g-f$ constitutes a cycle of length 3 in $AG(C_\mathscr{P}(X))$ if and only if $|int_X Z(f)\cap X_\mathscr{P}|\geq 2$ and  $|int_X Z(g)\cap X_\mathscr{P}|\geq 2$. So, in this case, \\
$c(f,g)=\begin{cases}
				3 &\text{ if }|int_XZ(f)\cap X_\mathscr{P}|\geq 2\text{ and } |int_XZ(g)\cap X_\mathscr{P}|\geq 2\\
				4 &\text{ if }|int_XZ(f)\cap X_\mathscr{P}|=1\text{ or } |int_XZ(g)\cap X_\mathscr{P}|=1
			\end{cases}$
			\item[(c)] If $f,g$ are non-adjacent in $AG(C_\mathscr{P}(X))$ then by Theorem~\ref{Th4.6}(i), there exists a vertex $h \in Z(C_\mathscr{P}(X))^*$ such that $h$ is adjacent to both of $f$ and $g$, constituting a square $f-h-g-2h-f$. Therefore, $c(f, g) = 4$. 
		\end{enumerate}
(ii) Follows from the fact that $AG(C_\mathscr{P}(X))$ is triangulated.
\end{proof}
\begin{corollary}
Each chord-less cycle in $AG(C_\mathscr{P}(X))$ is of length $3$ or $4$.
\end{corollary}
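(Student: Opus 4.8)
I would argue by contradiction. Suppose $AG(C_\mathscr{P}(X))$ contained a chord-less cycle $C:v_0-v_1-\cdots-v_{n-1}-v_0$ with $n\ge 5$. For each $i$ write $S_i=int_XZ(v_i)\cap X_\mathscr{P}$; since each $v_i$ is a non-zero zero divisor, $S_i\neq\emptyset$, and by Theorem~\ref{Th4.2} two vertices $v_i,v_j$ are adjacent in $AG(C_\mathscr{P}(X))$ exactly when $S_i$ and $S_j$ are $\subseteq$-incomparable. Because $C$ is chord-less, this translates into: $S_i$ and $S_{i\pm 1}$ (indices read modulo $n$) are incomparable, while $S_i$ and $S_j$ are comparable for every pair of non-consecutive vertices $v_i,v_j$ of $C$. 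Thus the whole problem reduces to showing that no family $(S_i)_{0\le i<n}$ of sets can realise this comparability pattern.

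The first step is a one-sidedness lemma: fix $i$ and consider the non-neighbours of $v_i$ on $C$, namely $v_{i+2},v_{i+3},\dots,v_{i-2}$. These form a path in $C$ (which degenerates to a single edge when $n=5$), so consecutive ones are adjacent and hence have incomparable $S$-sets. If among them some $v_j$ had $S_j\subseteq S_i$ and some adjacent $v_k$ had $S_i\subseteq S_k$, then $S_j\subseteq S_k$ would follow, contradicting incomparability. Hence one of the two classes $\{j:S_j\subseteq S_i\}$, $\{j:S_i\subseteq S_j\}$ exhausts all non-neighbours of $v_i$; call the corresponding side $\sigma(i)\in\{-,+\}$ (coincidences $S_j=S_i$ are harmless, as they only make ``comparable'' easier, so I would dispose of them first). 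The second step shows $\sigma$ is constant along $C$: for each $i$ the sets of non-neighbours of $v_i$ and of $v_{i+1}$ overlap (this is exactly where $n\ge 5$ is used), and choosing a common non-neighbour $v_j$, the case $\sigma(i)\neq\sigma(i+1)$ would force a chain $S_i\subseteq S_j\subseteq S_{i+1}$ or its reverse, contradicting the incomparability of $S_i$ and $S_{i+1}$.

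With $\sigma$ constant, say $\sigma\equiv +$ (the case $\sigma\equiv-$ being symmetric), every pair of non-consecutive vertices $v_i,v_j$ satisfies $S_i\subseteq S_j$ by applying $\sigma(i)=+$, and $S_j\subseteq S_i$ by applying $\sigma(j)=+$; hence $S_i=S_j$. Since $n\ge 5$, the three vertices $v_0,v_2,v_3$ are pairwise non-consecutive on $C$, so $S_0=S_2=S_3$; but $v_2$ and $v_3$ are adjacent on $C$, so $S_2$ and $S_3$ must be incomparable — a contradiction. Therefore $AG(C_\mathscr{P}(X))$ has no chord-less cycle of length $\ge 5$; since $AG(C_\mathscr{P}(X))$ is triangulated (Theorem~\ref{Th4.12}) it does contain chord-less $3$-cycles, and altogether every chord-less cycle has length $3$ or $4$.

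I expect the only genuine difficulty to be the bookkeeping modulo $n$: verifying that the non-neighbours of a vertex really form a path, that the non-neighbour sets of consecutive vertices always meet once $n\ge 5$, and that the finitely many ``tie'' cases $S_i=S_j$ can be absorbed without changing the conclusion. A cosmetically shorter route would be to observe that, via Theorem~\ref{Th4.2}, an induced $n$-cycle would make $\overline{C_n}$ the comparability graph of the poset $(\{S_i\},\subseteq)$, and $\overline{C_n}$ is not a comparability graph for $n\ge 5$; but proving that fact in the present setting amounts to essentially the same computation, so I would keep the self-contained argument above.
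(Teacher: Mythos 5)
Your argument is correct and it takes a genuinely different route from the paper, which supplies no separate proof at all: there the statement is presented as an immediate by-product of Theorem~\ref{Th4.15}, i.e.\ of the computation $3\leq c(f,g)\leq 4$ for the smallest cycle through two given vertices. You instead go back to Theorem~\ref{Th4.2}, translate adjacency into incomparability of the sets $int_XZ(\cdot)\cap X_\mathscr{P}$, and show that no induced cycle of length $\geq 5$ can realise the resulting comparability pattern — in effect a self-contained proof, in this setting, of the classical fact that co-comparability graphs have no induced cycles of length at least five. This buys something real: the paper's data alone (every pair of vertices lies on a cycle of length at most $4$, diameter $2$) do not formally exclude long chordless cycles — the Petersen graph has diameter $2$ and all pairs on short cycles yet contains induced $5$-cycles — so your argument supplies the combinatorial content the paper leaves implicit, and it works verbatim for any graph whose adjacency is set-incomparability. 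Two small repairs: the sentence claiming $v_0,v_2,v_3$ are ``pairwise non-consecutive'' is wrong as written ($v_2$ and $v_3$ are consecutive), but your deduction only uses the non-consecutive pairs $(v_0,v_2)$ and $(v_0,v_3)$ to get $S_2=S_3$, which contradicts the incomparability of the adjacent pair $v_2,v_3$, so the conclusion stands; and the ``tie'' cases $S_i=S_j$ need no preliminary disposal — a non-neighbour with $S_j=S_i$ simply lies in both classes of your one-sidedness lemma, and if both signs were valid at some vertex your Step 2 (using a common non-neighbour, available since $n\geq 5$) already yields a contradiction, so $\sigma$ is in fact uniquely determined and constant.
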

The crux of Theorem~\ref{Th4.15} may be represented diagrammatically as follows:
\begin{center}
		\begin{tikzpicture}
			\draw (0,0)--(2,2)--(4,0)--(0,0) (7,0)--(9,2)--(11,0)--(7,0);
			\coordinate[Bullet=black,label=left:$f$] (f) at (0,0);
			\coordinate[Bullet=black,label=right:$g$] (g) at (4,0);
			\coordinate[Bullet=black,label=left:$f$] (f) at (7,0);
			\coordinate[Bullet=black,label=right:$g$] (g) at (11,0);
			\node (start) at (2,-1) {$f.g\neq 0\text{ and }f,g\text{ are adjacent}$};
			\node (start) at (9,-1) {$f.g=0\text{ and }|int_XZ(f)\cap X_\mathscr{P}|\geq 2$};
			\node (start) at (9,-1.5) {and $|int_XZ(g)\cap X_\mathscr{P}|\geq 2$};
			\draw (0,-5)--(0,-3)--(4,-3)--(4,-5)--(0,-5) (7,-4)--(9,-3)--(11,-4)--(9,-5)--(7,-4);
			\coordinate[Bullet=black,label=left:$f$] (f) at (0,-5);
			\coordinate[Bullet=black,label=right:$g$] (g) at (4,-5);
			\coordinate[Bullet=black,label=left:$f$] (f) at (7,-4);
			\coordinate[Bullet=black,label=right:$g$] (g) at (11,-4);
			\node (start) at (2,-6) {$f.g=0\text{ and }|int_XZ(f)\cap X_\mathscr{P}|=1$};
			\node (start) at (9,-6) {$f,g\text{ are non-adjacent}$};
			\node (start) at (2,-6.5) {or $|int_XZ(g)\cap X_\mathscr{P}|=1$};
		\end{tikzpicture}
	\end{center}
The following theorem refines Theorem~\ref{Th4.15} in absence of any isolated point of $X$ in $X_\mathscr{P}$. 
\begin{theorem}\label{Th4.19}
		If $X_\mathscr{P}$ contains no isolated point of $X$, then for any $f,g\in Z(C_\mathscr{P}(X))^*$, $$c(f,g)=\begin{cases}
			3&\text{ if }f,g\text{ are adjacent}\\
			4&\text{ otherwise}
		\end{cases}$$ 
	\end{theorem}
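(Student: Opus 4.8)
The plan is to derive the statement as an immediate refinement of Theorem~\ref{Th4.15}, once the following structural fact is isolated: if $X_\mathscr{P}$ contains no isolated point of $X$, then every vertex $f$ of $AG(C_\mathscr{P}(X))$ satisfies $|int_XZ(f)\cap X_\mathscr{P}|\geq 2$. Indeed, $f\in Z(C_\mathscr{P}(X))^*$ forces $int_XZ(f)\cap X_\mathscr{P}\neq\emptyset$ by the description of $Z(C_\mathscr{P}(X))^*$ recalled just before Observation~\ref{Obs1}; and since $int_XZ(f)$ is open in $X$ and $X_\mathscr{P}$ is open in $X$, the set $int_XZ(f)\cap X_\mathscr{P}$ is open in $X$. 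If it were a singleton $\{x\}$, then $x\in X_\mathscr{P}$ would be an isolated point of $X$, contradicting the hypothesis. This is precisely the observation already used in the proof of Theorem~\ref{Th4.11}.

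Next I would run through the adjacency trichotomy supplied by Theorem~\ref{Th4.15}(i). If $f$ and $g$ are not adjacent, part~(c) gives $c(f,g)=4$. If they are adjacent with $f.g\neq 0$, part~(a) gives $c(f,g)=3$. If they are adjacent with $f.g=0$, part~(b) applies: the case split there distinguishes whether $|int_XZ(f)\cap X_\mathscr{P}|\geq 2$ and $|int_XZ(g)\cap X_\mathscr{P}|\geq 2$, or whether one of these sets is a singleton. By the structural fact above the latter branch is vacuous, so part~(b) yields $c(f,g)=3$. Assembling the three subcases, $c(f,g)=3$ whenever $f$ and $g$ are adjacent and $c(f,g)=4$ otherwise, which is exactly the asserted formula.

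I do not anticipate a genuine obstacle: the theorem is a corollary of Theorem~\ref{Th4.15}, the whole content being that the hypothesis eliminates the only configuration --- an adjacent pair one of whose $\mathscr{P}$-point interiors is a singleton --- that could have produced $c(f,g)=4$ for adjacent vertices. The single point deserving care is that $int_XZ(f)\cap X_\mathscr{P}$ must be recognized as open \emph{in $X$} (not merely in $X_\mathscr{P}$), so that a one-point intersection really does exhibit an isolated point of $X$; this is legitimate because $X_\mathscr{P}$ itself is open in $X$.
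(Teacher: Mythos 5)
Your argument is correct and is essentially the paper's own proof: the paper likewise notes that absence of isolated points of $X$ in the open set $X_\mathscr{P}$ forces $|int_XZ(f)\cap X_\mathscr{P}|\geq 2$ for every vertex $f$, and then invokes Theorem~\ref{Th4.15}. Your added justification via openness of $int_XZ(f)\cap X_\mathscr{P}$ in $X$ is exactly the implicit step the paper leaves to the reader.
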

	\begin{proof}
		If $X_\mathscr{P}$ contains no isolated point of $X$ then $|int_XZ(f)\cap X_\mathscr{P}|\geq 2$ for all $f\in Z(C_\mathscr{P}(X))^*$. So, by Theorem \ref{Th4.15}, the result holds.
	\end{proof}
Example \ref{Ex4.13} guarantees that the converse of Theorem~\ref{Th4.19} doesn't always hold. More generally, in any completely regular Hausdorff space $X$ with infinitely many isolated points, as $X \setminus Z(f)$, for each $f \in (Z(C_F(X)))^*$, is a finite subset of isolated points, $int_XZ(f)\cap X_\mathscr{P}= Z(f) \cap X_\mathscr{P}$ is infinite. As a consequence, $|int_XZ(f)\cap X_\mathscr{P}|\geq 2$ holds for all $f \in (Z(C_F(X)))^*$. So, by Theorem~\ref{Th4.15}, in spite of the presence of infinitely many isolated points of $X$, for every pair $f, g \in (Z(C_F(X)))^*$,
$$c(f,g)=\begin{cases}
			3&\text{ if }f,g\text{ are adjacent}\\
			4&\text{ otherwise}
		\end{cases}$$ 
In the next theorem we show that a sufficient condition for the converse of Theorem~\ref{Th4.19} to hold is that $X_\mathscr{P}$ is a clopen (not necessarily proper) subset of $X$. 
\begin{theorem}\label{Th4.19a}
If $X_\mathscr{P}$ is a clopen (not necessarily proper) subset of $X$ and for any $f,g\in Z(C_\mathscr{P}(X))^*$, $$c(f,g)=\begin{cases}
			3&\text{ if }f,g\text{ are adjacent}\\
			4&\text{ otherwise}
		\end{cases}$$ 
then $X_\mathscr{P}$ does not contain any isolated point of $X$. 
\end{theorem}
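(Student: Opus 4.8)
The plan is to argue by contraposition. Assume that $X_\mathscr{P}$ is clopen in $X$ but that it \emph{does} contain an isolated point $x$ of $X$; I will then produce two adjacent vertices $f,g$ of $AG(C_\mathscr{P}(X))$ with $c(f,g)=4$, which contradicts the displayed dichotomy and forces the conclusion. The candidate pair is precisely the one already used informally just after Theorem~\ref{Th4.11}, namely $f=1_{x}$ (the characteristic function of $\{x\}$) and $g=1_{X_\mathscr{P}\setminus\{x\}}$.

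First I would verify that $f$ and $g$ are legitimate vertices. Since $x$ is isolated and $X$ is locally $\mathscr{P}$ at $x$, the singleton $\{x\}$ is clopen and, being a closed subset of $cl_XV\in\mathscr{P}$ for a neighbourhood $V$ witnessing $x\in X_\mathscr{P}$, belongs to $\mathscr{P}$; hence $f\in C_\mathscr{P}(X)$ (alternatively, apply Lemma~\ref{Lem1} with the neighbourhood $\{x\}$). Because $X_\mathscr{P}$ is clopen, $X_\mathscr{P}\setminus\{x\}$ is clopen as well, so $g$ is continuous, and its cozero set $X_\mathscr{P}\setminus\{x\}$ coincides with its own closure, which lies in $\mathscr{P}$; hence $g\in C_\mathscr{P}(X)$. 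As $|X_\mathscr{P}|\geq 3$, both functions are non-zero and $f\neq g$ (their cozero sets differ). A direct computation, using that both supports are clopen, gives $int_XZ(f)\cap X_\mathscr{P}=X_\mathscr{P}\setminus\{x\}$ and $int_XZ(g)\cap X_\mathscr{P}=\{x\}$; both are non-empty, so by the description of $Z(C_\mathscr{P}(X))^{*}$ recalled in Section~2, $f,g\in Z(C_\mathscr{P}(X))^{*}$. Their cozero sets $\{x\}$ and $X_\mathscr{P}\setminus\{x\}$ are disjoint, so $fg=0$, and since the zero divisor graph is a subgraph of $AG(C_\mathscr{P}(X))$, the edge $f-g$ lies in $AG(C_\mathscr{P}(X))$.

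Finally I would invoke Theorem~\ref{Th4.15}(i)(b) for this edge: $f$ and $g$ are adjacent with $fg=0$ and $|int_XZ(g)\cap X_\mathscr{P}|=1$, so $c(f,g)=4$. This contradicts the hypothesis that $c(f,g)=3$ for every adjacent pair of vertices; therefore $X_\mathscr{P}$ cannot contain an isolated point of $X$.

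The one step that genuinely needs care is the claim $g\in C_\mathscr{P}(X)$, i.e.\ that $cl_X(X_\mathscr{P}\setminus\{x\})=X_\mathscr{P}\setminus\{x\}$ actually belongs to the ideal $\mathscr{P}$ of closed sets; this is exactly where the clopenness of $X_\mathscr{P}$ is used (it also indicates why the hypothesis that $X_\mathscr{P}$ be clopen, rather than merely open, is essential here, cf.\ the remark following Theorem~\ref{Th4.11}). Everything else — the identities for $int_XZ(\cdot)\cap X_\mathscr{P}$, the adjacency of $f$ and $g$, and the value $c(f,g)=4$ — is a routine consequence of Observation~\ref{Obs1}, Theorem~\ref{Th4.2}, the inclusion $\Gamma(C_\mathscr{P}(X))\subseteq AG(C_\mathscr{P}(X))$, and Theorem~\ref{Th4.15}.
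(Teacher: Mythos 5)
You have reproduced the paper's own proof: it too argues by contraposition, choosing $f=1_x$ and $g=1_{X_\mathscr{P}\setminus\{x\}}$ for an isolated point $x\in X_\mathscr{P}$, noting $f\cdot g=0$ and $int_XZ(g)\cap X_\mathscr{P}=\{x\}$, and invoking Theorem~\ref{Th4.15} to force $c(f,g)=4$ against the hypothesis, so in structure and in every detail your argument is the published one.

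One caveat, concerning exactly the step you yourself single out as delicate: neither you nor the paper actually proves that $X_\mathscr{P}\setminus\{x\}$ belongs to $\mathscr{P}$, and clopenness of $X_\mathscr{P}$ alone does not give it, so the assertion $g=1_{X_\mathscr{P}\setminus\{x\}}\in C_\mathscr{P}(X)$ is not justified as written. Concretely, take $X$ an infinite discrete space and $\mathscr{P}$ the ideal of finite subsets: then $X_\mathscr{P}=X$ is clopen, every vertex $f$ of $AG(C_F(X))$ has $int_XZ(f)\cap X_\mathscr{P}=Z(f)$ infinite, so by Theorem~\ref{Th4.15} the dichotomy $c(f,g)=3$ for adjacent pairs and $4$ otherwise holds (as the paper itself observes just before this theorem), yet every point of $X_\mathscr{P}$ is isolated and $1_{X_\mathscr{P}\setminus\{x\}}\notin C_F(X)$ because its cozero set is infinite. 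Thus the key witness $g$ need not exist, and the argument (indeed the statement as phrased) really requires the extra assumption that $X_\mathscr{P}\in\mathscr{P}$ (equivalently, that the closed set $X_\mathscr{P}\setminus\{x\}$ lies in $\mathscr{P}$), which holds, for instance, when $X_\mathscr{P}$ is finite. Since this lacuna is inherited verbatim from the paper's proof rather than introduced by you, your write-up should be regarded as matching the paper's argument step for step, with the same unproved membership claim at its core.
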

\begin{proof}
let $X_\mathscr{P}$ be a clopen subset of $X$. If $x\in X_\mathscr{P}$ is an isolated point of $X$ then choosing $f=1_x\in Z(C_\mathscr{P}(X))^*$ and $g=1_{X_\mathscr{P}\setminus \{x\}}\in Z(C_\mathscr{P}(X))^*$ we find that $f.g=0$ and $int_XZ(g)\cap X_\mathscr{P}=\{x\}$. Then by Theorem \ref{Th4.15}, $c(f,g)=4$ though $f,g$ are adjacent in $AG(C_\mathscr{P}(X))$, contradicting the hypothesis.
\end{proof}

Interpreting Theorem~\ref{Th4.19}and Theorem~\ref{Th4.19a} for special choices of $\mathscr{P}$, we get an explicit formula for $c(f, g)$ in terms of the adjacency of the vertices $f$ and $g$ in $C(X)$ as well as in $C_K(X)$. 

\begin{theorem} Let $X$ be a completely regular Hausdorff space. 
\begin{enumerate}
\item $X$ has no isolated point if and only if for any $f,g\in Z(C(X))^*$,  
$$c(f,g)=\begin{cases}
					3&\text{ if }f,g\text{ are adjacent}\\
					4&\text{ otherwise}
				\end{cases}$$
\item If $X$ is locally compact then $X$ has no isolated point if and only if for any $f,g\in Z(C_K(X))^*$,  
$$c(f,g)=\begin{cases}
					3&\text{ if }f,g\text{ are adjacent}\\
					4&\text{ otherwise}.
				\end{cases}$$
			\end{enumerate}
\end{theorem}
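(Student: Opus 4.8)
The plan is to recognize that this final theorem is simply the specialization of Theorems~\ref{Th4.19} and~\ref{Th4.19a} to the two canonical choices of $\mathscr{P}$, namely the ideal of all closed sets (giving $C(X)$) and the ideal of all compact sets (giving $C_K(X)$), combined with the standing assumption $|X_\mathscr{P}| \geq 3$. For part (1): if $\mathscr{P}$ is the ideal of all closed subsets of $X$, then $C_\mathscr{P}(X) = C(X)$ and $X_\mathscr{P} = X$, which is trivially a clopen subset of $X$. Hence, for the forward direction, apply Theorem~\ref{Th4.19}: if $X$ (which equals $X_\mathscr{P}$) has no isolated point, then the displayed formula for $c(f,g)$ holds for all $f,g \in Z(C(X))^*$. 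For the converse, apply Theorem~\ref{Th4.19a}: since $X_\mathscr{P} = X$ is clopen and the displayed formula holds, $X_\mathscr{P} = X$ contains no isolated point. This gives the biconditional.

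For part (2): if $\mathscr{P}$ is the ideal of all compact subsets of $X$, then $C_\mathscr{P}(X) = C_K(X)$ and $X_\mathscr{P}$ is the set of points at which $X$ is locally compact. Under the hypothesis that $X$ is locally compact, $X_\mathscr{P} = X$, which is again clopen in $X$. The argument now runs exactly parallel to part (1): the forward implication is Theorem~\ref{Th4.19} applied with $X_\mathscr{P} = X$, and the converse is Theorem~\ref{Th4.19a}. In both parts one should note that $|X_\mathscr{P}| = |X| \geq 3$ is being tacitly assumed, as flagged after Theorem~\ref{Eq}; otherwise the graph is empty and the statement is vacuous (and in fact the equivalence still holds trivially since a space with $\leq 2$ points and the formula holding vacuously has no isolated point issue — though to be safe one simply works under the blanket hypothesis already in force).

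I expect there to be essentially no obstacle: the content is entirely in the already-proven Theorems~\ref{Th4.19} and~\ref{Th4.19a}, and the only thing to verify is the bookkeeping that $X_\mathscr{P}$ is clopen in each case — trivially so, since $X_\mathscr{P} = X$ in both. The one place requiring a token of care is ensuring the converse direction of part (2) genuinely only needs $X$ locally compact (so that $X_\mathscr{P} = X$ is clopen), which is exactly the stated hypothesis. Thus the proof is just: "Apply Theorem~\ref{Th4.19} and Theorem~\ref{Th4.19a} with the respective identifications $C_\mathscr{P}(X) = C(X)$, $X_\mathscr{P} = X$ (resp. $C_\mathscr{P}(X) = C_K(X)$, $X_\mathscr{P} = X$ when $X$ is locally compact), noting $X_\mathscr{P}$ is clopen in each instance." I would write it out in three or four lines for each part.
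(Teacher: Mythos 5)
Your proposal is correct and is exactly the paper's intended argument: the paper offers no separate proof, stating the theorem as the interpretation of Theorem~\ref{Th4.19} (forward direction) and Theorem~\ref{Th4.19a} (converse, using that $X_\mathscr{P}=X$ is clopen) for $\mathscr{P}$ the ideal of closed sets, respectively compact sets with $X$ locally compact. Your bookkeeping of the identifications and the clopenness of $X_\mathscr{P}$ is precisely what is needed.
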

Two vertices $a, b$ in a graph $G$ are called orthogonal, denoted by $a\perp b$, if $a, b$ are adjacent in $G$ and no other vertex of $G$ is adjacent to both of $a$ and $b$. The following theorem sets a criterion for existence of a pair of orthogonal vertices in $AG(C_\mathscr{P}(X))$ in terms of the topological behaviour of $X$ and $X_\mathscr{P}$.
\begin{theorem}\label{Th4.18}
		For any $f,g\in Z(C_\mathscr{P}(X))^*$, $f\perp g$ in $AG(C_\mathscr{P}(X))$ if and only if $f.g=0$ and either $|int_XZ(f)\cap X_\mathscr{P}|=1$ or $|int_XZ(g)\cap X_\mathscr{P}|=1$.
	\end{theorem}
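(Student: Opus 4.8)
The plan is to prove the biconditional by reducing "orthogonality" to the absence of a common neighbour, and then invoking the already-established criterion for common neighbours, namely Theorem~\ref{Th4.6}. First I would unwind the definition: $f\perp g$ means $f$ and $g$ are adjacent in $AG(C_\mathscr{P}(X))$ and no vertex $h\in Z(C_\mathscr{P}(X))^*$ is adjacent to both. Thus the statement to be proved is exactly the contrapositive negation, within the adjacent case, of Theorem~\ref{Th4.6}. By Theorem~\ref{Th4.6}(i), if $f$ and $g$ were \emph{not} adjacent in $\Gamma(C_\mathscr{P}(X))$ (i.e. $f.g\neq 0$) while being adjacent in $AG(C_\mathscr{P}(X))$, then a common neighbour always exists, so $f\not\perp g$; hence $f\perp g$ forces $f.g=0$. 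Once $f.g=0$, Theorem~\ref{Th4.6}(ii) tells us a common neighbour exists if and only if $|int_XZ(f)\cap X_\mathscr{P}|\geq 2$ and $|int_XZ(g)\cap X_\mathscr{P}|\geq 2$; therefore no common neighbour exists precisely when at least one of these cardinalities equals $1$ (it cannot be $0$, since $f,g\in Z(C_\mathscr{P}(X))^*$ guarantees $int_XZ(f)\cap X_\mathscr{P}\neq\emptyset$ and likewise for $g$). This gives the forward direction.

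For the converse, suppose $f.g=0$ and, without loss of generality, $|int_XZ(f)\cap X_\mathscr{P}|=1$. I would first note that $f$ and $g$ are adjacent in $AG(C_\mathscr{P}(X))$: since $f.g=0$ with both in $Z(C_\mathscr{P}(X))^*$, they are adjacent in $\Gamma(C_\mathscr{P}(X))$, hence in $AG(C_\mathscr{P}(X))$ (the latter contains the former as a subgraph, as recalled in the introduction). Then, by Theorem~\ref{Th4.6}(ii) again, the failure of the condition $|int_XZ(f)\cap X_\mathscr{P}|\geq 2$ means there is no vertex adjacent to both $f$ and $g$. Combining, $f\perp g$.

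I do not expect a genuine obstacle here: the theorem is essentially a repackaging of Theorem~\ref{Th4.6}, and the only points requiring a line of care are (a) checking that $f\in Z(C_\mathscr{P}(X))^*$ really does force $int_XZ(f)\cap X_\mathscr{P}\neq\emptyset$, which is immediate from the description $Z(C_\mathscr{P}(X))^*=\{f\in C_\mathscr{P}(X)\setminus\{0\}:int_XZ(f)\cap X_\mathscr{P}\neq\emptyset\}$ quoted before Observation~\ref{Obs1}, so that "no common neighbour" is equivalent to "at least one of the two sets is a singleton" rather than "at least one is empty or a singleton"; and (b) the symmetry in $f$ and $g$, which lets me assume $|int_XZ(f)\cap X_\mathscr{P}|=1$ in the converse without loss of generality. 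The whole argument is then a two-paragraph citation of Theorem~\ref{Th4.6}(i) and (ii) together with the subgraph relation $\Gamma(C_\mathscr{P}(X))\subseteq AG(C_\mathscr{P}(X))$.
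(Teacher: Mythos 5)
Your proposal is correct and follows essentially the same route as the paper: both directions are obtained by combining the definition of $\perp$ with Theorem~\ref{Th4.6}(i) and (ii), using that $f,g\in Z(C_\mathscr{P}(X))^*$ rules out the cardinality $0$ case. Your extra remarks (that $f.g=0$ gives adjacency via $\Gamma(C_\mathscr{P}(X))\subseteq AG(C_\mathscr{P}(X))$) only make explicit what the paper leaves to the reader in its one-line converse.
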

	\begin{proof}
		If $f\perp g$  then $f,g$ are adjacent in $AG(C_\mathscr{P}(X))$ and no other vertex is adjacent to both $f,g$ in $AG(C_\mathscr{P}(X))$. By Theorem~\ref{Th4.6}(i), $f.g\neq 0$ is impossible. So, $f.g=0$ and therefore by Theorem~\ref{Th4.6}(ii), either $|int_XZ(f)\cap X_\mathscr{P}|=1$ or $|int_XZ(g)\cap X_\mathscr{P}|=1$.\\
		Converse follows from Theorem~\ref{Th4.6}(ii) and the definition of $f\perp g$.
	\end{proof}

\begin{theorem}\label{Th4.22}
		$AG(C_\mathscr{P}(X))$ does not contain any orthogonal pair of vertices if $X_\mathscr{P}$ has no isolated point of $X$. The converse holds if $X_\mathscr{P}$ is a clopen (not necessarily proper) subset of $X$.
	\end{theorem}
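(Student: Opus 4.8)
The plan is to deduce both assertions directly from the orthogonality criterion already established in Theorem~\ref{Th4.18}, together with the characterization of $Z(C_\mathscr{P}(X))^*$ recalled before Observation~\ref{Obs1}. For the first (sufficiency) direction, I would argue by contraposition on the conclusion: suppose $AG(C_\mathscr{P}(X))$ \emph{does} contain an orthogonal pair $f\perp g$. By Theorem~\ref{Th4.18} this forces $f.g=0$ and (without loss of generality) $|int_XZ(f)\cap X_\mathscr{P}|=1$, say $int_XZ(f)\cap X_\mathscr{P}=\{x\}$. The point to extract is that such an $x$ must be an isolated point of $X$ lying in $X_\mathscr{P}$: indeed $f.g=0$ gives $\emptyset\neq X\setminus Z(g)\subseteq int_XZ(f)\cap X_\mathscr{P}=\{x\}$ by Observation~\ref{Obs1}(ii), so $X\setminus Z(g)=\{x\}$, and since the co-zero set of a continuous function is open, $\{x\}$ is open in $X$, i.e. $x$ is an isolated point of $X$; moreover $x\in X\setminus Z(g)\subseteq X_\mathscr{P}$ by Observation~\ref{Obs1}(i) (or simply because $x\in X_\mathscr{P}$ was given). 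Hence $X_\mathscr{P}$ contains an isolated point of $X$, which is exactly the negation of the hypothesis. This proves the contrapositive and so the first statement.

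For the converse under the clopen assumption, I would simply invoke the construction already carried out in the proof of Theorem~\ref{Th4.19a}: assuming $X_\mathscr{P}$ clopen and $X_\mathscr{P}$ \emph{does} contain an isolated point $x$ of $X$, the functions $f=1_x$ and $g=1_{X_\mathscr{P}\setminus\{x\}}$ both lie in $C_\mathscr{P}(X)$ (because $\{x\}$ and $X_\mathscr{P}$, hence $X_\mathscr{P}\setminus\{x\}$, are clopen in $X$) and both lie in $Z(C_\mathscr{P}(X))^*$ (using $|X_\mathscr{P}|\geq 3$ to ensure $X_\mathscr{P}\setminus\{x\}\neq\emptyset$ and that the interiors of their zero sets meet $X_\mathscr{P}$). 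One checks $f.g=0$ and $int_XZ(g)\cap X_\mathscr{P}=\{x\}$, so $|int_XZ(g)\cap X_\mathscr{P}|=1$, and Theorem~\ref{Th4.18} yields $f\perp g$. Thus if $X_\mathscr{P}$ is clopen and contains no orthogonal pair, it can contain no isolated point of $X$, which is the asserted converse.

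I do not anticipate a genuine obstacle here; the theorem is essentially a repackaging of Theorem~\ref{Th4.18} and of the witness pair built in Theorem~\ref{Th4.19a}. The only points that need care are the small verifications that $f=1_x$ and $g=1_{X_\mathscr{P}\setminus\{x\}}$ really are members of $C_\mathscr{P}(X)$ and really are non-zero zero-divisors --- which is where the clopenness of $X_\mathscr{P}$ and the standing assumption $|X_\mathscr{P}|\geq 3$ are used --- and the observation in the first direction that a singleton co-zero set is automatically an isolated point of $X$. Everything else is immediate from the cited results.
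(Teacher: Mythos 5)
Your proposal is correct and follows essentially the same route as the paper: the forward direction extracts an isolated point of $X$ inside $X_\mathscr{P}$ from the orthogonality criterion of Theorem~\ref{Th4.18} (the paper argues marginally more directly that the open set $int_XZ(f)\cap X_\mathscr{P}$, being a singleton, makes $x$ isolated, whereas you route through the cozero set of $g$ via Observation~\ref{Obs1}), and the converse uses exactly the paper's witness pair $f=1_x$, $g=1_{X_\mathscr{P}\setminus\{x\}}$ together with Theorem~\ref{Th4.18}. The verification that $f,g\in Z(C_\mathscr{P}(X))^*$ is justified by you from clopenness of $X_\mathscr{P}$ and $|X_\mathscr{P}|\geq 3$, which is precisely the (tacit) justification the paper itself gives, so there is no divergence from the paper's argument.
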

	\begin{proof}
	$AG(C_\mathscr{P}(X))$ contains an orthogonal pair of vertices $f, g$ then by Theorem~\ref{Th4.18}, either $|int_XZ(f)\cap X_\mathscr{P}|$ or $|int_XZ(g)\cap X_\mathscr{P}|$ is a singleton, say $\{x\}$. Since $X_\mathscr{P}$ is open in $X$, it follows that $x \in X_\mathscr{P}$ is an isolated point of $X$, contradicting our hypothesis. \\
	Conversely let $X_\mathscr{P}$ be a clopen subset of $X$ and $x\in X_\mathscr{P}$ be an isolated point of $X$. By choosing $f=1_x\in Z(C_\mathscr{P}(X))^*$ and $g=1_{X_\mathscr{P}\setminus\{x\}}\in Z(C_\mathscr{P}(X))^*$, it follows from Theorem~\ref{Th4.18} that $f\perp g$ in $AG(C_\mathscr{P}(X))$. 
	\end{proof}
Once again, Example~\ref{Ex4.13} endorses that abundance of isolated points in $X_\mathscr{P}$ does not ensure the existence of a pair of orthogonal vertices in $AG(C_\mathscr{P}(X))$.\\

A graph $G$ is said to be complemented if for each vertex $a$ in $G$, there exists a vertex $b$ in $G$ such that $a \perp b$. A complemented graph $G$ is called uniquely complemented if $a \perp b$ and $a\perp c$ implies that $b$ and $c$ are adjacent to the same set of vertices in $G$. It is well known that a graph may be complemented without being uniquely complemented. In the next theorem we prove that for $AG(C_\mathscr{P}(X))$, the terms `complemented' and `uniquely complemented' are synonymous. 
\begin{theorem}\label{Th4.20}
		If $AG(C_\mathscr{P}(X))$ is complemented then it is uniquely complemented.
\end{theorem}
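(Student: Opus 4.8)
The plan is to check the defining property of ``uniquely complemented'' directly from the two characterisations already established, namely the adjacency rule of Theorem~\ref{Th4.2} and the orthogonality rule of Theorem~\ref{Th4.18}. So assume $AG(C_\mathscr{P}(X))$ is complemented and take $f,g,h\in Z(C_\mathscr{P}(X))^*$ with $f\perp g$ and $f\perp h$; we must show that $g$ and $h$ are adjacent to the same set of vertices. Observe first that, by Theorem~\ref{Th4.2}, whether an arbitrary vertex $k$ is adjacent to $g$ is determined solely by the sets $int_XZ(g)\cap X_\mathscr{P}$ and $int_XZ(k)\cap X_\mathscr{P}$. Hence it suffices to prove the single set equality $int_XZ(g)\cap X_\mathscr{P}=int_XZ(h)\cap X_\mathscr{P}$: once this is known, $g$ and $h$ are non-adjacent (neither of the two equal sets is properly contained in the other) and, for every other vertex $k$, $k$ is adjacent to $g$ exactly when it is adjacent to $h$, so $g$ and $h$ have the same set of neighbours.

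Next I would unpack the hypotheses. By Theorem~\ref{Th4.18}, $f\perp g$ yields $fg=0$ and ($|int_XZ(f)\cap X_\mathscr{P}|=1$ or $|int_XZ(g)\cap X_\mathscr{P}|=1$), and likewise $f\perp h$ yields $fh=0$ and ($|int_XZ(f)\cap X_\mathscr{P}|=1$ or $|int_XZ(h)\cap X_\mathscr{P}|=1$). From $fg=0$, Observation~\ref{Obs1} gives $\emptyset\neq X\setminus Z(g)\subseteq int_XZ(f)\cap X_\mathscr{P}$ and $\emptyset\neq X\setminus Z(f)\subseteq int_XZ(g)\cap X_\mathscr{P}$ (the non-emptiness follows since $f,g\neq 0$), and $fh=0$ gives the two analogous inclusions. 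Now I split on the size of $int_XZ(f)\cap X_\mathscr{P}$.

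If $|int_XZ(f)\cap X_\mathscr{P}|=1$, say it equals $\{x\}$, then $\emptyset\neq X\setminus Z(g)\subseteq\{x\}$ forces $X\setminus Z(g)=\{x\}$, and similarly $X\setminus Z(h)=\{x\}$; thus $Z(g)=Z(h)$, which gives the required equality at once. If instead $|int_XZ(f)\cap X_\mathscr{P}|\geq 2$, then Theorem~\ref{Th4.18} forces $|int_XZ(g)\cap X_\mathscr{P}|=1$ and $|int_XZ(h)\cap X_\mathscr{P}|=1$; write these singletons as $\{y\}$ and $\{z\}$. The point is that the single function $f$ pins down both: $\emptyset\neq X\setminus Z(f)\subseteq int_XZ(g)\cap X_\mathscr{P}=\{y\}$ forces $X\setminus Z(f)=\{y\}$, while the parallel inclusion coming from $fh=0$ forces $X\setminus Z(f)=\{z\}$, so $y=z$ and therefore $int_XZ(g)\cap X_\mathscr{P}=int_XZ(h)\cap X_\mathscr{P}$ again.

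I do not expect a serious obstacle here: the argument is a short deduction from Theorems~\ref{Th4.2} and~\ref{Th4.18} together with Observation~\ref{Obs1}. The only step that wants a moment's care is the opening reduction --- verifying that coincidence of the two sets $int_XZ(g)\cap X_\mathscr{P}$ and $int_XZ(h)\cap X_\mathscr{P}$ is precisely what the phrase ``adjacent to the same set of vertices'' demands. It is also worth remarking that the hypothesis that $AG(C_\mathscr{P}(X))$ be complemented is used only to render the term ``uniquely complemented'' applicable; the implication that $f\perp g$ and $f\perp h$ force $g$ and $h$ to have the same neighbours holds unconditionally.
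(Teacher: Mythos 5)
Your proposal is correct and follows essentially the same route as the paper: unpack $f\perp g$ and $f\perp h$ via Theorem~\ref{Th4.18}, split into the case $|int_XZ(f)\cap X_\mathscr{P}|=1$ and the case where both $int_XZ(g)\cap X_\mathscr{P}$ and $int_XZ(h)\cap X_\mathscr{P}$ are singletons, deduce $int_XZ(g)\cap X_\mathscr{P}=int_XZ(h)\cap X_\mathscr{P}$, and conclude by Theorem~\ref{Th4.2}. Your explicit justification of the opening reduction (that equality of these sets is exactly what ``adjacent to the same set of vertices'' requires) is a slightly more careful rendering of a step the paper treats as evident, but the argument is the same.
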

	\begin{proof}
		Let $f\perp g$ and $f\perp h$ in $AG(C_\mathscr{P}(X))$ for $f,g,h\in Z(C_\mathscr{P}(X))^*$. By Theorem \ref{Th4.18}, $f.g=0=f.h$ and either $|int_XZ(f)\cap X_\mathscr{P}|=1$ or $|int_XZ(g)\cap X_\mathscr{P}|=1=|int_XZ(h)\cap X_\mathscr{P}|$.\\
	If $int_XZ(f)\cap X_\mathscr{P}=\{x\}$ then $f.g=0\implies X\setminus Z(g)\subset int_XZ(f)\cap X_\mathscr{P}=\{x\}$, so that, $int_XZ(g)\cap X_\mathscr{P}=X_\mathscr{P}\setminus\{x\}$. Similarly, $f.h=0\implies int_XZ(h)\cap X_\mathscr{P}=X_\mathscr{P}\setminus\{x\}$. Hence, $int_XZ(g)\cap X_\mathscr{P}=int_XZ(h)\cap X_\mathscr{P}$.\\
	If $|int_XZ(g)\cap X_\mathscr{P}|=1=|int_XZ(h)\cap X_\mathscr{P}|$ then assuming $int_XZ(g)\cap X_\mathscr{P}=\{x\}$ and $int_XZ(h)\cap X_\mathscr{P}=\{y\}$, we arrive at the following : $f.g=0\implies X\setminus Z(f)\subset int_XZ(g)\cap X_\mathscr{P}=\{x\}$ and $f.h=0\implies X\setminus Z(f)\subset int_XZ(h)\cap X_\mathscr{P}=\{y\}$. Therefore, $X \setminus Z(f) = x=y$ and hence, $int_XZ(g)\cap X_\mathscr{P}=int_XZ(h)\cap X_\mathscr{P}$.\\
		So, in any case, $int_XZ(g)\cap X_\mathscr{P}=int_XZ(h)\cap X_\mathscr{P}$. By Theorem \ref{Th4.2}, it is evident that $g,h$ are adjacent to the same set of vertices in $AG(C_\mathscr{P}(X))$. 
	\end{proof}

If $|X_\mathscr{P}|=2$ then $AG(C_\mathscr{P}(X)) (= \Gamma(C_\mathscr{P}(X)))$ becomes a complete bipartite graph and hence, it is uniquely complemented. For if $|X_\mathscr{P}|<2$, $AG(C_\mathscr{P}(X))$ remains (trivially) uniquely complemented as well. So, being confined to our blanket assumption that $|X_\mathscr{P}|\geq 3$, we prove next that $AG(C_\mathscr{P}(X))$ is almost never a (uniquely) complemented graph. \\
\begin{theorem}
	$AG(C_\mathscr{P}(X))$ is uniquely complemented if and only if $|X_\mathscr{P}|= 3$.
\end{theorem}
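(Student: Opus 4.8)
The overall plan is to reduce the statement, via Theorem~\ref{Th4.20}, to the claim that $AG(C_\mathscr{P}(X))$ is \emph{complemented} if and only if $|X_\mathscr{P}|=3$: a complemented $AG(C_\mathscr{P}(X))$ is automatically uniquely complemented, and conversely a uniquely complemented graph is in particular complemented. Two background facts will be used throughout. First, the orthogonality criterion of Theorem~\ref{Th4.18}: $f\perp g$ iff $f.g=0$ and at least one of $|int_XZ(f)\cap X_\mathscr{P}|$, $|int_XZ(g)\cap X_\mathscr{P}|$ equals $1$. Second, $X_\mathscr{P}$ is open in the $T_1$ space $X$, so a finite nonempty open subset of $X$ contained in $X_\mathscr{P}$ consists of points isolated in $X$; in particular, when $|X_\mathscr{P}|=3$ all three $\mathscr{P}$-points are isolated, so for each such $p$ one has $\{p\}\in\mathscr{P}$ (a closed subset of a $\mathscr{P}$-neighbourhood of $p$) and $1_p\in C_\mathscr{P}(X)$, and likewise $1_{\{p,q\}}\in C_\mathscr{P}(X)$ for isolated $p,q\in X_\mathscr{P}$.

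For the direction $|X_\mathscr{P}|=3\Rightarrow$ complemented, write $X_\mathscr{P}=\{x_1,x_2,x_3\}$, all isolated. Then $int_XZ(f)\cap X_\mathscr{P}=\{x_i: f(x_i)=0\}$ for $f\in C_\mathscr{P}(X)$, so a vertex $f$ vanishes on exactly one or exactly two of the $x_i$. If it vanishes on exactly one, say $int_XZ(f)\cap X_\mathscr{P}=\{x_1\}$, I would take $g=1_{x_1}$: this is a vertex, $f.g=0$ since $X\setminus Z(g)=\{x_1\}\subseteq int_XZ(f)$, and $|int_XZ(f)\cap X_\mathscr{P}|=1$, so $f\perp g$ by Theorem~\ref{Th4.18}. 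If it vanishes on exactly two, say on $x_1,x_2$, then $g=1_{\{x_1,x_2\}}$ is a vertex, $f.g=0$, and $int_XZ(g)\cap X_\mathscr{P}=\{x_3\}$ is a singleton, so again $f\perp g$. Hence every vertex has a complement, and by Theorem~\ref{Th4.20} the graph is uniquely complemented.

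For the converse I would argue the contrapositive: if $|X_\mathscr{P}|\geq 4$, then $AG(C_\mathscr{P}(X))$ is not complemented. The key lemma is the following monotonicity estimate: whenever $f,g$ are vertices with $f.g=0$, Observation~\ref{Obs1} gives $X\setminus Z(g)\subseteq int_XZ(f)\subseteq Z(f)$, hence $cl_X(X\setminus Z(g))\subseteq Z(f)$, hence $int_XZ(g)=X\setminus cl_X(X\setminus Z(g))\supseteq X\setminus Z(f)$; since cozero sets lie in $X_\mathscr{P}$ this gives $int_XZ(g)\cap X_\mathscr{P}\supseteq X\setminus Z(f)$, so $|int_XZ(g)\cap X_\mathscr{P}|\geq |X\setminus Z(f)|$. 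Therefore it suffices to exhibit a single vertex $f$ with $|X\setminus Z(f)|\geq 2$ \emph{and} $|int_XZ(f)\cap X_\mathscr{P}|\geq 2$: for such an $f$, every vertex $g$ with $f.g=0$ fails the numerical part of the orthogonality criterion while every $g$ with $f.g\neq 0$ fails orthogonality outright, so $f$ has no complement. To build $f$ I split into two cases. If $X_\mathscr{P}$ contains a point $q$ not isolated in $X$, pick distinct $q_3,q_4\in X_\mathscr{P}\setminus\{q\}$ and apply Lemma~\ref{Lem1} to $q$ and the open set $X\setminus\{q_3,q_4\}$ to obtain $f\in C_\mathscr{P}(X)$ with $q\in X\setminus Z(f)\subseteq cl_X(X\setminus Z(f))\subseteq X\setminus\{q_3,q_4\}$; then $X\setminus Z(f)$ is infinite (an open neighbourhood of a non-isolated point), $q_3,q_4\in int_XZ(f)\cap X_\mathscr{P}$, and $f$ is a genuine vertex since $int_XZ(f)\cap X_\mathscr{P}\neq\emptyset$. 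If instead every point of $X_\mathscr{P}$ is isolated, pick two of them $p_1,p_2$ and take $f=1_{\{p_1,p_2\}}\in C_\mathscr{P}(X)$: then $|X\setminus Z(f)|=2$ and $int_XZ(f)\cap X_\mathscr{P}=X_\mathscr{P}\setminus\{p_1,p_2\}$ has at least two elements. (When $X_\mathscr{P}$ has no isolated point, Theorem~\ref{Th4.22} already yields the conclusion, but the construction above is uniform.)

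I expect the only-if direction, and within it the choice of the offending vertex, to be the main obstacle — essentially a matter of careful bookkeeping: one must check that the produced $f$ genuinely lies in $Z(C_\mathscr{P}(X))^*=\{f\neq 0 : int_XZ(f)\cap X_\mathscr{P}\neq\emptyset\}$, and one must handle separately the case in which $X_\mathscr{P}$ consists entirely of isolated points. This isolated-point dichotomy is precisely what makes $|X_\mathscr{P}|=3$ the threshold: a three-element open $X_\mathscr{P}$ is forced to be three isolated points, for which the small explicit complements $1_x$ and $1_{\{x,y\}}$ always exist; but as soon as $|X_\mathscr{P}|\geq 4$ one can always fit a cozero set of size $\geq 2$ inside $int_XZ(f)$ for a vertex $f$ whose own interior-zero-set still meets $X_\mathscr{P}$ in $\geq 2$ points, and that destroys complementation.
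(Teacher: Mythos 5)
Your proposal is correct and follows essentially the same route as the paper: reduce via Theorem~\ref{Th4.20} to complementedness, produce explicit complements $1_{x}$, $1_{\{x,y\}}$ when $|X_\mathscr{P}|=3$, and for $|X_\mathscr{P}|\geq 4$ exhibit a vertex $f$ with $|X\setminus Z(f)|\geq 2$ and $|int_XZ(f)\cap X_\mathscr{P}|\geq 2$, which Theorem~\ref{Th4.18} together with $f.g=0\Rightarrow X\setminus Z(f)\subseteq int_XZ(g)\cap X_\mathscr{P}$ shows has no complement. The only (immaterial) difference is that the paper builds this witness uniformly as a product $f_xf_y$ via Lemma~\ref{Lem2}, whereas you split into the isolated/non-isolated cases.
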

	\begin{proof}
		In view of Theorem~\ref{Th4.20}, it is enough to show that the graph is complemented if and only if $|X_\mathscr{P}|= 3$. \\
Let $X_\mathscr{P}=\{x,y,z\}$. So, $f\in Z(C_\mathscr{P}(X))^*$ implies that either $|X\setminus Z(f)|=1$ or $2$.\\
	If $|X\setminus Z(f)|=1$, say $X\setminus Z(f)=\{x\}$, then $g=1_{\{y,z\}} \in Z(C_\mathscr{P}(X))^*$ such that $f.g=0$ and $|int_XZ(g)\cap X_\mathscr{P}|=1$. By Theorem \ref{Th4.18}, $f\perp g$ in $AG(C_\mathscr{P}(X))$.\\
	If $|X\setminus Z(f)|=2$, say $X\setminus Z(f)=\{x,y\}$, then $|int_XZ(f)\cap X_\mathscr{P}|=1$. By choosing $g=1_z$ it is easy to see that $g \in Z(C_\mathscr{P}(X))^*$ with $f.g=0$. By Theorem \ref{Th4.18} again, $f\perp g$ in $AG(C_\mathscr{P}(X))$. \\
	For if $|X_\mathscr{P}|\geq 4$ there are atleast four distinct vertices $x, y, z, w \in X_\mathscr{P}$. By Lemma \ref{Lem2}, there exist $f_x,f_y\in Z(C_\mathscr{P}(X))^*$ such that $\{y,z,w\}\subset X\setminus Z(f_x)\subset X\setminus int_XZ(f_x)\subset X\setminus\{x\}$ and $\{x,z,w\}\subset X\setminus Z(f_y)\subset X\setminus int_XZ(f_y)\subset X\setminus\{y\}$. Then $x\in int_XZ(f_x)\cap X_\mathscr{P}$ and $y\in int_XZ(f_y)\cap X_\mathscr{P}$. Let $f=f_x.f_y\in C_\mathscr{P}(X)$. Then $x,y\in int_XZ(f)\cap X_\mathscr{P}\implies f\in Z(C_\mathscr{P}(X))^*$ and $z,w\in X\setminus Z(f)$. If possible let $g\in Z(C_\mathscr{P}(X))^*$ be such that $f\perp g$ in $AG(C_\mathscr{P}(X))$. By Theorem \ref{Th4.18}, $f.g=0$ and $|int_XZ(g)\cap X_\mathscr{P}|=1$ (as $|int_XZ(f)\cap X_\mathscr{P}|\geq 2$). But $f.g=0\implies X\setminus Z(f)\subset int_XZ(g)\cap X_\mathscr{P}\implies z,w\in int_XZ(g)\cap X_\mathscr{P}$ i.e., $|int_XZ(g)\cap X_\mathscr{P}|\geq 2$ and so, we arrive at a contradiction. Hence, no vertex is orthogonal to such $f$ in $AG(C_\mathscr{P}(X))$. i.e., $AG(C_\mathscr{P}(X))$ is not complemented when $|X_\mathscr{P}|\geq 4$.
	\end{proof}
In particular, special choices of $\mathscr{P}$ reveals when $C(X)$ and $C_K(X)$ are (uniquely) complemented, as recorded in the following Corollary:
\begin{corollary}
\begin{enumerate}
\item $AG(C(X))$ is uniquely complemented if and only if $X$ has atmost three points.
\item $AG(C_K(X))$ is uniquely complemented if and only if $X$ is locally compact at atmost three points.
\end{enumerate}
\end{corollary}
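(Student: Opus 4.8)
The plan is to read both statements off the characterization established immediately above, namely that $AG(C_\mathscr{P}(X))$ is uniquely complemented if and only if $|X_\mathscr{P}|=3$, after first folding in the small-cardinality cases discussed just before that theorem. So I would begin by recording the \emph{unconditional} form of the characterization: if $|X_\mathscr{P}|\le 1$ then $C_\mathscr{P}(X)$ is an integral domain or the zero ring, hence $AG(C_\mathscr{P}(X))$ is the empty graph and is (vacuously) uniquely complemented; if $|X_\mathscr{P}|=2$ then $AG(C_\mathscr{P}(X))=\Gamma(C_\mathscr{P}(X))$ is a complete bipartite graph, which is uniquely complemented; and for $|X_\mathscr{P}|\ge 3$ the preceding theorem applies verbatim. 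Combining these three cases, $AG(C_\mathscr{P}(X))$ is uniquely complemented if and only if $|X_\mathscr{P}|\le 3$.

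Next I would specialise $\mathscr{P}$. For part (1), take $\mathscr{P}$ to be the ideal of all closed subsets of $X$; then, as recorded in Section~2, $C_\mathscr{P}(X)=C(X)$ and $X_\mathscr{P}=X$, so the displayed characterization becomes: $AG(C(X))$ is uniquely complemented if and only if $|X|\le 3$, i.e.\ $X$ has at most three points. For part (2), take $\mathscr{P}$ to be the ideal of all compact subsets of $X$; then $C_\mathscr{P}(X)=C_K(X)$ and $X_\mathscr{P}$ is precisely the set of points at which $X$ is locally compact, so the characterization becomes: $AG(C_K(X))$ is uniquely complemented if and only if that set has cardinality at most three, i.e.\ $X$ is locally compact at at most three points.

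I do not anticipate any real obstacle here. The only point needing a little care is that the preceding theorem is stated under the blanket hypothesis $|X_\mathscr{P}|\ge 3$, so one must explicitly append the two degenerate cases $|X_\mathscr{P}|\le 2$ (the empty graph, and the complete bipartite graph) before one may legitimately read off ``at most three'' in place of ``exactly three''. Once that is done, the Corollary is simply the instantiation of $\mathscr{P}$ at the two ideals for which $C_\mathscr{P}(X)$ collapses to $C(X)$ and to $C_K(X)$ respectively, both identifications together with the corresponding descriptions of $X_\mathscr{P}$ having already been noted in Section~2.
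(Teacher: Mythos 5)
Your proposal is correct and follows exactly the paper's route: the paper states this corollary as an immediate specialization of the preceding theorem ($AG(C_\mathscr{P}(X))$ uniquely complemented iff $|X_\mathscr{P}|=3$ under the blanket assumption $|X_\mathscr{P}|\ge 3$), together with the remarks made just before it that the cases $|X_\mathscr{P}|\le 2$ (empty graph, respectively complete bipartite graph) are also uniquely complemented, and the Section~2 identifications $C_\mathscr{P}(X)=C(X)$, $X_\mathscr{P}=X$ and $C_\mathscr{P}(X)=C_K(X)$, $X_\mathscr{P}=\{$points of local compactness$\}$. Your explicit handling of the degenerate cases to justify reading ``at most three'' is precisely the folding-in the paper performs in the paragraph preceding its theorem, so there is nothing to add.
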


\begin{lemma}\label{Lem4.22}
		Let $f,g\in Z(C_\mathscr{P}(X))^*$ such that $f.g=0$ and $int_XZ(f)\cap int_XZ(g)\cap X_\mathscr{P}=\emptyset$. Then  for some $h\in C_\mathscr{P}(X)$, $[int_XZ(f)\cup int_XZ(g)]\cap X_\mathscr{P}\subset int_XZ(h)\cap X_\mathscr{P}$ implies $h=0$.
	\end{lemma}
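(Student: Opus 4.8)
The assertion to be established is that \emph{every} $h\in C_\mathscr{P}(X)$ satisfying $[int_XZ(f)\cup int_XZ(g)]\cap X_\mathscr{P}\subset int_XZ(h)\cap X_\mathscr{P}$ must be the zero function, and I would prove it by contradiction. Suppose some such $h$ were nonzero. The plan is to work entirely with the three cozero sets $A=X\setminus Z(f)$, $B=X\setminus Z(g)$ and $D=X\setminus Z(h)$. By Observation~\ref{Obs1}(i) each of these is an open subset of $X$ lying entirely inside $X_\mathscr{P}$, and from the standard identity $cl_X(X\setminus S)=X\setminus int_XS$ we get $cl_XA=X\setminus int_XZ(f)$ and $cl_XB=X\setminus int_XZ(g)$; moreover $D\neq\emptyset$ since $h\neq 0$.

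The first step is to extract the content of the hypothesis $f.g=0$. Since $C_\mathscr{P}(X)$ is commutative, $g.f=0$ as well, so Observation~\ref{Obs1}(ii) yields $A=X\setminus Z(f)\subset int_XZ(g)=X\setminus cl_XB$; equivalently $A\cap cl_XB=\emptyset$. (I note in passing that the other hypothesis, $int_XZ(f)\cap int_XZ(g)\cap X_\mathscr{P}=\emptyset$, will not actually be needed.)

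The core step is to show $D\subset cl_XA\cap cl_XB$. Pick any $x\in D$. Then $x\notin Z(h)$, hence $x\notin int_XZ(h)$, so $x\notin int_XZ(h)\cap X_\mathscr{P}$; but $x\in D\subset X_\mathscr{P}$, so the inclusion hypothesis on $h$ forces $x\notin int_XZ(f)\cup int_XZ(g)$, i.e.\ $x\in cl_XA\cap cl_XB$. As $x\in D$ was arbitrary, $D\subset cl_XA\cap cl_XB$. Finally, $D\subset cl_XB$ together with $A\cap cl_XB=\emptyset$ gives $D\cap A=\emptyset$; on the other hand $D$ is a nonempty open set contained in $cl_XA$, so taking $x\in D$ the set $D$ is a neighbourhood of $x$ that must meet $A$, i.e.\ $D\cap A\neq\emptyset$. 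This contradiction shows $D=\emptyset$, i.e.\ $Z(h)=X$, i.e.\ $h=0$.

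I do not expect a genuine obstacle here: the whole argument reduces to the elementary fact that a nonempty open subset of $cl_XA$ must meet $A$, applied once the hypothesis $f.g=0$ has been used to detach $D$ from $A$. The only point that needs a moment's care is the appeal to Observation~\ref{Obs1}(i), which is what guarantees that an arbitrary point of the cozero set $D$ is automatically a $\mathscr{P}$-point — precisely the condition under which the $X_\mathscr{P}$-restricted inclusion involving $h$ can be applied to that point.
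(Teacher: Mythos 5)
Your argument is correct, and it is worth noting that it does not follow the paper's route. The paper's proof runs the hypotheses in the opposite direction: from $f.g=0$ it gets $(X\setminus Z(f))\cup(X\setminus Z(g))\subset[int_XZ(f)\cup int_XZ(g)]\cap X_\mathscr{P}\subset int_XZ(h)\cap X_\mathscr{P}$, takes complements to conclude $X\setminus Z(h)\subset Z(f)\cap Z(g)\cap X_\mathscr{P}$, and then uses openness of the cozero set of $h$ to place it inside $int_XZ(f)\cap int_XZ(g)\cap X_\mathscr{P}$, which is empty \emph{by the second hypothesis}; so the disjointness assumption is essential to the paper's argument. You instead push the inclusion hypothesis pointwise on $X\setminus Z(h)$ to get $X\setminus Z(h)\subset cl_X(X\setminus Z(f))\cap cl_X(X\setminus Z(g))$, use $f.g=0$ only to detach $X\setminus Z(h)$ from $X\setminus Z(f)$, and finish with the elementary fact that a nonempty open subset of $cl_XA$ must meet $A$ (equivalently, the boundary of an open set has empty interior). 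The payoff of your version is that the hypothesis $int_XZ(f)\cap int_XZ(g)\cap X_\mathscr{P}=\emptyset$ is never used, so you have in fact proved a slightly stronger lemma (this costs nothing in the paper, since in the application, Theorem~\ref{Th4.23}, both hypotheses hold anyway); the paper's version is a shorter direct inclusion chain with no argument by contradiction. Both rely on exactly the same two ingredients from Observation~\ref{Obs1}, so the difference is one of routing rather than of machinery.
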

	\begin{proof}
		$f.g=0\implies X\setminus Z(f)\subset int_XZ(g)\cap X_\mathscr{P}$ and $X\setminus Z(g)\subset int_XZ(f)\cap X_\mathscr{P}$. Therefore, $X\setminus Z(f)\cup X\setminus Z(g)\subset [int_XZ(g)\cap X_\mathscr{P}]\cup[int_XZ(f)\cap X_\mathscr{P}]=[int_XZ(f)\cup int_XZ(g)]\cap X_\mathscr{P}\subset int_XZ(h)\cap X_\mathscr{P}\implies X\setminus[int_XZ(h)\cap X_\mathscr{P}]\subset Z(f)\cap Z(g)$. Thus $X\setminus Z(h)\subset X\setminus[int_XZ(h)\cap X_\mathscr{P}]\subset Z(f)\cap Z(g)\cap X_\mathscr{P}\implies X\setminus Z(h)\subset int_X[Z(f)\cap Z(g)\cap X_\mathscr{P}]= int_XZ(f)\cap int_XZ(g)\cap X_\mathscr{P}=\emptyset\implies Z(h)=X$, i.e., $h=0$. 
	\end{proof}
	\begin{theorem}\label{Th4.23}
		$dt(AG(C_\mathscr{P}(X)))=2$ if and only if there exists a pair of vertices $f,g$ in $AG(C_\mathscr{P}(X))$ such that $f.g=0$ and $int_XZ(f)\cap int_XZ(g)\cap X_\mathscr{P}=\emptyset$.
	\end{theorem}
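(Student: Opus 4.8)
The plan is to establish both directions by characterizing when a two-element set $D = \{f,g\}$ can be a dominating (indeed total dominating) set of $AG(C_\mathscr{P}(X))$. Since $AG(C_\mathscr{P}(X))$ is a simple graph, $dt(AG(C_\mathscr{P}(X))) \geq 2$ always, so proving the equality amounts to exhibiting a dominating pair exactly under the stated hypothesis. For the forward direction, I would start from a dominating set $D = \{f,g\}$ of size $2$. First I would argue that $f$ and $g$ must themselves be adjacent: if they were not, then by Corollary~\ref{Cor4.7} and the adjacency criterion, both $2f$ and $2g$ fail to be dominated — indeed $2f$ is not adjacent to $f$ (elementary fact (ii)) and, since $int_XZ(2g)\cap X_\mathscr{P} = int_XZ(g)\cap X_\mathscr{P}$ is comparable with $int_XZ(f)\cap X_\mathscr{P}$ when $f,g$ are non-adjacent, $2f$ is not adjacent to $g$ either; a short case check on which of $f,g$ is "smaller" gives the contradiction. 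Hence $f-g$ is an edge.

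Next, with $f,g$ adjacent, I would show $f.g = 0$: if $f.g \neq 0$, then by Theorem~\ref{Th4.6}(i) there is a vertex $h$ adjacent to both $f$ and $g$, and I would leverage Theorem~\ref{Th4.15}(i)(a) together with the structure of the graph to produce a vertex not dominated by $\{f,g\}$ — concretely, a suitably chosen $h'$ lying "strictly between" $f$ and $g$ that is adjacent to neither. (Here the key point is that $f.g\neq 0$ forces $int_XZ(f)\cap X_\mathscr{P}$ and $int_XZ(g)\cap X_\mathscr{P}$ to have nonempty union that is not all of $X_\mathscr{P}$, leaving room to build an undominated vertex using Lemma~\ref{Lem1} and Theorem~\ref{Th4.2}.) Finally, having $f.g = 0$, suppose toward a contradiction that $int_XZ(f)\cap int_XZ(g)\cap X_\mathscr{P} \neq \emptyset$; then there is a point $x$ in this intersection, and by Lemma~\ref{Lem1} there is a vertex $h$ with $X\setminus Z(h)\subset int_XZ(f)\cap int_XZ(g)\cap X_\mathscr{P}$, so $int_XZ(h)\cap X_\mathscr{P}$ contains both $int_XZ(f)\cap X_\mathscr{P}$ and $int_XZ(g)\cap X_\mathscr{P}$ only if... — more carefully, $h$ satisfies $f.h = g.h = 0$, and by Theorem~\ref{Th4.2} $h$ is adjacent to neither $f$ nor $g$; since also $2h$ is not adjacent to $h$ yet must be dominated, one pushes this to a contradiction, or more directly one finds that $\{f,g\}$ fails to dominate $h$. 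This yields $int_XZ(f)\cap int_XZ(g)\cap X_\mathscr{P} = \emptyset$, as required.

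For the converse, suppose $f,g \in Z(C_\mathscr{P}(X))^*$ satisfy $f.g = 0$ and $int_XZ(f)\cap int_XZ(g)\cap X_\mathscr{P} = \emptyset$. I claim $D = \{f,g\}$ is a total dominating set. Given any vertex $k \in Z(C_\mathscr{P}(X))^*$, I must show $k$ is adjacent to $f$ or to $g$; the same argument applied to $k \in \{f,g\}$ handles totality (note $f,g$ are adjacent to each other since $f.g=0$ gives adjacency in $\Gamma$, hence in $AG$, provided we check $f\neq g$ and neither is a multiple of the other — and if, say, $k=2f$ we instead show $2f$ is adjacent to $g$). The heart of the matter is Lemma~\ref{Lem4.22}: from $f.g=0$ we get $X\setminus Z(f)\subset int_XZ(g)\cap X_\mathscr{P}$ and $X\setminus Z(g)\subset int_XZ(f)\cap X_\mathscr{P}$, so $[int_XZ(f)\cup int_XZ(g)]\cap X_\mathscr{P}$ covers $X\setminus Z(f)$ and $X\setminus Z(g)$, and Lemma~\ref{Lem4.22} tells us that no nonzero $h$ has $int_XZ(h)\cap X_\mathscr{P}$ containing this union. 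Applying this with $h=k$: it is impossible that $[int_XZ(f)\cup int_XZ(g)]\cap X_\mathscr{P}\subset int_XZ(k)\cap X_\mathscr{P}$, so at least one of $int_XZ(f)\cap X_\mathscr{P}$, $int_XZ(g)\cap X_\mathscr{P}$ is not contained in $int_XZ(k)\cap X_\mathscr{P}$; say $int_XZ(f)\cap X_\mathscr{P}\not\subset int_XZ(k)\cap X_\mathscr{P}$. If also $int_XZ(k)\cap X_\mathscr{P}\not\subset int_XZ(f)\cap X_\mathscr{P}$ then $k$ is adjacent to $f$ by Theorem~\ref{Th4.2} and we are done; otherwise $int_XZ(k)\cap X_\mathscr{P}\subsetneq int_XZ(f)\cap X_\mathscr{P}$, and since $int_XZ(f)\cap int_XZ(g)\cap X_\mathscr{P}=\emptyset$ we get $int_XZ(k)\cap int_XZ(g)\cap X_\mathscr{P}=\emptyset$; combining with the fact that $int_XZ(g)\cap X_\mathscr{P}\supsetneq\emptyset$ (it contains $X\setminus Z(f)\neq\emptyset$) and $int_XZ(k)\cap X_\mathscr{P}\neq\emptyset$, neither of $int_XZ(k)\cap X_\mathscr{P}$, $int_XZ(g)\cap X_\mathscr{P}$ contains the other, so $k$ is adjacent to $g$ by Theorem~\ref{Th4.2}.

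The main obstacle I anticipate is the forward direction, specifically the step ruling out $f.g\neq 0$: one must construct, from a dominating pair $\{f,g\}$ with $f.g\neq 0$, an explicit undominated vertex, and this requires careful bookkeeping with Lemma~\ref{Lem1}, Lemma~\ref{Lem2} and Theorem~\ref{Th4.2} to ensure the constructed function lies in $Z(C_\mathscr{P}(X))^*$ and is genuinely adjacent to neither $f$ nor $g$ (the blanket assumption $|X_\mathscr{P}|\geq 3$ will be used here to have enough room to separate points). The non-adjacency sub-case is comparatively routine once one notes that $\{2f\}$ or $\{2g\}$ is a candidate undominated vertex. In the converse, the only delicate point is handling vertices of the form $k = mf$ or $k = mg$ and verifying $\{f,g\}$ is genuinely *total* dominating rather than merely dominating, but the elementary facts preceding Theorem~\ref{Th4.15} make this transparent.
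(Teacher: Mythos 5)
Your converse direction is sound and is essentially the paper's argument in a slightly repackaged form: Lemma~\ref{Lem4.22} excludes the case $[int_XZ(f)\cup int_XZ(g)]\cap X_\mathscr{P}\subset int_XZ(k)\cap X_\mathscr{P}$, and the hypothesis $int_XZ(f)\cap int_XZ(g)\cap X_\mathscr{P}=\emptyset$ disposes of the remaining cases, exactly as in the paper's four-case check. One small repair is needed there: $dt\geq 2$ does not follow from the graph being simple (a single universal vertex can dominate a simple graph, since domination only concerns vertices outside $D$); the correct reason, used in the paper, is that $2f$ is never adjacent to $f$, so no singleton dominates.

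The genuine gap is in the forward direction, in the step ruling out $int_XZ(f)\cap int_XZ(g)\cap X_\mathscr{P}\neq\emptyset$. You take $h$ with $X\setminus Z(h)\subset int_XZ(f)\cap int_XZ(g)\cap X_\mathscr{P}$, note $f.h=g.h=0$, and assert via Theorem~\ref{Th4.2} that $h$ is adjacent to neither $f$ nor $g$. This is backwards: $f.h=0$ with $f\neq h$ forces adjacency in $AG(C_\mathscr{P}(X))$, since the zero-divisor graph is a subgraph of the annihilator graph; concretely, $X\setminus Z(h)\subset int_XZ(f)\cap X_\mathscr{P}$ makes both containments of Theorem~\ref{Th4.2} fail (either containment would force $X\setminus Z(h)\subset int_XZ(h)$ or $X\setminus Z(f)\subset int_XZ(f)$, i.e.\ $h=0$ or $f=0$). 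So your $h$ \emph{is} dominated by $\{f,g\}$ and produces no contradiction, and the fallback via $2h$ fails for the same reason. What is needed is a vertex whose interior zero set is contained in both $int_XZ(f)$ and $int_XZ(g)$: the paper takes $h=f^2+g^2$, for which $int_XZ(h)=int_XZ(f)\cap int_XZ(g)$, so $h\in Z(C_\mathscr{P}(X))^*$ precisely because the triple intersection is nonempty, $h\notin\{f,g\}$ by the already-established adjacency of $f$ and $g$, and the one-sided containments of Theorem~\ref{Th4.2} show $h$ is adjacent to neither. Relatedly, your step ruling out $f.g\neq 0$ is only a sketch, and the picture of an undominated vertex lying ``strictly between'' $f$ and $g$ is the wrong one: the paper picks $x\in(X\setminus Z(f))\cap(X\setminus Z(g))$ and uses Lemma~\ref{Lem1} to get $h$ with $X\setminus Z(h)$ inside this common cozero set, whence $Z(f)\cup Z(g)\subset int_XZ(h)$, so $int_XZ(h)\cap X_\mathscr{P}$ contains both $int_XZ(f)\cap X_\mathscr{P}$ and $int_XZ(g)\cap X_\mathscr{P}$ and $h$ is adjacent to neither; until these two constructions are carried out correctly, the forward implication is not proved.
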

	\begin{proof}
		Let $dt(AG(C_\mathscr{P}(X)))=2$ and $\{f,g\}$ be a dominating set in $AG(C_\mathscr{P}(X))$. Since $2f$ is not adjacent to $f$ and $2f\notin \{f,g\}$, $2f$ is adjacent to $g$ and so $f$ is adjacent to $g$ in $AG(C_\mathscr{P}(X))$. By Theorem \ref{Th4.2}, we get
		\begin{equation}\label{eq1}
			\begin{split}
					int_XZ(f)\cap X_\mathscr{P}\not\subset int_XZ(g)\cap X_\mathscr{P}\text{ and }\\
					int_XZ(g)\cap X_\mathscr{P}\not\subset int_XZ(f)\cap X_\mathscr{P}
			\end{split}
		\end{equation}
	 	If possible let $f.g\neq 0$ and $x\in X\setminus Z(f)\cap X\setminus Z(g)$. Then by Lemma~\ref{Lem1}, there exists $h\in C_\mathscr{P}(X)$ such that $x\in X\setminus Z(h)\subset X\setminus int_XZ(h)\subset X\setminus Z(f)\cap X\setminus Z(g)\implies h\neq 0$ and $Z(f)\cup Z(g)\subset int_XZ(h)\implies h\in Z(C_\mathscr{P}(X))^*$ and $h$ is not adjacent to both $f,g$ in $AG(C_\mathscr{P}(X))$. By Equation \ref{eq1} and the fact that $Z(f)\cup Z(g)\subset int_XZ(h)$, we get $h\neq f$ and $h\neq g$, i.e., $h\notin \{f,g\}$ which contradicts that $\{f,g\}$ is a dominating set in $AG(C_\mathscr{P}(X))$. \\
		If possible let $int_XZ(f)\cap int_XZ(g)\cap X_\mathscr{P}\neq\emptyset$. Choosing $h=f^2+g^2\in C_\mathscr{P}(X)$, $int_XZ(h)=int_XZ(f)\cap int_XZ(g)\implies int_XZ(h)\cap X_\mathscr{P}\neq\emptyset\implies h\in Z(C_\mathscr{P}(X))^*$ and $h\neq f$, $h\neq g$ by Equation \ref{eq1}. Since $int_XZ(f)\cap int_XZ(g)\cap X_\mathscr{P}=int_XZ(h)\cap X_\mathscr{P}$, $h\notin\{f,g\}$ is not adjacent to both $f,g$ in $AG(C_\mathscr{P}(X))$, which gives a contradiction. \\
		Conversely, let $f,g\in Z(C_\mathscr{P}(X))^*$ be such that $f.g=0$ and $int_XZ(f)\cap int_XZ(g)\cap X_\mathscr{P}=\emptyset$. Since for each $h\in Z(C_\mathscr{P}(X))^*$, $ecc(h) =2$ implies that $dt(AG(C_\mathscr{P}(X)))\geq 2$. We now show that $\{f,g\}$ is a dominating set in $AG(C_\mathscr{P}(X))$. If possible let $h$ be not adjacent to both $f$ and $g$ in $AG(C_\mathscr{P}(X))$. Then by Theorem~\ref{Th4.2}, [either $int_XZ(f)\cap X_\mathscr{P}\subset int_XZ(h)\cap X_\mathscr{P}$ or $int_XZ(h)\cap X_\mathscr{P}\subset int_XZ(f)\cap X_\mathscr{P}$] and [either $int_XZ(g)\cap X_\mathscr{P}\subset int_XZ(h)\cap X_\mathscr{P}$ or $int_XZ(h)\cap X_\mathscr{P}\subset int_XZ(g)\cap X_\mathscr{P}$].\\
		If $int_XZ(f)\cap X_\mathscr{P}\subset int_XZ(h)\cap X_\mathscr{P}$ and $int_XZ(g)\cap X_\mathscr{P}\subset int_XZ(h)\cap X_\mathscr{P}$ then $[int_XZ(f)\cup int_XZ(g)]\cap X_\mathscr{P}\subset int_XZ(h)\cap X_\mathscr{P}$ and hence by Lemma \ref{Lem4.22}, we get $h=0$ which is not possible as $h\in Z(C_\mathscr{P}(X))^*$.\\
		If $int_XZ(h)\cap X_\mathscr{P}\subset int_XZ(f)\cap X_\mathscr{P}$ and $int_XZ(h)\cap X_\mathscr{P}\subset int_XZ(g)\cap X_\mathscr{P}$ then $int_XZ(h)\cap X_\mathscr{P}\subset int_XZ(f)\cap int_XZ(g)\cap X_\mathscr{P}=\emptyset$ which contradicts that $h\in Z(C_\mathscr{P}(X))^*$.\\
		If $int_XZ(h)\cap X_\mathscr{P}\subset int_XZ(f)\cap X_\mathscr{P}$ and $int_XZ(g)\cap X_\mathscr{P}\subset int_XZ(h)\cap X_\mathscr{P}$ then $int_XZ(g)\cap X_\mathscr{P}\subset int_XZ(f)\cap X_\mathscr{P}\implies int_XZ(f)\cap int_XZ(g)\cap X_\mathscr{P}= int_XZ(g)\cap X_\mathscr{P}$. Therefore $int_XZ(g)\cap X_\mathscr{P}=\emptyset$ which contradicts that $g\in Z(C_\mathscr{P}(X))^*$.\\
		If $int_XZ(f)\cap X_\mathscr{P}\subset int_XZ(h)\cap X_\mathscr{P}$ and $int_XZ(h)\cap X_\mathscr{P}\subset int_XZ(g)\cap X_\mathscr{P}$ then $int_XZ(f)\cap X_\mathscr{P}\subset int_XZ(g)\cap X_\mathscr{P}$ and we get a similar contradiction.\\
		Hence, $\{f,g\}$ is a dominating set in $AG(C_\mathscr{P}(X))$, proving $dt(AG(C_\mathscr{P}(X)))=2$.
	\end{proof}
	\begin{theorem}\label{Th4.26}
		$dt(AG(C_\mathscr{P}(X)))=2$ if and only if $\Gamma(C_\mathscr{P}(X))$ is not hypertriangulated.
	\end{theorem}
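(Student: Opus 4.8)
The plan is to route everything through Theorem~\ref{Th4.23}, which already identifies $dt(AG(C_\mathscr{P}(X)))=2$ with the existence of a pair $f,g\in Z(C_\mathscr{P}(X))^*$ satisfying $f.g=0$ and $int_XZ(f)\cap int_XZ(g)\cap X_\mathscr{P}=\emptyset$. Since an edge of $\Gamma(C_\mathscr{P}(X))$ is precisely an unordered pair $\{f,g\}$ of distinct elements of $Z(C_\mathscr{P}(X))^*$ with $f.g=0$ (and, under the blanket hypothesis $|X_\mathscr{P}|\geq 3$, such pairs exist, so being hypertriangulated is not a vacuous condition), it suffices to establish the bridging equivalence
\[
f-g \text{ lies on a triangle in } \Gamma(C_\mathscr{P}(X)) \iff int_XZ(f)\cap int_XZ(g)\cap X_\mathscr{P}\neq\emptyset
\]
for every edge $f-g$ of $\Gamma(C_\mathscr{P}(X))$. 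Granting it, $\Gamma(C_\mathscr{P}(X))$ fails to be hypertriangulated exactly when some edge $f-g$ violates the right-hand side, which is exactly the configuration produced by Theorem~\ref{Th4.23}; stringing the two equivalences together finishes the proof.

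For the bridging equivalence I would first observe, using Observation~\ref{Obs1}, that for $h\in C_\mathscr{P}(X)$ the relation $f.h=0$ amounts to $X\setminus Z(h)\subseteq int_XZ(f)$, hence, every co-zero point being a $\mathscr{P}$-point, to $X\setminus Z(h)\subseteq int_XZ(f)\cap X_\mathscr{P}$; the same with $g$ in place of $f$. So a nonzero $h$ has $f.h=g.h=0$ iff $X\setminus Z(h)\subseteq int_XZ(f)\cap int_XZ(g)\cap X_\mathscr{P}$, and any such $h$ is automatically in $Z(C_\mathscr{P}(X))^*$ and is distinct from $f$ and from $g$ because $C_\mathscr{P}(X)$ is reduced (if $h=f$ then $f^2=f.h=0$, forcing $f=0$). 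When the triple intersection is empty the only such $h$ is $0$, so $f-g$ lies on no triangle. When it contains a point $x$, the set $int_XZ(f)\cap int_XZ(g)$ is an open neighbourhood of the $\mathscr{P}$-point $x$, so Lemma~\ref{Lem1} supplies $h\in C_\mathscr{P}(X)$ with $x\in X\setminus Z(h)\subseteq int_XZ(f)\cap int_XZ(g)$; then $h\neq 0$, $f.h=g.h=0$, and $f-h-g-f$ is a triangle through the edge $f-g$.

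I do not anticipate a genuine obstacle: given Theorem~\ref{Th4.23}, Observation~\ref{Obs1} and Lemma~\ref{Lem1}, the argument is essentially bookkeeping. The two points that need a moment's care are translating ``$h$ adjacent in $\Gamma(C_\mathscr{P}(X))$ to both $f$ and $g$'' into the single containment $X\setminus Z(h)\subseteq int_XZ(f)\cap int_XZ(g)\cap X_\mathscr{P}$, and checking that the witness $h$ coming out of Lemma~\ref{Lem1} is a bona fide third vertex rather than $f$ or $g$; both rest on $C_\mathscr{P}(X)$ having no nonzero nilpotents.
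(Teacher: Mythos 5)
Your proposal is correct and follows essentially the same route as the paper: both directions are reduced to the criterion of Theorem~\ref{Th4.23}, combined with the characterization that an edge $f-g$ of $\Gamma(C_\mathscr{P}(X))$ lies on a triangle if and only if $int_XZ(f)\cap int_XZ(g)\cap X_\mathscr{P}\neq\emptyset$. The only difference is that the paper imports this characterization from Theorem 2.28 of \cite{Ach}, whereas you prove it inline from Observation~\ref{Obs1} and Lemma~\ref{Lem1} (correctly, including the check that the witness $h$ differs from $f$ and $g$ since the ring is reduced), which merely makes the argument self-contained.
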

	\begin{proof}
		By Theorem $2.28$ in \cite{Ach}, $\Gamma(C_\mathscr{P}(X))$ is hypertriangulated if and only if for any edge $f-g$ in $\Gamma(C_\mathscr{P}(X))$, $int_XZ(f)\cap int_XZ(g)\cap X_\mathscr{P}\neq\emptyset$. Let $\Gamma(C_\mathscr{P}(X))$ be not hypertriangulated. Then there is an edge $f-g$ in $\Gamma(C_\mathscr{P}(X))$ such that $int_XZ(f)\cap int_XZ(g)\cap X_\mathscr{P}=\emptyset$. Since $f-g$ is an edge in $\Gamma(C_\mathscr{P}(X))$, $f.g=0$. Hence, by Theorem \ref{Th4.23}, $dt(AG(C_\mathscr{P}(X)))=2$. Conversely, if $dt(AG(C_\mathscr{P}(X))) =2$ then there exists $f,g\in Z(C_\mathscr{P}(X))^*$ such that $f.g=0$ and $int_XZ(f)\cap int_XZ(g)\cap X_\mathscr{P}=\emptyset$. Then $f.g=0\implies f-g$ is an edge in $\Gamma(C_\mathscr{P}(X))$ and $int_XZ(f)\cap int_XZ(g)\cap X_\mathscr{P}=\emptyset\implies f-g$ is not an edge of a triangle in $\Gamma(C_\mathscr{P}(X))$. Therefore, $\Gamma(C_\mathscr{P}(X))$ is not hypertriangulated.
	\end{proof}
	The following Corollary is an immediate consequence of Theorem \ref{Th4.26} and Theorem $2.29$ in \cite{Ach}.
	\begin{corollary}
		If $cl_X(X_\mathscr{P})\notin \mathscr{P}$, then $dt(AG(C_\mathscr{P}(X)))>2$. 
	\end{corollary}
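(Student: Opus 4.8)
The plan is to deduce this purely formally from Theorem~\ref{Th4.26} together with Theorem~2.29 of \cite{Ach}; no new topology or graph theory is needed. First I would record the elementary observation, already noted in the excerpt, that $dt(AG(C_\mathscr{P}(X)))\geq 2$ always holds: indeed $ecc(h)=2$ for every $h\in Z(C_\mathscr{P}(X))^*$, so no single vertex can dominate the graph, and under our standing hypothesis $|X_\mathscr{P}|\geq 3$ the vertex set $Z(C_\mathscr{P}(X))^*$ is nonempty, so this lower bound is meaningful. Consequently the assertion $dt(AG(C_\mathscr{P}(X)))>2$ is literally equivalent to the assertion $dt(AG(C_\mathscr{P}(X)))\neq 2$.

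Next I would take the contrapositive of Theorem~\ref{Th4.26}: since $dt(AG(C_\mathscr{P}(X)))=2$ if and only if $\Gamma(C_\mathscr{P}(X))$ is not hypertriangulated, it follows that $dt(AG(C_\mathscr{P}(X)))\neq 2$ if and only if $\Gamma(C_\mathscr{P}(X))$ is hypertriangulated. Combining this with the previous paragraph, $dt(AG(C_\mathscr{P}(X)))>2$ holds precisely when $\Gamma(C_\mathscr{P}(X))$ is hypertriangulated.

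Finally I would invoke Theorem~2.29 of \cite{Ach}, which furnishes the required sufficient condition: if $cl_X(X_\mathscr{P})\notin\mathscr{P}$ then $\Gamma(C_\mathscr{P}(X))$ is hypertriangulated. Chaining the two equivalences of the preceding paragraphs with this implication yields $dt(AG(C_\mathscr{P}(X)))>2$ whenever $cl_X(X_\mathscr{P})\notin\mathscr{P}$, which is exactly the claim. I do not expect any genuine obstacle here — the corollary is essentially a bookkeeping exercise — the only points requiring care are to cite Theorem~2.29 of \cite{Ach} in the correct direction (hypothesis on $\mathscr{P}$ $\Rightarrow$ hypertriangulatedness of $\Gamma(C_\mathscr{P}(X))$) and to keep the trivial lower bound $dt(AG(C_\mathscr{P}(X)))\geq 2$ in place, so that the conclusion ``$\neq 2$'' may be upgraded to ``$>2$''.
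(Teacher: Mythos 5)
Your argument is correct and is essentially the paper's own (the paper simply declares the corollary an immediate consequence of Theorem~\ref{Th4.26} together with Theorem~2.29 of \cite{Ach}, which is exactly the chain you spell out, including the standing lower bound $dt(AG(C_\mathscr{P}(X)))\geq 2$ coming from the non-adjacency of $f$ and $2f$). Your reading of Theorem~2.29 of \cite{Ach} -- that $cl_X(X_\mathscr{P})\notin\mathscr{P}$ forces $\Gamma(C_\mathscr{P}(X))$ to be hypertriangulated -- is the correct direction, so nothing is missing.
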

	\begin{theorem}\label{Th4.25}
		If $X_\mathscr{P}$ is clopen and properly contains a clopen subset of $X$ then $$dt(AG(C_\mathscr{P}(X)))=2.$$
	\end{theorem}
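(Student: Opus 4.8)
The plan is to reduce everything to the criterion of Theorem~\ref{Th4.23}, which asserts that $dt(AG(C_\mathscr{P}(X)))=2$ exactly when $AG(C_\mathscr{P}(X))$ possesses a pair of vertices $f,g$ with $f\cdot g=0$ and $int_XZ(f)\cap int_XZ(g)\cap X_\mathscr{P}=\emptyset$. So the whole task is to exhibit one such pair from the stated hypothesis on $X_\mathscr{P}$.

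The natural candidate is built from clopen characteristic functions. Let $A$ be the clopen subset of $X$ supplied by the hypothesis, so $\emptyset\neq A\subsetneq X_\mathscr{P}$; since $X_\mathscr{P}$ is itself clopen, the complement $X_\mathscr{P}\setminus A$ is also a nonempty clopen subset of $X$. Put $f=1_A$ and $g=1_{X_\mathscr{P}\setminus A}$. One first checks that $f,g\in C_\mathscr{P}(X)$: both are continuous because $A$ and $X_\mathscr{P}\setminus A$ are clopen, and their cozero sets are $A$ and $X_\mathscr{P}\setminus A$, which lie in $\mathscr{P}$ (this is the one place the hypothesis on $X_\mathscr{P}$ is genuinely used -- see below). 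Next, using the description $Z(C_\mathscr{P}(X))^*=\{h\in C_\mathscr{P}(X)\setminus\{0\}:int_XZ(h)\cap X_\mathscr{P}\neq\emptyset\}$ recalled before Observation~\ref{Obs1}, both $f$ and $g$ are vertices: $f\neq 0\neq g$, while $int_XZ(f)\cap X_\mathscr{P}=X_\mathscr{P}\setminus A$ and $int_XZ(g)\cap X_\mathscr{P}=A$ are nonempty because $A$ is a proper nonempty subset of $X_\mathscr{P}$.

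The two required properties are then immediate set computations. Since $A$ and $X_\mathscr{P}\setminus A$ are disjoint, $f\cdot g=1_{A\cap(X_\mathscr{P}\setminus A)}=0$; and intersecting the descriptions above gives $int_XZ(f)\cap int_XZ(g)\cap X_\mathscr{P}=(X_\mathscr{P}\setminus A)\cap A=\emptyset$. Feeding this pair into Theorem~\ref{Th4.23} yields $dt(AG(C_\mathscr{P}(X)))=2$. As a consistency check one can instead route the argument through Theorem~\ref{Th4.26}: $f\cdot g=0$ makes $f-g$ an edge of $\Gamma(C_\mathscr{P}(X))$, and the vanishing of $int_XZ(f)\cap int_XZ(g)\cap X_\mathscr{P}$ keeps that edge out of every triangle, so $\Gamma(C_\mathscr{P}(X))$ is not hypertriangulated.

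The step I expect to be the crux -- and the one to state carefully in the write-up -- is verifying $1_A,1_{X_\mathscr{P}\setminus A}\in C_\mathscr{P}(X)$, i.e. that the clopen sets $A$ and $X_\mathscr{P}\setminus A$ (being closed, each equals its own closure) are members of $\mathscr{P}$. This is precisely what the hypothesis ``$X_\mathscr{P}$ is clopen and properly contains a clopen subset of $X$'' is meant to guarantee, and it should be made explicit (for instance, that a clopen subset of $X$ lying inside the clopen set $X_\mathscr{P}$, together with its clopen complement inside $X_\mathscr{P}$, belongs to $\mathscr{P}$). Granting that, the remainder is routine Boolean bookkeeping combined with Theorem~\ref{Th4.2} and Theorem~\ref{Th4.23}.
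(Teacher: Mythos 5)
Your construction is exactly the paper's: the paper also takes a nonempty proper clopen subset $K$ of the clopen set $X_\mathscr{P}$, puts $f=1_K$, $g=1_{X_\mathscr{P}\setminus K}$, computes $f\cdot g=0$ and $int_XZ(f)\cap int_XZ(g)\cap X_\mathscr{P}=\emptyset$, and invokes Theorem~\ref{Th4.23}; all of your set-theoretic bookkeeping and the verification that $f,g$ are vertices match the paper's argument.

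The problem is the step you yourself single out as the crux, namely that $A$ and $X_\mathscr{P}\setminus A$ belong to $\mathscr{P}$ so that $1_A,1_{X_\mathscr{P}\setminus A}\in C_\mathscr{P}(X)$. You leave this as an assertion (``this is precisely what the hypothesis is meant to guarantee''), but it does not follow from the stated hypothesis: $A\subset X_\mathscr{P}$ only says each point of $A$ has a neighborhood whose closure lies in $\mathscr{P}$, and without compactness of $A$ (compare Lemma~\ref{Lem2}) one cannot conclude $A\in\mathscr{P}$. Concretely, let $X$ be an infinite discrete space and $\mathscr{P}$ the ideal of finite subsets; then $X_\mathscr{P}=X$ is clopen and properly contains nonempty clopen subsets, yet $1_A\notin C_\mathscr{P}(X)$ for any infinite clopen $A$, and in fact the conclusion of the theorem fails in this case, since Theorem~\ref{dom}(3) gives $dt(AG(C_F(X)))=\aleph_0$. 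So the gap is genuine and cannot be closed from the hypothesis as stated; in fairness, the paper's own proof makes the identical unjustified jump (``since both $K$ and $X_\mathscr{P}\setminus K$ are clopen in $X$, $f,g\in C_\mathscr{P}(X)$''), so you have faithfully reproduced its defect rather than introduced a new one. What is actually needed is an additional assumption such as $cl_X(X_\mathscr{P})=X_\mathscr{P}\in\mathscr{P}$ (note the Corollary following Theorem~\ref{Th4.26} already shows this condition is necessary for $dt=2$), or at least that the chosen clopen pieces lie in $\mathscr{P}$: since $\mathscr{P}$ is closed under closed subsets, $X_\mathscr{P}\in\mathscr{P}$ makes $K$ and $X_\mathscr{P}\setminus K$ members of $\mathscr{P}$, and then both your argument and the paper's go through verbatim, which is exactly the situation in the paper's applications (e.g.\ Theorem~\ref{dom}(3) with $K_X$ finite, or $C(X)$ where $\mathscr{P}$ contains all closed sets).
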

	\begin{proof}
		Since $X_\mathscr{P}$ is a disconnected subspace of $X$,  $X_\mathscr{P}$ contains a non-empty proper clopen subset, say $K$. Since $X_\mathscr{P}$ is clopen in $X$, $K$ is clopen in $X$ also. Consider $f=1_K$ and $g=1_{X_\mathscr{P}\setminus K}$. Since both $K$ and $X_\mathscr{P}\setminus K$ are clopen in $X$, $f,g\in C_\mathscr{P}(X)$ and $f.g=0$. Also $int_XZ(f)\cap X_\mathscr{P}= X_\mathscr{P}\setminus K$ and $int_XZ(f)\cap X_\mathscr{P}=K\implies int_XZ(f)\cap int_XZ(g)\cap X_\mathscr{P}=\emptyset$ and $f,g\in Z(C_\mathscr{P}(X))^*$. So, by Theorem~\ref{Th4.23}, $dt(AG(C_\mathscr{P}(X)))=2$.
	\end{proof}
	The converse of Theorem \ref{Th4.25} is not true in general. For example let $X=\mathbb{R}$ and $\mathscr{P}$ be the ideal of all closed sets in $X$. Then $C_\mathscr{P}(X)=C(X)$ and $X_\mathscr{P}= \mathbb{R}$ which is not a disconnected subspace of $X$. However, $dt(AG(C_\mathscr{P}(X)))=2$, because if we consider $f(x)=\begin{cases}
		0&x\leq 0\\
		x&x\geq 0
	\end{cases}$ and $g(x)=\begin{cases}
		x&x\leq 0\\
		0&x\geq 0
	\end{cases}$, then $f,g\in Z(C(X))^*$ with $f.g=0$ and $int_XZ(f)\cap int_XZ(g)=\emptyset$. So, by Theorem~\ref{Th4.23}, $dt(AG(C(\mathbb{R})))=2$.\\

In the following theorem, the dominating numbers of the annihilator graphs of $C(X)$, $C_K(X)$ and $C_F(X)$ are recorded.
\begin{theorem}\label{dom}
For a topological space $X$,
\begin{enumerate}
\item $dt(AG(C(X))) = 2$, if $X$ is disconnected. \\
If $X$ is connected then $dt(AG(C(X))) = 2$ if and only if $X$ is not a middle $P$-space.
\item $dt(AG(C_K(X))) = 2$, if $X$ is a locally compact and disconnected space.
\item $dt(AG(C_F(X))) =\begin{cases} 2, & \textnormal{ if } K_X \textnormal{ is finite}\cr \aleph_0, & \textnormal{ otherwise}\end{cases}$.
\end{enumerate}
\end{theorem}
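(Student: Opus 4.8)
The plan is to read off all three parts from the structural results already proved, chiefly Theorem~\ref{Th4.23}, Theorem~\ref{Th4.25} and Theorem~\ref{Th4.26}, exploiting that for each of the three distinguished ideals the set $X_\mathscr{P}$ is explicit: $X_\mathscr{P}=X$ for $C(X)$; $X_\mathscr{P}=X$ when $X$ is locally compact, in the case of $C_K(X)$; and $X_\mathscr{P}=K_X$, the set of isolated points, for $C_F(X)$. For parts (1) and (2), when $X$ is disconnected the set $X_\mathscr{P}=X$ is clopen in $X$ and, being disconnected, properly contains a non-empty clopen subset of $X$, so Theorem~\ref{Th4.25} applies verbatim and gives $dt(AG(C(X)))=2$, respectively $dt(AG(C_K(X)))=2$; equivalently one can feed a pair of complementary idempotents $1_U,1_{X\setminus U}$ into Theorem~\ref{Th4.23}. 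For the remaining (connected) case of part (1) I would invoke Theorem~\ref{Th4.26}, which says $dt(AG(C(X)))=2$ iff $\Gamma(C(X))$ is not hypertriangulated, and then unwind the terminology via the known characterisation that $\Gamma(C(X))$ is hypertriangulated exactly when $X$ is a middle $P$-space (equivalently, via Theorem~$2.28$ of \cite{Ach} with $X_\mathscr{P}=X$, exactly when every edge $f-g$ of $\Gamma(C(X))$ has $int_XZ(f)\cap int_XZ(g)\neq\emptyset$). Chaining the two equivalences gives the stated criterion for connected $X$.

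For part (3) with $K_X$ finite, the point is that a finite set of isolated points is clopen in the $T_1$ space $X$ (open as a union of isolated points, closed as a finite set); so under the standing hypothesis $|X_\mathscr{P}|\ge 3$ the clopen set $X_\mathscr{P}=K_X$ is a disconnected subspace properly containing the clopen singleton $\{x\}$ for any $x\in K_X$, and Theorem~\ref{Th4.25} yields $dt(AG(C_F(X)))=2$ immediately. Alternatively one takes $f=1_{\{x\}}$ and $g=1_{K_X\setminus\{x\}}$, notes $f\cdot g=0$ and $int_XZ(f)\cap int_XZ(g)\cap X_\mathscr{P}=\emptyset$, and applies Theorem~\ref{Th4.23}.

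For part (3) with $K_X$ infinite I would prove separately that no finite dominating set exists and that a countable one does. Here every $f\in Z(C_F(X))^*$ has finite co-zero set $X\setminus Z(f)\subseteq K_X$, so $int_XZ(f)\cap X_\mathscr{P}=K_X\setminus(X\setminus Z(f))$ is cofinite in $K_X$. Given $f_1,\dots,f_k\in Z(C_F(X))^*$, set $S=\bigcup_{i=1}^k(X\setminus Z(f_i))$ and pick an isolated point $a\in K_X\setminus S$ (possible since $K_X$ is infinite); the vertex $h=1_{S\cup\{a\}}$ lies in $Z(C_F(X))^*$, differs from each $f_i$ (it does not vanish at $a$ while each $f_i$ does), and satisfies $int_XZ(h)\cap X_\mathscr{P}=K_X\setminus(S\cup\{a\})\subseteq K_X\setminus(X\setminus Z(f_i))=int_XZ(f_i)\cap X_\mathscr{P}$, so by Theorem~\ref{Th4.2} it is adjacent to none of the $f_i$; hence no finite set dominates and $dt(AG(C_F(X)))\ge\aleph_0$. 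Conversely, fix distinct isolated points $a_1,a_2,\dots\in K_X$ and put $g_n=1_{\{a_n\}}\in Z(C_F(X))^*$. For a vertex $f$ not among the $g_n$, the finite set $X\setminus Z(f)$ omits some $a_m$; then $int_XZ(f)\cap X_\mathscr{P}=K_X\setminus(X\setminus Z(f))$ and $int_XZ(g_m)\cap X_\mathscr{P}=K_X\setminus\{a_m\}$ are mutually non-comparable (the former contains $a_m$; and $X\setminus Z(f)$ is non-empty and omits $a_m$, so is not contained in $\{a_m\}$), so $f$ is adjacent to $g_m$ by Theorem~\ref{Th4.2}. Thus $\{g_n:n\in\mathbb{N}\}$ dominates $AG(C_F(X))$, and $dt(AG(C_F(X)))=\aleph_0$.

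The only genuinely non-routine point is part (3) in the infinite case, and within it the guess that the indicator functions of single isolated points already form a dominating set; this works precisely because every non-zero zero-divisor of $C_F(X)$ has a finite co-zero set and must therefore miss one of the countably many chosen isolated points, and dually because finitely many such co-zero sets can always be strictly enlarged inside the infinite set $K_X$. Everything else is bookkeeping with Theorem~\ref{Th4.2} and the clopenness of finite sets of isolated points, while the connected case of (1) is a formal consequence of Theorem~\ref{Th4.26} once the middle $P$-space terminology is translated through \cite{Ach}.
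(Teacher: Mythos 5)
Your proposal is correct and follows essentially the same route as the paper: parts (1) (connected case via Theorem~\ref{Th4.26} and the Azarpanah characterisation of middle $P$-spaces) and (2) are identical, and for (3) you use the same countable dominating set $\{1_{\{a_n\}}\}$ of characteristic functions of isolated points together with the same enlargement trick $1_{S\cup\{a\}}$ to rule out finite dominating sets, with your adjacency checks via Theorem~\ref{Th4.2} if anything slightly more careful than the paper's. The only cosmetic difference is that for disconnected $X$ in part (1) you invoke Theorem~\ref{Th4.25}/\ref{Th4.23} directly rather than non-hypertriangulatedness of $\Gamma(C(X))$ via Theorem~\ref{Th4.26}, which is an equivalent reformulation.
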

\begin{proof}
\begin{enumerate}
\item If $X$ is disconnected then $\Gamma(C(X))$ is not hypertriangulated and therefore, by Theorem \ref{Th4.26}, $dt(AG(C(X)))=2$.\\
If $X$ is connected then $X$ is not a middle $P$-space if and only if $\Gamma(C(X))$ is not hypertriangulated \cite{Azarpanah}. So, the result follows from Theorem \ref{Th4.26}.
\item If $X$ is locally compact then $X = X_\mathscr{P}$ and so, by disconnectedness of $X$, we get by Theorem \ref{Th4.25} that $dt(AG(C_K(X)))=2$.
\item If $K_X$, the set of isolated points of $X$ is finite then it is a disconnected subset of $X$ containing proper clopen subsets. Therefore, by Theorem \ref{Th4.25}, $dt(AG(C_F(X)))=2$. \\
Let $dt(AG(C_F(X)))$ be infinite. Choosing a countably infinite subset $K$ of $K_X$, we set $W=\{1_x\in Z(C_F(X))^*:x\in K\}$. We claim that $W$ is a dominating set in $AG(C_F(X))$. If possible, let there be $f\in Z(C_F(X))^*\setminus W$ such that $f$ is not adjacent to any $1_x$, $x\in K$. i.e., For each $x\in K$, either $Z(f) \subseteq Z(1_x)$ or $Z(1_x) \subseteq Z(f)$. In any case, $K \subseteq X \setminus Z(f)$ which leads to a contradiction, as $X\setminus Z(f)$ is finite. Hence, $dt(AG(C_F(X)))\leq |K|=\aleph_0$. \\
It is now enough to show that no finite set of vertices is a dominating set in $AG(C_F(X))$. Let $W'=  \{f_i\in Z(C_F(X))^*:i=1,2,...,n\}$. Choose $x\in K_X\setminus \left(\bigcup\limits_{i=1}^n X\setminus Z(f_i)\right)$. Such a choice of $x$ is possible, as $\bigcup\limits_{i=1}^n X\setminus Z(f_i)$ is a finite subset of $K_X$ and $K_X$ is infinite. Considering $f = 1_A$ where $A = \{x\} \cup \left(\bigcup\limits_{i=1}^n X\setminus Z(f_i)\right)$ we observe that $f \in Z(C_F(X))^*$, however, $f$ is not adjacent to any $f_i \in W'$. \\
Hence, $dt(AG(C_F(X))) = \aleph_0$. 
\end{enumerate}
\end{proof}
The observation that the vertices $f$ and $2f$ are non-adjacent in  $AG(C_\mathscr{P}(X))$ leads to the result : $dt(AG(C_\mathscr{P}(X))) = 2$ if and only if $dt_t(AG(C_\mathscr{P}(X))) = 2$. In the next theorem, we generalize this result if $dt(AG(C_\mathscr{P}(X))) < c$(= the cardinality of the continuum).
\begin{theorem}
		If $dt(AG(C_\mathscr{P}(X)))<c$ then $dt(AG(C_\mathscr{P}(X)))=dt_t(AG(C_\mathscr{P}(X)))$.
	\end{theorem}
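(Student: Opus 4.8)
The plan is to show that, given a dominating set $D$ in $AG(C_\mathscr{P}(X))$ of cardinality $\kappa = dt(AG(C_\mathscr{P}(X))) < c$, one can manufacture a \emph{total} dominating set of the same cardinality $\kappa$; since $dt \leq dt_t$ always holds, this yields equality. The only way $D$ can fail to be a total dominating set is that some vertices of $D$ have no neighbour inside $D$. The key observation to exploit is the one already used repeatedly in the paper: for any vertex $f$, the vertices $f$ and $2f$ (more generally $m_1 f$ and $m_2 f$) are non-adjacent, while $f$ and $g$ are adjacent if and only if $m_1 f$ and $m_2 g$ are adjacent. So scaling a vertex does not change its adjacency to anything outside its scalar line.

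First I would let $D_0 = \{ f \in D : f \text{ has no neighbour in } D\}$ be the ``bad'' vertices. For each $f \in D_0$ I want to attach a companion vertex $g_f$ with $g_f$ adjacent to $f$; then $D' = D \cup \{ g_f : f \in D_0\}$ will be a dominating set (it contains $D$) in which every vertex of $D_0$ now has a neighbour in $D'$, and the newly added $g_f$ themselves have the neighbour $f \in D'$. One must still check that the remaining old vertices of $D \setminus D_0$ have neighbours in $D'$ — but by definition of $D_0$ each such vertex already has a neighbour in $D$, hence in $D'$. So $D'$ is a total dominating set. The construction of $g_f$ is immediate: since $|X_\mathscr{P}| \geq 3$ and $f \in Z(C_\mathscr{P}(X))^*$, the graph $AG(C_\mathscr{P}(X))$ is triangulated (Theorem~\ref{Th4.12}), so $f$ lies on a triangle and in particular has \emph{some} neighbour $g_f$ in the whole graph.

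The remaining and genuinely delicate point is the cardinality bookkeeping: I must ensure $|D'| = \kappa$, i.e. that adjoining the $g_f$ does not inflate the cardinality. If $\kappa$ is infinite this is automatic, since $|D'| \leq |D| + |D_0| \leq \kappa + \kappa = \kappa$. The hypothesis $\kappa < c$ is therefore doing real work only in the finite case, or rather it is the finite case that needs the hypothesis removed in a different guise: if $\kappa$ is finite we already know from the paragraph preceding the theorem that $dt = 2 \iff dt_t = 2$, and for finite $\kappa > 2$ the same companion-vertex construction gives $|D'| \leq 2\kappa$, which is not obviously $\leq \kappa$. Here is where the structural hypothesis $\kappa < c$ must be leveraged more cleverly: I expect one actually shows that if $dt$ is finite then $dt \le 2$ (equivalently, a finite dominating set forces $|X_\mathscr{P}|$ small enough that Theorem~\ref{Th4.23} applies), collapsing the finite case entirely to the already-known $dt = 2$ situation; the bound $\kappa < c$ rules out precisely the pathological size of $Z(C_\mathscr{P}(X))^*$ (which has cardinality at least $c$ whenever $|X_\mathscr{P}|\ge 2$ via the scalar lines $\mathbb{R}\cdot f$) that would otherwise permit a finite dominating set of size $> 2$.

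Thus the proof splits as follows: \textbf{(1)} reduce to producing, for each dominating set $D$, a companion assignment $f \mapsto g_f$ with $g_f \sim f$, using triangulatedness; \textbf{(2)} verify $D' = D \cup \{g_f\}$ is totally dominating; \textbf{(3)} handle the cardinality, where for infinite $\kappa$ absorption is trivial and for finite $\kappa$ one invokes $\kappa < c$ together with the scalar-line structure to force $\kappa = 2$ and then quote the pre-theorem remark. I expect step \textbf{(3)}, specifically pinning down why a finite dominating set cannot have size strictly between $2$ and $c$, to be the main obstacle; everything else is routine manipulation of the adjacency criterion in Theorem~\ref{Th4.2}.
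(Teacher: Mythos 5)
There is a genuine gap, and it sits exactly where you predicted: the finite case of your step \textbf{(3)}. Your augmentation argument ($D' = D \cup \{g_f : f \in D_0\}$ with $g_f$ adjacent to $f$, supplied by triangulatedness) does correctly produce a total dominating set, and for infinite $\kappa$ the absorption $|D'| \leq \kappa + \kappa = \kappa$ closes the argument; that part is a valid, if different, route. But for finite $\kappa$ you only get $|D'| \leq 2\kappa$, and your proposed rescue --- that a finite dominating number must already equal $2$, so one can quote the pre-theorem remark --- is pure speculation: nothing in the paper (or in your sketch) shows that $dt$ finite forces $dt \leq 2$, and your appeal to ``$Z(C_\mathscr{P}(X))^*$ has cardinality at least $c$'' does not bear on the size of a dominating set. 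So as written the proof is incomplete precisely in the case you flagged.

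The missing idea, which is the whole content of the paper's proof, is to apply the domination property of the minimum dominating set $H$ to the scalar multiples of a ``bad'' vertex itself, rather than to repair $H$ from outside. If $f \in H$ has no neighbour in $H$, then since adjacency is invariant under nonzero scalar multiplication, every $r.f$ with $r \in \mathbb{R}\setminus\{0\}$ also has no neighbour in $H$; but $H$ is dominating, so any vertex outside $H$ must have a neighbour in $H$, forcing $r.f \in H$ for every such $r$. Hence $\{r.f : r \in \mathbb{R}\setminus\{0\}\} \subset H$ and $|H| \geq c$, contradicting $|H| = dt(AG(C_\mathscr{P}(X))) < c$. Thus a minimum dominating set of size $< c$ is already a total dominating set, giving $dt_t \leq dt$ with no case split between finite and infinite $\kappa$ and no need for the auxiliary vertices $g_f$ at all. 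Your observation that scaling preserves adjacency was the right tool; you just pointed it at the wrong set.
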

	\begin{proof}
		Let $H$ be a dominating set in $AG(C_\mathscr{P}(X))$ such that $|H|=dt(AG(C_\mathscr{P}(X)))$ $< c$. If $H$ is not a total dominating set of $AG(C_\mathscr{P}(X))$, there exists $f\in H$ such that $f$ is not adjacent to any vertices of $H$. So, $r.f$ is not adjacent to any vertices of $H$, for each $r\in\mathbb{R}\setminus\{0\}$. As a result $\{r.f : r\in\mathbb{R}\setminus\{0\} \} \subset H$ which gives $ |H|\geq c$. Therefore, $H$ is a total dominating set of $AG(C_\mathscr{P}(X))$ and hence, $dt_t(AG(C_\mathscr{P}(X)))\leq |H|= dt(AG(C_\mathscr{P}(X)))$. The reverse implication follows from the definition.
	\end{proof}

\section{An induced subgraph of $C_\mathscr{P}(X)$}
In this section, we define an equivalence relation on the set of vertices of $AG(C_\mathscr{P}(X))$ and fetch exactly one element from each equivalence class to constitute a set of vertices for a  subgraph of it. The adjacency relation on the new set of vertices is defined accordingly so that it becomes an induced subgraph $G(C_\mathscr{P}(X))$ of $AG(C_\mathscr{P}(X))$. As proposed in the introduction, we observe that the graph properties of $G(C_\mathscr{P}(X))$ are mostly analogous to those of $AG(C_\mathscr{P}(X))$.\\		
A relation $\sim$ on $C_\mathscr{P}(X)$ is defined as follows: For all $f,g\in C_\mathscr{P}(X)$,
$$f\sim g \textnormal{ if and only if } int_XZ(f)\cap X_\mathscr{P}=int_XZ(g)\cap X_\mathscr{P} .$$ 
$\sim$ being an equivalence relation on $C_\mathscr{P}(X)$, partitions the set in disjoint equivalence classes. In what follows, $[f]$ denotes the equivalence class containing $f$. The following observations are useful in the subsequent development of this paper:
\begin{enumerate}
\item[(a)] No two members of $[f]$ are adjacent in $AG(C_\mathscr{P}(X))$, for each $f\in C_\mathscr{P}(X)$.
\item[(b)] $[0] = \{0\}$. For if $g \in [0]$, $int_XZ(g)\cap X_\mathscr{P} = X_\mathscr{P}$ and $X \setminus Z(g) \subset X_\mathscr{P}$ which imply $g=0$.
\item[(c)] For any $f$, $g \in Z(C_\mathscr{P}(X))^*$, $f$ is adjacent to $g \implies$ $f$ is adjacent to $h$, for all $h\in [g]$.
\end{enumerate}
Choosing exactly one element from each equivalence class, a subset $S$ of $C_\mathscr{P}(X)$ is constructed. The induced subgraph obtained by taking the set of vertices as $V(C_\mathscr{P}(X)) = \{f\in S\setminus\{0\}:int_XZ(f)\cap X_\mathscr{P}\neq\emptyset\}\subset Z(C_\mathscr{P}(X))^*$, is denoted by $G(C_\mathscr{P}(X))$. By definition of induced subgraph, for any $f, g\in V$,
$f$ is adjacent to $g$ in $G(C_\mathscr{P}(X))$ if and only if $f$ is adjacent to $g$ in $AG(C_\mathscr{P}(X))$, i.e., $int_XZ(g)\cap X_\mathscr{P}\not\subset int_XZ(f)\cap X_\mathscr{P}$ and $int_XZ(f)\cap X_\mathscr{P}\not\subset int_XZ(g)\cap X_\mathscr{P}$. Further,
\begin{enumerate}
\item[(a)] $f=g$ in $V(C_\mathscr{P}(X))$ if and only if $int_XZ(f)\cap X_\mathscr{P}=int_XZ(g)\cap X_\mathscr{P}$.
\item[(b)] For each $f\in Z(C_\mathscr{P}(X))^*$ there exists a unique $f'\in V(C_\mathscr{P}(X))$ such that $[f']=[f]$.
\end{enumerate}
A useful result based on these observations is recorded in the following lemma for future references.
\begin{lemma}\label{Lem5.1}
For each edge $f-g$ in $AG(C_\mathscr{P}(X))$ there exists a unique edge $f'-g'$ in $G(C_\mathscr{P}(X))$ such that $[f']=[f]$ and $[g']=[g]$.
\end{lemma}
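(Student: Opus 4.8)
The plan is to exploit the equivalence relation $\sim$ together with observation (b) following the construction of $V(C_\mathscr{P}(X))$, which guarantees that every nonzero zero-divisor has a unique representative in $V(C_\mathscr{P}(X))$. Given an edge $f-g$ in $AG(C_\mathscr{P}(X))$, both $f$ and $g$ lie in $Z(C_\mathscr{P}(X))^*$, so I would first invoke observation (b): there exist unique $f', g' \in V(C_\mathscr{P}(X))$ with $[f'] = [f]$ and $[g'] = [g]$, i.e., $int_XZ(f')\cap X_\mathscr{P} = int_XZ(f)\cap X_\mathscr{P}$ and similarly for $g$.

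Next I would check that $f'$ and $g'$ are genuinely adjacent in $G(C_\mathscr{P}(X))$. Since adjacency in $AG(C_\mathscr{P}(X))$ is governed by Theorem~\ref{Th4.2} entirely through the sets $int_XZ(\cdot)\cap X_\mathscr{P}$, and since $f'$ (resp. $g'$) has exactly the same such set as $f$ (resp. $g$), the adjacency condition $int_XZ(f')\cap X_\mathscr{P}\not\subset int_XZ(g')\cap X_\mathscr{P}$ and $int_XZ(g')\cap X_\mathscr{P}\not\subset int_XZ(f')\cap X_\mathscr{P}$ holds iff it holds for $f, g$ — which it does. One should also note $f' \neq g'$: if $f' = g'$ then $int_XZ(f)\cap X_\mathscr{P} = int_XZ(g)\cap X_\mathscr{P}$, contradicting that $f,g$ are adjacent (by Theorem~\ref{Th4.2}, adjacent vertices have incomparable, hence distinct, such sets). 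Since $G(C_\mathscr{P}(X))$ is an induced subgraph, adjacency of $f', g'$ in $AG(C_\mathscr{P}(X))$ gives adjacency in $G(C_\mathscr{P}(X))$, so $f'-g'$ is the required edge.

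Finally I would address uniqueness. Suppose $f''-g''$ is another edge in $G(C_\mathscr{P}(X))$ with $[f''] = [f]$ and $[g''] = [g]$. Then $f''$ and $f'$ both lie in $V(C_\mathscr{P}(X))$ and satisfy $[f''] = [f] = [f']$; by observation (b) the representative in $V(C_\mathscr{P}(X))$ of a given class is unique, so $f'' = f'$, and likewise $g'' = g'$. Hence the edge $f''-g''$ coincides with $f'-g'$.

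I do not anticipate a serious obstacle here; the statement is essentially a bookkeeping consequence of the fact that every graph-theoretic relation in sight depends on $f$ only through $int_XZ(f)\cap X_\mathscr{P}$, together with the well-definedness and uniqueness built into the construction of $S$ and $V(C_\mathscr{P}(X))$. The only point requiring a moment's care is verifying $f' \neq g'$ so that $f'-g'$ is a bona fide edge rather than a loop, and this follows immediately from Theorem~\ref{Th4.2} as indicated above.
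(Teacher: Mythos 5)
Your proof is correct and follows exactly the route the paper intends: the lemma is recorded there as an immediate consequence of the observations preceding it (unique representatives in $V(C_\mathscr{P}(X))$ plus the fact that adjacency, via Theorem~\ref{Th4.2}, depends only on $int_XZ(\cdot)\cap X_\mathscr{P}$), and your write-up simply makes those steps explicit, including the worthwhile check that $f'\neq g'$.
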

Before discussing various graph parameters of the induced subgraph of $AG(C_\mathscr{P}(X))$, we cite a few illustrative examples of $G(C_\mathscr{P}(X))$ and visualize them when $|X_\mathscr{P}|$ is finite. 

\begin{example}
Let $|X_\mathscr{P}| = 2$. If $X_\mathscr{P}= \{x, y\}$ then any vertex $f$ in $AG(C_\mathscr{P}(X))$ ($=\Gamma (C_\mathscr{P}(X))$) takes the form $r.1_x$ or $r.1_y$, for some $r\in \mathbb{R}\setminus \{0\}$, where $1_A$ denotes the characteristic function on $A\subseteq X$. Therefore, $Z(C_\mathscr{P}(X))^*=\{r.1_x:r\in\mathbb{R}\setminus\{0\}\}\cup\{r.1_y:r\in\mathbb{R}\setminus\{0\}\}$ and hence $AG(C_\mathscr{P}(X))$ is a complete bipartite graph as shown below:
	\begin{center}
		\begin{tikzpicture}[roundnode/.style={ellipse, draw=green!60, fill=green!5, very thick, minimum size=7mm}]
			\node[roundnode] (maintopic) {$\{r.1_x:r\in\mathbb{R}\setminus\{0\}\}$};
			\node[roundnode] (lowercircle) [below=of maintopic] {$\{r.1_y:r\in\mathbb{R}\setminus\{0\}\}$};
			\draw (maintopic.south) -- (lowercircle.north);
		\end{tikzpicture}
	\end{center} 
Choosing $V = \{1_x, 1_y\}$, we obtain $G(C_\mathscr{P}(X))$ as the complete graph $K_2$ with two vertices, as the following:
		\begin{center}
			\begin{tikzpicture}
				\draw (0,0)--(4,0);
				\fill (0,0) circle (3pt) node[left] {$1_x$};
				\fill (4,0) circle (3pt) node[right] {$1_y$};
				\node (start) at (2,-0.5) {$K_2$};
			\end{tikzpicture}
		\end{center}
\end{example}
\begin{example}
Let $|X_\mathscr{P}| = 3$. Then $f\in Z(C_\mathscr{P}(X))^*$ if and only if either $|X\setminus Z(f)|=1$ or $2$. If $X_\mathscr{P}=\{x,y,z\}$ then $X\setminus Z(f)$ is one of $\{x\}$, $\{y\}$, $\{z\}$, $\{x,y\}$, $\{x,z\}$, $\{y,z\}$. Therefore, $AG(C_\mathscr{P}(X))$ is a $6$-partite graph of the following form :
	\begin{center}
		\begin{tikzpicture}[roundnode/.style={ellipse, draw=green!60, fill=green!5, very thick, minimum size=7mm}]
			\node[roundnode] (maintopic) {$\forall f, X\setminus Z(f)=\{x\}$};
			\node[roundnode] (upperleftcircle) [above left=of maintopic] {$\forall f, X\setminus Z(f)=\{y\}$};
			\node[roundnode] (upperrightcircle) [above right=of maintopic] {$\forall f, X\setminus Z(f)=\{z\}$};
			\node[roundnode] (lowerleftcircle) [below left=of maintopic] {$\forall f, X\setminus Z(f)=\{x,z\}$};
			\node[roundnode] (lowerrightcircle) [below right=of maintopic] {$\forall f, X\setminus Z(f)=\{x,y\}$};
			\node[roundnode] (lowercircle) [below right=of lowerleftcircle] {$\forall f, X\setminus Z(f)=\{y,z\}$};
			\draw (upperleftcircle.south) -- (maintopic.north);
			\draw (upperrightcircle.south) -- (maintopic.north);
			\draw (upperleftcircle.east) -- (upperrightcircle.west);
			\draw (maintopic.south) -- (lowercircle.north);
			\draw (upperleftcircle.south) -- (lowerleftcircle.north);
			\draw (upperrightcircle.south) -- (lowerrightcircle.north);
			\draw (lowerleftcircle.east) -- (lowerrightcircle.west);
			\draw (lowerleftcircle.south) -- (lowercircle.north);
			\draw (lowerrightcircle.south) -- (lowercircle.north);
		\end{tikzpicture}
	\end{center}
Considering $V=\{1_x,1_y,1_z, 1_{\{x,y\}}, 1_{\{x,z\}},1_{\{y,z\}}\}$, the induced subgraph $G(C_\mathscr{P}(X))$ takes the form as shown below:
		\begin{center}
			\begin{tikzpicture}
				\draw (2,2)--(-2,2)--(0,0)--(2,2) (-2,-2)--(0,-4)--(2,-2)--(-2,-2) (-2,2)--(-2,-2) (2,2)--(2,-2) (0,0)--(0,-4);
				\fill (0,0) circle (3pt) node[left] {$1_x$};
				\fill (-2,2) circle (3pt) node[left] {$1_y$};
				\fill (2,2) circle (3pt) node[right] {$1_z$};
				\fill (0,-4) circle (3pt) node[left] {$1_{\{y,z\}}$};
				\fill (-2,-2) circle (3pt) node[left] {$1_{\{x,z\}}$};
				\fill (2,-2) circle (3pt) node[right] {$1_{\{x,y\}}$};
			\end{tikzpicture}
		\end{center}
\end{example}
Scrutinising the pattern of getting $G(C_\mathscr{P}(X))$ from $AG(C_\mathscr{P}(X))$ in the above examples, one expects that if $|X_\mathscr{P}| = \ finite$ then $V = \{1_A \ : \ \emptyset\neq A\subsetneqq X_\mathscr{P}\}$ serves as the set of vertices of $G(C_\mathscr{P}(X))$ and two vertices $1_A, 1_B$ are adjacent in $G(C_\mathscr{P}(X))$ if and only if $A\not\subset B$ and $B\not\subset A$. This can be deduced as a special case of a more general example (where $X_\mathscr{P}$ need not be finite) as cited below.  
\begin{example}\label{Ex7.1}
Consider $\mathcal{P}$ as the ideal of all finite subsets of $X$. Then $C_\mathscr{P}(X)=C_F(X)$ and $X_\mathscr{P}=K_X$, the set of all isolated points of $X$. If $f\in C_F(X)$ then $X\setminus Z(f)$ is finite and therefore, $[f]=[1_{X\setminus Z(f)}]$. So, considering $V=\{1_A:\emptyset\neq A\subsetneqq K_X \text{ and }A\text{ is finite}\}$, the  induced subgraph $G(C_F(X))$ of $AG(C_F(X))$ is achieved. Clearly, two vertices $1_A, 1_B$ are adjacent in $G(C_F(X))$ if and only if $A\not\subset B$ and $B\not\subset A$.
\end{example}
It is relevant to inquire when the induced subgraph is finite. The following theorem asserts that the finiteness of $G(C_\mathscr{P}(X))$ completely depends upon the cardinality of $X_\mathscr{P}$.
 
\begin{theorem}\label{Th7.3}
$G(C_\mathscr{P}(X))$ is a finite graph if and only if $X_\mathscr{P}$ is a finite set. \\
Moreover, if $X_\mathscr{P}$ is finite, then $|G(C_\mathscr{P}(X))|=2^{|X_\mathscr{P}|}-2$ and $deg(f)=2^{|X_\mathscr{P}|} -2^{|X\setminus Z(f)|}-2^{|X_\mathscr{P}\cap Z(f)|}+1$ for each $f\in V(C_\mathscr{P}(X))$.
\end{theorem}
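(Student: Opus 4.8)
The strategy is to reduce everything to the combinatorial model of $G(C_\mathscr{P}(X))$ established just before the theorem. First I would prove the two directions of the equivalence. If $X_\mathscr{P}$ is infinite, then by Lemma~\ref{Lem1} (applied to distinct points of $X_\mathscr{P}$) one can produce infinitely many functions $f$ with pairwise distinct sets $int_XZ(f)\cap X_\mathscr{P}$; concretely, for each $x\in X_\mathscr{P}$ pick $f_x\in C_\mathscr{P}(X)$ with $x\in X\setminus Z(f_x)$, so that $x\in X\setminus Z(f_x)$ but $x\notin X\setminus Z(f_y)$ for the representative of a different class, forcing infinitely many equivalence classes and hence $|V(C_\mathscr{P}(X))|=\infty$. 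Conversely, if $X_\mathscr{P}=\{x_1,\dots,x_n\}$ is finite, then every $f\in C_\mathscr{P}(X)$ satisfies $X\setminus Z(f)\subseteq X_\mathscr{P}$, so $f$ is determined up to $\sim$ by the finite set $X\setminus Z(f)\subseteq X_\mathscr{P}$; thus there are at most $2^{n}$ equivalence classes, and $G(C_\mathscr{P}(X))$ is finite. This is the easy part.

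For the counting formula, the key observation is that when $X_\mathscr{P}$ is finite, $int_XZ(f)\cap X_\mathscr{P}=X_\mathscr{P}\setminus(X\setminus Z(f))$, so the map $f\mapsto X\setminus Z(f)$ is a bijection from the equivalence classes to the subsets $A\subseteq X_\mathscr{P}$. The vertex set $V(C_\mathscr{P}(X))$ consists of those classes with $int_XZ(f)\cap X_\mathscr{P}\neq\emptyset$ and $f\neq 0$, i.e. those $A$ with $A\neq X_\mathscr{P}$ and $A\neq\emptyset$ — exactly $2^{|X_\mathscr{P}|}-2$ of them. (One should note here that every subset $A\subseteq X_\mathscr{P}$ is realized: since $X_\mathscr{P}$ is finite and open, each of its points is isolated in $X$, so $1_A\in C_\mathscr{P}(X)$ for every $A\subseteq X_\mathscr{P}$; this is the content underlying Example~\ref{Ex7.1}.) So identify each vertex $f$ with the set $A=X\setminus Z(f)$, a nonempty proper subset of $X_\mathscr{P}$.

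For the degree, fix a vertex $f$ with $A=X\setminus Z(f)$, write $n=|X_\mathscr{P}|$, $a=|A|=|X\setminus Z(f)|$, and note $|X_\mathscr{P}\cap Z(f)|=n-a$. By Theorem~\ref{Th4.2} (or directly by the adjacency rule for $G(C_\mathscr{P}(X))$ recorded before the statement), $f$ is adjacent to the vertex corresponding to $B\subseteq X_\mathscr{P}$, $\emptyset\neq B\subsetneqq X_\mathscr{P}$, precisely when neither $int_XZ(f)\cap X_\mathscr{P}\subseteq int_XZ(g)\cap X_\mathscr{P}$ nor the reverse holds; translating through $int_XZ(\cdot)\cap X_\mathscr{P}=X_\mathscr{P}\setminus(\cdot)$, this is exactly the condition $A\not\subseteq B$ and $B\not\subseteq A$. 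So $deg(f)$ equals the number of nonempty proper subsets $B$ of $X_\mathscr{P}$ that are incomparable with $A$. By inclusion–exclusion on the complementary event, the number of subsets $B\subseteq X_\mathscr{P}$ with ($B\subseteq A$ or $A\subseteq B$) is $|\{B:B\subseteq A\}|+|\{B:A\subseteq B\}|-|\{B:A\subseteq B\subseteq A\}|=2^{a}+2^{n-a}-1$; subtracting this from the $2^{n}$ total subsets gives $2^{n}-2^{a}-2^{n-a}+1$ incomparable subsets. Finally one checks that the excluded cases $B=\emptyset$ and $B=X_\mathscr{P}$ are already comparable with $A$ (they are $\subseteq A$ and $\supseteq A$ respectively), hence were already removed, so no further correction is needed and $deg(f)=2^{|X_\mathscr{P}|}-2^{|X\setminus Z(f)|}-2^{|X_\mathscr{P}\cap Z(f)|}+1$.

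The only genuine subtlety — the step I would be most careful about — is the realizability of all subsets of $X_\mathscr{P}$ as cozero sets and the bookkeeping of the two excluded extreme subsets in the degree count; both are handled by the remark that finiteness of the open set $X_\mathscr{P}$ makes its points isolated, so $1_A\in C_\mathscr{P}(X)$, and by observing that $\emptyset$ and $X_\mathscr{P}$ contribute to the "comparable" tally rather than the "incomparable" one. Everything else is a direct unwinding of the equivalence relation $\sim$ and the adjacency description already in hand.
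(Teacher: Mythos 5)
Your proposal follows the paper's own proof essentially verbatim: infinitely many points of $X_\mathscr{P}$ produce infinitely many $\sim$-classes via Lemma~\ref{Lem1}, and for finite $X_\mathscr{P}$ the vertices are identified with the characteristic functions $1_A$ of nonempty proper subsets $A\subseteq X_\mathscr{P}$ (all points of the finite open set $X_\mathscr{P}$ being isolated), after which the vertex count and the degree formula are the same inclusion--exclusion arithmetic — you count incomparable subsets among all $2^{|X_\mathscr{P}|}$ subsets, the paper counts comparable vertices inside $V(C_\mathscr{P}(X))$, which is the identical computation. The only point to tighten, exactly as the paper does, is the infinite case: apply Lemma~\ref{Lem1} with a neighbourhood excluding the comparison point so that the other point lands in $int_XZ(f_x)\cap X_\mathscr{P}$ (membership or non-membership in cozero sets alone does not certify that the classes $[f_x]$ are distinct, since $\sim$ is defined through $int_XZ(\cdot)\cap X_\mathscr{P}$).
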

\begin{proof}
Let  $x,y\in X_\mathscr{P}$. Then by Lemma \ref{Lem1}, there exist $f_x,f_y\in C_\mathscr{P}(X)$ such that $x\in X\setminus Z(f_x)\subset X\setminus int_XZ(f_x)\subset X\setminus \{y\}$ and $y\in X\setminus Z(f_y)\subset X\setminus int_XZ(f_y)\subset X\setminus \{x\}$. Then $y\in int_XZ(f_x)\cap X_\mathscr{P}$, $x\in X\setminus Z(f_x)$ and $x\in int_XZ(f_y)\cap X_\mathscr{P}$, $y\in X\setminus Z(f_y)$. So, $f_x, f_y\in Z(C_\mathscr{P}(X))^*$ such that $[f_x]\neq [f_y]$. Let $f'_x, f'_y\in V(C_\mathscr{P}(X))$ be such that $[f'_x]=[f_x]$ and $[f'_y]=[f_y]$. Then $x \mapsto f'_x$ defines an injective function from $X_\mathscr{P}$ to $G(C_\mathscr{P}(X))$. Hence,  $|G(C_\mathscr{P}(X))| \geq |X_\mathscr{P}|$. \\
If $X_\mathscr{P}$ is infinite then $G(C_\mathscr{P}(X))$ is an infinite graph. In other words, if $G(C_\mathscr{P}(X))$ is a finite graph then $X_\mathscr{P}$ is a finite set. \\
Conversely, if $X_\mathscr{P}$ is finite then by a special case of Example \ref{Ex7.1}, we get $V=\{1_A:\emptyset\neq A\subsetneqq X_\mathscr{P}\}$ and so, $|V|= |\mathcal{P}(X_\mathscr{P})|-2$, which is finite. Therefore, $G(C_\mathscr{P}(X))$ is a finite graph. \\
Also, $|G(C_\mathscr{P}(X))|=|\mathcal{P}(X_\mathscr{P})|-2=2^{|X_\mathscr{P}|}-2$. \\
Let $f\in V$. Then $f=1_A$ where $A=X\setminus Z(f)$.If $1_B\in V$ is not adjacent to $1_A$ then either $B\subset A$ or $A\subset B$. Now $\{1_B\in V : B\subset A\}=\mathcal{P}(A)\setminus \{\emptyset\}\implies|\{1_B\in V : B\subset A\}|= |\mathcal{P}(A)\setminus \{\emptyset\}|=2^{|A|}-1$ and $\{1_B\in V : A\subsetneqq B\}=\{A\cup C: C\in\mathcal{P}(X_\mathscr{P}\setminus A)\setminus \{\emptyset, X_\mathscr{P}\setminus A\}\}\implies |\{1_B\in V: A\subsetneqq B\}|=|\mathcal{P}(X_\mathscr{P}\setminus A)\setminus \{\emptyset, X_\mathscr{P}\setminus A\}|=2^{|X_\mathscr{P}\setminus A|}-2$. Therefore, $\{1_B\in V :1_B\text{ is not adjacent to }1_A\text{ in }G(C_\mathscr{P}(X))\}=\{1_B\in V : B\subsetneqq A\}\sqcup\{1_B\in V : A\subsetneqq B\}\implies |\{1_B\in V : 1_B\text{ is not adjacent to }1_A\text{ in }G(C_\mathscr{P}(X))\}|= (2^{|A|}-1)+(2^{|\mathcal{P}(X_\mathscr{P}\setminus A)|}-2)=2^{|A|}+2^{|X_\mathscr{P}\setminus A|}-3$. Hence,
\begin{eqnarray*}
deg(f) = deg(1_A) &=&|\{1_B\in V \ : \ 1_B\text{ is adjacent to }1_A\text{ in }G(C_\mathscr{P}(X))\}|\\ 
&=&|V|-|\{1_B\in V\ : \ 1_B\text{ is not adjacent to}1_A\text{ in }G(C_\mathscr{P}(X))\}|\\ 
&=& (2^{|X_\mathscr{P}|}-2)-(2^{|A|}+2^{|X_\mathscr{P}\setminus A|}-3) \\ 
&=& 2^{|X_\mathscr{P}|}- 2^{|A|}-2^{|X_\mathscr{P}\setminus A|}+1.
\end{eqnarray*}
Since $A=X\setminus Z(f)$, $X_\mathscr{P}\setminus A=X_\mathscr{P}\cap Z(f)$ and consequently, $deg(f)=2^{|X_\mathscr{P}|} -2^{|X\setminus Z(f)|}-2^{|X_\mathscr{P}\cap Z(f)|}+1$, for each $f\in V(C_\mathscr{P}(X))$.
\end{proof}

We now establish that $AG(C_\mathscr{P}(X))$ and $G(C_\mathscr{P}(X))$ have same values for certain important graph parameters, namely, diameter, eccentricity and girth.
\begin{theorem}
			$d_G(f,g)=d_{AG}(f,g)$ for each $f,g\in V(C_\mathscr{P}(X))$.
		\end{theorem}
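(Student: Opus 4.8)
The plan is to show that the distance between two vertices is the same whether computed in $G(C_\mathscr{P}(X))$ or in $AG(C_\mathscr{P}(X))$, by exploiting the fact that $G(C_\mathscr{P}(X))$ is an induced subgraph together with the diameter-$2$ structure established in Corollary~\ref{Cor4.7}. Since $G(C_\mathscr{P}(X))$ is an induced subgraph of $AG(C_\mathscr{P}(X))$ on the vertex set $V(C_\mathscr{P}(X)) \subseteq Z(C_\mathscr{P}(X))^*$, every path in $G(C_\mathscr{P}(X))$ is a path in $AG(C_\mathscr{P}(X))$; hence $d_{AG}(f,g) \leq d_G(f,g)$ for all $f,g \in V(C_\mathscr{P}(X))$ automatically. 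The work lies entirely in the reverse inequality.

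First I would dispose of the case $d_{AG}(f,g) = 1$: if $f$ and $g$ are adjacent in $AG(C_\mathscr{P}(X))$ then, being both vertices of the induced subgraph, they are adjacent in $G(C_\mathscr{P}(X))$, so $d_G(f,g) = 1$. By Corollary~\ref{Cor4.7} the only other possibility is $d_{AG}(f,g) = 2$, i.e.\ $f$ and $g$ are distinct non-adjacent vertices. Here I must produce a vertex $h \in V(C_\mathscr{P}(X))$ adjacent in $G(C_\mathscr{P}(X))$ to both $f$ and $g$, which will give $d_G(f,g) \leq 2$, hence $=2$. By Theorem~\ref{Th4.6} (using part (i) if $f,g$ are non-adjacent in $\Gamma(C_\mathscr{P}(X))$, and noting that $f,g$ non-adjacent in $AG(C_\mathscr{P}(X))$ forces, via Theorem~\ref{Th4.2}, that their interior-zero-set-traces are comparable — which one can check guarantees each has at least two $\mathscr{P}$-points whenever $|X_\mathscr{P}| \geq 3$, so part (ii) applies otherwise) there exists some $h_0 \in Z(C_\mathscr{P}(X))^*$ adjacent to both $f$ and $g$ in $AG(C_\mathscr{P}(X))$.

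The key step is then to replace $h_0$ by the representative of its equivalence class: by observation (b) following Lemma~\ref{Lem5.1}, there is a unique $h \in V(C_\mathscr{P}(X))$ with $[h] = [h_0]$, i.e.\ $int_XZ(h)\cap X_\mathscr{P} = int_XZ(h_0)\cap X_\mathscr{P}$. Since adjacency in $AG(C_\mathscr{P}(X))$ is governed entirely by the sets $int_XZ(\cdot)\cap X_\mathscr{P}$ (Theorem~\ref{Th4.2}), $h$ is adjacent to both $f$ and $g$ in $AG(C_\mathscr{P}(X))$, hence in the induced subgraph $G(C_\mathscr{P}(X))$ — provided $h \neq f$ and $h \neq g$, which holds because $h$ is adjacent to each of them while no vertex is adjacent to itself. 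Thus $f - h - g$ is a path of length $2$ in $G(C_\mathscr{P}(X))$, giving $d_G(f,g) \leq 2 = d_{AG}(f,g)$.

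The main obstacle is the bookkeeping in the non-adjacent case: one has to confirm that Theorem~\ref{Th4.6} actually delivers a common neighbour of $f$ and $g$ in every sub-case (in particular checking that when $f,g$ are adjacent in $\Gamma(C_\mathscr{P}(X))$ but non-adjacent in $AG(C_\mathscr{P}(X))$, the comparability forced by Theorem~\ref{Th4.2} is compatible with the hypotheses $|int_XZ(f)\cap X_\mathscr{P}| \geq 2$ and $|int_XZ(g)\cap X_\mathscr{P}| \geq 2$, using $|X_\mathscr{P}| \geq 3$), and then verifying that passing to the class representative $h$ preserves the two adjacencies. Everything else is immediate from the induced-subgraph property and Corollary~\ref{Cor4.7}.
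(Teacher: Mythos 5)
Your proposal is correct and takes essentially the same route as the paper: for non-adjacent $f,g\in V(C_\mathscr{P}(X))$ one gets a common neighbour in $AG(C_\mathscr{P}(X))$ from Corollary~\ref{Cor4.7} (via Theorem~\ref{Th4.6}), and then replaces it by the representative of its equivalence class in $V(C_\mathscr{P}(X))$, using Theorem~\ref{Th4.2} to see that adjacency depends only on $int_XZ(\cdot)\cap X_\mathscr{P}$, so the representative is still adjacent to both and distinct from them. One remark: your sub-case in which $f,g$ are adjacent in $\Gamma(C_\mathscr{P}(X))$ but non-adjacent in $AG(C_\mathscr{P}(X))$ never occurs, since $\Gamma(C_\mathscr{P}(X))$ is a subgraph of $AG(C_\mathscr{P}(X))$; hence Theorem~\ref{Th4.6}(i) always suffices, and your parenthetical claim that comparability of the traces together with $|X_\mathscr{P}|\geq 3$ forces $|int_XZ(f)\cap X_\mathscr{P}|\geq 2$ and $|int_XZ(g)\cap X_\mathscr{P}|\geq 2$ is in fact false in general (e.g.\ $int_XZ(f)\cap X_\mathscr{P}=\{x\}\subset \{x,y\}=int_XZ(g)\cap X_\mathscr{P}$), but this is harmless here precisely because that sub-case is vacuous.
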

		\begin{proof}
			Let $f,g\in V(C_\mathscr{P}(X))$ be not adjacent in $V(C_\mathscr{P}(X))$. Then $f,g$ are not adjacent in $AG(C_\mathscr{P}(X))$ and so, $d_{AG(C_\mathscr{P}(X))}(f,g)=2$ by Corollary \ref{Cor4.7}, i.e., there exists $h\in Z(C_\mathscr{P}(X))^*$ which is adjacent to both $f,g$ in $AG(C_\mathscr{P}(X))$ . From Theorem \ref{Th4.2}, it follows that $int_XZ(h)\cap X_\mathscr{P}\neq int_XZ(f)\cap X_\mathscr{P}$ and $int_XZ(h)\cap X_\mathscr{P}\neq int_XZ(g)\cap X_\mathscr{P}$, i.e., $[h]\neq [f]$ and $[h]\neq [g]$ and so, we can consider $h\in V(C_\mathscr{P}(X))$. Thus $h$ is adjacent to both $f,g$ in $G(C_\mathscr{P}(X))\implies d_{G}(f,g)=2$. Thus $d_{G}(X)(f,g)=d_{AG}(f,g)$ for each $f,g\in V(C_\mathscr{P}(X))$.
		\end{proof}
		\begin{corollary}\label{Cor5.3}
			$diam(G(C_\mathscr{P}(X)))=diam(AG(C_\mathscr{P}(X)))=2$
		\end{corollary}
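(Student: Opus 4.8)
The plan is to read this off directly from the theorem just proved together with Corollary~\ref{Cor4.7}. By definition $diam(G(C_\mathscr{P}(X))) = \sup\{d_G(f,g) : f,g \in V(C_\mathscr{P}(X))\}$, and the preceding theorem identifies each term on the right with the corresponding $d_{AG}(f,g)$. Hence $diam(G(C_\mathscr{P}(X))) = \sup\{d_{AG}(f,g) : f,g \in V(C_\mathscr{P}(X))\} \leq diam(AG(C_\mathscr{P}(X))) = 2$, the final equality being Corollary~\ref{Cor4.7}. Thus the upper bound is free, and the entire content of the corollary reduces to showing that the diameter is not $\leq 1$, i.e.\ that $G(C_\mathscr{P}(X))$ is not a complete graph.

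For the latter it suffices to exhibit two distinct, non-adjacent vertices of $G(C_\mathscr{P}(X))$; by the computation above such a pair lies at $G$-distance exactly $2$, forcing $diam(G(C_\mathscr{P}(X))) = 2$. Invoking the blanket assumption $|X_\mathscr{P}| \geq 3$, fix three distinct $\mathscr{P}$-points $x,y,w$. Applying Lemma~\ref{Lem2} to the compact set $\{x\}$ inside the neighbourhood $X\setminus\{y,w\}$ gives $g\in C_\mathscr{P}(X)$ with $x\in X\setminus Z(g)$ and $cl_X(X\setminus Z(g))\subseteq X\setminus\{y,w\}$, while Lemma~\ref{Lem1} applied at $y$ inside $X\setminus\{w\}$ gives $h\in C_\mathscr{P}(X)$ with $y\in X\setminus Z(h)$ and $cl_X(X\setminus Z(h))\subseteq X\setminus\{w\}$. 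Put $f=g^2+h^2$, so that $int_XZ(f)=int_XZ(g)\cap int_XZ(h)$. A short check using Observation~\ref{Obs1} and the fact that $int_XZ(g)=X\setminus cl_X(X\setminus Z(g))$ (and similarly for $h,f$) shows that $w\in int_XZ(f)\cap X_\mathscr{P}$, that $y\in[int_XZ(g)\cap X_\mathscr{P}]\setminus int_XZ(f)$, and hence that $\emptyset\neq int_XZ(f)\cap X_\mathscr{P}\subsetneqq int_XZ(g)\cap X_\mathscr{P}$; in particular $f,g\in Z(C_\mathscr{P}(X))^*$ with $[f]\neq[g]$, and by Theorem~\ref{Th4.2} they are non-adjacent in $AG(C_\mathscr{P}(X))$. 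Replacing $f,g$ by their (unique) representatives in $V(C_\mathscr{P}(X))$ leaves $int_XZ(\cdot)\cap X_\mathscr{P}$ unchanged, so these representatives are distinct vertices of $G(C_\mathscr{P}(X))$, non-adjacent by Theorem~\ref{Th4.2}. When $X_\mathscr{P}$ is finite one may skip the construction and simply note, via Example~\ref{Ex7.1} and Theorem~\ref{Th7.3}, that $1_{\{x\}}$ and $1_{\{x,y\}}$ already form such a pair. There is no genuine obstacle here; the only step calling for any argument at all is verifying that $G(C_\mathscr{P}(X))$ fails to be complete, and even that is routine once a nested pair of $\mathscr{P}$-traces of zero sets is produced.
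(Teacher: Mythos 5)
Your proposal is correct and follows the paper's route: the corollary is read off from the preceding theorem ($d_G(f,g)=d_{AG}(f,g)$ on $V(C_\mathscr{P}(X))$) together with Corollary~\ref{Cor4.7}. The only addition is your explicit construction of a non-adjacent pair in $V(C_\mathscr{P}(X))$ to certify the lower bound $diam(G(C_\mathscr{P}(X)))\geq 2$; the paper leaves this implicit here and supplies essentially the same construction later, in the proof of Theorem~\ref{Th7.6}, so your extra care closes a small gap rather than changing the argument.
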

		\begin{theorem}\label{Th7.6}		  	   
			$ecc_{G}(f)=2=ecc_{AG}(f)$ for each $f\in V(C_\mathscr{P}(X))$.
		\end{theorem}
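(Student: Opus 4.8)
The plan is to handle the two equalities separately: the one for $AG(C_\mathscr{P}(X))$ is already on record, and the one for $G(C_\mathscr{P}(X))$ reduces to producing a single well-chosen vertex. Since $f\in V(C_\mathscr{P}(X))\subseteq Z(C_\mathscr{P}(X))^*$, the equality $ecc_{AG}(f)=2$ is exactly the statement of the corollary following Corollary~\ref{Cor4.7} (witnessed by the non-adjacent pair $f,2f$), so nothing new is needed there. For the induced subgraph, Corollary~\ref{Cor5.3} already gives $ecc_G(f)\le diam(G(C_\mathscr{P}(X)))=2$, so it suffices to exhibit one vertex $g'\in V(C_\mathscr{P}(X))$ with $g'\ne f$ that is \emph{not} adjacent to $f$ in $G(C_\mathscr{P}(X))$; then $d_G(f,g')=2$ because $G(C_\mathscr{P}(X))$ has diameter $2$, and hence $ecc_G(f)=2$.

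The subtle point is that the usual witness $2f$ does not help here, since $[2f]=[f]$ and so the representative of $[2f]$ in $V(C_\mathscr{P}(X))$ is $f$ itself. Instead I would produce a genuinely different equivalence class whose set $int_XZ(\cdot)\cap X_\mathscr{P}$ is \emph{comparable} with $S_f:=int_XZ(f)\cap X_\mathscr{P}$, which by Theorem~\ref{Th4.2} is precisely what non-adjacency to $f$ amounts to. Write $A=X\setminus Z(f)$; then $\emptyset\ne A\subseteq X_\mathscr{P}$ by Observation~\ref{Obs1}(i), and $S_f\ne\emptyset$ since $f$ is a vertex. I would split on $|A|$. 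If $|A|\ge 2$, fix distinct $a,b\in A$ and apply Lemma~\ref{Lem1} to $a\in X_\mathscr{P}$ with the open neighbourhood $G:=(X\setminus Z(f))\setminus\{b\}$, obtaining $g\in C_\mathscr{P}(X)$ with $a\in X\setminus Z(g)$ and $X\setminus int_XZ(g)\subseteq G\subseteq X\setminus Z(f)$. Complementing the last inclusion gives $Z(f)\subseteq int_XZ(g)$, hence $S_f\subseteq int_XZ(g)\cap X_\mathscr{P}=:S_g$; on the other hand $b\notin G\supseteq X\setminus int_XZ(g)$ forces $b\in int_XZ(g)\cap X_\mathscr{P}=S_g$, while $b\in A$ keeps $b$ out of $int_XZ(f)$, so $b\in S_g\setminus S_f$ and $S_f\subsetneq S_g$. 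Since $a\in X\setminus Z(g)$ we get $g\ne 0$, and $S_g\supseteq S_f\ne\emptyset$ gives $g\in Z(C_\mathscr{P}(X))^*$ with $[g]\ne[f]$.

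If instead $|A|=1$, say $A=\{x\}$, then $x$ is an isolated point, $Z(f)=X\setminus\{x\}$ is clopen, and $S_f=X_\mathscr{P}\setminus\{x\}$; here the hypothesis $|X_\mathscr{P}|\ge 3$ enters, allowing me to pick distinct $p,q\in X_\mathscr{P}\setminus\{x\}$. Applying Lemma~\ref{Lem2} to the compact set $K:=\{x,p\}\subseteq X_\mathscr{P}$ and the open neighbourhood $X\setminus\{q\}$ of $K$ produces $g\in C_\mathscr{P}(X)$ with $\{x,p\}\subseteq X\setminus Z(g)$ and $cl_X(X\setminus Z(g))\subseteq X\setminus\{q\}$. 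Then $x\in X\setminus Z(g)$ gives $S_g\subseteq X_\mathscr{P}\setminus\{x\}=S_f$; $p\in X\setminus Z(g)$ gives $p\in S_f\setminus S_g$; and $q\notin cl_X(X\setminus Z(g))=X\setminus int_XZ(g)$ gives $q\in S_g$. Thus $\emptyset\ne S_g\subsetneq S_f$, $g\ne 0$, and $g\in Z(C_\mathscr{P}(X))^*$ with $[g]\ne[f]$. In either case, taking $g'\in V(C_\mathscr{P}(X))$ to be the representative of $[g]$, and using that adjacency in the induced subgraph coincides with that in $AG(C_\mathscr{P}(X))$ and (by Theorem~\ref{Th4.2}) depends only on $int_XZ(g')\cap X_\mathscr{P}=S_g$, one sees $g'$ is non-adjacent to $f$, while $[g']=[g]\ne[f]$ forces $g'\ne f$; hence $d_G(f,g')=2$ and $ecc_G(f)=2$.

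The only place where genuine care is needed is the case distinction: one must notice that when the cozero set of $f$ consists of a single (necessarily isolated) point, $S_f$ is a co-singleton inside $X_\mathscr{P}$, so it cannot be enlarged to a realizable set and one is forced to shrink it instead — which is exactly the step that uses $|X_\mathscr{P}|\ge 3$. Everything else is routine bookkeeping with the identity $X\setminus int_XZ(h)=cl_X(X\setminus Z(h))$ and the inclusion $X\setminus Z(h)\subseteq X_\mathscr{P}$.
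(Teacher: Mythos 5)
Your proposal is correct and follows essentially the same route as the paper: the same case split on $|X\setminus Z(f)|$, using Lemma~\ref{Lem1} to enlarge $int_XZ(f)\cap X_\mathscr{P}$ when the cozero set has at least two points and Lemma~\ref{Lem2} (with $|X_\mathscr{P}|\geq 3$) to shrink it when the cozero set is a singleton, then passing to the representative in $V(C_\mathscr{P}(X))$ and invoking $diam(G(C_\mathscr{P}(X)))=2$. Your explicit justification that $g\in Z(C_\mathscr{P}(X))^*$ via $S_g\supseteq S_f\neq\emptyset$ (rather than via the extra point placed in $int_XZ(g)$) is a negligible variation.
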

		\begin{proof}
			Let $f\in V(C_\mathscr{P}(X))$. Then $X\setminus Z(f)\neq\emptyset$.\\
If $|X\setminus Z(f)|=1$, say $X\setminus Z(f)=\{x\}$ then $int_XZ(f)\cap X_\mathscr{P}=X_\mathscr{P}\setminus\{x\}$.\\
			Since $|X_\mathscr{P}|\geq 3$, there exist distinct $y,z\in X_\mathscr{P}\setminus\{x\}$. By Lemma \ref{Lem2}, we find $g\in C_\mathscr{P}(X)$ such that $\{x,y\}\subset X\setminus Z(g)\subset X\setminus int_XZ(g)\subset X\setminus\{z\}$. Then $z\in int_XZ(g)\cap X_\mathscr{P}$ and $x,y\notin Z(g)\implies g\in Z(C_\mathscr{P}(X))^*$ and $int_XZ(g)\cap X_\mathscr{P}\subsetneqq int_XZ(f)\cap X_\mathscr{P}$, i.e., $[f]\neq [g]$. By Lemma~\ref{Lem5.1}, there exists $g'\in V(C_\mathscr{P}(X))$ such that $[g']=[g]$. Then $f,g'\in  V(C_\mathscr{P}(X))$ with $int_XZ(g')\cap X_\mathscr{P}\subsetneqq int_XZ(f)\cap X_\mathscr{P}\implies f,g'$ are not adjacent in $G(C_\mathscr{P}(X))$.\\
Let $|X\setminus Z(f)|\geq 2$ and choose distinct $x,y\in X\setminus Z(f)$.\\
			By Lemma \ref{Lem1}, there exists $g\in C_\mathscr{P}(X)$ such that $x\in X\setminus Z(g)\subset X\setminus int_XZ(g)\subset X\setminus[Z(f)\cup \{y\}]$. Then $g\neq 0$ and $Z(f)\cup\{y\}\subset int_XZ(g)\implies g\in Z(C_\mathscr{P}(X))^*$ and $int_XZ(f)\cap X_\mathscr{P}\subsetneqq int_XZ(g)\cap X_\mathscr{P}$, i.e., $[f]\neq [g]$. As done earlier, considering $g\in V(C_\mathscr{P}(X))$ we find that $f,g$ are not adjacent in $G(C_\mathscr{P}(X))$.\\
			Therefore, in any case, for each $f\in V(C_\mathscr{P}(X))$, there exists $g\in V(C_\mathscr{P}(X))$ such that $g$ is not adjacent to $f$ in $G(C_\mathscr{P}(X))$. So, $ecc_{G}(f)\geq 2$. As $diam(G(C_\mathscr{P}(X)))=2$, it follows that $ecc_{G}(f)=2$ ($=ecc_{AG}(f)$), for each $f\in V(C_\mathscr{P}(X))$.
		\end{proof}
	\begin{corollary}
\begin{enumerate}
	\item[(i)] $C(G(C_\mathscr{P}(X)))=V(C_\mathscr{P}(X))$
\item[(ii)] $rad(G(C_\mathscr{P}(X))) =2=rad(AG(C_\mathscr{P}(X)))$.
\end{enumerate}
	\end{corollary}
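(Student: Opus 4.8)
Both parts are immediate consequences of Theorem~\ref{Th7.6}, which asserts that $ecc_G(f)=2$ for every $f\in V(C_\mathscr{P}(X))$. The plan is simply to unwind the definitions of center and radius given in Section~2.

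For part (i), recall that a vertex $f$ lies in $C(G(C_\mathscr{P}(X)))$ precisely when $ecc_G(f)\le ecc_G(g)$ for every vertex $g$. Since Theorem~\ref{Th7.6} gives $ecc_G(f)=2$ uniformly over all $f\in V(C_\mathscr{P}(X))$, this inequality holds trivially for each vertex, so every vertex of $G(C_\mathscr{P}(X))$ belongs to its center; hence $C(G(C_\mathscr{P}(X)))=V(C_\mathscr{P}(X))$. For part (ii), the radius is by definition the common eccentricity value attained on the center, which we have just seen is $2$; thus $rad(G(C_\mathscr{P}(X)))=2$, and $rad(AG(C_\mathscr{P}(X)))=2$ was already recorded in the corollary following Corollary~\ref{Cor4.7} (or re-derived the same way from $ecc_{AG}(f)=2$).

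There is no substantive obstacle here: the entire content was extracted in Theorem~\ref{Th7.6}, where the nontrivial step was constructing, for each $f\in V(C_\mathscr{P}(X))$, a non-adjacent vertex $g\in V(C_\mathscr{P}(X))$ to force $ecc_G(f)\ge 2$, together with Corollary~\ref{Cor5.3} giving the upper bound $ecc_G(f)\le diam(G(C_\mathscr{P}(X)))=2$. The present corollary only repackages that equality.

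\begin{proof}
By Theorem~\ref{Th7.6}, $ecc_G(f)=2$ for every $f\in V(C_\mathscr{P}(X))$, so all vertices share the same (hence minimal) eccentricity; consequently every vertex lies in the center, proving (i). The radius is the eccentricity attained on the center, which equals $2$, and $rad(AG(C_\mathscr{P}(X)))=2$ follows identically from $ecc_{AG}(f)=2$. This establishes (ii).
\end{proof}
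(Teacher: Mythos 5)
Your argument is correct and is exactly the route the paper intends: the corollary is stated without proof as an immediate consequence of Theorem~\ref{Th7.6}, and your unwinding of the definitions of center and radius (all eccentricities equal $2$, hence every vertex is central and the radius is $2$, with the $AG$ case already recorded after Corollary~\ref{Cor4.7}) is precisely that implicit argument.
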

	\begin{theorem}\label{Th5.6}
		Let $f,g\in V(C_\mathscr{P}(X))$. 
		\begin{enumerate}
			\item If $f.g\neq 0$ then there always exists a vertex in $V(C_\mathscr{P}(X))$ which is adjacent to both $f,g$ in $G(C_\mathscr{P}(X))$.
			\item If $f.g=0$ then there exists a vertex in $V(C_\mathscr{P}(X))$ which is adjacent to both $f,g$ in $G(C_\mathscr{P}(X))$ if and only if $|int_XZ(f)\cap X_\mathscr{P}|\geq 2$ and $|int_XZ(g)\cap X_\mathscr{P}|\geq 2$.
		\end{enumerate}
	\end{theorem}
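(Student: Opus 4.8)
The plan is to derive this as the induced-subgraph counterpart of the already-established Theorem~\ref{Th4.6}, by transporting the witnessing vertex supplied there into $V(C_\mathscr{P}(X))$. Note first that for $f,g\in V(C_\mathscr{P}(X))$ the condition $f.g\neq 0$ says exactly that $f,g$ are \emph{not} adjacent in $\Gamma(C_\mathscr{P}(X))$, while $f.g=0$ says exactly that they \emph{are} adjacent there. Hence statement~(1) and the ``if'' direction of~(2) correspond to parts (i) and (ii) of Theorem~\ref{Th4.6}, while the ``only if'' direction of~(2) corresponds to the forward implication in Theorem~\ref{Th4.6}(ii).

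\textbf{Existence.} Given $f,g\in V(C_\mathscr{P}(X))$ with either $f.g\neq 0$, or else $f.g=0$ together with $|int_XZ(f)\cap X_\mathscr{P}|\geq 2$ and $|int_XZ(g)\cap X_\mathscr{P}|\geq 2$, Theorem~\ref{Th4.6} yields a vertex $h\in Z(C_\mathscr{P}(X))^*$ adjacent to both $f$ and $g$ in $AG(C_\mathscr{P}(X))$. By Lemma~\ref{Lem5.1} (equivalently, by the construction of $V(C_\mathscr{P}(X))$) there is a unique $h'\in V(C_\mathscr{P}(X))$ with $[h']=[h]$, so $int_XZ(h')\cap X_\mathscr{P}=int_XZ(h)\cap X_\mathscr{P}$, which is non-empty; thus $h'$ is genuinely a vertex of $G(C_\mathscr{P}(X))$. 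Since the adjacency criterion of Theorem~\ref{Th4.2} depends only on the sets $int_XZ(\cdot)\cap X_\mathscr{P}$, $h'$ is again adjacent to both $f$ and $g$ in $AG(C_\mathscr{P}(X))$; moreover the strict non-comparability forced by adjacency gives $int_XZ(h')\cap X_\mathscr{P}\neq int_XZ(f)\cap X_\mathscr{P}$ and $int_XZ(h')\cap X_\mathscr{P}\neq int_XZ(g)\cap X_\mathscr{P}$, i.e. $h'\neq f$ and $h'\neq g$. As $G(C_\mathscr{P}(X))$ is the subgraph of $AG(C_\mathscr{P}(X))$ induced on $V(C_\mathscr{P}(X))$ and $f,g,h'$ all lie there, both adjacencies carry over to $G(C_\mathscr{P}(X))$, which proves the existence claims.

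\textbf{The ``only if'' half of~(2).} Suppose $f.g=0$ and some $h'\in V(C_\mathscr{P}(X))$ is adjacent to both $f$ and $g$ in $G(C_\mathscr{P}(X))$. Since $G(C_\mathscr{P}(X))$ is an induced subgraph of $AG(C_\mathscr{P}(X))$, the same $h'\in Z(C_\mathscr{P}(X))^*$ is adjacent to $f$ and to $g$ in $AG(C_\mathscr{P}(X))$; and $f.g=0$ means $f,g$ are adjacent in $\Gamma(C_\mathscr{P}(X))$, so Theorem~\ref{Th4.6}(ii) forces $|int_XZ(f)\cap X_\mathscr{P}|\geq 2$ and $|int_XZ(g)\cap X_\mathscr{P}|\geq 2$.

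The argument is essentially bookkeeping on top of Theorem~\ref{Th4.6}; the one point that needs care — and the one I would flag as the only real obstacle — is verifying that the transported representative $h'$ is genuinely distinct from $f$ and from $g$, so that it functions as a bona fide third vertex. This is precisely where one uses that adjacency in $AG(C_\mathscr{P}(X))$ entails strict incomparability of the associated traces $int_XZ(\cdot)\cap X_\mathscr{P}$, hence $[h']\neq[f]$ and $[h']\neq[g]$.
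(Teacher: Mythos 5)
Your proposal is correct and follows essentially the same route as the paper: invoke Theorem~\ref{Th4.6} to obtain a common neighbour $h$ in $AG(C_\mathscr{P}(X))$, pass to the unique representative $h'\in V(C_\mathscr{P}(X))$ with $[h']=[h]$ (noting adjacency forces $[h]\neq[f]$, $[h]\neq[g]$), and use that $G(C_\mathscr{P}(X))$ is induced; the converse direction likewise reduces to Theorem~\ref{Th4.6}(ii). Your explicit check that $h'$ is distinct from $f$ and $g$ is exactly the point the paper handles implicitly, so there is no substantive difference.
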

	\begin{proof}
		\hspace*{3cm}
		\begin{enumerate}
			\item If $f.g\neq 0$ then by Theorem~\ref{Th4.6}, there always exists a vertex $h$ in $Z(C_\mathscr{P}(X))^*$ such that $h$ is adjacent to both $f$ and $g$ in $AG(C_\mathscr{P}(X))$. So, $[h]\neq [f]$ and $[h]\neq [g]$. Let $h'\in V(C_\mathscr{P}(X))$ such that $[h']=[h]$. Clearly, $h'$ is adjacent to both $f,g$ in $G(C_\mathscr{P}(X))$.
			\item Let $f.g=0$. If $|int_XZ(f)\cap X_\mathscr{P}|\geq 2$ and $|int_XZ(g)\cap X_\mathscr{P}|\geq 2$, then by Theorem~\ref{Th4.6}(ii), there exists a vertex $h$ in $Z(C_\mathscr{P}(X))^*$ which is adjacent to both $f,g$ in $AG(C_\mathscr{P}(X))$. Let $h'\in V(C_\mathscr{P}(X))$ such that $[h']=[h]$. Then, $h$ is adjacent to both $f,g$ in $G(C_\mathscr{P}(X))$. Conversely let $h$ be adjacent to both $f,g$ in $G(C_\mathscr{P}(X))$. Then $h$ is adjacent to both $f,g$ in $AG(C_\mathscr{P}(X))$ also and hence by Theorem~\ref{Th4.6}(ii), $|int_XZ(f)\cap X_\mathscr{P}|\geq 2$ and $|int_XZ(g)\cap X_\mathscr{P}|\geq 2$.
		\end{enumerate}
	\end{proof}
	\begin{theorem}
		$G(C_\mathscr{P}(X))$ is hypertriangulated if and only if $AG(C_\mathscr{P}(X))$ is hypertriangulated.
	\end{theorem}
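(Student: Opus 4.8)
The plan is to prove the equivalence by establishing each implication separately, exploiting the fact that $G(C_\mathscr{P}(X))$ is an induced subgraph of $AG(C_\mathscr{P}(X))$ together with the structural correspondence supplied by Lemma~\ref{Lem5.1} and Theorem~\ref{Th5.6}. First I would assume $AG(C_\mathscr{P}(X))$ is hypertriangulated and show $G(C_\mathscr{P}(X))$ is hypertriangulated. Take an arbitrary edge $f-g$ in $G(C_\mathscr{P}(X))$; since $G(C_\mathscr{P}(X))$ is induced, $f-g$ is an edge of $AG(C_\mathscr{P}(X))$, hence lies on a triangle $f-g-h-f$ in $AG(C_\mathscr{P}(X))$. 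Here $h\in Z(C_\mathscr{P}(X))^*$ is adjacent to both $f$ and $g$, so by observation (c) preceding Lemma~\ref{Lem5.1}, $[h]\neq[f]$ and $[h]\neq[g]$; replacing $h$ by the unique representative $h'\in V(C_\mathscr{P}(X))$ with $[h']=[h]$ keeps $h'$ adjacent to both $f$ and $g$ in $AG(C_\mathscr{P}(X))$, and therefore in $G(C_\mathscr{P}(X))$ as well. Thus $f-g-h'-f$ is a triangle in $G(C_\mathscr{P}(X))$, proving every edge of $G(C_\mathscr{P}(X))$ is on a triangle.

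For the converse, assume $G(C_\mathscr{P}(X))$ is hypertriangulated and take an arbitrary edge $f-g$ in $AG(C_\mathscr{P}(X))$. By Lemma~\ref{Lem5.1} there is a unique edge $f'-g'$ in $G(C_\mathscr{P}(X))$ with $[f']=[f]$ and $[g']=[g]$. By hypothesis this edge lies on a triangle $f'-g'-h-f'$ in $G(C_\mathscr{P}(X))$, so $h$ is adjacent to both $f'$ and $g'$. Since adjacency in $AG(C_\mathscr{P}(X))$ depends only on the sets $int_XZ(\cdot)\cap X_\mathscr{P}$ (Theorem~\ref{Th4.2}), and these are the same across an equivalence class, $h$ is adjacent to $f$ and to $g$ in $AG(C_\mathscr{P}(X))$; hence $f-g-h-f$ is a triangle in $AG(C_\mathscr{P}(X))$, showing every edge of $AG(C_\mathscr{P}(X))$ is on a triangle.

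The argument is essentially a bookkeeping exercise in pushing triangles back and forth through the quotient by $\sim$, and I do not anticipate a serious obstacle: the only point that needs care is verifying that when we relocate the third vertex $h$ to its canonical representative $h'$, we genuinely have $h'\neq f$ and $h'\neq g$ as vertices of $G(C_\mathscr{P}(X))$ — this is exactly the content of observation (c) and the fact that the class of $h$ differs from those of $f$ and $g$, which follows from $h$ being adjacent (hence not equal in the $\sim$-sense) to both. Alternatively, one could phrase the whole proof in one stroke by invoking Theorem~\ref{Th5.6}: an edge $f-g$ with $fg\neq 0$ always lies on a triangle in both graphs, while for an edge with $fg=0$, lying on a triangle in either graph is equivalent to the single condition $|int_XZ(f)\cap X_\mathscr{P}|\geq 2$ and $|int_XZ(g)\cap X_\mathscr{P}|\geq 2$, which is manifestly invariant under passing between $f,g$ and their representatives $f',g'$; since Lemma~\ref{Lem5.1} gives a bijection between edges of the two graphs respecting equivalence classes, hypertriangulatedness of one graph transfers to the other.
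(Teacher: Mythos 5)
Your proposal is correct and takes essentially the same route as the paper: both directions transfer the third vertex of a triangle between $AG(C_\mathscr{P}(X))$ and $G(C_\mathscr{P}(X))$ by passing to the class representative, using Lemma~\ref{Lem5.1} and the fact that adjacency depends only on $int_XZ(\cdot)\cap X_\mathscr{P}$. Your added care that $h'\neq f,g$, and the alternative one-stroke argument via Theorem~\ref{Th5.6}, are fine but do not change the substance.
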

	\begin{proof}
		Let $G(C_\mathscr{P}(X))$ be hypertriangulated and $f-g$ be an edge in $AG(C_\mathscr{P}(X))$. Then by Lemma~\ref{Lem5.1} there exists a unique edge $f'-g'$ in $G(C_\mathscr{P}(X))$ such that $[f']=[f]$ and $[g']=[g]$. Since $G(C_\mathscr{P}(X))$ is hypertriangulated, there exists $h\in V(C_\mathscr{P}(X))$ such that $h$ is adjacent to both $f',g'$ in $G(C_\mathscr{P}(X))$. So,  $h$ is adjacent to both $f,g$ in $AG(C_\mathscr{P}(X))$ and hence $f-g-h-f$ is a triangle in $AG(C_\mathscr{P}(X))$, proving that $AG(C_\mathscr{P}(X))$ is hypertriangulated.\\
		Conversely, let $f-g$ be an edge in $G(C_\mathscr{P}(X))$. Then $f-g$ is an edge in $AG(C_\mathscr{P}(X))$. So, there exists $h\in Z(C_\mathscr{P}(X))^*$ such that $f-g-h-f$ is a triangle in $AG(C_\mathscr{P}(X))$. Consider $h'\in V(C_\mathscr{P}(X))$ such that $[h']=[h]$. Then $f-g-h'-f$ is a triangle in $G(C_\mathscr{P}(X))$. Therefore, $G(C_\mathscr{P}(X))$ is hypertriangulated.
	\end{proof}
	\begin{theorem}
		$G(C_\mathscr{P}(X))$ is always triangulated.
	\end{theorem}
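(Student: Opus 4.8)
The plan is to lift a triangle from $AG(C_\mathscr{P}(X))$ down to $G(C_\mathscr{P}(X))$, exploiting the fact that adjacency in $AG(C_\mathscr{P}(X))$ depends only on the equivalence classes of the vertices — exactly the device already used for hypertriangulatedness. First I would fix an arbitrary vertex $f\in V(C_\mathscr{P}(X))\subseteq Z(C_\mathscr{P}(X))^*$. By Theorem~\ref{Th4.12}, $AG(C_\mathscr{P}(X))$ is triangulated, so there exist $g,h\in Z(C_\mathscr{P}(X))^*$ with $f-g-h-f$ a triangle in $AG(C_\mathscr{P}(X))$. Since each of the pairs $\{f,g\}$, $\{g,h\}$, $\{f,h\}$ is an edge, observation (a) preceding Lemma~\ref{Lem5.1} shows the classes $[f],[g],[h]$ are pairwise distinct.

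Next I would pass to representatives: let $g',h'\in V(C_\mathscr{P}(X))$ be the (unique) elements with $[g']=[g]$ and $[h']=[h]$. These do lie in $V(C_\mathscr{P}(X))$ because $g,h\in Z(C_\mathscr{P}(X))^*$ forces $int_XZ(g')\cap X_\mathscr{P}=int_XZ(g)\cap X_\mathscr{P}\neq\emptyset$ and similarly for $h'$. Because adjacency in $AG(C_\mathscr{P}(X))$ is governed solely by the sets $int_XZ(\cdot)\cap X_\mathscr{P}$ (Theorem~\ref{Th4.2}), replacing $g$ by $g'$ and $h$ by $h'$ preserves all three adjacencies, so $f-g'-h'-f$ is again a triangle in $AG(C_\mathscr{P}(X))$. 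As $[f],[g'],[h']$ are pairwise distinct, $f,g',h'$ are pairwise distinct vertices of the induced subgraph $G(C_\mathscr{P}(X))$, and since that subgraph is induced, the triangle $f-g'-h'-f$ lies entirely in $G(C_\mathscr{P}(X))$. Hence every vertex of $G(C_\mathscr{P}(X))$ sits on a triangle, i.e., $G(C_\mathscr{P}(X))$ is triangulated.

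There is essentially no genuine obstacle here; the argument is pure bookkeeping once Theorem~\ref{Th4.12} is in hand. The only two points deserving a word of care are that the chosen representatives actually belong to $V(C_\mathscr{P}(X))$ (guaranteed by $int_XZ(g)\cap X_\mathscr{P}\neq\emptyset$ and $int_XZ(h)\cap X_\mathscr{P}\neq\emptyset$) and that $f,g',h'$ are mutually distinct (guaranteed by the three equivalence classes being pairwise distinct). If one prefers, the lifting step can be phrased via two applications of the edge-lifting Lemma~\ref{Lem5.1} together with observation (c), but the equivalence-class invariance of adjacency makes the direct argument cleanest.
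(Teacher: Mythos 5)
Your proof is correct and follows the paper's own argument essentially verbatim: invoke the triangulatedness of $AG(C_\mathscr{P}(X))$ (Theorem~\ref{Th4.12}) to obtain a triangle $f-g-h-f$, then replace $g,h$ by their class representatives $g',h'\in V(C_\mathscr{P}(X))$ to obtain a triangle in $G(C_\mathscr{P}(X))$. The extra checks you record (that the representatives lie in $V(C_\mathscr{P}(X))$ and that $f,g',h'$ are pairwise distinct) are exactly the details the paper leaves implicit, so nothing further is needed.
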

	\begin{proof}
		Let $f\in V(C_\mathscr{P}(X))$. Since $AG(C_\mathscr{P}(X))$ is always triangulated and $f\in Z(C_\mathscr{P}(X))^*$, there exists a triangle in $AG(C_\mathscr{P}(X))$ containing $f$ as a vertex, say $f-g-h-f$ for some $g,h\in Z(C_\mathscr{P}(X))^*$. Let $g',h'\in V(C_\mathscr{P}(X))$ be such that $[g']=[g]$ and $[h']=[h]$. Hence, $f-g'-h'-f$ is a triangle in $G(C_\mathscr{P}(X))$, proving that $G(C_\mathscr{P}(X))$ is triangulated.
	\end{proof}
	\begin{corollary}
		$gr(G(C_\mathscr{P}(X)))=gr(AG(C_\mathscr{P}(X)))$.
	\end{corollary}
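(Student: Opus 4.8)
The plan is to read the corollary off from the theorem just proved, namely that $G(C_\mathscr{P}(X))$ is always triangulated, combined with Theorem~\ref{Th4.15}(ii), which records that $gr(AG(C_\mathscr{P}(X)))=3$. First I would note the elementary fact that for any simple graph the girth is at least $3$, and that in a triangulated graph every vertex — in particular, at least one vertex — lies on a triangle, so the graph contains a $3$-cycle and its girth is exactly $3$. Applying this to $G(C_\mathscr{P}(X))$ gives $gr(G(C_\mathscr{P}(X)))=3=gr(AG(C_\mathscr{P}(X)))$, which is precisely the asserted equality.

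No real obstacle arises here: all the substantive work has already been carried out in establishing that $G(C_\mathscr{P}(X))$ is triangulated, which itself followed by lifting a triangle of $AG(C_\mathscr{P}(X))$ — guaranteed by Theorem~\ref{Th4.12} — through the equivalence-class correspondence of Lemma~\ref{Lem5.1}. The only point deserving a word of care is that $G(C_\mathscr{P}(X))$ contains a cycle at all, but triangulatedness already supplies a $3$-cycle, so nothing further needs to be checked and the corollary is immediate.
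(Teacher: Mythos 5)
Your argument is correct and is precisely the one the paper intends: the corollary is stated without proof as an immediate consequence of $G(C_\mathscr{P}(X))$ being triangulated (giving girth $3$) together with Theorem~\ref{Th4.15}(ii), which gives $gr(AG(C_\mathscr{P}(X)))=3$. Nothing is missing.
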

The following theorem follows immediately from Theorem~\ref{Th5.6} :
\begin{theorem}\label{Th5.10}
		For $f,g\in V(C_\mathscr{P}(X))$, $f\perp_{G} g$ if and only if $f.g=0$ and either $|int_XZ(f)\cap X_\mathscr{P}|=1$ or $|int_XZ(g)\cap X_\mathscr{P}|=1$.
	\end{theorem}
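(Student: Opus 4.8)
The plan is to derive this directly from Theorem~\ref{Th5.6}, mirroring the argument for Theorem~\ref{Th4.18} but carried out entirely inside the induced subgraph $G(C_\mathscr{P}(X))$.

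For the forward implication, suppose $f\perp_{G} g$. By definition $f$ and $g$ are adjacent in $G(C_\mathscr{P}(X))$ and no vertex of $V(C_\mathscr{P}(X))$ is adjacent to both of them. If $f.g\neq 0$, then Theorem~\ref{Th5.6}(1) would supply a vertex of $V(C_\mathscr{P}(X))$ adjacent to both $f$ and $g$, contradicting $f\perp_{G} g$; hence $f.g=0$. Next, were it the case that $|int_XZ(f)\cap X_\mathscr{P}|\geq 2$ and $|int_XZ(g)\cap X_\mathscr{P}|\geq 2$ simultaneously, Theorem~\ref{Th5.6}(2) would again produce a common neighbour of $f$ and $g$ in $G(C_\mathscr{P}(X))$, a contradiction. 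Since $f,g\in V(C_\mathscr{P}(X))$ forces $int_XZ(f)\cap X_\mathscr{P}\neq\emptyset$ and $int_XZ(g)\cap X_\mathscr{P}\neq\emptyset$, we conclude that $|int_XZ(f)\cap X_\mathscr{P}|=1$ or $|int_XZ(g)\cap X_\mathscr{P}|=1$.

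For the converse, assume $f.g=0$ and, without loss of generality, $|int_XZ(f)\cap X_\mathscr{P}|=1$. First I would observe that $f.g=0$ makes $f$ and $g$ adjacent in $\Gamma(C_\mathscr{P}(X))$, hence in $AG(C_\mathscr{P}(X))$; since $f,g\in V(C_\mathscr{P}(X))$ and $G(C_\mathscr{P}(X))$ is the induced subgraph on $V(C_\mathscr{P}(X))$, they are adjacent in $G(C_\mathscr{P}(X))$ as well. Then apply the contrapositive of the forward direction of Theorem~\ref{Th5.6}(2): because $|int_XZ(f)\cap X_\mathscr{P}|=1$, no vertex of $V(C_\mathscr{P}(X))$ can be adjacent to both $f$ and $g$ in $G(C_\mathscr{P}(X))$. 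Together with the adjacency of $f$ and $g$, this is precisely $f\perp_{G} g$.

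There is essentially no analytic obstacle here; the whole content is packaged in Theorem~\ref{Th5.6}. The only point requiring a moment's care is the converse, where one must separately record that $f$ and $g$ are adjacent in $G(C_\mathscr{P}(X))$ (via $\Gamma(C_\mathscr{P}(X))\subseteq AG(C_\mathscr{P}(X))$ and the induced-subgraph structure) before invoking Theorem~\ref{Th5.6}(2). A minor bookkeeping subtlety is that Theorem~\ref{Th5.6}(2) is a biconditional about the existence of a common neighbour, so the proof uses one implication of it in the forward direction and its contrapositive in the converse.
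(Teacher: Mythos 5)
Your proof is correct and matches the paper's route exactly: the paper simply remarks that this theorem ``follows immediately from Theorem~\ref{Th5.6}'', and your write-up is just that derivation spelled out (mirroring the proof of Theorem~\ref{Th4.18}), including the correct observation that in the converse the adjacency of $f$ and $g$ comes from $f.g=0$ via $\Gamma(C_\mathscr{P}(X))\subseteq AG(C_\mathscr{P}(X))$ and the induced-subgraph structure.
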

\begin{theorem}
		For $f,g\in Z(C_\mathscr{P}(X))^*$, there exist unique $f',g'\in V(C_\mathscr{P}(X))$ such that $f\perp_{AG} g$ if and only if $f'\perp_{G} g'$.
	\end{theorem}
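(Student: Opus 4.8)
The plan is to read off the equivalence from the two ready-made descriptions of orthogonality: Theorem~\ref{Th4.18} characterises $f\perp_{AG}g$ (for $f,g\in Z(C_\mathscr{P}(X))^*$) as ``$f\cdot g=0$ and $|int_XZ(f)\cap X_\mathscr{P}|=1$ or $|int_XZ(g)\cap X_\mathscr{P}|=1$'', while Theorem~\ref{Th5.10} gives the formally identical criterion for $f'\perp_{G}g'$ with $f',g'\in V(C_\mathscr{P}(X))$. The whole point will be that \emph{both} ingredients of these criteria are unaffected when a vertex is replaced by its $\sim$-class representative.

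First I would fix, for the given $f,g$, the $f',g'\in V(C_\mathscr{P}(X))$ with $[f']=[f]$ and $[g']=[g]$; existence and uniqueness of these is exactly the defining property of $V(C_\mathscr{P}(X))$, and they genuinely lie in $V(C_\mathscr{P}(X))$ since $int_XZ(f')\cap X_\mathscr{P}=int_XZ(f)\cap X_\mathscr{P}\neq\emptyset$ and likewise for $g'$. Because $[f']=[f]$ and $[g']=[g]$, we get $|int_XZ(f')\cap X_\mathscr{P}|=|int_XZ(f)\cap X_\mathscr{P}|$ and $|int_XZ(g')\cap X_\mathscr{P}|=|int_XZ(g)\cap X_\mathscr{P}|$, so the cardinality halves of the two criteria coincide word for word.

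The only step requiring an actual argument is that vanishing of the product transfers as well, i.e. $f\cdot g=0\iff f'\cdot g'=0$. For this I would first establish the small fact: if $a\sim b$ in $C_\mathscr{P}(X)$ and $c\in C_\mathscr{P}(X)$, then $a\cdot c=0\iff b\cdot c=0$. The argument: by Observation~\ref{Obs1}(ii), $a\cdot c=0$ iff $X\setminus Z(c)\subseteq int_XZ(a)$; since $X\setminus Z(c)\subseteq X_\mathscr{P}$ by Observation~\ref{Obs1}(i), this is equivalent to $X\setminus Z(c)\subseteq int_XZ(a)\cap X_\mathscr{P}=int_XZ(b)\cap X_\mathscr{P}\subseteq int_XZ(b)$, i.e. to $b\cdot c=0$. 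Applying this with $(a,b,c)=(f,f',g)$ and then with $(a,b,c)=(g,g',f')$ yields $f\cdot g=0\iff f'\cdot g=0\iff f'\cdot g'=0$.

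Putting the last two paragraphs together, the right-hand side of Theorem~\ref{Th4.18} for the pair $f,g$ is literally the same proposition as the right-hand side of Theorem~\ref{Th5.10} for the pair $f',g'$, so $f\perp_{AG}g\iff f'\perp_{G}g'$. I do not expect any real obstacle: the one delicate observation is the asymmetry in the auxiliary fact — a product $a\cdot c$ vanishes depending on $a$ only through $int_XZ(a)\cap X_\mathscr{P}$, whereas the cozero set of $c$ is automatically contained in $X_\mathscr{P}$ — and once that is spotted everything else is bookkeeping.
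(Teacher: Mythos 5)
Your proposal is correct and follows essentially the same route as the paper: fix the unique representatives $f',g'$, observe that the cardinality conditions of Theorems~\ref{Th4.18} and~\ref{Th5.10} are invariant under passing to $\sim$-representatives, and transfer $f\cdot g=0$ to $f'\cdot g'=0$ in two steps via Observation~\ref{Obs1} (the paper substitutes $g\mapsto g'$ first and then $f\mapsto f'$, but this is the same argument). No gaps.
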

	\begin{proof} 
		For $f,g\in Z(C_\mathscr{P}(X))^*$, there exist unique $f',g'\in V(C_\mathscr{P}(X))$ such that  $[f']=[f]$ and $[g]=[g']$.  $[f']=[f] \implies$ $int_XZ(f)\cap X_\mathscr{P}=int_XZ(f')\cap X_\mathscr{P}$ and similarly, $[g]=[g'] \implies$ $int_XZ(g)\cap X_\mathscr{P}=int_XZ(g')\cap X_\mathscr{P}$. Then $f.g=0\iff X\setminus Z(f)\subset int_XZ(g)\cap X_\mathscr{P}=int_XZ(g')\cap X_\mathscr{P}\iff f.g'=0\iff X\setminus Z(g')\subset int_XZ(f)\cap X_\mathscr{P}=int_XZ(f')\cap X_\mathscr{P}\iff f'.g'=0$. Therefore, 
		\begin{equation*}
			\begin{split}
				f\perp_{AG} g &\iff f.g=0\text{ and either }|int_XZ(f)\cap X_\mathscr{P}|=1\\
				&\qquad\qquad\quad\qquad\text{ or }|int_XZ(g)\cap X_\mathscr{P}|=1\\
				&\iff f'.g'=0\text{ and either }|int_XZ(f')\cap X_\mathscr{P}|=1\\
				&\qquad\qquad\quad\qquad\text{ or }|int_XZ(g')\cap X_\mathscr{P}|=1\\
				&\iff f'\perp_{G(C_\mathscr{P}(X))} g', \text{ by Theorem \ref{Th5.10}}
			\end{split}
		\end{equation*}
	\end{proof}
	\begin{corollary}\label{Cor5.12}
		For $f,g\in V(C_\mathscr{P}(X))$, $f\perp_{G}g$ if and only if $f\perp_{AG}g$.
	\end{corollary}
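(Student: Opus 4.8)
The plan is to read this off as the special case of the preceding theorem in which the two given vertices already lie in $V(C_\mathscr{P}(X))$. Recall that for every $h\in Z(C_\mathscr{P}(X))^*$ there is a \emph{unique} $h'\in V(C_\mathscr{P}(X))$ with $[h']=[h]$; in particular, if $h$ itself belongs to $V(C_\mathscr{P}(X))$, then uniqueness forces $h'=h$, since $h\in V(C_\mathscr{P}(X))$ and trivially $[h]=[h]$.

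So, given $f,g\in V(C_\mathscr{P}(X))$, I would first invoke the previous theorem with these particular $f$ and $g$: it produces the unique $f',g'\in V(C_\mathscr{P}(X))$ with $[f']=[f]$ and $[g']=[g]$ for which $f\perp_{AG} g$ holds iff $f'\perp_{G} g'$ holds. By the observation just made, $f'=f$ and $g'=g$, so that equivalence becomes $f\perp_{AG} g \iff f\perp_{G} g$, which is exactly the assertion of the corollary.

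There is essentially no obstacle here; the only point to be mindful of is that ``$f\perp_{G}g$'' is a statement internal to the induced subgraph $G(C_\mathscr{P}(X))$ --- namely that $f$ and $g$ are adjacent in $G(C_\mathscr{P}(X))$ and that no vertex \emph{of $G(C_\mathscr{P}(X))$} is adjacent to both --- but the preceding theorem is already phrased precisely in those terms, so no reconciliation is needed. If one prefers a self-contained argument, one can instead combine Theorem~\ref{Th5.10} with Theorem~\ref{Th4.18}: for $f,g\in V(C_\mathscr{P}(X))$ both $f\perp_{G}g$ and $f\perp_{AG}g$ unwind to the single condition ``$f.g=0$ and either $|int_XZ(f)\cap X_\mathscr{P}|=1$ or $|int_XZ(g)\cap X_\mathscr{P}|=1$'', and hence the two orthogonality notions coincide on $V(C_\mathscr{P}(X))$.
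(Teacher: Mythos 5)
Your proposal is correct and follows essentially the same route the paper intends: the corollary is exactly the preceding theorem specialized to $f,g\in V(C_\mathscr{P}(X))$, where uniqueness of representatives forces $f'=f$ and $g'=g$; equivalently, Theorem~\ref{Th5.10} and Theorem~\ref{Th4.18} reduce both orthogonality relations to the same condition $f.g=0$ with $|int_XZ(f)\cap X_\mathscr{P}|=1$ or $|int_XZ(g)\cap X_\mathscr{P}|=1$. Nothing further is needed.
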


\begin{theorem}\label{Th5.12}
		If $G(C_\mathscr{P}(X))$ is complemented then it is uniquely complemented.
	\end{theorem}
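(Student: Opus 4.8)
The plan is to imitate the proof of Theorem~\ref{Th4.20}, but to carry it out inside the vertex set $V(C_\mathscr{P}(X))$ and to exploit the characterisation of orthogonality in $G(C_\mathscr{P}(X))$ supplied by Theorem~\ref{Th5.10}. So assume $G(C_\mathscr{P}(X))$ is complemented and suppose $f\perp_{G} g$ and $f\perp_{G} h$ for $f,g,h\in V(C_\mathscr{P}(X))$. It suffices to show that $g$ and $h$ are adjacent to the same set of vertices of $G(C_\mathscr{P}(X))$; in fact I expect to prove the stronger statement $g=h$, so that in $G(C_\mathscr{P}(X))$ an orthogonal complement, when it exists, is literally unique.

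First I would apply Theorem~\ref{Th5.10} twice: from $f\perp_{G} g$ we obtain $f.g=0$ together with $|int_XZ(f)\cap X_\mathscr{P}|=1$ or $|int_XZ(g)\cap X_\mathscr{P}|=1$, and from $f\perp_{G} h$ we obtain $f.h=0$ together with $|int_XZ(f)\cap X_\mathscr{P}|=1$ or $|int_XZ(h)\cap X_\mathscr{P}|=1$. Combining these, either $int_XZ(f)\cap X_\mathscr{P}$ is a singleton, or both $int_XZ(g)\cap X_\mathscr{P}$ and $int_XZ(h)\cap X_\mathscr{P}$ are singletons. In the first case, writing $int_XZ(f)\cap X_\mathscr{P}=\{x\}$, Observation~\ref{Obs1}(ii) applied to $f.g=0$ gives $X\setminus Z(g)\subset\{x\}$, and since $g\neq 0$ this forces $X\setminus Z(g)=\{x\}$, whence $int_XZ(g)\cap X_\mathscr{P}=X_\mathscr{P}\setminus\{x\}$; the same computation applied to $h$ yields $int_XZ(h)\cap X_\mathscr{P}=X_\mathscr{P}\setminus\{x\}$. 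In the second case, writing $int_XZ(g)\cap X_\mathscr{P}=\{x\}$ and $int_XZ(h)\cap X_\mathscr{P}=\{y\}$, Observation~\ref{Obs1}(ii) applied to $f.g=0$ and to $f.h=0$ forces (again using $f\neq 0$) both $X\setminus Z(f)=\{x\}$ and $X\setminus Z(f)=\{y\}$, so $x=y$ and hence $int_XZ(g)\cap X_\mathscr{P}=int_XZ(h)\cap X_\mathscr{P}$.

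In every case we obtain $int_XZ(g)\cap X_\mathscr{P}=int_XZ(h)\cap X_\mathscr{P}$, i.e.\ $[g]=[h]$; since $g$ and $h$ both belong to $V(C_\mathscr{P}(X))$, which by construction contains exactly one representative of each $\sim$-class, this yields $g=h$. A fortiori $g$ and $h$ are adjacent to the same set of vertices of $G(C_\mathscr{P}(X))$, so $G(C_\mathscr{P}(X))$ is uniquely complemented. I do not anticipate a genuine obstacle; the one point that needs care is the final upgrade from equality of $int_XZ(\cdot)\cap X_\mathscr{P}$ to equality of the vertices themselves, which rests on the fact that $V(C_\mathscr{P}(X))$ is a transversal of the relation $\sim$. (Alternatively, one could first prove that $AG(C_\mathscr{P}(X))$ is complemented --- using Corollary~\ref{Cor5.12} to transport each orthogonal pair of $G(C_\mathscr{P}(X))$ into $AG(C_\mathscr{P}(X))$ and Theorem~\ref{Th4.18} to handle the remaining vertices through their class representatives --- and then invoke Theorem~\ref{Th4.20}; but the direct argument above is shorter.)
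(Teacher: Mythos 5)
Your argument is correct and is essentially the paper's own proof: the same double application of Theorem~\ref{Th5.10}, the same two-case analysis on which of $int_XZ(f)\cap X_\mathscr{P}$, $int_XZ(g)\cap X_\mathscr{P}$, $int_XZ(h)\cap X_\mathscr{P}$ is a singleton, and the same conclusion $int_XZ(g)\cap X_\mathscr{P}=int_XZ(h)\cap X_\mathscr{P}$. Your only addition is the (valid) final upgrade to $g=h$ via the transversal property of $V(C_\mathscr{P}(X))$, which the paper does not need since equality of the classes already gives adjacency to the same set of vertices.
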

	\begin{proof}
		Let $f\perp_G g$ and $f\perp_G h$, for some $f,g,h\in V(C_\mathscr{P}(X))$. By Theorem \ref{Th5.10}, $f.g=0=f.h$ and either $|int_XZ(f)\cap X_\mathscr{P}|=1$ or $|int_XZ(g)\cap X_\mathscr{P}|=1=|int_XZ(h)\cap X_\mathscr{P}|$.\\
If $|int_XZ(f)\cap X_\mathscr{P}|=1$, say $int_XZ(f)\cap X_\mathscr{P}=\{x\}$, then
$f.g=0\implies X\setminus Z(g)\subset int_XZ(f)\cap X_\mathscr{P}=\{x\}$, so that  $int_XZ(g)\cap X_\mathscr{P}=X_\mathscr{P}\setminus\{x\}$.  Similarly, $f.h=0\implies int_XZ(h)\cap X_\mathscr{P}=X_\mathscr{P}\setminus\{x\}$. So, $int_XZ(g)\cap X_\mathscr{P}=int_XZ(h)\cap X_\mathscr{P}$.\\
If $|int_XZ(g)\cap X_\mathscr{P}|=1=|int_XZ(h)\cap X_\mathscr{P}|$ then assuming $int_XZ(g)\cap X_\mathscr{P}=\{x\}$ and $int_XZ(h)\cap X_\mathscr{P}=\{y\}$ we get $X\setminus Z(f)=\{x\}$ and $X\setminus Z(f)=\{y\}$ respectively from $f.g=0$ and $f.h = 0$. Therefore, $x=y$ and so, $int_XZ(g)\cap X_\mathscr{P}=int_XZ(h)\cap X_\mathscr{P}$.\\
		i.e., $int_XZ(g)\cap X_\mathscr{P}=int_XZ(h)\cap X_\mathscr{P}$ holds in any case. So, any $k\in V(C_\mathscr{P}(X))$ is adjacent to $g$ in $G(C_\mathscr{P}(X))$ if and only if $k$ is adjacent to $h$ in $G(C_\mathscr{P}(X))$. i.e., $g,h$ are adjacent to the same set of vertices in $G(C_\mathscr{P}(X))$ which proves that $G(C_\mathscr{P}(X))$ is uniquely complemented.
	\end{proof}
\begin{theorem}
		$G(C_\mathscr{P}(X))$ is uniquely complemented if and only if $AG(C_\mathscr{P}(X))$ is uniquely complemented.
	\end{theorem}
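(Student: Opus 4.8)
The plan is to reduce the statement to the single equivalence ``$AG(C_\mathscr{P}(X))$ is complemented if and only if $G(C_\mathscr{P}(X))$ is complemented'', and then to feed this into Theorem~\ref{Th4.20} and Theorem~\ref{Th5.12}. A uniquely complemented graph is in particular complemented, while Theorem~\ref{Th4.20} (respectively, Theorem~\ref{Th5.12}) supplies the reverse implication for $AG(C_\mathscr{P}(X))$ (respectively, $G(C_\mathscr{P}(X))$). Hence once the displayed equivalence is available we get the chain
\[
AG(C_\mathscr{P}(X))\text{ uniquely complemented}\iff AG(C_\mathscr{P}(X))\text{ complemented}\iff G(C_\mathscr{P}(X))\text{ complemented}\iff G(C_\mathscr{P}(X))\text{ uniquely complemented},
\]
which is precisely the assertion.

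For the forward direction of the key equivalence I would fix $f\in V(C_\mathscr{P}(X))\subseteq Z(C_\mathscr{P}(X))^*$. If $AG(C_\mathscr{P}(X))$ is complemented there is $g\in Z(C_\mathscr{P}(X))^*$ with $f\perp_{AG}g$; let $g'\in V(C_\mathscr{P}(X))$ be the unique vertex with $[g']=[g]$. Since $f$ is already its own representative in $V(C_\mathscr{P}(X))$, the theorem relating $\perp_{AG}$ and $\perp_{G}$ (the one immediately preceding Corollary~\ref{Cor5.12}) yields $f\perp_{G}g'$. Thus every vertex of $G(C_\mathscr{P}(X))$ has an orthogonal partner, so $G(C_\mathscr{P}(X))$ is complemented.

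For the converse, assume $G(C_\mathscr{P}(X))$ is complemented and take an arbitrary $f\in Z(C_\mathscr{P}(X))^*$. Choose the unique $f'\in V(C_\mathscr{P}(X))$ with $[f']=[f]$; by hypothesis there is $g'\in V(C_\mathscr{P}(X))$ with $f'\perp_{G}g'$, hence $f'\perp_{AG}g'$ by Corollary~\ref{Cor5.12}. Since by Theorem~\ref{Th4.2} adjacency in $AG(C_\mathscr{P}(X))$ depends on a vertex only through $int_XZ(\cdot)\cap X_\mathscr{P}$, and $int_XZ(f)\cap X_\mathscr{P}=int_XZ(f')\cap X_\mathscr{P}$, replacing $f'$ by $f$ preserves adjacency to $g'$ as well as the non-existence of a common neighbour; therefore $f\perp_{AG}g'$. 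Hence $AG(C_\mathscr{P}(X))$ is complemented.

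I expect the only delicate point to be the substitution argument in the last paragraph: one must check that passing from $f'$ to a $\sim$-equivalent $f$ does not create a new vertex adjacent to both $f$ and $g'$. This is immediate from the class-invariance of the adjacency relation recorded after the definition of $G(C_\mathscr{P}(X))$ (items (a) and (c) there), since that relation is phrased entirely in terms of the sets $int_XZ(\cdot)\cap X_\mathscr{P}$. Everything else is a bookkeeping exercise in moving between a vertex of $AG(C_\mathscr{P}(X))$ and its representative in $V(C_\mathscr{P}(X))$.
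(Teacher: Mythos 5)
Your proposal is correct and follows essentially the same route as the paper: reduce, via Theorem~\ref{Th4.20} and Theorem~\ref{Th5.12}, to showing that $AG(C_\mathscr{P}(X))$ is complemented if and only if $G(C_\mathscr{P}(X))$ is, and then transfer orthogonal partners between the two graphs using the orthogonality correspondence (the theorem preceding Corollary~\ref{Cor5.12} and Corollary~\ref{Cor5.12} itself). Your explicit check that replacing $f'$ by the $\sim$-equivalent $f$ cannot create a new common neighbour (using that members of one class are never adjacent) is a detail the paper glosses over, but the argument is otherwise the same.
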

	\begin{proof}
		In view of Theorem \ref{Th4.20} and \ref{Th5.12}, it is enough to show that $G(C_\mathscr{P}(X))$ is complemented if and only if $AG(C_\mathscr{P}(X))$ is complemented. \\
Let $G(C_\mathscr{P}(X))$ be complemented and $f\in Z(C_\mathscr{P}(X))^*$. Let $f'\in V(C_\mathscr{P}(X))$ be such that $[f']=[f]$. Since $G(C_\mathscr{P}(X))$ is complemented, there exists $g\in V(C_\mathscr{P}(X))$ such that $f\perp_{G}g$ and hence by Corollary \ref{Cor5.12}, $f\perp_{AG}g$, proving that $AG(C_\mathscr{P}(X))$ is complemented.\\
		Conversely, let $f\in V(C_\mathscr{P}(X))$. Since $AG(C_\mathscr{P}(X))$ is complemented, there exists $g\in Z(C_\mathscr{P}(X))^*$ such that $f\perp_{AG}g$. By Theorem \ref{Th5.12}, $f\perp_{AG}g'$ where $g'\in V(C_\mathscr{P}(X))$ and $[g']=[g]$. Therefore, $G(C_\mathscr{P}(X))$ is complemented.
	\end{proof}

\begin{lemma}\label{Lem5.17}
		For each dominating set $W$ of $AG(C_\mathscr{P}(X))$, $W'=\{f'\in V(C_\mathscr{P}(X)): [f']=[f]\text{ for some }f\in W\}$ is a dominating set of $G(C_\mathscr{P}(X))$ with $|W'|\leq |W|$.
	\end{lemma}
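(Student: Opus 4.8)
The plan is to exploit the canonical surjection $W\to W'$, $f\mapsto f'$, where $f'$ denotes the unique element of $V(C_\mathscr{P}(X))$ with $[f']=[f]$. This representative exists for every $f\in W$ because $W\subseteq Z(C_\mathscr{P}(X))^*$, so $int_XZ(f)\cap X_\mathscr{P}\neq\emptyset$, and uniqueness is the observation recorded just before Lemma~\ref{Lem5.1}. Since $W'$ is by definition the image of this map, the inequality $|W'|\leq|W|$ is immediate, and the whole task reduces to showing $W'$ dominates $G(C_\mathscr{P}(X))$.

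To that end I would first dispose of a trivial case: if $g\in V(C_\mathscr{P}(X))$ happens to lie in $W$, then, $g$ being its own class representative in $V(C_\mathscr{P}(X))$, we get $g\in W'$. Hence any $g\in V(C_\mathscr{P}(X))\setminus W'$ automatically satisfies $g\notin W$. Now $g$ is a vertex of $AG(C_\mathscr{P}(X))$ lying outside the dominating set $W$, so there is some $f\in W$ adjacent to $g$ in $AG(C_\mathscr{P}(X))$.

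The key step is to transport this adjacency along equivalence classes. Adjacency in $AG(C_\mathscr{P}(X))$ depends only on the sets $int_XZ(\cdot)\cap X_\mathscr{P}$ by Theorem~\ref{Th4.2} (equivalently, via observation (c) preceding Lemma~\ref{Lem5.1} together with the symmetry of the adjacency relation), and $[f']=[f]$; hence $f'$ is again adjacent to $g$ in $AG(C_\mathscr{P}(X))$. Moreover $f'\in W'$ whereas $g\notin W'$, so $f'\neq g$, and since both $f'$ and $g$ belong to $V(C_\mathscr{P}(X))$, the fact that $G(C_\mathscr{P}(X))$ is an induced subgraph of $AG(C_\mathscr{P}(X))$ forces $f'$ and $g$ to be adjacent in $G(C_\mathscr{P}(X))$. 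Thus every $g\in V(C_\mathscr{P}(X))\setminus W'$ has a neighbour in $W'$, so $W'$ is a dominating set of $G(C_\mathscr{P}(X))$.

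I do not expect a real obstacle here: the argument is essentially bookkeeping with class representatives. The two points that need a moment's care are that $g\in V(C_\mathscr{P}(X))\setminus W'$ cannot lie in $W$, and that $f'\neq g$ — both follow at once from the uniqueness of the representative in $V(C_\mathscr{P}(X))$ and from $f'\in W'$, $g\notin W'$. Everything else is an application of Theorem~\ref{Th4.2} and the definition of the induced subgraph.
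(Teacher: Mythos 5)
Your proposal is correct and follows essentially the same route as the paper's proof: pass from $f\in V(C_\mathscr{P}(X))\setminus W'$ (hence $f\notin W$) to a neighbour in $W$ via domination in $AG(C_\mathscr{P}(X))$, replace that neighbour by its class representative in $V(C_\mathscr{P}(X))$, and use that adjacency depends only on the classes to land in $G(C_\mathscr{P}(X))$. Your extra remarks on $f'\neq g$ and the cardinality bound only make explicit what the paper leaves as evident.
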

	\begin{proof}
		Clearly, $W\cap V(C_\mathscr{P}(X))\subset W'$. Let $f\in V(C_\mathscr{P}(X))\setminus W'$. Then it follows that $f\in Z(C_\mathscr{P}(X))^*\setminus W$. $W$ being a dominating set in $AG(C_\mathscr{P}(X))$, there exists $g\in W$ such that $g$ is adjacent to $f$ in $AG(C_\mathscr{P}(X))$. Choose $g'\in V(C_\mathscr{P}(X))$ such that $[g']=[g]$. Then $g'\in W'$ and $f,g'$ are adjacent in $G(C_\mathscr{P}(X))$, proving $W'$ to be a dominating set of $G(C_\mathscr{P}(X))$. That $|W'| \leq |W|$ is evident.
	\end{proof}
	\begin{theorem}\label{Th7.16}
		$dt(G(C_\mathscr{P}(X)))\leq dt(AG(C_\mathscr{P}(X)))$.
	\end{theorem}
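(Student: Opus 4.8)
The plan is to deduce the inequality directly from Lemma~\ref{Lem5.17}, which already carries essentially all the weight. First I would observe that the infimum defining $dt(AG(C_\mathscr{P}(X)))$ is actually attained: since any collection of cardinal numbers is well-ordered, there exists a dominating set $W$ of $AG(C_\mathscr{P}(X))$ with $|W| = dt(AG(C_\mathscr{P}(X)))$. This remark is worth making because $dt(AG(C_\mathscr{P}(X)))$ need not be finite --- for instance Theorem~\ref{dom} exhibits $dt(AG(C_F(X))) = \aleph_0$ when $K_X$ is infinite --- so one cannot simply speak of a ``smallest'' dominating set without this justification.

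Next I would apply Lemma~\ref{Lem5.17} to this particular $W$. It yields the set $W' = \{f' \in V(C_\mathscr{P}(X)) : [f'] = [f] \text{ for some } f \in W\}$, which the lemma guarantees to be a dominating set of $G(C_\mathscr{P}(X))$ with $|W'| \leq |W|$. Chaining the two facts gives
$$dt(G(C_\mathscr{P}(X))) \leq |W'| \leq |W| = dt(AG(C_\mathscr{P}(X))),$$
which is precisely the assertion.

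I do not anticipate any genuine obstacle here: once Lemma~\ref{Lem5.17} is available, the argument is a one-line chain of inequalities. The only point that calls for a moment's care is the attainment of the minimum in the definition of $dt$, and that is automatic from the well-ordering of the cardinals; everything else is routine.
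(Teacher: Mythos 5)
Your proposal is correct and follows essentially the same route as the paper: apply Lemma~\ref{Lem5.17} to a dominating set of $AG(C_\mathscr{P}(X))$ and compare cardinalities. The only (harmless) difference is that the paper takes an \emph{arbitrary} dominating set $W$, deduces $dt(G(C_\mathscr{P}(X)))\leq |W|$, and then passes to the infimum over all such $W$, which sidesteps the attainment-of-the-minimum remark you make via well-ordering of cardinals.
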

	\begin{proof}
		Let $W$ be a dominating set in $AG(C_\mathscr{P}(X))$. From Lemma \ref{Lem5.17}, there exists a dominating set $W'$ in $G(C_\mathscr{P}(X))$ such that $|W'|\leq |W|$. So, $dt(G(C_\mathscr{P}(X)))\leq |W'|\leq |W|$. $W$ being an arbitrary dominating set in $AG(C_\mathscr{P}(X))$ it then follows that $dt(G(C_\mathscr{P}(X)))\leq dt(AG(C_\mathscr{P}(X)))$.
	\end{proof}
	\begin{theorem}\label{Th7.19}
		$dt(G(C_\mathscr{P}(X)))=2$ if and only if there exist $f,g\in V(C_\mathscr{P}(X))$ with $f.g=0$ and $int_XZ(f)\cap int_XZ(g)\cap X_\mathscr{P}=\emptyset$. 
	\end{theorem}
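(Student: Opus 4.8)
The plan is to play both directions off Theorem~\ref{Th4.23}: the condition appearing here is the \emph{same} one that characterises $dt(AG(C_\mathscr{P}(X)))=2$, and a witnessing pair $(f,g)$ lies in $V(C_\mathscr{P}(X))$, hence is simultaneously available for both graphs.

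\textbf{Sufficiency.} Suppose $f,g\in V(C_\mathscr{P}(X))$ with $f.g=0$ and $int_XZ(f)\cap int_XZ(g)\cap X_\mathscr{P}=\emptyset$. Then $f\neq g$, since $f=g$ would give $f^2=0$, forcing $X\setminus Z(f)\subseteq int_XZ(f)$, i.e. $f=0\notin V(C_\mathscr{P}(X))$. By the converse half of Theorem~\ref{Th4.23}, $\{f,g\}$ is a dominating set of $AG(C_\mathscr{P}(X))$; as $f,g$ are already their own class representatives, Lemma~\ref{Lem5.17} applied with $W=\{f,g\}$ shows $\{f,g\}$ dominates $G(C_\mathscr{P}(X))$, so $dt(G(C_\mathscr{P}(X)))\le 2$. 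Since $ecc_G(h)=2$ for every vertex $h$ by Theorem~\ref{Th7.6}, no single vertex dominates $G(C_\mathscr{P}(X))$, whence $dt(G(C_\mathscr{P}(X)))=2$.

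\textbf{Necessity, the edge case.} Let $\{f,g\}$ be a dominating set of $G(C_\mathscr{P}(X))$ with $f\neq g$, and suppose first that $f$ and $g$ are adjacent. I claim this very pair meets the two conditions. If $f.g\neq 0$, pick $p\in(X\setminus Z(f))\cap(X\setminus Z(g))$ (so $p\in X_\mathscr{P}$ by Observation~\ref{Obs1}) and apply Lemma~\ref{Lem1} to get $h\in C_\mathscr{P}(X)$, $h\neq 0$, with $p\in X\setminus Z(h)\subseteq X\setminus int_XZ(h)\subseteq(X\setminus Z(f))\cap(X\setminus Z(g))$. Then $Z(f)\cup Z(g)\subseteq int_XZ(h)$, so $int_XZ(f)\cap X_\mathscr{P}$ and $int_XZ(g)\cap X_\mathscr{P}$ are both contained in $int_XZ(h)\cap X_\mathscr{P}$, these inclusions being strict because $f,g$ are adjacent (Theorem~\ref{Th4.2}). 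Hence $h\in Z(C_\mathscr{P}(X))^*$ with $[h]\notin\{[f],[g]\}$; passing to its representative $h'\in V(C_\mathscr{P}(X))$, we get a vertex of $G(C_\mathscr{P}(X))$ distinct from $f$ and $g$ and, by Theorem~\ref{Th4.2}, adjacent to neither — contradicting that $\{f,g\}$ dominates. So $f.g=0$. If moreover $int_XZ(f)\cap int_XZ(g)\cap X_\mathscr{P}\neq\emptyset$, put $h=f^2+g^2$, so $int_XZ(h)\cap X_\mathscr{P}=int_XZ(f)\cap int_XZ(g)\cap X_\mathscr{P}$ is a non-empty proper subset of both $int_XZ(f)\cap X_\mathscr{P}$ and $int_XZ(g)\cap X_\mathscr{P}$ (properness again from adjacency), and the same representative argument yields an undominated vertex, a contradiction. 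Thus $int_XZ(f)\cap int_XZ(g)\cap X_\mathscr{P}=\emptyset$, and $(f,g)$ is the required pair.

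\textbf{Necessity, the remaining case and the obstacle.} It remains to treat a $2$-element dominating set $\{f,g\}$ with $f\not\sim g$; equivalently, one must show that whenever $G(C_\mathscr{P}(X))$ admits a $2$-element dominating set, it admits one realised by an edge, and then invoke the edge case. By Theorem~\ref{Th4.2} we may assume $int_XZ(f)\cap X_\mathscr{P}\subsetneq int_XZ(g)\cap X_\mathscr{P}\subsetneq X_\mathscr{P}$; since $\Gamma(C_\mathscr{P}(X))$ is a subgraph of $AG(C_\mathscr{P}(X))$, non-adjacency forces $f.g\neq 0$, so some $p\in(X\setminus Z(f))\cap(X\setminus Z(g))\subseteq X_\mathscr{P}$ lies outside both interior zero sets. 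The domination hypothesis then imposes strong rigidity on the family $\{int_XZ(h)\cap X_\mathscr{P}:h\in V(C_\mathscr{P}(X))\}$: it can contain no non-empty proper subset of $int_XZ(f)\cap X_\mathscr{P}$, no set strictly between $int_XZ(f)\cap X_\mathscr{P}$ and $int_XZ(g)\cap X_\mathscr{P}$, and no proper superset of $int_XZ(g)\cap X_\mathscr{P}$ contained in $X_\mathscr{P}$ — for by Lemma~\ref{Lem1} (or Lemma~\ref{Lem2}) together with the defining property $cl_X(X\setminus Z(h))\in\mathscr{P}$ any such set would be realised by some $h$, and its representative in $V(C_\mathscr{P}(X))$ would be an undominated vertex. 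I expect the technical heart of the proof to be exploiting this rigidity — conveniently recast, via Theorem~\ref{Th4.26}, as the assertion that $\Gamma(C_\mathscr{P}(X))$ is \emph{not} hypertriangulated — so as to exhibit a complementary pair $\phi,\gamma\in V(C_\mathscr{P}(X))$ with $\phi.\gamma=0$ and $int_XZ(\phi)\cap int_XZ(\gamma)\cap X_\mathscr{P}=\emptyset$; the precise point to pin down is that a $2$-element dominating set of $G(C_\mathscr{P}(X))$ is incompatible with $\Gamma(C_\mathscr{P}(X))$ being hypertriangulated.
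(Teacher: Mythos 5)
Your sufficiency argument and your ``edge case'' of the necessity direction are fine and essentially reproduce the paper's reasoning (the constructions with $Z(f)\cup Z(g)\subset int_XZ(h)$ and with $h=f^2+g^2$, followed by passage to the representative $h'$, are exactly what the paper does). But the necessity direction has a genuine gap: the case of a two-element dominating set $\{f,g\}$ whose members are \emph{not} adjacent is never actually disposed of. You correctly isolate what must be shown (such a set cannot exist, equivalently any $2$-element dominating set of $G(C_\mathscr{P}(X))$ is an edge), but you only describe a ``rigidity'' heuristic and state that you ``expect'' it to yield the conclusion; no contradiction is derived. This is not a minor omission -- it is the first and longest part of the paper's proof. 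The paper closes it as follows: assuming without loss of generality $int_XZ(f)\cap X_\mathscr{P}\subsetneqq int_XZ(g)\cap X_\mathscr{P}$, it splits on $|X\setminus Z(g)|$. If $|X\setminus Z(g)|\geq 2$, Lemma~\ref{Lem1} applied to a point $x\in X\setminus Z(g)$, excising another point $y\in X\setminus Z(g)$, produces $h$ with $Z(g)\cup\{y\}\subset int_XZ(h)$, so the representative $h'$ satisfies $int_XZ(f)\cap X_\mathscr{P}\subsetneqq int_XZ(g)\cap X_\mathscr{P}\subsetneqq int_XZ(h')\cap X_\mathscr{P}$ and is an undominated vertex outside $\{f,g\}$. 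If $|X\setminus Z(g)|=1$, then $int_XZ(g)\cap X_\mathscr{P}=X_\mathscr{P}\setminus\{x\}$ and the strict inclusion forces $|X\setminus Z(f)|\geq 2$, whereupon Lemma~\ref{Lem2} (applied to $\{x,y\}$ with $y,z\in X\setminus Z(f)$) produces a class strictly between $[f]$ and $[g]$, again an undominated vertex. Your sketch supplies neither construction.

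A further caution about the route you propose at the end: invoking Theorem~\ref{Th4.26} is not available at this point, because that theorem characterises $dt(AG(C_\mathscr{P}(X)))=2$, and from $dt(G(C_\mathscr{P}(X)))=2$ you only know $dt(G(C_\mathscr{P}(X)))\leq dt(AG(C_\mathscr{P}(X)))$ (Theorem~\ref{Th7.16}); the implication $dt(G)=2\Rightarrow dt(AG)=2$ is essentially what the present theorem is establishing, so using it (or its reformulation via hypertriangulatedness of $\Gamma(C_\mathscr{P}(X))$) as an intermediate step would be circular unless you prove the missing implication directly -- which again amounts to the very case analysis you have left open.
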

	\begin{proof}
		Let there be $f,g\in V(C_\mathscr{P}(X))$ obeying the given conditions. As $f,g\in Z(C_\mathscr{P}(X))^*$, by Theorem \ref{Th4.23}, $dt(AG(C_\mathscr{P}(X)))=2$. Also by Theorem~\ref{Th7.6}, $ecc(f)=2$ for each $f\in V(C_\mathscr{P}(X))$ and therefore, $dt(G(C_\mathscr{P}(X)))>1$. Clubbing all these facts with Theorem~\ref{Th7.16}, we get $dt(G(C_\mathscr{P}(X)))=2$.\\
		Conversely, let $dt(G(C_\mathscr{P}(X)))=2$ and $\{f,g\}\subset V(C_\mathscr{P}(X))$ constitute a dominating set of $G(C_\mathscr{P}(X))$. We claim that $f,g$ are adjacent in $G(C_\mathscr{P}(X))$. If not then either $int_XZ(f)\cap X_\mathscr{P}\subsetneqq int_XZ(g)\cap X_\mathscr{P}$ or $int_XZ(g)\cap X_\mathscr{P}\subsetneqq int_XZ(f)\cap X_\mathscr{P}$. Without loss of generality, we assume that $int_XZ(f)\cap X_\mathscr{P}\subsetneqq int_XZ(g)\cap X_\mathscr{P}$. For $g\in V(C_\mathscr{P}(X))$, $X\setminus Z(g)\neq\emptyset$.\\
If $|X\setminus Z(g)|\geq 2$, say $x,y\in X\setminus Z(g)$, then by Lemma \ref{Lem1}, there exists $h\in C_\mathscr{P}(X)$ such that $x\in X\setminus Z(h)\subset X\setminus int_XZ(h)\subset X_\mathscr{P}\setminus[Z(g)\cup \{y\}]$. Then $Z(g)\cup\{y\}\subset int_XZ(h)$ and $h\neq 0$. Since $y\in [int_XZ(h)\cap X_\mathscr{P}]\setminus Z(g)$, it follows that $int_XZ(g)\cap X_\mathscr{P}\subsetneqq int_XZ(h)\cap X_\mathscr{P}$. Let $h'\in V(C_\mathscr{P}(X))$ with $[h']=[h]$. Therefore, $int_XZ(f)\cap X_\mathscr{P}\subsetneqq int_XZ(g)\cap X_\mathscr{P}\subsetneqq int_XZ(h')\cap X_\mathscr{P}\implies h'\in V(C_\mathscr{P})\setminus\{f,g\}$ and $h$ is not adjacent to $f,g$; this contradicts our hypothesis that$\{f,g\}$ is a dominating set of $G(C_\mathscr{P}(X))$.\\
		If $|X\setminus Z(g)|=1$, say $X\setminus Z(g)=\{x\}\implies int_XZ(g)\cap X_\mathscr{P}=X_\mathscr{P}\setminus\{x\}$ then assume if possible that $X\setminus Z(f)=\{y\}$. Then $int_XZ(f)\cap X_\mathscr{P}=X_\mathscr{P}\setminus\{y\}$. Therefore $int_XZ(f)\cap X_\mathscr{P}\subsetneqq int_XZ(g)\cap X_\mathscr{P}\implies X_\mathscr{P}\setminus\{y\}\subsetneqq X_\mathscr{P}\setminus\{x\}$ which is not possible. Therefore, $|X\setminus Z(f)|\geq 2$ and choose any $y,z\in X\setminus Z(f)$. By Lemma~\ref{Lem2}, there exists $h\in C_\mathscr{P}(X)$ such that $\{x,y\}\subset X\setminus Z(h)\subset X\setminus int_XZ(h)\subset X\setminus[Z(f)\cup \{z\}]$. Therefore, $Z(f)\cup\{z\}\subset int_XZ(h)\subset X\setminus\{x,y\}$ and $h\neq 0$. Let $h'\in V(C_\mathscr{P}(X))$ be such that $[h']=[h]$. Then $int_XZ(f)\cap X_\mathscr{P}\subsetneqq int_XZ(h')\cap X_\mathscr{P}\subsetneqq X_\mathscr{P}\setminus\{x\}= int_XZ(h')\cap X_\mathscr{P}$. So, $h'\in V(C_\mathscr{P})\setminus\{f,g\}$ is not adjacent to $f,g$, leading to a contradiction.\\
Hence, $f,g$ are adjacent in $G(C_\mathscr{P}(X))$ so that 
\begin{equation}\label{eq2}
			\begin{split}
				int_XZ(f)\cap X_\mathscr{P}\not\subset int_XZ(g)\cap X_\mathscr{P}\text{ and }\\
				int_XZ(g)\cap X_\mathscr{P}\not\subset int_XZ(f)\cap X_\mathscr{P}
			\end{split}
		\end{equation}
Our next claim is $f.g = 0$. If possible let $f.g\neq 0$ and choose $x\in X\setminus Z(f)\cap X\setminus Z(g)$. By Lemma~\ref{Lem1}, there exists $h\in C_\mathscr{P}(X)$ such that $x\in X\setminus Z(h)\subset  X\setminus int_XZ(h)\subset X\setminus Z(f)\cap X\setminus Z(g)\implies h\neq 0$ and $Z(f)\cup Z(g)\subset int_XZ(h)$. Clearly, $h'\in V(C_\mathscr{P}(X))$ with $[h']=[h]$, is not adjacent to both $f,g$ in $G(C_\mathscr{P}(X))$. By Equation \ref{eq2} and the fact that $Z(f)\cup Z(g)\subset int_XZ(h')$, we get $h'\notin \{f,g\}$, which contradicts that $\{f,g\}$ is a dominating set in $G(C_\mathscr{P}(X))$.\\
		Finally, to show that $int_XZ(f)\cap int_XZ(g)\cap X_\mathscr{P}=\emptyset$. If possible let $int_XZ(f)\cap int_XZ(g)\cap X_\mathscr{P}\neq\emptyset$ and choose $x\in int_XZ(f)\cap int_XZ(g)\cap X_\mathscr{P}$. Let $h=f^2+g^2\in C_\mathscr{P}(X)$. Then $int_XZ(h)=int_XZ(f)\cap int_XZ(g)\implies int_XZ(h)\cap X_\mathscr{P}\neq\emptyset\implies h\in Z(C_\mathscr{P}(X))^*$. If $h'\in V(C_\mathscr{P}(X))$ is such that $[h']=[h]$ then $int_XZ(h')=int_XZ(f)\cap int_XZ(g)$. So, by Equation \ref{eq2}, $h'\neq f$, $h'\neq g$. Since $int_XZ(f)\cap int_XZ(g)\cap X_\mathscr{P}=int_XZ(h')\cap X_\mathscr{P}$, $h'$ is not adjacent to both $f,g$ in $G(C_\mathscr{P}(X))$ and we arrive at a contradiction. 
	\end{proof}
An immediate consequence is recorded in the following corollary.
	\begin{corollary}\label{CorTT}
		$dt(G(C_\mathscr{P}(X)))=2$ if and only if $dt_t(G(C_\mathscr{P}(X)))=2$.
	\end{corollary}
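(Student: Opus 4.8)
The plan is to read off the corollary from Theorem~\ref{Th7.19} together with two facts already in hand: the elementary inequality $dt(H)\le dt_t(H)$ (and $dt_t(H)\ge 2$ for a simple graph $H$) recorded in Section~2, and the fact that $ecc_G(f)=2$ for every $f\in V(C_\mathscr{P}(X))$ (Theorem~\ref{Th7.6}).

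First I would dispose of the ``if'' direction. Assume $dt_t(G(C_\mathscr{P}(X)))=2$. Since $dt(H)\le dt_t(H)$ always holds, $dt(G(C_\mathscr{P}(X)))\le 2$. On the other hand, by Theorem~\ref{Th7.6} we have $ecc_G(f)=2$ for each vertex $f$, so no vertex of $G(C_\mathscr{P}(X))$ is adjacent to every other vertex; hence no singleton can be a dominating set, forcing $dt(G(C_\mathscr{P}(X)))\ge 2$. Thus $dt(G(C_\mathscr{P}(X)))=2$.

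For the ``only if'' direction, assume $dt(G(C_\mathscr{P}(X)))=2$. By Theorem~\ref{Th7.19} there exist $f,g\in V(C_\mathscr{P}(X))$ with $f.g=0$ and $int_XZ(f)\cap int_XZ(g)\cap X_\mathscr{P}=\emptyset$. By the converse part of Theorem~\ref{Th4.23}, $\{f,g\}$ is then a dominating set of $AG(C_\mathscr{P}(X))$; since $f,g\in V(C_\mathscr{P}(X))$ and $G(C_\mathscr{P}(X))$ is an induced subgraph, $\{f,g\}$ is also a dominating set of $G(C_\mathscr{P}(X))$. Next I would check that $f$ and $g$ are themselves adjacent: from $f.g=0$ we get $X\setminus Z(g)\subset int_XZ(f)\cap X_\mathscr{P}$ and $X\setminus Z(f)\subset int_XZ(g)\cap X_\mathscr{P}$, and if, say, $int_XZ(f)\cap X_\mathscr{P}\subset int_XZ(g)\cap X_\mathscr{P}$ held, then $X\setminus Z(g)\subset int_XZ(g)$, i.e.\ $g=0$, contradicting $g\in Z(C_\mathscr{P}(X))^*$; symmetrically for the other containment. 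Hence, by Theorem~\ref{Th4.2}, $f$ and $g$ are adjacent in $AG(C_\mathscr{P}(X))$, and therefore in $G(C_\mathscr{P}(X))$. Consequently every vertex of $G(C_\mathscr{P}(X))$ --- including $f$ (adjacent to $g$) and $g$ (adjacent to $f$) --- has a neighbour inside $\{f,g\}$, so $\{f,g\}$ is a total dominating set. Hence $dt_t(G(C_\mathscr{P}(X)))\le 2$, and since $G(C_\mathscr{P}(X))$ is simple, $dt_t(G(C_\mathscr{P}(X)))=2$.

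There is essentially no obstacle here; the one place requiring a line of work is the verification that the dominating pair produced by Theorem~\ref{Th7.19} consists of two adjacent vertices, after which the passage from ``dominating'' to ``total dominating'' is automatic. Everything else is bookkeeping on the cited theorems.
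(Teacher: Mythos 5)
Your proof is correct and follows the route the paper intends: the corollary is stated as an immediate consequence of Theorem~\ref{Th7.19}, and your argument simply fills in the details — the trivial direction via $dt\leq dt_t$ together with $ecc_G(f)=2$ (Theorem~\ref{Th7.6}), and the other direction by extracting the pair $f,g$ from Theorem~\ref{Th7.19}, noting it dominates $G(C_\mathscr{P}(X))$ and that $f,g$ are adjacent, so the dominating pair is total. (Your adjacency verification is fine; it could be shortened by observing that $f.g=0$ makes $f,g$ adjacent in $\Gamma(C_\mathscr{P}(X))\subseteq AG(C_\mathscr{P}(X))$, hence in the induced subgraph $G(C_\mathscr{P}(X))$.)
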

\begin{lemma}\label{LemT}
		Every total dominating set in $G(C_\mathscr{P}(X))$ is also a total dominating set in $AG(C_\mathscr{P}(X))$.
	\end{lemma}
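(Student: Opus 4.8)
The plan is to lean on two facts already in place: that $G(C_\mathscr{P}(X))$ is an \emph{induced} subgraph of $AG(C_\mathscr{P}(X))$ with vertex set $V(C_\mathscr{P}(X))\subset Z(C_\mathscr{P}(X))^*$, and that adjacency in $AG(C_\mathscr{P}(X))$ only depends on the $\sim$-class of a vertex --- precisely observation (c) among the displayed observations preceding Lemma~\ref{Lem5.1}: if $a$ is adjacent to $b$ then $a$ is adjacent to every member of $[b]$. Together with the uniqueness observation (each $f\in Z(C_\mathscr{P}(X))^*$ has a unique representative $f'\in V(C_\mathscr{P}(X))$ with $[f']=[f]$), these let us push a total dominating set from the induced subgraph up to the whole graph essentially for free.

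Concretely, I would argue as follows. Let $D$ be a total dominating set of $G(C_\mathscr{P}(X))$. Then $D\subseteq V(C_\mathscr{P}(X))\subseteq Z(C_\mathscr{P}(X))^*$, so $D$ is a legitimate subset of the vertex set of $AG(C_\mathscr{P}(X))$. Fix an arbitrary vertex $f\in Z(C_\mathscr{P}(X))^*$ of $AG(C_\mathscr{P}(X))$ and pick the unique $f'\in V(C_\mathscr{P}(X))$ with $[f']=[f]$, i.e. $int_XZ(f)\cap X_\mathscr{P}=int_XZ(f')\cap X_\mathscr{P}$. Since $D$ totally dominates $G(C_\mathscr{P}(X))$ and $f'\in V(C_\mathscr{P}(X))$, there is some $g\in D$ adjacent to $f'$ in $G(C_\mathscr{P}(X))$; because $G(C_\mathscr{P}(X))$ is an induced subgraph, $g$ is adjacent to $f'$ in $AG(C_\mathscr{P}(X))$ as well. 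Now apply observation (c): $g$ adjacent to $f'$ and $f\in[f']$ yield that $g$ is adjacent to $f$ in $AG(C_\mathscr{P}(X))$. As $f$ was arbitrary and $g\in D$, every vertex of $AG(C_\mathscr{P}(X))$ has a neighbour in $D$, which is exactly the assertion that $D$ is a total dominating set of $AG(C_\mathscr{P}(X))$.

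I do not anticipate a genuine obstacle here; the whole content is bookkeeping with the equivalence relation $\sim$. The only two points needing a moment's attention are that the vertex $g$ produced by total domination in $G(C_\mathscr{P}(X))$ still lies in $Z(C_\mathscr{P}(X))^*$ (it does, since $D\subseteq V(C_\mathscr{P}(X))$), and that adjacency genuinely transfers from $f'$ to $f$, which is precisely observation (c); the case $f\in V(C_\mathscr{P}(X))$ is automatically covered, since then $f'=f$ and no change of representative is needed.
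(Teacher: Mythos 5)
Your argument is correct and follows essentially the same route as the paper: pass from an arbitrary $f\in Z(C_\mathscr{P}(X))^*$ to its representative $f'\in V(C_\mathscr{P}(X))$, use total domination in $G(C_\mathscr{P}(X))$ to get $g\in D$ adjacent to $f'$, and transfer adjacency from $f'$ to $f$ since adjacency in $AG(C_\mathscr{P}(X))$ depends only on the $\sim$-class. Your version merely makes explicit the appeal to observation (c) that the paper leaves implicit in its ``and hence'' step.
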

	\begin{proof}
		Let $W$ be a total dominating set in $G(C_\mathscr{P}(X))$ and $f\in Z(C_\mathscr{P}(X))^*$. Then there exists $f'\in V(C_\mathscr{P}(X))$ such that $[f']=[f]$. Since $W$ is a total dominating set in $G(C_\mathscr{P}(X))$, there exists $g\in W$ such that $f',g$ are adjacent in $G(C_\mathscr{P}(X))$ and hence $f$ and $g$ are adjacent in $AG(C_\mathscr{P}(X))$. Therefore, $W$ is a total dominating set in $AG(C_\mathscr{P}(X))$.
	\end{proof}
	\begin{theorem}\label{ThmT}
		$dt_t(G(C_\mathscr{P}(X)))=dt_t(AG(C_\mathscr{P}(X)))$.
	\end{theorem}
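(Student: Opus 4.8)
The plan is to prove the equality by establishing the two opposite inequalities separately. The inequality $dt_t(AG(C_\mathscr{P}(X))) \le dt_t(G(C_\mathscr{P}(X)))$ comes essentially for free from Lemma~\ref{LemT}: a total dominating set of $G(C_\mathscr{P}(X))$ of minimum cardinality is, by that lemma, also a total dominating set of $AG(C_\mathscr{P}(X))$, so $dt_t(AG(C_\mathscr{P}(X)))$ cannot exceed its size.

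For the reverse inequality $dt_t(G(C_\mathscr{P}(X))) \le dt_t(AG(C_\mathscr{P}(X)))$, I would reuse the ``pass to class representatives'' device already employed in Lemma~\ref{Lem5.17} and Theorem~\ref{Th7.16}. Starting from an arbitrary total dominating set $W$ of $AG(C_\mathscr{P}(X))$, I would form $W' = \{f'\in V(C_\mathscr{P}(X)) : [f']=[f] \text{ for some } f\in W\}$, which plainly satisfies $|W'|\le|W|$. The substantive task is to verify that $W'$ is a \emph{total} dominating set of $G(C_\mathscr{P}(X))$, not merely a dominating one. For this, given an arbitrary $h\in V(C_\mathscr{P}(X))\subseteq Z(C_\mathscr{P}(X))^*$, I would choose $g\in W$ adjacent to $h$ in $AG(C_\mathscr{P}(X))$ (possible since $W$ is total dominating), let $g'$ be the unique representative of $[g]$ in $V(C_\mathscr{P}(X))$, and show that $g'\in W'$ is adjacent to $h$ in $G(C_\mathscr{P}(X))$. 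Taking the infimum over all such $W$ then yields the inequality, and the two bounds together give equality.

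The one place that needs care — and essentially the only obstacle — is the verification that replacing $g$ by its class representative $g'$ genuinely produces a neighbour of $h$ inside the induced subgraph $G(C_\mathscr{P}(X))$. Two points must be checked: first, that $g'\neq h$, which holds because adjacency of $h$ and $g$ in $AG(C_\mathscr{P}(X))$ forces $int_XZ(h)\cap X_\mathscr{P}\neq int_XZ(g)\cap X_\mathscr{P}$ by Theorem~\ref{Th4.2}, i.e. $[h]\neq[g]=[g']$; second, that adjacency is inherited, which is precisely observation~(c) preceding Lemma~\ref{Lem5.1} (adjacency in $AG(C_\mathscr{P}(X))$ depends only on the sets $int_XZ(\cdot)\cap X_\mathscr{P}$) together with the fact that $G(C_\mathscr{P}(X))$ is an induced subgraph of $AG(C_\mathscr{P}(X))$. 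Once these two observations are in place, the remainder is a routine transcription of the argument of Lemma~\ref{Lem5.17}.
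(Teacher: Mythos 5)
Your proposal is correct and follows essentially the same route as the paper: the inequality $dt_t(AG(C_\mathscr{P}(X)))\leq dt_t(G(C_\mathscr{P}(X)))$ via Lemma~\ref{LemT}, and the reverse inequality by replacing a total dominating set $W$ of $AG(C_\mathscr{P}(X))$ with its set of class representatives $W'$ and checking it totally dominates $G(C_\mathscr{P}(X))$. Your extra care in verifying $g'\neq h$ (via Theorem~\ref{Th4.2}) is a detail the paper leaves implicit, but the argument is the same.
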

	\begin{proof}
		Let $H$ be a total dominating set in $AG(C_\mathscr{P}(X))$. Consider $H'=\{f'\in V(C_\mathscr{P}(X)):[f']=[f]\text{ for some }f\in H\}$. Let $f\in V(C_\mathscr{P}(X))$. Then $f\in Z(C_\mathscr{P}(X))^*$ and hence there exists $g\in H$ such that $f,g$ are adjacent in $AG(C_\mathscr{P}(X))$. Therefore $f$ and $g'$ are adjacent in $G(C_\mathscr{P}(X))$ where $g'\in H'$. It follows that $H'$ is a total dominating set in $AG(C_\mathscr{P}(X))$. Therefore, $dt_t(G(C_\mathscr{P}(X)))\leq |H'|\leq |H|$, proving $dt_t(G(C_\mathscr{P}(X)))\leq dt_t(AG(C_\mathscr{P}(X)))$. Conversely, let $W$ be a total dominating set in $G(C_\mathscr{P}(X))$. Then by Lemma~\ref{LemT}, $W$ is a total dominating set in $AG(C_\mathscr{P}(X))$ and hence $dt_t(AG(C_\mathscr{P}(X)))\leq |W|$. Hence, $dt_t(AG(C_\mathscr{P}(X)))\leq dt_t(G(C_\mathscr{P}(X)))$
	\end{proof}
	The next corollary follows from Theorem~\ref{Th7.16} and Theorem~\ref{ThmT}
	\begin{corollary}
		$dt(G(C_\mathscr{P}(X)))=dt_t(G(C_\mathscr{P}(X)))$ if and only if $dt(G(C_\mathscr{P}(X)))=dt(AG(C_\mathscr{P}(X)))$ $=dt_t(AG(C_\mathscr{P}(X)))=dt_t(G(C_\mathscr{P}(X)))$.
	\end{corollary}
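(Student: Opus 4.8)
The plan is to read this off as a short chain of inequalities among the four quantities $dt(G(C_\mathscr{P}(X)))$, $dt_t(G(C_\mathscr{P}(X)))$, $dt(AG(C_\mathscr{P}(X)))$ and $dt_t(AG(C_\mathscr{P}(X)))$, invoking only facts already in hand: Theorem~\ref{Th7.16}, which gives $dt(G(C_\mathscr{P}(X)))\leq dt(AG(C_\mathscr{P}(X)))$; Theorem~\ref{ThmT}, which gives $dt_t(G(C_\mathscr{P}(X)))=dt_t(AG(C_\mathscr{P}(X)))$; and the elementary inequality $dt(H)\leq dt_t(H)$, valid in any simple graph $H$ and already recorded in Section~2.

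The reverse implication is trivial: equality of all four parameters certainly entails $dt(G(C_\mathscr{P}(X)))=dt_t(G(C_\mathscr{P}(X)))$.

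For the forward implication I would assume $dt(G(C_\mathscr{P}(X)))=dt_t(G(C_\mathscr{P}(X)))$ and string together
\[
dt(G(C_\mathscr{P}(X)))\leq dt(AG(C_\mathscr{P}(X)))\leq dt_t(AG(C_\mathscr{P}(X)))=dt_t(G(C_\mathscr{P}(X)))=dt(G(C_\mathscr{P}(X))),
\]
where the first step is Theorem~\ref{Th7.16}, the second is $dt\leq dt_t$ applied to $AG(C_\mathscr{P}(X))$, the first equality is Theorem~\ref{ThmT}, and the last equality is the standing hypothesis. Because the chain returns to its starting value, each inequality in it is forced to be an equality, and therefore $dt(G(C_\mathscr{P}(X)))=dt(AG(C_\mathscr{P}(X)))=dt_t(AG(C_\mathscr{P}(X)))=dt_t(G(C_\mathscr{P}(X)))$, as claimed.

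There is no genuine obstacle here; the only point requiring care is bookkeeping the direction of each inequality so that the telescoping actually closes, and observing that all the estimates are statements about cardinal numbers (possibly infinite), so no finiteness assumption on $X_\mathscr{P}$ is needed. I would present the whole proof in two or three lines.
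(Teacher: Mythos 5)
Your proposal is correct and is essentially the argument the paper intends: the corollary is stated as following from Theorem~\ref{Th7.16} and Theorem~\ref{ThmT}, and your chain $dt(G)\leq dt(AG)\leq dt_t(AG)=dt_t(G)=dt(G)$ (using the elementary $dt\leq dt_t$) is exactly that deduction, with the reverse implication trivial. No gap to report.
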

	We cite an example where $dt(G(C_\mathscr{P}(X)))= dt_t(G(C_\mathscr{P}(X)))$.
	\begin{example}
		Let $X$ be a completely regular Hausdorff space with atleast two isolated points and $\mathscr{P}$ be the ideal of all finite sets of $X$. Then $C_\mathscr{P}(X)=C_F(X)$ and $X_\mathscr{P}$ is the set of all isolated points of $X$. So, $|X_\mathscr{P}|\geq 2$. As seen before, $V(C_\mathscr{P}(X))=\{1_A:\emptyset\neq A\subsetneqq X_\mathscr{P}\}$.\\
		If $X_\mathscr{P}$ is finite then $ X_\mathscr{P}$ is clopen and so, proceeding as in Theorem~\ref{Th7.19}, $W'=\{1_x,1_{X_\mathscr{P}\setminus\{x\}}\}$, for some $x\in X_\mathscr{P}$ is a total dominating set of $G(C_\mathscr{P}(X))$. So $dt_t(G(C_\mathscr{P}(X)))=2$ and hence $dt(G(C_\mathscr{P}(X)))= dt_t(G(C_\mathscr{P}(X)))$, by Corollary~\ref{CorTT}.\\
		If $X_\mathscr{P}$ is infinite then set $W'=\{1_x:x\in K\}$, for some countable infinite subset $K$ of $X_\mathscr{P}$. Proceeding as in Theorem~\ref{dom}, it is easy to see that, $W'$ is a total dominating set in $G(C_\mathscr{P}(X))$. Clearly, $dt_t(G(C_\mathscr{P}(X)))\leq |W'|=|K|=\aleph_0$. If $W$ is any dominating set of $G(C_\mathscr{P}(X))$ then it is enough to show that, $W$ is never finite. If possible let $W=\{f_i\in V(C_\mathscr{P}(X)):i=1,2,\ldots,n\}$. Choosing any $x\in X_\mathscr{P}\setminus(\bigcup\limits_{i=1}^n (X\setminus Z(f_i)))$ (such a point exists as $X_\mathscr{P}$ is infinite),  we construct a finite subset $A=\{x\} \cup \bigcup\limits_{i=1}^n (X\setminus Z(f_i))$ of $X_\mathscr{P}$. Then $X\setminus Z(f_i)\subsetneqq A=X\setminus Z(1_A)$ for each $i=1,2,\ldots,n$. So $1_A\in V(C_\mathscr{P}(X))\setminus W$ and $1_A$ is not adjacent to any $f_i$, $i=1,2,...,n$, contradicting the assumption that $W$ is a dominating set. Thus $dt(G(C_\mathscr{P}(X)))\geq \aleph_0\geq dt_t(G(C_\mathscr{P}(X)))$. Hence $dt(G(C_\mathscr{P}(X))) =dt_t(G(C_\mathscr{P}(X)))$.
	\end{example}
So, from Theorem~\ref{dom} and the last example, we get
$$dt(G(C_F(X)))=\begin{cases}
			2&\text{when }X\text{ has finitely many isolated points }\\
			\aleph_0&\text{otherwise}
		\end{cases}$$

	\begin{theorem}\label{Th7.17}
		$cl(G(C_\mathscr{P}(X)))=cl(AG(C_\mathscr{P}(X)))$.
	\end{theorem}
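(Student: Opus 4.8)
The plan is to prove the two clique numbers bound each other. One inequality is essentially free: $G(C_\mathscr{P}(X))$ is an induced subgraph of $AG(C_\mathscr{P}(X))$ on a subset of its vertices, with exactly the inherited adjacency relation, so every complete subgraph of $G(C_\mathscr{P}(X))$ is also a complete subgraph of $AG(C_\mathscr{P}(X))$. Hence $cl(G(C_\mathscr{P}(X))) \leq cl(AG(C_\mathscr{P}(X)))$.

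For the reverse inequality, I would take an arbitrary complete subgraph $K$ of $AG(C_\mathscr{P}(X))$ and manufacture from it a complete subgraph of $G(C_\mathscr{P}(X))$ of the same cardinality. The key point is the observation (recorded just after the definition of $\sim$) that no two members of a single $\sim$-class are adjacent in $AG(C_\mathscr{P}(X))$; since the vertices of $K$ are pairwise adjacent, the assignment $f \mapsto [f]$ is therefore injective on $K$. For each $f \in K \subset Z(C_\mathscr{P}(X))^*$ we have $int_XZ(f)\cap X_\mathscr{P}\neq\emptyset$, so there is a unique representative $f' \in V(C_\mathscr{P}(X))$ with $[f']=[f]$; by the injectivity just noted, $f \mapsto f'$ is injective on $K$, so $K' := \{f' : f \in K\}$ satisfies $|K'| = |K|$.

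It remains to check that $K'$ is complete in $G(C_\mathscr{P}(X))$. If $f',g'$ are distinct elements of $K'$ arising from distinct $f,g \in K$, then $f$ and $g$ are adjacent in $AG(C_\mathscr{P}(X))$; by Theorem~\ref{Th4.2} adjacency depends only on the sets $int_XZ(\cdot)\cap X_\mathscr{P}$, and $[f']=[f]$, $[g']=[g]$ mean precisely $int_XZ(f')\cap X_\mathscr{P}=int_XZ(f)\cap X_\mathscr{P}$ and $int_XZ(g')\cap X_\mathscr{P}=int_XZ(g)\cap X_\mathscr{P}$. Hence $f'$ and $g'$ are adjacent in $AG(C_\mathscr{P}(X))$, and therefore in the induced subgraph $G(C_\mathscr{P}(X))$. (Equivalently, one may invoke Lemma~\ref{Lem5.1} edge by edge.) Thus $K'$ is a complete subgraph of $G(C_\mathscr{P}(X))$ with $|K'|=|K|$, so $|K| \leq cl(G(C_\mathscr{P}(X)))$; taking the supremum over all complete subgraphs $K$ of $AG(C_\mathscr{P}(X))$ gives $cl(AG(C_\mathscr{P}(X))) \leq cl(G(C_\mathscr{P}(X)))$, and the result follows.

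I do not expect a real obstacle here; the only thing to watch is that the correspondence $K \leftrightarrow K'$ is a genuine bijection of vertex sets preserving adjacency in both directions, so that the suprema of cardinalities over complete subgraphs agree exactly. This is exactly what the class-disjointness of cliques (no two members of $[f]$ adjacent) together with the class-invariance of adjacency (Theorem~\ref{Th4.2}) provide, and the same argument simultaneously shows that the chromatic and clique parameters of the two graphs match.
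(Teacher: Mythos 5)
Your proposal is correct and follows essentially the same route as the paper: one inequality from $G(C_\mathscr{P}(X))$ being an induced subgraph, and the reverse by passing from an arbitrary clique $K$ of $AG(C_\mathscr{P}(X))$ to the set of class representatives in $V(C_\mathscr{P}(X))$, using that clique members lie in distinct $\sim$-classes (so the map is injective) and that adjacency depends only on $int_XZ(\cdot)\cap X_\mathscr{P}$ (so the image is complete). No gaps.
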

\begin{proof}
As $G(C_\mathscr{P}(X))$ is a subgraph $AG(C_\mathscr{P}(X))$, $cl(G(C_\mathscr{P}(X)))\leq cl(AG(C_\mathscr{P}(X)))$.\\
Let $H$ be any complete subgraph of $AG(C_\mathscr{P}(X))$. Consider $H'=\{f'\in V(C_\mathscr{P}(X)):[f']=[f]\text{ for some }f\in H\}$. Completeness of $H'$ is immediate from the completeness of $H$. Also, for any $f \in AG(C_\mathscr{P}(X))$, there exists a unique $f'\in G(C_\mathscr{P}(X))$ with $[f] = [f']$ defines a function $\psi : H \rightarrow H'$ given by $f \mapsto f'$ whenever $[f] = [f']$. As $H$ is complete, it follows that $\psi$ is one-one. The construction of $H'$ itself indicates that $\psi$ is onto. \\
So, $|H| = |H'|\leq cl(G(C_\mathscr{P}(X)))$ implies that $cl(AG(C_\mathscr{P}(X)))\leq cl(G(C_\mathscr{P}(X)))$. 
\end{proof}

\begin{theorem}\label{Th7.18}
		$\chi (G(C_\mathscr{P}(X)))=\chi (AG(C_\mathscr{P}(X)))$.
	\end{theorem}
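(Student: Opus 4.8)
The plan is to prove the two inequalities $\chi(G(C_\mathscr{P}(X))) \leq \chi(AG(C_\mathscr{P}(X)))$ and $\chi(AG(C_\mathscr{P}(X))) \leq \chi(G(C_\mathscr{P}(X)))$ separately. The first one is immediate: $G(C_\mathscr{P}(X))$ is a subgraph of $AG(C_\mathscr{P}(X))$, so the restriction of any $\Lambda$-coloring of $AG(C_\mathscr{P}(X))$ to the vertex set $V(C_\mathscr{P}(X))$ is a $\Lambda$-coloring of $G(C_\mathscr{P}(X))$, whence $\chi(G(C_\mathscr{P}(X))) \leq \chi(AG(C_\mathscr{P}(X)))$.

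For the reverse inequality I would exploit the natural retraction of $AG(C_\mathscr{P}(X))$ onto its induced subgraph $G(C_\mathscr{P}(X))$. Recall that for each $f \in Z(C_\mathscr{P}(X))^*$ there is a \emph{unique} $f' \in V(C_\mathscr{P}(X))$ with $[f'] = [f]$; write $r(f) = f'$ for the resulting map $r : Z(C_\mathscr{P}(X))^* \to V(C_\mathscr{P}(X))$, which fixes each vertex of $V(C_\mathscr{P}(X))$. The key point, which is precisely the content of Lemma~\ref{Lem5.1}, is that $r$ sends edges to edges: if $f-g$ is an edge of $AG(C_\mathscr{P}(X))$, then by Theorem~\ref{Th4.2} the sets $int_XZ(f)\cap X_\mathscr{P}$ and $int_XZ(g)\cap X_\mathscr{P}$ are incomparable, hence unequal, so $r(f) \neq r(g)$ and $r(f)-r(g)$ is an edge of $G(C_\mathscr{P}(X))$.

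With this in hand, set $\Lambda = \chi(G(C_\mathscr{P}(X)))$ and fix a $\Lambda$-coloring $\varphi : V(C_\mathscr{P}(X)) \to [0,\Lambda)$ of $G(C_\mathscr{P}(X))$. Define $\psi = \varphi \circ r : Z(C_\mathscr{P}(X))^* \to [0,\Lambda)$. If $f-g$ is an edge of $AG(C_\mathscr{P}(X))$, then $r(f)-r(g)$ is an edge of $G(C_\mathscr{P}(X))$ by the preceding paragraph, so $\psi(f) = \varphi(r(f)) \neq \varphi(r(g)) = \psi(g)$. Thus $\psi$ is a $\Lambda$-coloring of $AG(C_\mathscr{P}(X))$, giving $\chi(AG(C_\mathscr{P}(X))) \leq \Lambda = \chi(G(C_\mathscr{P}(X)))$. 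Combining the two inequalities yields the asserted equality.

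I do not anticipate a serious obstacle: the argument rests entirely on the fact that the class-representative map $r$ is edge-preserving, and this has already been isolated as Lemma~\ref{Lem5.1}. The only subtlety worth stating explicitly is that $r$ does not collapse the two endpoints of an edge, which is why one invokes Theorem~\ref{Th4.2} to see that adjacency in $AG(C_\mathscr{P}(X))$ forces the associated $\sim$-classes to differ.
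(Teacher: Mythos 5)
Your proposal is correct and follows essentially the same route as the paper: the trivial inequality because $G(C_\mathscr{P}(X))$ is a subgraph, and for the reverse, coloring each $f\in Z(C_\mathscr{P}(X))^*$ by the color of its unique representative $f'\in V(C_\mathscr{P}(X))$ with $[f']=[f]$, the properness resting on the fact (Lemma~\ref{Lem5.1}, via Theorem~\ref{Th4.2}) that adjacency in $AG(C_\mathscr{P}(X))$ descends to adjacency of the representatives in $G(C_\mathscr{P}(X))$. If anything, your verification that the extended coloring is proper---adjacent $f,g$ have adjacent images $r(f),r(g)$, which therefore carry distinct colors---is phrased more carefully than the paper's consistency check, which asserts that equal colors force $[f]=[g]$.
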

	\begin{proof}
		Certainly,  $\chi (G(C_\mathscr{P}(X)))\leq \chi (AG(C_\mathscr{P}(X)))$.\\
Let $\chi(G(C_\mathscr{P}(X))) = \Lambda$. If $f\in Z(C_\mathscr{P}(X))^*$ is such that $f\in V(C_\mathscr{P}(X))$ then it is already colored. So, we assume $f\notin V(C_\mathscr{P}(X))$. For each $f$ there exists a unique $f'\in V(C_\mathscr{P}(X))$ such that $[f']=[f]$. As $f,f'$ are non-adjacent in $AG(C_\mathscr{P}(X))$, we color $f$ by the color of $f'$. We claim that the coloring is consistent. If $f, g$ in $AG(C_\mathscr{P}(X))$ have the same color, say $f'$, then by the rule of coloring, $[f] = [f'] = [g]$ which implies that $f$ and $g$ are non-adjacent, proving consistency, as desired.
So, $\chi (AG(C_\mathscr{P}(X)))\leq \chi (G(C_\mathscr{P}(X)))$.
	\end{proof}

\section{An algorithm for coloring $AG(C_F(X))$}

In this section, we show that the chromatic number and the clique number of $AG(C_F(X))$ are same and this value is completely determined by the cardinality of the set of all isolated points of $X$. Moreover, if $|K_X|$ is finite then $G(C_F(X))$ is a finite graph and we are successful in devising an algorithm to color the vertices of $G(C_F(X))$. As a consequence, obtain the chromatic number of $AG(C_F(X))$, even though it is an infinite graph.

\begin{theorem}\label{Th7.21}
If $|K_X|$ is infinite then $|V(C_F(X))| = |K_X|$. 
\end{theorem}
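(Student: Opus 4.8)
The plan is to reduce the statement to a standard fact of cardinal arithmetic via the explicit description of $V(C_F(X))$ already established. Recall that when $\mathscr{P}$ is the ideal of all finite subsets of $X$ we have $C_\mathscr{P}(X)=C_F(X)$ and $X_\mathscr{P}=K_X$, and that Example~\ref{Ex7.1} identifies $V(C_F(X))=\{1_A:\emptyset\neq A\subsetneqq K_X\text{ and }A\text{ is finite}\}$. Since $|K_X|$ is infinite, a finite subset of $K_X$ can never equal $K_X$, so the properness condition ``$A\subsetneqq K_X$'' is vacuous; consequently $1_A\mapsto A$ is a bijection from $V(C_F(X))$ onto the family $\mathcal{F}$ of all non-empty finite subsets of $K_X$. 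Thus it suffices to prove $|\mathcal{F}|=|K_X|$.

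For the inequality $|K_X|\leq|\mathcal{F}|$ I would simply note that $x\mapsto\{x\}$ is an injection of $K_X$ into $\mathcal{F}$. For the reverse inequality, write $\mathcal{F}=\bigcup_{n=1}^{\infty}\mathcal{F}_n$, where $\mathcal{F}_n$ is the set of $n$-element subsets of $K_X$. For each $n\geq 1$ the map $(x_1,\dots,x_n)\mapsto\{x_1,\dots,x_n\}$ is a surjection of $K_X^{\,n}$ onto $\mathcal{F}_n$, whence $|\mathcal{F}_n|\leq|K_X^{\,n}|=|K_X|$, the last equality holding because $K_X$ is infinite. Therefore $|\mathcal{F}|\leq\aleph_0\cdot|K_X|=|K_X|$. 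Combining the two inequalities via the Cantor--Schr\"oder--Bernstein theorem gives $|\mathcal{F}|=|K_X|$, and composing with the bijection above yields $|V(C_F(X))|=|K_X|$.

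There is no genuine obstacle in this argument; it is pure cardinality bookkeeping once Example~\ref{Ex7.1} is invoked. The only point demanding a little care is the preliminary observation that, in the infinite case, $V(C_F(X))$ is in bijection with the \emph{full} family of non-empty finite subsets of $K_X$ (not a proper subfamily), which is exactly what makes the properness clause in the description of $V(C_F(X))$ harmless here.
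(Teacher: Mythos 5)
Your proof is correct and follows essentially the same route as the paper: the lower bound via the singleton vertices $1_x$, and the upper bound by identifying vertices with non-empty finite subsets of $K_X$ and bounding that family by $\left|\bigcup_{n=1}^{\infty}K_X^{\,n}\right|=|K_X|$. The only cosmetic difference is that you invoke Example~\ref{Ex7.1} to identify $V(C_F(X))$ with the finite subsets, whereas the paper uses the injectivity of $f\mapsto X\setminus Z(f)$ on $V(C_F(X))$ directly.
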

\begin{proof}
From Theorem~\ref{Th7.3}, taking $\mathscr{P}$ as the ideal of all finite subsets of $X$, it follows that $|V(C_F(X))|$ is infinite. Clearly, $|K_X| = |\{1_x : x\in K_X\}| \leq |V(C_F(X))|$.  $\{X\setminus Z(f): f\in V(C_F(X))\}$ being a family of nonempty finite subsets of $K_X$, $\{X\setminus Z(f): f\in V(C_F(X))\}\subset \bigcup\limits_{n=1}^\infty (K_X)^n =|K_X|$ (as $|K_X|$ is infinite) so that $|\{X\setminus Z(f): f\in V(C_F(X))\}|\leq |K_X|$. For distinct $f,g\in V(C_F(X))$, $X\setminus Z(f)\neq X\setminus Z(g)\implies |V(C_F(X))|=|\{X\setminus Z(f): f\in V(C_F(X))\}|$, i.e., $|V(C_F(X))|\leq |K_X|$. 
\end{proof}
\begin{theorem}\label{Th7.22}
If $|K_X|$ is infinite then $cl(G(C_F(X))) = |K_X| = \chi(G(C_F(X)))$.
\end{theorem}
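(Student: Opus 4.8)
The plan is to reduce everything to the explicit picture of $G(C_F(X))$ given in Example~\ref{Ex7.1}. Since $K_X$ is infinite, every nonempty finite subset of $K_X$ is automatically a proper subset, so $V(C_F(X))=\{1_A : A\text{ a nonempty finite subset of }K_X\}$, and $1_A,1_B$ are adjacent exactly when $A\not\subseteq B$ and $B\not\subseteq A$. Consequently a complete subgraph of $G(C_F(X))$ is nothing but (the family of indicator functions of) an antichain in the poset of nonempty finite subsets of $K_X$ ordered by inclusion, and this is the combinatorial fact the whole argument rests on.

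First I would compute the clique number. The collection $\{1_{\{x\}} : x\in K_X\}$ is a complete subgraph of $G(C_F(X))$ of cardinality $|K_X|$, since distinct singletons are pairwise incomparable; hence $cl(G(C_F(X)))\ge |K_X|$. For the reverse inequality, any complete subgraph is a subset of $V(C_F(X))$, which has cardinality $|K_X|$ by Theorem~\ref{Th7.21}; therefore $cl(G(C_F(X)))\le |K_X|$, and the two bounds give $cl(G(C_F(X)))=|K_X|$, with the supremum attained.

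Next I would pin down the chromatic number by squeezing it between the clique number and $|V(C_F(X))|$. On the one hand, the clique $\{1_{\{x\}}:x\in K_X\}$ of size $|K_X|$ forces any coloring to use at least $|K_X|$ distinct colors, since two distinct singletons, being adjacent, cannot receive the same color; thus $\chi(G(C_F(X)))\ge |K_X|$. On the other hand, assigning pairwise distinct colors to the vertices of $G(C_F(X))$ is trivially a legitimate coloring, and it uses $|V(C_F(X))|=|K_X|$ colors, so $\chi(G(C_F(X)))\le |K_X|$. Combining, $\chi(G(C_F(X)))=|K_X|=cl(G(C_F(X)))$.

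There is no serious obstacle here: the statement is essentially a corollary of Theorem~\ref{Th7.21} together with the combinatorial description in Example~\ref{Ex7.1}. The only point that deserves a word of care is the inequality $\chi\ge cl$ for a possibly infinite graph — it is valid in the present situation precisely because the supremum defining $cl(G(C_F(X)))$ is \emph{attained} by the concrete clique of singletons, so the lower bound on the number of colors applies verbatim.
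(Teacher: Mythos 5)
Your proof is correct and follows essentially the same route as the paper: the singleton clique $\{1_x : x\in K_X\}$ gives the lower bound, Theorem~\ref{Th7.21} gives $|V(C_F(X))|=|K_X|$ as the upper bound, and the chromatic number is squeezed in between. Your direct verification that the singleton clique forces $\chi(G(C_F(X)))\ge |K_X|$ is just a careful unpacking of the paper's inequality $cl(G(C_F(X)))\le \chi(G(C_F(X)))$, with the (welcome) extra remark that the bound is legitimate here because the supremum defining the clique number is attained.
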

\begin{proof}
Since $cl(G(C_F(X)))\leq |V(C_F(X))|$ it follows from Theorem~\ref{Th7.21} that $cl(G(C_F(X)))\leq |K_X|$. Observing that the subgraph $H$ of $G(C_F(X))$ whose set of vertices is $\{1_x:x\in K_X\}$, is complete, we get $cl(G(C_F(X)))\geq |H|=|K_X|$. Hence, $cl(G(C_F(X)))=|K_X|$. \\
Also, $cl(G(C_F(X)))\leq \chi(G(C_F(X)))\leq V(C_F(X))$. By Theorem~\ref{Th7.21} the result follows.
\end{proof}

\begin{theorem}\label{ThClique}
If $|K_X|$ is finite then $cl(G(C_F(X)))\geq \binom{|K_X|}{[\frac{|K_X|}{2}]}$, where $\binom{n}{r} = \frac{n!}{(n-r)! r!}$  and $[m]$ denotes the positive integer less than or equal to $m$.
\end{theorem}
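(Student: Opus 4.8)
The plan is to translate the statement into a purely combinatorial one about antichains. With $\mathscr{P}$ the ideal of all finite subsets of $X$ we have $X_\mathscr{P}=K_X$, so the blanket assumption $|X_\mathscr{P}|\ge 3$ together with the hypothesis forces $3\le |K_X|<\infty$. By the special case of Example~\ref{Ex7.1} recorded in Theorem~\ref{Th7.3}, the vertex set of $G(C_F(X))$ is $V=\{1_A:\emptyset\ne A\subsetneq K_X\}$, and two vertices $1_A,1_B$ are adjacent precisely when $A\not\subset B$ and $B\not\subset A$, i.e. when $A$ and $B$ are incomparable under inclusion. Hence a complete subgraph of $G(C_F(X))$ is exactly an antichain in the poset of nonempty proper subsets of $K_X$, and it suffices to exhibit one sufficiently large antichain.

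Set $n=|K_X|$ and let $r=[n/2]$ be the greatest integer $\le n/2$. Since $n\ge 2$ we have $1\le r\le n-1$, so every $A\subseteq K_X$ with $|A|=r$ is a nonempty proper subset of $K_X$; thus $1_A\in V$ for each such $A$. First I would observe that any two distinct subsets $A,B$ of $K_X$ of the same cardinality $r$ are incomparable, so the vertices $\{1_A:|A|=r\}$ are pairwise adjacent in $G(C_F(X))$ and form a complete subgraph. Its cardinality is $\binom{n}{r}$, which gives $cl(G(C_F(X)))\ge\binom{|K_X|}{[|K_X|/2]}$.

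There is essentially no obstacle here; the only point to check carefully is that the chosen ``middle layer'' stays inside $V$, i.e. consists of nonempty proper subsets, which is immediate from $1\le[n/2]\le n-1$ for $n\ge 2$. I would also add a remark that in fact equality holds: by Sperner's theorem $\binom{n}{[n/2]}$ is the maximum size of an antichain in the full Boolean lattice $\mathcal{P}(K_X)$, and each maximal antichain (the middle layer, or either of the two middle layers when $n$ is odd) already avoids both $\emptyset$ and $K_X$, so it lies in $V$. Only the lower bound is needed for the statement as written, and, combined with Theorem~\ref{Th7.17}, it yields $cl(AG(C_F(X)))\ge\binom{|K_X|}{[|K_X|/2]}$ as well.
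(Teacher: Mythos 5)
Your proof is correct and follows essentially the same route as the paper: both arguments take the middle layer $\{1_A : |A| = [\,|K_X|/2\,]\}$, observe that sets of equal size are pairwise incomparable so these vertices form a complete subgraph, and conclude $cl(G(C_F(X)))\geq \binom{|K_X|}{[\,|K_X|/2\,]}$. The Sperner-theorem remark about equality is a nice extra but not needed for the stated bound.
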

\begin{proof}
Let $|K_X| = \{x_1, x_2, \ldots, x_n\}$. We define an order `$<$' on $K_X$, so that $x_1 < x_2 < \ldots < x_n$. For each $k\leq n$, define $\mathscr{A}_k = \{A \subset K_X : |A| = k\}$.\\
We make a convention that for any $A\in \mathscr{A}_k$, whenever we write $A = \{a_1, a_2, \ldots, a_k\}$, it is understood that $a_1<a_2<\ldots < a_k$.\\
Define an order $<$ on $\mathscr{A}_k$ by $A < B$ if and only if either ($a_1 < b_1$) or (there exists some $i \in \{2,\ldots, n\}$ such that $a_i <b_i$ and $a_j = b_j$ for all $j < i$).
It is clear that for each $k=1, 2, \ldots, n$, $(\mathscr{A}_k, <)$ is a totally ordered set with the least element $A^1 = \{x_1, x_2, \ldots, x_k\}$. For each $k$, $H_k$, with set of vertices $V(H_k) = \{1_A : A\in \mathscr{A}_k\}$ is a complete subgraph of $G(C_F(X))$. It is further observed that $|\mathscr{A}_k| = \binom{|K_X|}{k}$. Therefore, $|H_k| = \binom{|K_X|}{k}$, for each $k=1, 2, \ldots, n$. \\
Among the complete subgraphs $H_k$ of $G(C_F(X))$, $H_{[\frac{|K_X|}{2}]}$ is maximal with number of vertices = $\binom{|K_X|}{[\frac{|K_X|}{2}]}$. Hence,  $cl(G(C_F(X)))\geq \binom{|K_X|}{[\frac{|K_X|}{2}]}$.
\end{proof}
\begin{theorem}\label{ThChrom}
If $|K_X|$ is finite then $\chi(G(C_F(X)))= \binom{|K_X|}{[\frac{|K_X|}{2}]}$.
\end{theorem}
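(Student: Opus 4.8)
The plan is to establish the two inequalities $\chi(G(C_F(X))) \geq \binom{|K_X|}{[|K_X|/2]}$ and $\chi(G(C_F(X))) \leq \binom{|K_X|}{[|K_X|/2]}$ separately. The lower bound is immediate from Theorem~\ref{ThClique}, since $\chi(H) \geq cl(H)$ for every graph $H$: indeed the complete subgraph $H_{[|K_X|/2]}$ exhibited there forces at least $\binom{|K_X|}{[|K_X|/2]}$ colors. So the entire content of the theorem is the upper bound, i.e.\ producing an explicit proper coloring of $G(C_F(X))$ with exactly $\binom{|K_X|}{[|K_X|/2]}$ colors. This is where the algorithm announced at the start of the section comes in, and it is the main obstacle: one must design a labeling of all nonempty proper subsets $A \subsetneqq K_X$ by colors drawn from a palette of size $\binom{n}{[n/2]}$ (where $n = |K_X|$) such that whenever $A \not\subset B$ and $B \not\subset A$ (the adjacency condition from Example~\ref{Ex7.1}), the colors of $1_A$ and $1_B$ differ. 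Equivalently, each color class must be a chain in the Boolean lattice $\mathcal{P}(K_X)$ with $\emptyset$ and $K_X$ removed.

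So the combinatorial heart is: \emph{partition the poset $\mathcal{P}(K_X) \setminus \{\emptyset, K_X\}$ into $\binom{n}{[n/2]}$ chains}. This is exactly a symmetric chain decomposition (a Dilworth-type / de~Bruijn--Tengbergen--Kruyswijk result): the Boolean lattice $\mathcal{P}(K_X)$ decomposes into $\binom{n}{[n/2]}$ disjoint saturated symmetric chains, because the middle layer $\mathscr{A}_{[n/2]}$ has that size and, by Dilworth's theorem together with the fact that the maximum antichain in $\mathcal{P}(K_X)$ has size $\binom{n}{[n/2]}$ (Sperner's theorem), the minimum number of chains needed to cover $\mathcal{P}(K_X)$ is precisely $\binom{n}{[n/2]}$. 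Removing $\emptyset$ and $K_X$ from the chains containing them does not increase the number of chains. First I would recall or construct such a symmetric chain decomposition explicitly (e.g.\ via the standard ``bracketing'' / parenthesis-matching bijection on $\{0,1\}$-strings, which the ordering ``$<$'' introduced in the proof of Theorem~\ref{ThClique} is plausibly set up to support), then assign one color to each chain, then color $1_A$ by the color of the unique chain containing $A$. Two adjacent vertices $1_A, 1_B$ are incomparable subsets, hence lie in distinct chains, hence get distinct colors — this verifies that the coloring is proper and uses exactly $\binom{n}{[n/2]}$ colors.

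Combining the two bounds gives $\chi(G(C_F(X))) = \binom{|K_X|}{[|K_X|/2]}$. The main difficulty is purely the construction and verification of the chain decomposition (making sure it is compatible with the algorithm the authors intend to spell out), rather than any topology: once $X_{\mathscr P}$ is replaced by the finite set $K_X$ and adjacency is translated via Example~\ref{Ex7.1} into set-incomparability, the statement is a finite, self-contained fact about the Boolean lattice. A minor point to handle carefully is the removal of the top and bottom elements: one should check these two deletions do not allow a reduction below $\binom{n}{[n/2]}$, which follows because the middle antichain $\mathscr{A}_{[n/2]}$ (with $n \geq 3$, so $[n/2] \notin \{0, n\}$) survives intact in $\mathcal{P}(K_X) \setminus \{\emptyset, K_X\}$ and still forces that many colors. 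Finally, via Theorem~\ref{Th7.18}, this also yields $\chi(AG(C_F(X))) = \binom{|K_X|}{[|K_X|/2]}$, completing the promised computation of the chromatic number of the infinite graph $AG(C_F(X))$ by finitely many colors.
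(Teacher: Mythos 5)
Your proposal is correct, but it proves the upper bound by a genuinely different route than the paper. You make the key translation explicit — a proper coloring of $G(C_F(X))$ is exactly a partition of the poset $\mathcal{P}(K_X)\setminus\{\emptyset,K_X\}$ into chains, since non-adjacency of $1_A,1_B$ means comparability of $A,B$ — and then invoke Dilworth's theorem plus Sperner's theorem (or, constructively, a symmetric chain decomposition \emph{\`a la} de Bruijn--Tengbergen--Kruyswijk) to get a chain partition into $\binom{|K_X|}{[\frac{|K_X|}{2}]}$ classes; the lower bound via the clique of Theorem~\ref{ThClique} is the same in both treatments. The paper instead builds the coloring by hand: it colors the middle layer $H_{[\frac{n}{2}]}$ with distinct colors and then runs an explicit greedy algorithm outward, layer by layer, assigning to each vertex of $H_{k\mp 1}$ the color of a not-yet-used neighbor-free vertex of $H_k$, and then verifies that colors never run out and that any two same-colored vertices are comparable — in effect constructing a chain decomposition from the middle layer outward without naming it as such. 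Your route is shorter and rests on classical, fully rigorous results (and in particular sidesteps the somewhat delicate bookkeeping in the paper's availability and consistency checks); what it does not deliver by itself is the concrete coloring algorithm that the section's title promises and that the paper illustrates for $|K_X|=5$, though the bracketing construction of a symmetric chain decomposition you mention would serve that purpose equally well. One small point: to be fully self-contained you should state, as you do, that deleting $\emptyset$ and $K_X$ from the chains does not change the count, and note that the case distinction on $|K_X|$ is harmless under the paper's standing assumption $|X_\mathscr{P}|\geq 3$.
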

\begin{proof}
Let $|K_x| = n$. Proceeding as in Theorem \ref{ThClique}, we obtain the complete subgraphs $H_k$ of $G(C_F(X))$, among which $H_{[\frac{n}{2}]}$ is maximal with number of vertices $=  \binom{n}{[\frac{n}{2}]}$. So, we start with coloring $H_{[\frac{n}{2}]}$ by $\binom{n}{[\frac{n}{2}]}$-many distinct colors.\\
Before writing the coloring scheme, we make a few conventions and introduce some notations that will be used in the algorithm for coloring the vertices of $G(C_F(X))$.
\begin{enumerate}
\item $A^1$ denotes the first element of each $\mathscr{A}_k$. So, $A^1 = \{x_1, x_2, \ldots, x_k\}$. (From the context we understand which $\mathscr{A}_k$ is under consideration).
\item $A^l$ denotes the last element of each $\mathscr{A}_k$. So, $A^l = \{x_{n-k+1}, x_{n-k+2}, \ldots, x_n\}$. 
\item $curr$ is a variable, used to denote some member of $\mathscr{A}_k$, for any $k$. 
\item $next$ is a variable, used to denote the immediate successor of $curr$.
\item $prev$ is a variable, used to denote a subset of $\mathscr{A}_k$, for any $k$.
\item $1_A \sim 1_B$ reads as ``Color $1_A$ using the color of $1_B$''.
\item $x \leftarrow y$ means ``Assign the value $y$ to the variable $x$''.
\item Since the elements of $A$ are ordered, for any $A \in \mathscr{A}_k$, $A_{-i}$ designates the set obtained from $A$ by deleting its $i$-th element. Clearly, $A \in \mathscr{A}_k$ implies that $A_{-i} \in \mathscr{A}_{k-1}$.
\item For any $A \in \mathscr{A}_{k-1}$, define $A_{+j} = A \cup \{x_j\}$. It is to note that $ A_{+j}\in \mathscr{A}_k$ if and only if $x_j \notin A$.  
\end{enumerate}
It is quite clear that if $A^1$ is the first element of $\mathscr{A}_k$ then ${A^1}_{-k}$ is the first element of $\mathscr{A}_{k-1}$.\\
\textbf{Step 1:} Starting from $k= [\frac{n}{2}]$, we color $H_{k-1}$ by using the colors of $H_k$, adopting the following algorithm:
\begin{enumerate}
\item[1.] $k \leftarrow [\frac{n}{2}]-1$
\item[2.] $curr \leftarrow A^1$
\item[3.] $1_{curr} \sim 1_{curr_{+k}}$
\item[4.] $prev\leftarrow \{curr_{+k}\}$
\item[5.] $curr \leftarrow next$
\item[6.] IF $j \in \{1, 2, \ldots, n\}$ is the smallest such that $curr_{+j}\in \mathscr{A}_k$ and $curr_{+j} \neq B$, for all $B \in prev$ THEN $1_{curr} \sim 1_{curr_{+j}}$ and $prev \leftarrow prev \cup \{curr_{+j}\}$
\item[7.] IF $curr = A^l$ THEN GOTO Step 8 ELSE GOTO Step 5
\item[8.] $k\leftarrow k-1$
\item[9.] CONTINUE  Step 2 to Step 8 TILL $k = 1$
\item[10.] STOP when $k=1$.
\end{enumerate}
We first check that the scheme described above exhausts coloring of all the vertices of $H_{k-1}$ with the colors used for $H_k$. The first element of $H_{k-1}$ is colored by the first element of the $H_k$. It is enough to show that the last element of $H_{k-1}$ gets a color.\\
We first show that $1_A \sim 1_B$ where $A = \{x_1, x_2, \ldots, x_{k-3}, x_{n-1}, x_n\}$ and $B =$ $\{x_1, x_2, \ldots,$ $x_{k-3}, x_{k-2}, x_{n-1}, x_n\}$.\\
Using the scheme of coloring described above, it is not hard to see that the vertex corresponding to $\{x_1, x_2, \ldots, x_{k-3}, x_{k-2}, x_{n-1}\}$ is colored by the vertex corresponding to $\{x_1, x_2, \ldots, x_{k-2}, x_{k-1}, x_{n-1}\}$ and $\{x_1, x_2, \ldots,$ $ x_{k-3}, x_{k-2}, x_{n}\}$ by $\{x_1, x_2, \ldots,$ $x_{k-2}, x_{k-1}, x_{n}\}$. Also, $B_{-i} = \{x_1, x_2, \ldots, x_{i-1}, x_{i+1}, \ldots, x_{n-1}, x_n\} > A$ as $x_{i+1} > x_i$. Hence, $1_{B_{-i}}$ ($i=1, 2, \ldots, k-2$) are not colored by $1_B$ and therefore, $1_B$ is available for $1_A$.\\
Proceeding similarly, we get that $1_A \sim 1_B$ where $A = \{x_1, x_{n-k+3}, \ldots, x_{n-1}, x_n\}$ and $B = \{x_1, x_2, x_{n-k+3},\ldots, x_{n-1}, x_n\}$.\\
So, for $A^l = \{x_{n-k+2}, x_{n-k+3}, \ldots, x_{n-1}, x_n\}$, $C = \{x_1, x_{n-k+2}, \ldots, x_{n-1}, x_n\}$ is available, so that $1_{A^l} \sim 1_C$ (because, none of $1_{C_{-i}}$ ($i \neq 1$) are colored by $C$, as seen above). \\
\textbf{Step 2:}
Starting from $k= [\frac{n}{2}]$, we color $H_{k+1}$ by using the colors of $H_k$, adopting the following algorithm:
\begin{enumerate}
\item[1.] $k \leftarrow [\frac{n}{2}] + 1$
\item[2.] $curr \leftarrow A^1$
\item[3.] $ 1_{curr} \sim 1_{curr_{-k}}$
\item[4.] $prev \leftarrow \{curr_{-k}\}$
\item[5.] $curr \leftarrow next$
\item[6.] IF $j$ is the largest such that $curr_{-j} \neq B$, for all $B \in prev$ THEN $ 1_{curr}\sim 1_{curr_{-j}}$ and $prev \leftarrow prev \cup \{curr_{-j}\}$
\item[7.] IF $curr =A^l$ THEN GOTO Step 8 ELSE GOTO Step 5
\item[8.] $k\leftarrow k+1$
\item[9.] CONTINUE  Step 2 TO Step 8 TILL $k = n$
\item[10.] STOP when $k=n$.
\end{enumerate}
We claim that the colors (i.e., vertices of $H_{k-1}$) do not exhaust before all the vertices of $H_k$ are colored. Let $A \in \mathscr{A}_{k}$ and for each $j$, $A_{-j} \in prev$. Let $B^j \in \mathscr{A}_k$ be such that $1_{B^j} \sim 1_{A_{-j}}$, for $j =1, 2, \ldots k$. Also, each $B^j$ has been colored before $A$ implies that $B^j < A$, for all $j =1, 2,\ldots k$. If $A = \{a_1, a_2, \ldots, a_k\}$ then $B^j = \{a_1, \ldots, a_{j-1}, a_{j+1}, \ldots, a_k\} \cup \{b_j\}$ where $b_j \in K_X$. Clearly, $b_j \neq a_i$, for each $i\neq j$ and $b_j \neq a_j$ as $A \neq B^j$.  Hence, there are atleast $2k$ distinct elements (i.e., $a_1, \ldots, a_k, b_1, \ldots, b_k$) exist in $K_X$. But, $[\frac{n}{2}]+1$ being the minimum value of $k$ in the above algorithm, $2k \geq 2([\frac{n}{2}]+1) > n = |K_X|$, which is impossible. Therefore, using a color of $H_{k-1}$ it is possible to color all the vertices of $H_{k}$, for each $k = [\frac{n}{2}] + 1$, \ldots, n. \\ 
Proceeding as in the previous case, we observe that if $1_A \in H_i$ and $1_B \in H_j$ ($i< j$) have the same color then (as $i< j$) $A\subsetneqq B$.\\
Finally, we check whether the coloring is consistent. For that matter, we choose any $1_A$ and $1_B$ from $V(C_F(X))$ such that they have the same color. Then $1_A \in H_i$ and $1_B \in H_j$ for some $i \neq j$. In other words, $A \in \mathscr{A}_i$ and $B \in \mathscr{A}_j$. Without loss of generality, we assume that $i < [\frac{n}{2}]$ and $j > [\frac{n}{2}]$. Then proceeding as before, we get some $1_C, 1_D \in H_{[\frac{n}{2}]}$ such that $1_A$ and $1_C$ have the same color and $1_B$ and $1_D$ have the same color. So, $A \subsetneqq C$ and $D \subsetneqq B$. As $1_A$ and $1_B$ have the same color, it follows that $1_C$ and $1_D$ also have the same color, which is possible when and only when $C = D$. Therefore, $A \subsetneqq C=D \subsetneqq B$; i.e., $1_A$ and $1_B$ are non-adjacent.
\end{proof}
\begin{corollary}\label{GFinite}
If $|K_X|$ is finite, $ cl(G(C_F(X))) = \chi(G(C_F(X)))= \binom{|K_X|}{[\frac{|K_X|}{2}]}$.
\end{corollary}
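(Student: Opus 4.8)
The statement is a packaging of the two immediately preceding theorems together with the universal inequality $cl(H)\le\chi(H)$ valid for every simple graph $H$ (every colour class must meet a clique in at most one vertex, so a clique of size $\kappa$ forces at least $\kappa$ colours). The plan is simply to chain these three facts. First I would invoke Theorem~\ref{ThClique}, which supplies the lower bound $cl(G(C_F(X)))\ge\binom{|K_X|}{[\frac{|K_X|}{2}]}$; here the hypothesis $|K_X|$ finite is exactly what makes $G(C_F(X))$ a finite graph (Theorem~\ref{Th7.3}) with vertex set $\{1_A:\emptyset\neq A\subsetneqq K_X\}$, and the layers $H_k$ give the required clique.

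Next I would apply Theorem~\ref{ThChrom}, which gives the exact value $\chi(G(C_F(X)))=\binom{|K_X|}{[\frac{|K_X|}{2}]}$; its proof already contains the explicit colouring algorithm (Step 1 descending from the middle layer $H_{[n/2]}$ and Step 2 ascending from it) together with the consistency check showing that two equally coloured vertices $1_A,1_B$ must satisfy $A\subsetneqq C=D\subsetneqq B$ for some middle-layer sets $C,D$, hence are non-adjacent. Stringing the inequalities together yields
\[
\binom{|K_X|}{[\frac{|K_X|}{2}]}\;\le\;cl(G(C_F(X)))\;\le\;\chi(G(C_F(X)))\;=\;\binom{|K_X|}{[\frac{|K_X|}{2}]},
\]
forcing equality throughout, which is the assertion.

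Since all the real work has already been done in Theorems~\ref{ThClique} and~\ref{ThChrom}, there is no genuine obstacle left in the corollary itself; the only point meriting a sentence is that the two theorems are stated under the common hypothesis ``$|K_X|$ finite'', so no further case analysis is needed. If one wanted to be maximally self-contained one could also remark that the same identity for $AG(C_F(X))$ (an infinite graph) follows by combining this corollary with Theorems~\ref{Th7.17} and~\ref{Th7.18}, which assert $cl(G(C_F(X)))=cl(AG(C_F(X)))$ and $\chi(G(C_F(X)))=\chi(AG(C_F(X)))$ — this is presumably the intended payoff of the section and could be recorded as a one-line remark after the corollary.
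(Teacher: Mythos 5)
Your proposal is correct and coincides with the paper's own argument: the corollary is proved there precisely by combining the universal inequality $cl(G(C_F(X)))\leq \chi(G(C_F(X)))$ with the lower bound of Theorem~\ref{ThClique} and the exact value of Theorem~\ref{ThChrom}. The closing remark about transferring the identity to $AG(C_F(X))$ via Theorems~\ref{Th7.17} and~\ref{Th7.18} is likewise exactly what the paper does immediately afterward.
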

\begin{proof}
Since $ cl(G(C_F(X))) \leq \chi(G(C_F(X)))$, the result follows from Theorem~\ref{ThChrom} and Theorem \ref{ThClique}.
\end{proof}
The following example illustrates the coloring scheme of Theorem~\ref{ThChrom}.
\begin{example}
Let $X$ be a Tychonoff space with $K_X=\{x_1, x_2, x_3, x_4, x_5\}$. Then $|K_X| = 5$.
\begin{center}
	\begin{tikzpicture}
		\foreach \x in {-0.25,1.5,3.25,5,6.75,8.5,10.25,12,13.75,15.5} \draw (\x,0) circle (0.2);
		\foreach \x in {0,1.5,3,4.5,6,7.5,9,10.5,12,13.5} \draw (\x,1.5) circle (0.2);
		\foreach \x in {0.75,2.25,3.75,5.25,6.75} \draw (\x,3) circle (0.2);
		\foreach \x in {0.75,3.75,6.75,9.75,12.75} \draw (\x,-1.5) circle (0.2);
		\node at (-1.5,0) {$H_3 :$};
		\node at (-1.5,1.5) {$H_2 :$};
		\node at (-1.5,3) {$H_1 :$};
		\node at (-1.5,-1.5) {$H_4 :$};
		\node at (0.75, 2.5) {$1_{x_1}$};
		\node at (2.25, 2.5) {$1_{x_2}$};
		\node at (3.75, 2.5) {$1_{x_3}$};
		\node at (5.25, 2.5) {$1_{x_4}$};
		\node at (6.75, 2.5) {$1_{x_5}$};
		\node at (0, 1) {$1_{\{x_1,x_2\}}$};
		\node at (1.5, 1) {$1_{\{x_1,x_3\}}$};
		\node at (3, 1) {$1_{\{x_1,x_4\}}$};
		\node at (4.5, 1) {$1_{\{x_1,x_5\}}$};
		\node at (6, 1) {$1_{\{x_2,x_3\}}$};
		\node at (7.5, 1) {$1_{\{x_2,x_4\}}$};
		\node at (9, 1) {$1_{\{x_2,x_5\}}$};
		\node at (10.5, 1) {$1_{\{x_3,x_4\}}$};
		\node at (12, 1) {$1_{\{x_3,x_5\}}$};
		\node at (13.5, 1) {$1_{\{x_4,x_5\}}$};
		\node at (-0.25, -0.5) {$1_{\{x_1,x_2,x_3\}}$};
		\node at (1.5, -0.5) {$1_{\{x_1,x_2,x_4\}}$};
		\node at (3.25, -0.5) {$1_{\{x_1,x_2,x_5\}}$};
		\node at (5, -0.5) {$1_{\{x_1,x_3,x_4\}}$};
		\node at (6.75, -0.5) {$1_{\{x_1,x_3,x_5\}}$};
		\node at (8.5, -0.5) {$1_{\{x_1,x_4,x_5\}}$};
		\node at (10.25, -0.5) {$1_{\{x_2,x_3,x_4\}}$};
		\node at (12, -0.5) {$1_{\{x_2,x_3,x_5\}}$};
		\node at (13.75, -0.5) {$1_{\{x_2,x_4,x_5\}}$};
		\node at (15.5, -0.5) {$1_{\{x_3,x_4,x_5\}}$};
		\node at (0.75, -2) {$1_{\{x_1,x_2,x_3,x_4\}}$};
		\node at (3.75, -2) {$1_{\{x_1,x_2,x_3,x_5\}}$};
		\node at (6.75, -2) {$1_{\{x_1,x_2,x_4,x_5\}}$};
		\node at (9.75, -2) {$1_{\{x_1,x_3,x_4,x_5\}}$};
		\node at (12.75, -2) {$1_{\{x_2,x_3,x_4,x_5\}}$};
		\fill[fill=black] (0,1.5) circle (0.2);
		\fill[fill=blue] (1.5,1.5) circle (0.2);
		\fill[fill=yellow] (3,1.5) circle (0.2);
		\fill[fill=green] (4.5,1.5) circle (0.2);
		\fill[fill=red] (6,1.5) circle (0.2);
		\fill[fill=brown] (7.5,1.5) circle (0.2);
		\fill[fill=orange] (9,1.5) circle (0.2);
		\fill[fill=pink] (10.5,1.5) circle (0.2);
		\fill[fill=gray] (12,1.5) circle (0.2);
		\fill[fill=lime] (13.5,1.5) circle (0.2);
		\fill[fill=black] (-0.25,0) circle (0.2);
		\fill[fill=yellow] (1.5,0) circle (0.2);
		\fill[fill=green] (3.25,0) circle (0.2);
		\fill[fill=blue] (5,0) circle (0.2);
		\fill[fill=gray] (6.75,0) circle (0.2);
		\fill[fill=lime] (8.5,0) circle (0.2);
		\fill[fill=red] (10.25,0) circle (0.2);
		\fill[fill=orange] (12,0) circle (0.2);
		\fill[fill=brown] (13.75,0) circle (0.2);
		\fill[fill=pink] (15.5,0) circle (0.2);
		\fill[fill=black] (0.75,3) circle (0.2);
		\fill[fill=red] (2.25,3) circle (0.2);
		\fill[fill=blue] (3.75,3) circle (0.2);
		\fill[fill=yellow] (5.25,3) circle (0.2);
		\fill[fill=green] (6.75,3) circle (0.2);
		\fill[fill=black] (0.75,-1.5) circle (0.2);
		\fill[fill=green] (3.75,-1.5) circle (0.2);
		\fill[fill=yellow] (6.75,-1.5) circle (0.2);
		\fill[fill=blue] (9.75,-1.5) circle (0.2);
		\fill[fill=red] (12.75,-1.5) circle (0.2);
	\end{tikzpicture}
\end{center}
\end{example}

Combining Theorem~\ref{Th7.17}, Theorem~\ref{Th7.18}, Theorem~\ref{Th7.22} and Corollary~\ref{GFinite}, we get 
\begin{theorem}
$$\chi(AG(C_F(X)))= cl(AG(C_F(X))) = \begin{cases} \binom{|K_X|}{[\frac{|K_X|}{2}]}, &  \textnormal{ if } |K_X| \textnormal{ is finite} \cr |K_X|, & \textnormal{ otherwise}\end{cases}$$
\end{theorem}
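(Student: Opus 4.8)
The plan is to deduce this purely by assembling results already established for the induced subgraph $G(C_F(X))$ together with the transfer theorems that equate the clique and chromatic numbers of $G(C_\mathscr{P}(X))$ and $AG(C_\mathscr{P}(X))$. Throughout one specializes $\mathscr{P}$ to be the ideal of all finite subsets of $X$, so that $C_\mathscr{P}(X) = C_F(X)$ and $X_\mathscr{P} = K_X$, the set of isolated points of $X$; I would note explicitly that this $\mathscr{P}$ is indeed an admissible ideal of closed sets (isolated points are closed, and finite unions of closed sets are closed), so that the general results of Section~4 apply verbatim to $C_F(X)$.

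First I would invoke Theorem~\ref{Th7.17} and Theorem~\ref{Th7.18}, which give $cl(AG(C_F(X))) = cl(G(C_F(X)))$ and $\chi(AG(C_F(X))) = \chi(G(C_F(X)))$, both valid without any cardinality hypothesis on $K_X$. This reduces the computation of the two parameters for the (typically infinite) annihilator graph to the corresponding computation for the induced subgraph $G(C_F(X))$, where the problem has already been solved.

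Next I would split into the two cases of the piecewise formula. If $|K_X|$ is finite, Corollary~\ref{GFinite} --- which itself rests on the lower bound of Theorem~\ref{ThClique} and the explicit colouring algorithm of Theorem~\ref{ThChrom} --- gives $cl(G(C_F(X))) = \chi(G(C_F(X))) = \binom{|K_X|}{[\frac{|K_X|}{2}]}$; combining this with the transfer identities of the previous paragraph yields the first branch. If $|K_X|$ is infinite, Theorem~\ref{Th7.22} (built on the cardinality count of Theorem~\ref{Th7.21}) gives $cl(G(C_F(X))) = |K_X| = \chi(G(C_F(X)))$, and again the transfer theorems propagate this equality to $AG(C_F(X))$, giving the second branch. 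Since in each case the clique number and the chromatic number of $AG(C_F(X))$ are shown to equal one and the same quantity, the asserted equality $\chi(AG(C_F(X))) = cl(AG(C_F(X)))$ drops out at the same time as the explicit value.

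I do not anticipate a genuine obstacle: this statement is a synthesis, and all the substantive work --- the bijection between complete subgraphs of $G$ and of $AG$ underlying Theorem~\ref{Th7.17}, the colour-lifting argument of Theorem~\ref{Th7.18}, the cardinality estimate of Theorem~\ref{Th7.21}, and the finite colouring algorithm of Theorem~\ref{ThChrom} --- has already been carried out. The only point meriting a sentence of care is the legitimacy of specializing the general-ideal results to $\mathscr{P} = \{\text{finite subsets of }X\}$, as indicated in the first paragraph; once that is in place the proof is a two-line case split.
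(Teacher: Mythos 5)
Your proposal is correct and matches the paper exactly: the paper also obtains this theorem by simply combining Theorem~\ref{Th7.17}, Theorem~\ref{Th7.18}, Theorem~\ref{Th7.22} and Corollary~\ref{GFinite}, splitting on whether $|K_X|$ is finite or infinite. Your extra remark on the admissibility of the ideal of finite subsets is harmless but not needed, since the paper already treats this choice of $\mathscr{P}$ as a standard special case throughout.
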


\section{On the induced graph isomorphisms}

In the earlier sections, we have seen that $G(C_\mathscr{P}(X))$ and $AG(C_\mathscr{P}(X))$ behave in a similar fashion so far as triangulatedness, hypertriangulatedness and complementedness are concerned and both have the same diameter, girth, eccentricity, clique number and chromatic number. In this section, we investigate whether a graph isomorphism $G(C_\mathscr{P}(X))\rightarrow G(C_\mathscr{Q}(Y))$ completely determines a graph isomorphism $AG(C_\mathscr{P}(X)) \rightarrow AG(C_\mathscr{Q}(Y))$ and get a partial answer to this query. However, we show that for $C_\mathscr{P}(X)$ with $|X_\mathscr{P}| =$ finite, the answer is complete. \\

In what follows, $X,Y$ stand for two completely regular Hausdorff spaces and $\mathscr{P}, \mathscr{Q}$ are ideals of closed sets in $X$ and $Y$ respectively. The equivalence relations on $C_\mathscr{P}(X)$ and $C_\mathscr{Q}(Y)$ are denoted respectively by $\sim_X$, $\sim_Y$ and their corresponding equivalence classes by $[f]_X$ and $[g]_Y$, ($f \in C_\mathscr{P}(X)$, $g \in C_\mathscr{Q}(Y))$ respectively. \\

We first show that the restriction of an isomorphism $\psi : AG(C_\mathscr{P}(X)) \rightarrow AG(C_\mathscr{Q}(Y))$ on $G(C_\mathscr{P}(X))$ takes $G(C_\mathscr{P}(X))$ isomorphically onto $G(C_\mathscr{Q}(Y))$. Before that we need a lemma : 

\begin{lemma}\label{Lem6.1}
		Let $\psi: AG(C_\mathscr{P}(X))\to AG(C_\mathscr{Q}(Y))$ be a graph isomorphism and $f,g\in Z(C_\mathscr{P}(X))^*$. Then $int_XZ(f)\cap X_\mathscr{P}=int_XZ(g)\cap X_\mathscr{P}$ if and only if $int_YZ(\psi(f))\cap Y_\mathscr{Q}=int_YZ(\psi(g))\cap Y_\mathscr{Q}$.
	\end{lemma}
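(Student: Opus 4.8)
\emph{Plan.} The plan is to recast the relation ``$int_XZ(f)\cap X_\mathscr{P}=int_XZ(g)\cap X_\mathscr{P}$'' (equivalently $[f]_X=[g]_X$) in purely combinatorial terms inside $AG(C_\mathscr{P}(X))$, so that it is automatically preserved by $\psi$. Write $N(f)$ for the set of vertices adjacent to $f$. The intrinsic characterization I would establish first is: \emph{for $f,g\in Z(C_\mathscr{P}(X))^*$, one has $int_XZ(f)\cap X_\mathscr{P}=int_XZ(g)\cap X_\mathscr{P}$ if and only if $N(f)=N(g)$} --- and when this holds $f$ and $g$ are necessarily non-adjacent, since in a simple graph $N(f)=N(g)$ together with $f$ adjacent to $g$ would force $g$ to be adjacent to itself. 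The same statement holds verbatim for $C_\mathscr{Q}(Y)$. Granting it, the lemma drops out at once, because a graph isomorphism carries $N(f)$ onto $N(\psi(f))$: thus $N(f)=N(g)$ in $AG(C_\mathscr{P}(X))$ if and only if $N(\psi(f))=N(\psi(g))$ in $AG(C_\mathscr{Q}(Y))$, which is precisely the asserted equivalence.

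\emph{The easy direction of the characterization.} If $int_XZ(f)\cap X_\mathscr{P}=int_XZ(g)\cap X_\mathscr{P}$, then by Theorem~\ref{Th4.2} adjacency of any vertex $h$ to $f$ is a condition involving only $int_XZ(h)\cap X_\mathscr{P}$ and $int_XZ(f)\cap X_\mathscr{P}$; substituting the equal set $int_XZ(g)\cap X_\mathscr{P}$ shows $h$ is adjacent to $f$ exactly when it is adjacent to $g$, and also that $f$ and $g$ are mutually non-adjacent. Hence $N(f)=N(g)$.

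\emph{The hard direction.} I would prove its contrapositive. Assume $A:=int_XZ(f)\cap X_\mathscr{P}\neq int_XZ(g)\cap X_\mathscr{P}=:B$. If $f$ and $g$ are adjacent, then $g\in N(f)\setminus N(g)$ and there is nothing more to do; otherwise Theorem~\ref{Th4.2} forces one of $A,B$ to be properly contained in the other, say $A\subsetneq B$. Since $f\in Z(C_\mathscr{P}(X))^*$, the set $A$ is nonempty, so I may fix $y\in A$ and $x\in B\setminus A$. As $X$ is Tychonoff, $X\setminus\{x\}$ is an open neighbourhood of $y\in X_\mathscr{P}$, and Lemma~\ref{Lem1} then provides $k\in C_\mathscr{P}(X)$ with $y\in X\setminus Z(k)$ and $x\in int_XZ(k)$. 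Setting $h:=g^2+k^2\in C_\mathscr{P}(X)$ gives $int_XZ(h)=int_XZ(g)\cap int_XZ(k)$, hence $int_XZ(h)\cap X_\mathscr{P}=B\cap\big(int_XZ(k)\cap X_\mathscr{P}\big)$; this set contains $x$ (so it is nonempty, i.e.\ $h\in Z(C_\mathscr{P}(X))^*$) and omits $y$, so it lies inside $B$ yet is $\subseteq$-incomparable with $A$. By Theorem~\ref{Th4.2}, $h$ is adjacent to $f$ but not to $g$, and therefore $N(f)\neq N(g)$.

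\emph{Main obstacle.} The one delicate point is the hard direction: $f$ cannot be separated from $g$ by an arbitrarily prescribed vertex, because not every subset of $X_\mathscr{P}$ occurs as $int_XZ(h)\cap X_\mathscr{P}$ for some $h\in C_\mathscr{P}(X)$ (a cofinite subset of $X_\mathscr{P}$, say, need not, once $X_\mathscr{P}$ contains non-isolated points of $X$). The fix is to fabricate the separating vertex out of $g$ itself, excising the single point $y$ from $B$ via the square of the auxiliary function $k$ of Lemma~\ref{Lem1}; everything else is routine manipulation with Theorem~\ref{Th4.2}.
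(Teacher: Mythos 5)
Your proof is correct, and it reorganizes the argument rather than reproducing the paper's. The paper works directly in the codomain: from $int_XZ(f)\cap X_\mathscr{P}=int_XZ(g)\cap X_\mathscr{P}$ it deduces that $f,g$, hence $\psi(f),\psi(g)$, are non-adjacent, assumes without loss of generality one inclusion between $int_YZ(\psi(f))\cap Y_\mathscr{Q}$ and $int_YZ(\psi(g))\cap Y_\mathscr{Q}$, and then rules out any point of $[int_YZ(\psi(g))\cap Y_\mathscr{Q}]\setminus Z(\psi(f))$ by using Lemma~\ref{Lem1} in $Y$ to produce a vertex $h$ with cozero set inside that difference, so that $\psi(g)h=0$ while $Z(\psi(f))\subset int_YZ(h)$; the contradiction is then left as a ``routine check'' (which tacitly uses that $f$ and $g$, having equal sets, have identical neighbourhoods, transported through $\psi$), and the reverse implication is declared clear. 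You instead prove once, inside a single graph, the intrinsic characterization $int_XZ(f)\cap X_\mathscr{P}=int_XZ(g)\cap X_\mathscr{P}\iff N(f)=N(g)$, your separating vertex $g^2+k^2$ (built from Lemma~\ref{Lem1}, trimming one point off $int_XZ(g)\cap X_\mathscr{P}$) playing the role of the paper's $h$ but constructed in the domain; the lemma then follows formally because an isomorphism carries $N(f)$ onto $N(\psi(f))$, so both directions come simultaneously with no case for $\psi^{-1}$ and no hidden step. The ingredients (Theorem~\ref{Th4.2} and Lemma~\ref{Lem1}) are the same; what your version buys is symmetry and an explicit statement of exactly the fact the paper's ``routine check'' relies on, at the cost of proving a slightly stronger auxiliary claim. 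Two points worth writing out when you polish it: $h=g^2+k^2\neq 0$ because $Z(h)\subseteq Z(g)\subsetneq X$ (nonemptiness of $int_XZ(h)\cap X_\mathscr{P}$ alone does not give $h\neq 0$), and $h\neq f$ since their associated sets are incomparable, so Theorem~\ref{Th4.2} genuinely yields adjacency.
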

	\begin{proof}
		Let $int_XZ(f)\cap X_\mathscr{P}=int_XZ(g)\cap X_\mathscr{P}$, for some $f, g \in AG(C_\mathscr{P}(X))$. Then $f,g$ are not adjacent in $AG(C_\mathscr{P}(X))$ and so, $\psi(f), \psi(g)$ are not adjacent in $AG(C_\mathscr{Q}(Y))$. Without loss of generality, assume that $int_YZ(\psi(f))\cap Y_\mathscr{Q}\subset int_YZ(\psi(g))\cap Y_\mathscr{Q}$.\\
For if $y \in [int_YZ(\psi(g))\cap Y_\mathscr{Q}]\setminus Z(\psi(f))$, by Lemma~\ref{Lem1}, there exists $h\in C_\mathscr{Q}(Y)$ such that $y\in Y\setminus Z(h)\subset Y\setminus int_YZ(h)\subset [int_YZ(\psi(g))\cap Y_\mathscr{Q}]\setminus Z(\psi(f))$. Clearly, $h\neq 0$, $Z(\psi(f))\subset int_YZ(h)$ and $\psi(g)h=0$. Now it is a routine check that $\psi(g)h = 0 \implies \psi(f)$ and $h$ are adjacent and hence, contradicts the assumption. 
So, $ int_YZ(\psi(g))\cap Y_\mathscr{Q}\subset Z(\psi(f))\cap Y_\mathscr{Q}$.\\
The converse part is clear.
\end{proof}

	\begin{theorem}\label{Th6.2}
		If $\psi : AG(C_\mathscr{P}(X)) \to AG(C_\mathscr{Q}(Y))$ is a graph isomorphism then $\psi|_{G(C_\mathscr{P}(X))}$ maps $G(C_\mathscr{P}(X))$ isomorphically onto $G(C_\mathscr{Q}(Y))$.
	\end{theorem}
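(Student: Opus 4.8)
The plan is to show that $\psi|_{G(C_\mathscr{P}(X))}$ is a well-defined map into $G(C_\mathscr{Q}(Y))$, is injective, is surjective, and preserves and reflects adjacency. Since $V(C_\mathscr{P}(X)) \subset Z(C_\mathscr{P}(X))^*$, the map $\psi$ is already defined on each $f \in V(C_\mathscr{P}(X))$; what must be checked first is that $\psi(f)$ lands inside $G(C_\mathscr{Q}(Y))$, i.e. that $\psi(f)$ may be taken as the chosen representative of its $\sim_Y$-class. The key tool is Lemma~\ref{Lem6.1}: it tells us that $\psi$ respects the equivalence relations in both directions, so distinct $\sim_X$-classes go to distinct $\sim_Y$-classes and conversely. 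Concretely, define the composite $\Phi : G(C_\mathscr{P}(X)) \to G(C_\mathscr{Q}(Y))$ by sending $f \mapsto (\psi(f))'$, where for any $h \in Z(C_\mathscr{Q}(Y))^*$, $h'$ denotes the unique element of $V(C_\mathscr{Q}(Y))$ with $[h']_Y = [h]_Y$. Since $f$ and $(\psi(f))'$ differ only by passing to the representative, and $\psi(f) \sim_Y (\psi(f))'$, all adjacency statements about $\psi(f)$ transfer to $(\psi(f))'$ via observation (c) following the definition of $G(C_\mathscr{P}(X))$.

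Next I would verify bijectivity. For injectivity: if $\Phi(f) = \Phi(g)$ for $f, g \in V(C_\mathscr{P}(X))$, then $[\psi(f)]_Y = [\psi(g)]_Y$, i.e. $int_YZ(\psi(f))\cap Y_\mathscr{Q} = int_YZ(\psi(g))\cap Y_\mathscr{Q}$; by the ``if'' direction of Lemma~\ref{Lem6.1} applied to $\psi^{-1}$ (or equivalently the converse direction of Lemma~\ref{Lem6.1} as stated), this forces $int_XZ(f)\cap X_\mathscr{P} = int_XZ(g)\cap X_\mathscr{P}$, hence $[f]_X = [g]_X$, and since both $f$ and $g$ are the chosen representatives in $V(C_\mathscr{P}(X))$, $f = g$. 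For surjectivity: given any $g \in V(C_\mathscr{Q}(Y))$, we have $g \in Z(C_\mathscr{Q}(Y))^*$, so $\psi^{-1}(g) \in Z(C_\mathscr{P}(X))^*$; let $f$ be the representative in $V(C_\mathscr{P}(X))$ with $[f]_X = [\psi^{-1}(g)]_X$. Then $\psi(f) \sim_Y \psi(\psi^{-1}(g)) = g$ by Lemma~\ref{Lem6.1}, so $\Phi(f) = (\psi(f))' = g' = g$. (Here $g = g'$ because $g$ is already the chosen representative.)

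Finally I would check that $\Phi$ preserves and reflects adjacency. Suppose $f, g \in V(C_\mathscr{P}(X))$ are adjacent in $G(C_\mathscr{P}(X))$; since $G(C_\mathscr{P}(X))$ is an induced subgraph, $f$ and $g$ are adjacent in $AG(C_\mathscr{P}(X))$, so $\psi(f)$ and $\psi(g)$ are adjacent in $AG(C_\mathscr{Q}(Y))$. By observation (c), $\psi(f)$ is adjacent to every element of $[\psi(g)]_Y$, in particular to $(\psi(g))' = \Phi(g)$; applying (c) once more with the roles reversed, $\Phi(g)$ is adjacent to $(\psi(f))' = \Phi(f)$ in $AG(C_\mathscr{Q}(Y))$, hence in the induced subgraph $G(C_\mathscr{Q}(Y))$. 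Reflecting adjacency is symmetric: if $\Phi(f)$ and $\Phi(g)$ are adjacent in $G(C_\mathscr{Q}(Y))$, they are adjacent in $AG(C_\mathscr{Q}(Y))$, so (running the same argument with $\psi^{-1}$, using that $\Phi(f) \sim_Y \psi(f)$ and $\Phi(g) \sim_Y \psi(g)$) $f$ and $g$ are adjacent in $AG(C_\mathscr{P}(X))$, hence in $G(C_\mathscr{P}(X))$. This completes the proof that $\Phi = \psi|_{G(C_\mathscr{P}(X))}$ (identifying $\psi(f)$ with its representative) is a graph isomorphism.

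The main obstacle is purely bookkeeping rather than conceptual: one must be careful that ``$\psi|_{G(C_\mathscr{P}(X))}$'' is interpreted correctly — literally $\psi(f)$ need not lie in the chosen vertex set $V(C_\mathscr{Q}(Y))$, so the statement is really about the induced map on equivalence-class representatives, and the cleanliest way to phrase this is via Lemma~\ref{Lem6.1} together with observations (a)--(c) on the induced subgraph. Once that identification is made explicit, every step reduces to a one-line application of Lemma~\ref{Lem6.1} or of the induced-subgraph property, and no topological input beyond what is already packaged into Lemma~\ref{Lem6.1} is needed.
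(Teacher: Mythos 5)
Your proposal is correct and follows essentially the same route as the paper: both replace $\psi(f)$ by its unique representative in $V(C_\mathscr{Q}(Y))$ to get the induced map, and both use Lemma~\ref{Lem6.1} (together with the fact that adjacency depends only on $\sim$-classes) to obtain bijectivity and preservation/reflection of adjacency. Your write-up merely spells out the injectivity, surjectivity and adjacency checks that the paper leaves as ``easy to see.''
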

	\begin{proof}
		Let $\psi: AG(C_\mathscr{P}(X))\to AG(C_\mathscr{Q}(Y))$ be a graph isomorphism and $f_X\in V(C_\mathscr{P}(X))$. Then $\psi(f_X) \in Z(C_\mathscr{Q}(Y))^*$. By Lemma~\ref{Lem5.1}, there exists a unique $f_Y\in V(C_\mathscr{Q}(Y))$ such that $[f_Y]_Y=[\psi(f_X)]_Y$; i.e., $int_YZ(f_Y)\cap Y_\mathscr{Q}=int_YZ(\psi(f_X))\cap Y_\mathscr{Q}$. Consider the map $\phi: V(C_\mathscr{P}(X))\to V(C_\mathscr{Q}(Y))$ given by $\phi(f_X)=f_Y$, where $f_Y\in V(C_\mathscr{Q}(Y))$ such that $int_YZ(f_Y)\cap Y_\mathscr{Q}=int_YZ(\psi(f_X))\cap Y_\mathscr{Q}$.\\
		If $f_Y\in V(C_\mathscr{Q}(Y))$ then $\psi^{-1}(f_Y)\in Z(C_\mathscr{P}(X))^*$. Choosing $f_X\in V(C_\mathscr{P}(X))$ in such a way that $int_XZ(f_X)\cap X_\mathscr{P}=int_XZ(\psi^{-1}(f_Y))\cap X_\mathscr{P}$. By Lemma~\ref{Lem6.1}, it is easy to see that $\phi(f_X)=f_Y$.  As a result, $\phi$ is a bijective map and it is easy to see that $\phi:G(C_\mathscr{P}(X))\to G(C_\mathscr{Q}(Y))$ is a graph isomorphism.
	\end{proof}

\begin{theorem}\label{Th6.3}
		Let $\phi:G(C_\mathscr{P}(X))\to G(C_\mathscr{Q}(Y))$ be a graph isomorphism satisfying the condition $|[f]_X| = |[\phi(f)]_Y|$, for all $f\in V(C_\mathscr{P}(X))$ then $\phi$ can be extended to a graph isomorphism between $AG(C_\mathscr{P}(X))$ and $AG(C_\mathscr{Q}(Y))$.
	\end{theorem}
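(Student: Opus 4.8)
The plan is to build the extension $\psi : AG(C_\mathscr{P}(X))\to AG(C_\mathscr{Q}(Y))$ by working equivalence class by equivalence class. Recall that $V(AG(C_\mathscr{P}(X))) = Z(C_\mathscr{P}(X))^*$ is partitioned into the equivalence classes $[f]_X$, and by observation (b) in Section 4 each class $[f]_X$ with $f\in V(C_\mathscr{P}(X))$ contains exactly one representative lying in $V(C_\mathscr{P}(X))$; moreover, by property (c) of $\sim$, adjacency in $AG(C_\mathscr{P}(X))$ depends only on the equivalence classes. So the graph $AG(C_\mathscr{P}(X))$ is completely determined by the graph $G(C_\mathscr{P}(X))$ together with the cardinalities $|[f]_X|$ for $f\in V(C_\mathscr{P}(X))$: it is obtained by ``blowing up'' each vertex $f$ of $G(C_\mathscr{P}(X))$ into an independent set of size $|[f]_X|$ and joining two blown-up classes completely whenever the original vertices were adjacent. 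The hypothesis $|[f]_X| = |[\phi(f)]_Y|$ says precisely that the blow-up data match on the two sides.

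First I would fix, for each $f\in V(C_\mathscr{P}(X))$, a bijection $\theta_f : [f]_X \to [\phi(f)]_Y$ subject only to the normalization $\theta_f(f) = \phi(f)$ (possible because $f \in [f]_X$, $\phi(f)\in[\phi(f)]_Y$, and the two classes have equal cardinality). Since the classes $\{[f]_X : f\in V(C_\mathscr{P}(X))\}$ partition $Z(C_\mathscr{P}(X))^*$ and likewise on the $Y$ side via $\phi$ being a bijection $V(C_\mathscr{P}(X))\to V(C_\mathscr{Q}(Y))$, I can glue the $\theta_f$ into a single map $\psi : Z(C_\mathscr{P}(X))^* \to Z(C_\mathscr{Q}(Y))^*$, namely $\psi(g) = \theta_f(g)$ where $f$ is the unique vertex of $V(C_\mathscr{P}(X))$ with $[g]_X = [f]_X$. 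This $\psi$ is a bijection (its inverse is assembled the same way from the $\theta_f^{-1}$ and $\phi^{-1}$), and $\psi|_{V(C_\mathscr{P}(X))} = \phi$ by the normalization, so $\psi$ genuinely extends $\phi$.

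Next I would check that $\psi$ is a graph homomorphism in both directions. Take $g, g' \in Z(C_\mathscr{P}(X))^*$ adjacent in $AG(C_\mathscr{P}(X))$; let $f, f'$ be their representatives in $V(C_\mathscr{P}(X))$. By property (c) of $\sim$ (applied twice, once on each coordinate), $f$ and $f'$ are adjacent in $AG(C_\mathscr{P}(X))$, hence — being distinct vertices of the induced subgraph — adjacent in $G(C_\mathscr{P}(X))$; here one also notes $f\neq f'$, since $g,g'$ adjacent forces $[g]_X\neq[g']_X$ by observation (a). Since $\phi$ is a graph isomorphism, $\phi(f), \phi(f')$ are adjacent in $G(C_\mathscr{Q}(Y))$, hence in $AG(C_\mathscr{Q}(Y))$. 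Now $\psi(g) = \theta_f(g) \in [\phi(f)]_Y$ and $\psi(g') = \theta_{f'}(g') \in [\phi(f')]_Y$ with $\phi(f)\neq\phi(f')$, so $\psi(g)\neq\psi(g')$; by property (c) again (applied on each coordinate, moving from $\phi(f)$ to $\psi(g)$ and from $\phi(f')$ to $\psi(g')$), $\psi(g)$ and $\psi(g')$ are adjacent in $AG(C_\mathscr{Q}(Y))$. Running the identical argument with $\psi^{-1}$ in place of $\psi$ (using that $\phi^{-1}$ is also a graph isomorphism and the $\theta_f^{-1}$ respect classes) gives the converse implication, so $\psi$ preserves non-adjacency as well, and is therefore a graph isomorphism.

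The only place any real care is needed is the bookkeeping that adjacency in $AG(C_\mathscr{P}(X))$ truly factors through the quotient by $\sim$ — i.e. that whenever $f$ and $f'$ are adjacent, every element of $[f]_X$ is adjacent to every element of $[f']_X$, and conversely a single adjacency between the classes forces adjacency of the representatives. This is exactly observations (a)–(c) following the definition of $\sim$ in Section 4, combined with Theorem~\ref{Th4.2}, so the ``obstacle'' is not a genuine difficulty: once the class-invariance of adjacency is in hand, the construction of $\psi$ from a family of class-wise bijections is forced, and the verification is the routine two-sided check sketched above. I would also remark that the cardinality condition $|[f]_X| = |[\phi(f)]_Y|$ is not merely convenient but necessary: two extended isomorphic graphs must match the sizes of the independent sets sitting over corresponding vertices, and (as the authors note elsewhere) for $|X_\mathscr{P}|$ finite every class $[f]_X$ is countably infinite, so the condition is then automatic.
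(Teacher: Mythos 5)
Your proposal is correct and follows essentially the same route as the paper: use the cardinality hypothesis to choose a bijection from each class $[f]_X$ onto $[\phi(f)]_Y$, paste these along the partition of $Z(C_\mathscr{P}(X))^*$ into a bijection $\psi$, and verify adjacency in both directions via the fact that adjacency in $AG(C_\mathscr{P}(X))$ depends only on $int_XZ(\cdot)\cap X_\mathscr{P}$, i.e.\ only on the equivalence class. Your explicit normalization $\theta_f(f)=\phi(f)$ is a small refinement the paper leaves implicit, and it is what makes $\psi$ literally an extension of $\phi$ rather than merely an isomorphism agreeing with $\phi$ up to equivalence.
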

	\begin{proof}
		For each $f\in Z(C_\mathscr{P}(X))^*$ there exists $f'\in V(C_\mathscr{P}(X))$ such that $[f']_X=[f]_X$. So, $\phi(f')\in V(C_\mathscr{Q}(Y))$ and hence by the hypothesis, $|[f']_X|=|[\phi(f')]_Y|$ which implies that $|[f]_X|=|[\phi(f')]_Y|$ and so, there exists a bijection between $|[f]_X|$ and $|[\phi(f')]_Y|$, say $\psi_{f'}:[f]_X\to [\phi(f')]_Y$. Since an equivalence relation on a set yields a partition, pasting these bijections we get a bijective map $\psi$ from $Z(C_\mathscr{P}(X))^*$ to $Z(C_\mathscr{Q}(Y))^*$. \\
Let $f,g\in Z(C_\mathscr{P}(X))^*$ be adjacent in $AG(C_\mathscr{P}(X))$. Then there are $f',g'\in V(C_\mathscr{P}(X))$ such that $[f']_X=[f]_X$ and $[g']_X=[g]_X$. $f',g'$ are adjacent in $G(C_\mathscr{P}(X))\implies \phi(f'),\phi(g')$ are adjacent in $G(C_\mathscr{Q}(Y))$. Now $\psi(f)=\psi_{f'}(f)\in [\phi(f')]_Y\implies int_YZ(\psi(f))\cap Y_\mathscr{Q}=int_YZ(\phi(f'))\cap Y_\mathscr{Q}$ and $\psi(g)=\psi_{g'}(g)\in [\phi(g')]_Y\implies int_YZ(\psi(g))\cap Y_\mathscr{Q}=int_YZ(\phi(g'))\cap Y_\mathscr{Q}$. It follows that $\psi(f), \psi(g)$ are adjacent in $AG(C_\mathscr{Q}(Y))$. Similarly, if $f, g$ are adjacent in $AG(C_\mathscr{Q}(Y))$ then $\psi^{-1}(f), \psi^{-1}(g)$ are adjacent in $AG(C_\mathscr{P}(X))$ proving that $\psi:AG(C_\mathscr{P}(X))\to AG(C_\mathscr{Q}(Y))$ is a graph isomorphism.
	\end{proof}

The following example guarantees the existence of a ring $C_\mathscr{P}(X)$ obeying the condition of Theorem~\ref{Th6.3}
	\begin{example}
		Let $X,Y$ be two completely regular Hausdorff spaces and $\mathscr{P},\mathscr{Q}$ be two ideal of closed sets on $X,Y$ respectively such that $|X_\mathscr{P}|\geq 2, |Y_\mathscr{Q}|\geq 2$ are finite. (For example, taking $X,Y$ as completely regular Hausdorff spaces with atleast two isolated points and $\mathscr{P}=\{\{x\},\{y\},\{z\},\{x,y\},\{y,z\},\{x,z\},\{x,y,z\}\}$ and $\mathscr{Q}=\{\{a\},\{b\},\{a,b\}\}$, where $x,y,z\in X$ are isolated points in $X$ and $a,b\in Y$ are isolated points in $Y$, we obtain $X_\mathscr{P}=\{x,y,z\}$ and $Y_\mathscr{Q}=\{a,b\}$).\\
If $f\in C_\mathscr{P}(X)$ then $X\setminus Z(f)\subset X_\mathscr{P}\implies X\setminus Z(f)$ is finite and hence, $Z(f)$ is a clopen subset of $X$. So, $X\setminus Z(f)=X_\mathscr{P}\setminus[int_XZ(f)\cap X_\mathscr{P}]$. Let $f\in Z(C_\mathscr{P}(X))^*$. Then $int_XZ(f)\cap X_\mathscr{P}\neq\emptyset\iff X\setminus Z(f)$ is a non-empty proper subset of $X_\mathscr{P}$ and therefore, $Z(C_\mathscr{P}(X))^*=\{f\in C(X):\emptyset\neq X\setminus Z(f)\subsetneqq X_\mathscr{P}\}$.\\
	If $f\in Z(C_\mathscr{P}(X))^*$ and $X\setminus Z(f)=\{x_1,x_2,...x_n\}$ where $1\leq n <|X_\mathscr{P}|$, then $f$ can be written as $f=\sum\limits_{i=1}^nf(x_i)1_{x_i}$. Now, $g\in [f]_X\iff X\setminus Z(g)=X\setminus Z(f)\implies[f]_X=\{\sum\limits_{i=1}^nr_i1_{x_i}:r_i\in\mathbb{R}\setminus\{0\}\text{ for each }i=1,2,...n\}$ and therefore, $|[f]_X|$ comes out as $2^{\aleph_0}$. Thus, for each $f\in Z(C_\mathscr{P}(X))^*$, $|[f]_X|=2^{\aleph_0}$. Similarly, for each $f\in Z(C_\mathscr{Q}(Y))^*$, $|[f]_Y|=2^{\aleph_0}$. If $\phi:G(C_\mathscr{P}(X))\to G(C_\mathscr{Q}(Y))$ is a graph isomorphism then for all $f\in V(\mathscr{P}(X))$, $|[f]_x|=2^{\aleph_0}=|[\phi(f)]_Y|$. Hence, $G(C_\mathscr{P}(X))$ and $G(C_\mathscr{Q}(Y))$ are graph isomorphic $\implies AG(C_\mathscr{P}(X))$ and $AG(C_\mathscr{Q}(Y))$ are graph isomorphic by Theorem~\ref{Th6.3}.
\end{example}

\begin{remark}
		It is also to note that $|X_\mathscr{P}|=|Y_\mathscr{Q}|$ : $\phi$ is a bijection between $V(C_\mathscr{P}(X))$ and $V(C_\mathscr{Q}(Y))\implies|V(C_\mathscr{P}(X))|=|V(C_\mathscr{Q}(Y))|\implies 2^{|X_\mathscr{P}|}-2= 2^{|Y_\mathscr{Q}|}-2$, by Theorem \ref{Th7.3}. So, we find a bijective map between $X_\mathscr{P}$ and $Y_\mathscr{Q}$. The proof of Theorem 6.18 in \cite{Ach} indicates that such a bijective map leads to a ring isomorphism between the rings $C_\mathscr{P}(X)$ and $C_\mathscr{Q}(Y)$.
\end{remark}

Finally, in view of the result that the annihilator graphs of two isomorphic commutative rings are graph isomorphic, we make the following observation:
	\begin{observation}\label{Rem6.5}
		If $X$ and $Y$ are two completely regular Hausdorff spaces and $\mathscr{P}$, $\mathscr{Q}$ are ideals of closed sets on $X, Y$ respectively such that $X_\mathscr{P}$ and $Y_\mathscr{Q}$ are finite sets with $|X_\mathscr{P}|\geq 2$ and $|Y_\mathscr{Q}|\geq 2$ then the following statements are equivalent:
		\begin{enumerate}
			\item $G(C_\mathscr{P}(X))$ and $G(C_\mathscr{Q}(Y))$ are graph isomorphic.
			\item $AG(C_\mathscr{P}(X))$ and $AG(C_\mathscr{Q}(Y))$ are graph isomorphic.
			\item $C_\mathscr{P}(X)$ and $C_\mathscr{Q}(Y)$ are ring isomorphic.
		\end{enumerate}
	\end{observation}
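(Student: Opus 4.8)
The plan is to prove the cycle of implications $(3)\Rightarrow(2)\Rightarrow(1)\Rightarrow(3)$. For $(3)\Rightarrow(2)$ I would invoke the general principle recalled just above the statement: a ring isomorphism $\alpha:C_\mathscr{P}(X)\to C_\mathscr{Q}(Y)$ restricts to a bijection $Z(C_\mathscr{P}(X))^*\to Z(C_\mathscr{Q}(Y))^*$ and satisfies $\alpha(ann(f))=ann(\alpha(f))$ for every $f$, so it preserves and reflects the adjacency condition $ann(f)\cup ann(g)\neq ann(fg)$; hence it is a graph isomorphism $AG(C_\mathscr{P}(X))\to AG(C_\mathscr{Q}(Y))$. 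No hypothesis on $X_\mathscr{P}$ is needed here. The implication $(2)\Rightarrow(1)$ is immediate from Theorem~\ref{Th6.2}, which says that any graph isomorphism $\psi:AG(C_\mathscr{P}(X))\to AG(C_\mathscr{Q}(Y))$ carries $G(C_\mathscr{P}(X))$ isomorphically onto $G(C_\mathscr{Q}(Y))$.

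The substantive step is $(1)\Rightarrow(3)$, and this is where I would use the finiteness of $X_\mathscr{P}$ and $Y_\mathscr{Q}$. Given a graph isomorphism $\phi:G(C_\mathscr{P}(X))\to G(C_\mathscr{Q}(Y))$, in particular $\phi$ is a bijection of vertex sets, so $|V(C_\mathscr{P}(X))|=|V(C_\mathscr{Q}(Y))|$. By Theorem~\ref{Th7.3} the left-hand side equals $2^{|X_\mathscr{P}|}-2$ and the right-hand side equals $2^{|Y_\mathscr{Q}|}-2$; since $n\mapsto 2^{n}-2$ is strictly increasing on $\{2,3,\dots\}$ and both $|X_\mathscr{P}|,|Y_\mathscr{Q}|\geq 2$, this forces $|X_\mathscr{P}|=|Y_\mathscr{Q}|$, say $=n$. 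It then remains to produce a ring isomorphism out of an arbitrary bijection $\theta:X_\mathscr{P}\to Y_\mathscr{Q}$. For this I would record that when $X_\mathscr{P}$ is finite, every point of $X_\mathscr{P}$ is isolated in $X$ (apply Lemma~\ref{Lem1} to get $f\in C_\mathscr{P}(X)$ with $X\setminus Z(f)=\{x\}$; a finite subset of a Tychonoff space is closed, so $\{x\}$ is clopen) and every finite subset of $X_\mathscr{P}$ lies in $\mathscr{P}$, whence $C_\mathscr{P}(X)=\{f\in C(X):X\setminus Z(f)\text{ is a finite subset of }X_\mathscr{P}\}$ and the evaluation map $f\mapsto\bigl(f(x)\bigr)_{x\in X_\mathscr{P}}$ is a ring isomorphism $C_\mathscr{P}(X)\xrightarrow{\ \sim\ }\mathbb{R}^{\,n}$; the same holds for $C_\mathscr{Q}(Y)$, and composing gives $C_\mathscr{P}(X)\cong\mathbb{R}^{\,n}\cong C_\mathscr{Q}(Y)$. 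Alternatively, as observed in the Remark preceding the statement, the bijection $\theta$ induces such a ring isomorphism directly via the argument in the proof of Theorem~6.18 of \cite{Ach}.

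I expect the only friction to be in $(1)\Rightarrow(3)$: carefully checking that finiteness of $X_\mathscr{P}$ really trivializes $C_\mathscr{P}(X)$ in the way claimed (isolated points, finite co-zero sets, clopen zero-sets) and that the induced map respects multiplication as well as addition — both of which are routine once the clopen structure is in place. Everything else in the equivalence is a direct citation of earlier results in the excerpt.
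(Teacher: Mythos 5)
Your proposal is correct and follows essentially the same route the paper intends: $(3)\Rightarrow(2)$ by the standard fact that isomorphic rings have isomorphic annihilator graphs, $(2)\Rightarrow(1)$ by Theorem~\ref{Th6.2}, and $(1)\Rightarrow(3)$ by comparing $|V(C_\mathscr{P}(X))|=2^{|X_\mathscr{P}|}-2$ with $2^{|Y_\mathscr{Q}|}-2$ via Theorem~\ref{Th7.3} to get $|X_\mathscr{P}|=|Y_\mathscr{Q}|$, exactly as in the Remark preceding the statement. The only difference is cosmetic: where the paper cites the proof of Theorem~6.18 of \cite{Ach} to turn the bijection $X_\mathscr{P}\to Y_\mathscr{Q}$ into a ring isomorphism, you supply the direct (and correct) argument that finiteness of the open set $X_\mathscr{P}$ makes its points isolated, so evaluation gives $C_\mathscr{P}(X)\cong\mathbb{R}^{|X_\mathscr{P}|}\cong C_\mathscr{Q}(Y)$.
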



\begin{thebibliography}{1}
\bibitem{Acharyya}
	Acharyya, S. K. and Ghosh, S. K., \emph{Functions in $C(X)$ with support lying on a class of subsets of $X$}, Top. Proc. \href{http://topology.nipissingu.ca/tp/reprints/v35/}{35}(2010), 127--148.
	
	\bibitem{Acharyya1}
	Acharyya, S. K. and Ghosh, S. K., \emph{A note on functions in $C(X)$ with support lying on an ideal of closed subsets of $X$}, Top. Proc. \href{http://topology.nipissingu.ca/tp/reprints/v40/}{40}(2012), 297--301.

	\bibitem{Acharyya2}
	Acharyya, S. K., Chattopadhyay, K. C. and Ghosh, P. P., \emph{The rings $C_K(X)$ and $C_\infty(X)$ - some remarks}, Kyung. Math. J. 43(2003), \href{https://kmj.knu.ac.kr/journal/view.html?uid=1314&vmd=Full}{363--369}.

\bibitem{Ach}
	Acharyya, S. K., Deb Ray, A. and Nandi, P. \emph{Zero-divisor graph of the rings $C_\mathscr{P}(X)$ and $C^\mathscr{P}_\infty(X)$}, Filomat, to appear.

\bibitem{Azarpanah}
	Azarpanah, F. and Motamedi, M., \emph{Zero-divisor graph of $C(X)$}, Acta. Math. Hungar. 108(1--2)(2005), \href{https://doi.org/10.1007/s10474-005-0205-z}{25--36}.

\bibitem{Bad}
	Badawi, A. \emph{On the Annihilator Graph of a Commutative Ring}, Comm. in Alg., 42, Vol. 1 (2014), \href{https://doi.org/10.1080/00927872.2012.707262}{108--121}.

\bibitem{Diestel}
	Diestel, R., \emph{Graph Theory}, Springer, \href{https://www.springer.com/gp/book/9783662536216}{2017}.

\bibitem{Levy}
	Levy, R. \emph{Almost-P-Spaces}, Can. J. of Math., Vol. 29, Issue 2 (1977), \href{https://doi.org/10.4153/CJM-1977-030-7}{284--288}.

\bibitem{Nik}
	Nikmehr, M. J., Azadi, A., Nikandish, R.,  \emph{The weakly zero divisor graph of a commutative ring}, Revi. de la Uni.Mat. Arg. Vol. 62, no. 1 (2021), {105--116}.
\end{thebibliography}
\end{document}